\def\bu{\bullet}
\def\marker{\>\hbox{${\vcenter{\vbox{
    \hrule height 0.4pt\hbox{\vrule width 0.4pt height 6pt
    \kern6pt\vrule width 0.4pt}\hrule height 0.4pt}}}$}\>}
\def\gpic#1{#1
     \smallskip\par\noindent{\centerline{\box\graph}} \medskip}
\tikzset{snake it/.style={decorate, decoration=snake}}
\newtheorem{theorem}{Theorem}[section]
\newtheorem{conjecture}[theorem]{Conjecture}
\newtheorem{lemma}[theorem]{Lemma}
\newtheorem{lem}[theorem]{Lemma}
\newtheorem{corollary}[theorem]{Corollary}
\theoremstyle{definition}
\newtheorem{definition}[theorem]{Definition}
\newtheorem{remark}[theorem]{Remark}
\def\st{\colon\,}   
\def\VEC#1#2#3{#1_{#2},\ldots,#1_{#3}}
\def\SE#1#2#3{\sum_{#1=#2}^{#3}}
\def\CH#1#2{\binom{#1}{#2}} 
\def\FR#1#2{\frac{#1}{#2}}
\def\FL#1{\left\lfloor{#1}\right\rfloor} 
\def\CL#1{\left\lceil{#1}\right\rceil}   
\def\NN{{\mathbb N}}
\def\cD{{\mathcal D}}
\def\C#1{\left | #1 \right |}    
\def\ov#1{\overline{#1}}        
\def\cD{{\mathcal D}}
\def\DD{{\mathcal D}}
\def\cF{{\mathcal F}}
\def\esub{\subseteq}
\def\nosub{\not\subseteq}
\def\eps{\varepsilon}
\def\ov#1{\overline{#1}}
\def\la{\langle}
\def\ra{\rangle}
\def\deg{d}
\def\bb{{\ov b}}
\def\hb{{\ov h}}
\def\ib{{\ov i}}
\def\jb{{\ov j}}
\def\kb{{\ov k}}
\def\qb{{\ov q}}
\def\ub{{\ov u}}
\def\hF{{\hat F}}
\long\def\skipit#1{}
\def\qed{\hfill\ifhmode\unskip\nobreak\fi\quad\ifmmode\Box\else\hfill$\Box$\fi\\}
\begin{document}

\title{Caterpillars with $n$ vertices are reconstructible from subgraphs
with at most $n/2+1$ vertices}

\author{
Alexandr V. Kostochka\thanks{University of Illinois, Urbana, IL:
\texttt{kostochk@illinois.edu}.  Research partially supported by NSF grant
DMS-2153507 and NSF RTG grant DMS-1937241.}\,,
Zishen Qu\thanks{University of Illinois, Urbana, IL:
\texttt{zishenq2@illinois.edu}.
Research supported by Natural Sciences and Engineering Research Council of
Canada (NSERC), [funding reference number PGSD-568936-2022];
financ\'ee par le Conseil de recherches en sciences naturelles
et en gnie du Canada (CRSNG), [num\'ero de r\'ef\'erence PGSD-568936-2022]
}\,,
Maddy Ritter\thanks{University of Illinois, Urbana, IL:
\texttt{mritter5@illinois.edu}.}\,,
Douglas B. West\thanks{Zhejiang Normal Univ., Jinhua, China
and University of Illinois, Urbana, IL:
\texttt{dwest@illinois.edu}.  Supported by National Natural Science Foundation
of China grants NSFC 11871439, 11971439, and U20A2068.}\,,
}

\date{\today}
\maketitle

\baselineskip 16pt

\begin{abstract}
The {\it $m$-deck} of an $n$-vertex graph is the multiset of unlabeled induced
subgraphs with $m$ vertices.  Caterpillars are trees in which all nonleaf
vertices lie on a single path.  We prove for $n\ge48$ that any $n$-vertex
caterpillar is reconstructible (up to isomorphism) from its $m$-deck when
$m>n/2$.  The result is sharp, since for $n\ge6$ there are two $n$-vertex
caterpillars having the same $\FL{n/2}$-deck.  Our result proves the special
case for caterpillars of a 1990 conjecture by N\'ydl about trees.
\end{abstract}

\section{Introduction}

The {\it $m$-deck} of a graph $G$ is the multiset of its $m$-vertex induced
subgraphs (as isomorphism classes); each member is an {\it $m$-card}, but the
vertices are unlabeled.  We write the $m$-deck as $\cD_m(G)$ or simply as
$\cD_m$.  Generalizing the classical problem of graph reconstruction, we say
that an $n$-vertex graph $G$ is {\it $\ell$-reconstructible} if it is
determined by $\cD_{n-\ell}(G)$, meaning that no graph not isomorphic to $G$
has the same $(n-\ell)$-deck.

Since every member of $\cD_{m-1}$ arises $n-m+1$ times by deleting a vertex
from a member of $\cD_m$, the $m$-deck of a graph determines its $(m-1)$-deck
(and all decks of smaller subgraphs).  The natural problem is then to find for
each graph the maximum $\ell$ such that it is $\ell$-reconstructible, or
equivalently the minimum $m$ such that it is reconstructible from its $m$-deck.
In this context, Manvel~\cite{M69,M74} extended
the classical Reconstruction Conjecture of Kelly~\cite{Kel1} and Ulam~\cite{U}.

\begin{conjecture}[{\rm Manvel~\cite{M69,M74}}]
For $\ell\in\NN$, there exists a threshold $M_\ell$ such that every graph with
at least $M_\ell$ vertices is $\ell$-reconstructible.
\end{conjecture}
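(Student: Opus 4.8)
The plan is to fix $\ell$ and show that every graph $G$ on $n$ vertices with $n$ large is determined by $\cD_{n-\ell}(G)$. The workhorse is a Kelly-type counting toolkit: since $\cD_m$ determines $\cD_{m'}$ for all $m'\le m$, and since a Kelly counting argument recovers from $\cD_m(G)$ the number of induced copies and the number of (not necessarily induced) copies of every graph $F$ with $|V(F)|\le m$, the deck $\cD_{n-\ell}(G)$ pins down the count of essentially every proper-size subgraph when $\ell$ is fixed and $n$ is large. Thus any two graphs sharing an $(n-\ell)$-deck agree on degree sequence, edge count, number of components of each bounded order, girth, counts of short paths and cycles, and all bounded-radius neighbourhood statistics. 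A first reduction handles disconnected $G$: either every component is small, in which case the deck reads off the components with multiplicities once $n-\ell$ exceeds the component orders, or one component is large, in which case one uniformly strips the small components from all cards and recurses on the large component; the only subtlety there is bookkeeping when several components share an order.

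Next comes a dichotomy between \emph{generic} and \emph{structured} graphs. For generic graphs --- those whose local structure is varied enough that a bounded number of well-chosen cards already pins down all adjacencies (no large classes of twins, no long induced paths, automorphism-poor) --- a Bollob\'as-type second-moment argument already shows that almost all graphs are reconstructible from as few as three cards, and a deterministic strengthening should extend this to all graphs meeting such genericity hypotheses, dense or sparse; for these, the shared subgraph counts together with the ability to locate the deleted vertices inside a card force $G\cong H$. The complementary, genuinely hard class is the highly structured sparse graphs: those assembled from a bounded-complexity ``core'' together with pendant trees and long subdivided (internally degree-$2$) paths. The caterpillars of this paper --- spine plus leaves --- are the prototype, as are general trees and cactus-like graphs, and this is precisely the regime the present work advances.

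The main obstacle is exactly this structured case, and it is where Manvel's conjecture stands open. What is required is a structural reconstruction theorem with two halves. First, one shows that such a $G$ has a canonical decomposition into a bounded-size skeleton, obtained by suppressing degree-$2$ vertices and pruning pendant gadgets, together with the pruned gadgets and the lengths of the suppressed paths, and that $\cD_{n-\ell}(G)$ determines the skeleton: this needs reconstruction of the block--cut structure, of the multiset of long induced path lengths, and of the cyclomatic data from $(n-\ell)$-cards. Second, one shows that the deck determines, for each type of attachable gadget, how many of each type hang at each location of the skeleton; as in the caterpillar case --- where the $m$-deck encodes partial sums of the leaf-degree sequence along the spine --- this reduces to solving a system of linear equations arising from the subgraph counts, and the crux is proving that this system has a unique admissible solution once $n$ is large relative to $\ell$. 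This last point subsumes N\'ydl's tree conjecture and, at $\ell=1$, the Reconstruction Conjecture itself, so the realistic path is to establish the case $\ell\ge 2$ with $n$ large first, bootstrapping from the present caterpillar result through forests, cacti, and bounded-treewidth graphs before confronting the general sparse/dense interface.
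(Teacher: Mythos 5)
The statement you are addressing is Manvel's conjecture, which the paper records as an open conjecture and does not prove; it is open for every $\ell$, including $\ell=1$ where it is the classical Reconstruction Conjecture. Your text is a research programme, not a proof, and you in effect concede this yourself (``this is where Manvel's conjecture stands open,'' ``the realistic path is to establish\dots''). Every load-bearing step is asserted rather than established.

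Concretely: (1) Kelly's Lemma gives you the counts of induced subgraphs with \emph{at most} $n-\ell$ vertices; it does not give ``essentially every proper-size subgraph,'' and the whole difficulty of $\ell$-reconstruction is precisely the subgraphs with more than $n-\ell$ vertices, which is why the present paper spends most of its length showing that certain large maximal batons and tritons can nevertheless be inferred. (2) The ``generic'' half rests on the claim that a Bollob\'as-type almost-all result admits a ``deterministic strengthening'' to all graphs satisfying some genericity hypotheses; no such strengthening is known, no hypotheses are specified, and the word ``should'' is doing all the work. (3) The ``structured'' half is a list of things ``one shows'' --- that the deck determines a skeleton, and that a system of subgraph-count equations has a unique solution --- with no argument for either; as you note, this subsumes N\'ydl's conjecture (still open apart from partial results such as this paper's) and, at $\ell=1$, the Reconstruction Conjecture itself. (4) Even the disconnected reduction is incomplete: stripping small components ``uniformly from all cards'' and recursing requires knowing which cards come from which components, and the ``bookkeeping'' you defer is a real issue. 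There is no proof here to compare with the paper's, because the paper (correctly) offers none; what it proves instead is the special case of caterpillars for N\'ydl's conjecture, via explicit recovery of maximal batons and tritons from the deck.
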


\noindent
Manvel named this ``Kelly's Conjecture'' in honor of the final sentence in
Kelly~\cite{Kel2}, which suggested studying reconstruction from the
$(n-2)$-deck.  The statement that $\cD_m$ determines $\cD_j$ when $j<m$, which
Kelly noted for $m=n-1$, is often called ``Kelly's Lemma''.

The classical Reconstruction Conjecture is $M_1=3$.  Since Manvel's conjecture
is open for all $\ell$, researchers have approached the question by studying
thresholds on the number of vertices for $\ell$-reconstructibility of various
classes or properties of graphs.  The survey by Kostochka and West~\cite{KW}
describes some of these results.  Of particular interest is the threshold for
trees, to extend the seminal result of Kelly~\cite{Kel2} that trees with at
least three vertices are $1$-reconstructible.  N\'ydl~\cite{N90} stated a
conjecture for general $\ell$.

\begin{conjecture}[\cite{N90}]\label{Nconj}
For $n\ge2\ell+1$, no two $n$-vertex trees have the same $(n-\ell)$-deck.
\end{conjecture}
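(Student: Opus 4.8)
The plan is to prove the equivalent statement that every $n$-vertex tree $T$ is reconstructible from $\cD_m(T)$ whenever $m>n/2$; by Kelly's Lemma a larger deck determines every smaller one, so it suffices to take $m=\FL{n/2}+1$, the least integer exceeding $n/2$. For $n$ below a fixed constant the statement is a finite computation, so assume $n$ is large.

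The first step is to extract the coarse structure of $T$ from $\cD_m$. Counting (not necessarily induced) copies of small trees via Kelly's Lemma recovers the degree sequence, hence the number of leaves, and also the number of copies of $\Sp2$, the spider with three legs of length $2$. A tree is a caterpillar precisely when it contains no $\Sp2$, and $\Sp2$ occurs in a tree as an induced subgraph iff it occurs as a subgraph; hence for $n\ge12$ we can decide from $\cD_m$ whether $T$ is a caterpillar, and if it is, the main theorem of this paper (with the finite check for small $n$) finishes the proof. So assume $T$ is not a caterpillar. Counting paths recovers the diameter $d$ when $d<m$; if instead $d\ge m$, then $T$ has a path on more than $n/2$ vertices and so fewer than $n/2$ vertices off it, a ``nearly a path'' regime handled together with the caterpillar-type analysis below. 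Thus we may assume a longest path $P=p_0\cdots p_d$ is determined as an isomorphism type, the components of $T-V(P)$ are the \emph{limbs}, each rooted at its unique attachment vertex on $P$, and $T$ is encoded by the left-to-right sequence of rooted-limb multisets along $P$.

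The reconstruction then proceeds by a ``peeling'' argument that processes $P$ inward from its two ends: at each stage one uses the cards obtained by deleting an $(n-m)$-set localized near the current position, and since $m>n/2$ any two cards overlap substantially, so a newly-read limb can be matched against the limbs already placed and the limbs and their positions are recovered one interval at a time. When every limb is a single leaf this is exactly the one-dimensional word-reconstruction problem solved in the caterpillar case. The new difficulty lies with limbs of large order: such a limb never appears whole on any single card, but only as a large induced sub-limb hanging off a known vertex of $P$ in a card that still contains most of the rest of $T$, so recovering it forces one to aggregate this partial information across the whole deck rather than to read it off a single card.

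The main obstacle is therefore the counting step that converts ``the multiplicity in $\cD_m$ of each $m$-vertex forest'' into a \emph{unique} limb-and-position decomposition of $T$. For caterpillars the limbs are leaves, so the arrangement is one-dimensional and the present paper's analysis exploits that structure throughout. For general trees the limbs form an alphabet that must itself be reconstructed, a limb of intermediate order never appears whole on any card, and a single card can coincide with ``a limb carrying a few extra leaves borrowed from a neighbouring limb.'' Ruling out such coincidences --- equivalently, showing that the linear system relating limb shapes and attachment positions to card multiplicities has only the intended solution once $m>n/2$ --- is the step where the techniques tuned to the essentially one-dimensional structure of caterpillars do not obviously transfer, and it is the point at which new input is needed. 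This is why Conjecture~\ref{Nconj} remains open beyond the caterpillar case established in this paper.
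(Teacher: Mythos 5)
There is no proof to compare against here: the statement you were asked to prove is Conjecture~\ref{Nconj}, which the paper cites from N\'ydl (1990) as an \emph{open conjecture} and does not prove. Indeed, as stated it is false: the paper itself records that Groenland, Johnston, Scott, and Tan found two $13$-vertex trees with the same $7$-deck, which is exactly the case $(n,\ell)=(13,6)$ with $n=2\ell+1$. So no argument can establish the conjecture without at least excluding that exception, and your proposal does not address it.

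Your write-up is an honest outline rather than a proof, and you say so yourself in the final paragraph: the step that converts card multiplicities into a unique limb-and-position decomposition for general trees is precisely the missing ingredient, and you conclude that the conjecture ``remains open beyond the caterpillar case.'' That is a correct assessment of the state of the art, but it means the proposal contains a genuine and acknowledged gap: the reduction to caterpillars (which the paper does prove, for $n\ge48$) covers only trees whose nonleaf vertices induce a path, and everything beyond that --- the ``peeling'' of limbs along a longest path, the aggregation of partial limb information across cards, and the injectivity of the resulting linear system --- is asserted as a plan with no argument. The finite check for small $n$ is also not a real step, since the threshold constant is never produced and the $(13,6)$ counterexample shows such a check would in fact fail. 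In short, what you have written is a correct description of why the problem is hard, not a proof of the statement.
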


One isolated counterexample to Conjecture~\ref{Nconj} is known;
Groenland et al.~\cite{GJST} found two $13$-vertex trees with the same
$7$-deck.  For general $\ell$, it is known that $n\ge2\ell$ does not suffice.
N\'ydl~\cite{N90} provided two trees with $2\ell$ vertices having the same
$\ell$-deck; a short proof is given in~\cite{KW} using the results
of~\cite{SW}.  The two trees arise from a path with $2\ell-1$ vertices by
adding one vertex adjacent to the central vertex of the path or to a neighbor
of that vertex.  We generalize this example in Theorem~\ref{catsharp}.

Many arguments for reconstructibility of the graphs in a particular class have
two parts, as articulated by Bondy and Hemminger~\cite{BH} for the case
$\ell=1$.  First, one proves that the class is {\it $\ell$-recognizable}; this
means that for the $(n-\ell)$-deck of any $n$-vertex graph, all graphs or no
graphs having that deck belong to the class.  Separately, one proves that the
family is {\it weakly $\ell$-reconstructible}, meaning that no two graphs in
the family have the same deck.  Thus N\'ydl's conjecture states that trees with
at least $2\ell+1$ vertices are weakly $\ell$-reconstructible.  For the other
step, Kostochka, Nahvi, West, and Zirlin~\cite{KNWZa} proved the following
result.

\begin{theorem}[{\rm\cite{KNWZa}}]\label{acyc}
For $n\ge2\ell+1$, except when $(n,\ell)=(5,2)$, the family of $n$-vertex
acyclic graphs is $\ell$-recognizable.
\end{theorem}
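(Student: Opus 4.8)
\medskip
\noindent\emph{Proof idea.}
Write $m=n-\ell$; since $n\ge 2\ell+1$ we have $m\ge\ell+1$, and $m\ge3$ once $\ell\ge2$. It suffices to prove that if $G$ is an $n$-vertex graph with $\cD_m(G)=\cD_m(F)$ for some $n$-vertex forest $F$, then $G$ itself is a forest. By the subgraph-counting form of Kelly's Lemma, for every graph $H$ on at most $m$ vertices the number of subgraphs of $G$ isomorphic to $H$ is determined by $\cD_m(G)$. Applying this with $H=K_2$ recovers $e(G)=e(F)\le n-1$, and applying it with $H=C_k$ for $3\le k\le m$ shows that $G$ has no cycle of length at most $m$ (since $F$ has none). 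Hence, if $G$ contains a cycle, its girth $g$ satisfies $g\ge m+1=n-\ell+1$. In particular, for $\ell=1$ this forces a Hamiltonian cycle and thus $e(G)\ge n>n-1$, a contradiction, so the case $\ell=1$ is immediate.

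Now assume $\ell\ge2$ and, for contradiction, that $G$ contains a cycle; fix a shortest one, $C$, with $|V(C)|=g\ge n-\ell+1$. Then at most $\ell-1$ vertices of $G$ lie off $C$, and since $e(G)\le n-1$ at most $\ell-2$ edges of $G$ are not edges of $C$; in particular $G$ is disconnected (a connected $n$-vertex graph with a cycle has at least $n$ edges) and has cyclomatic number at most $\ell-1$. Thus $G$ is a long cycle together with a bounded amount of ``debris'': at most $\ell-1$ extra vertices and at most $\ell-2$ extra edges, attached in some configuration. Before continuing, one disposes of the small cases (roughly, $n$ bounded in terms of $\ell$) by inspection; this is exactly where the genuine exception $(n,\ell)=(5,2)$ lives, since $C_4\cup K_1$ and the spider with leg lengths $1,1,2$ have the same $3$-deck. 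For $n$ large relative to $\ell$ one argues as follows.

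Since $g>m$, every $m$-card of $G$ is itself a forest, and every $m$-card is obtained by deleting a set $S$ of $\ell$ vertices at least one of which lies on $C$ (there is no room to take $S$ entirely off $C$); hence the restriction of each card to $V(C)$ is a disjoint union of paths spanning at least $g-\ell$ vertices of $C$. This rigidity is combined with two further reconstructed invariants: the degree sequence of $G$ (in this near-forest regime $\Delta(G)=O(\ell)\le m-1$, so the star-counts $\sum_i\binom{d_i}{k}$ for $1\le k\le m-1$ determine it) and the isomorphism types and multiplicities of the components of $G$ disjoint from $C$ (each has at most $\ell-1\le m-2$ vertices and so is visible inside the cards). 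One then shows that $\cD_m(F)=\cD_m(G)$ forces $F$ to be, up to a matching amount of debris, a path on at least $n-\ell+1$ vertices, so that $F$ and $G$ differ only in whether the long structure is ``closed up''; comparing the multiplicities of a small family of carefully chosen cards — those isomorphic to prescribed disjoint unions of paths, which separate a near-spanning cycle from a near-spanning path — yields the contradiction. Equivalently, one shows $c(G)=c(F)$, which with $e(G)=e(F)=n-c(F)$ forces cyclomatic number $0$ for $G$.

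The main obstacle is this last step. The difficulty is not that $G$ is hard to describe, but that it is pinned down only up to a bounded yet genuinely variable family of ways to attach the debris to the long cycle, and one must extract from the $m$-deck alone an invariant that is simultaneously (i) provably computable from $\cD_m(G)$ and (ii) provably different for $G$ and for \emph{every} $n$-vertex forest. Making this work uniformly for all $n\ge 2\ell+1$, rather than only for $n$ much larger than $\ell$, is what makes the argument lengthy, and the single failure at $(n,\ell)=(5,2)$ shows both that the threshold $2\ell+1$ is best possible and that some explicit small-case checking is unavoidable.
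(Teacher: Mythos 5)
This theorem appears in the paper only as a citation: it is quoted from~\cite{KNWZa}, the separate article ``Acyclic graphs with at least $2\ell+1$ vertices are $\ell$-recognizable,'' and the present caterpillar paper never proves it. There is therefore no in-paper proof to compare your write-up against.

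Evaluating your proposal on its own terms: the preliminary reductions are sound. Kelly's Lemma recovers $e(G)=e(F)\le n-1$ and the absence of cycles of length at most $m=n-\ell$, so any cycle in $G$ has girth at least $n-\ell+1$; the $\ell=1$ case follows at once, and for $\ell\ge2$ the putative cycle $C$ leaves at most $\ell-1$ off-cycle vertices, at most $\ell-2$ non-cycle edges, and forces $G$ to be disconnected. All of this is correct. But from there the argument is only a plan: you assert, without proof, that the degree sequence of $G$ is reconstructible in this regime, that the small components away from $C$ are visible inside cards, and that some family of path-union cards separates ``long cycle plus debris'' from every $n$-vertex forest — and you then explicitly flag producing such a family ``uniformly for all $n\ge2\ell+1$'' as the hard part that you do not supply. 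That acknowledged gap is not a detail; it is essentially the entire content of~\cite{KNWZa}. The boundary $n=2\ell+1$ is especially delicate because the debris (up to $\ell-1$ vertices) is nearly as large as a card ($m=\ell+1$ vertices), so the rigidity you gesture at is far from automatic, and the exceptional pair $(n,\ell)=(5,2)$ already shows that the separation you want can genuinely fail. In short: a reasonable plan of attack that correctly locates where the difficulty lies, but not a proof.
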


\noindent
The case $(n,\ell)=(5,2)$ is excluded because the tree $T_5$ obtained from the
complete bipartite graph $K_{1,3}$ by subdividing one edge has the same
$3$-deck as the disjoint union of a $4$-cycle with an isolated vertex.

By Kelly's Lemma, the $(n-\ell)$ deck determines the $2$-deck and hence the
number of edges.  Since $n$-vertex trees are the $n$-vertex acyclic graphs with
$n-1$ edges, Theorem~\ref{acyc} implies that proving Conjecture~\ref{Nconj}
(excluding $(n,\ell)\in\{(5,2),(13,6)\}$) would show that trees with at least
$2\ell+1$ vertices are $\ell$-reconstructible (except when $\ell\in\{2,6\}$).
Due to $T_5$ not being $2$-reconstructible, at least six vertices are needed to
guarantee $2$-reconstructibility of trees, and Giles~\cite{Gil} proved that
this suffices.

A general threshold for $\ell$-reconstructibility of $n$-vertex trees has been
proved.  Groenland, Johnston, Scott, and Tan~\cite{GJST} proved that $n$-vertex
trees are $\ell$-reconstructible when $n\ge 9\ell+24\sqrt{2\ell}+o(\sqrt\ell)$.
Kostochka, Nahvi, West and Zirlin~\cite{KNWZt} improved the sufficient
condition to $n\ge 6\ell+11$.

Since finding the least number of vertices to guarantee
$\ell$-reconstructibility of trees seems to be quite difficult,
it is natural to consider $\ell$-reconstructibility
of trees with special structure.
A {\it caterpillar} is a tree in which the subgraph induced by the
nonleaf vertices is a path called the \emph{spine}.
Z. Hunter~\cite{Hun} proved that an $n$-vertex
caterpillar is reconstructible from its $m$-deck when
$m>(n/2)+C$ for some constant $C$.  Our result reduces
$C$ to $0$, proving N\'ydl's conjecture for the special case of caterpillars
when $n$ is sufficiently large.

\begin{theorem}\label{main}
When $n\ge48$ and $m>n/2$, every $n$-vertex caterpillar is
reconstructible from its $m$-deck, and this is sharp.
\end{theorem}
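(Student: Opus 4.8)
The plan is to prove Theorem~\ref{main} in three parts, following the recognition/weak-reconstruction dichotomy of Bondy and Hemminger. First we show that for $n\ge48$ and $m>n/2$ the class of $n$-vertex caterpillars is \emph{recognizable} from the $m$-deck: if $G$ is an $n$-vertex caterpillar and $H$ is an $n$-vertex graph with $\cD_m(H)=\cD_m(G)$, then $H$ is a caterpillar. Second, and this is the bulk of the work, we prove \emph{weak reconstruction}: two $n$-vertex caterpillars with $n\ge48$ and the same $m$-deck for $m>n/2$ are isomorphic. Together these give the reconstructibility asserted in Theorem~\ref{main}. Sharpness is Theorem~\ref{catsharp}, which for every $n\ge6$ exhibits two nonisomorphic $n$-vertex caterpillars with the same $\FL{n/2}$-deck, generalizing N\'ydl's example by sliding one distinguished spine vertex within a central stretch of the spine along which the leaf counts are constant.

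For recognition, Kelly's Lemma gives that $\cD_m$ determines $\cD_2$, hence $e(H)=e(G)=n-1$. Since $m>n/2$ we have $n\ge 2(n-m)+1$, and $n\ge48$ excludes $(n,n-m)=(5,2)$, so Theorem~\ref{acyc} gives that $H$ is acyclic; an acyclic $n$-vertex graph with $n-1$ edges is a tree. Now use that a tree is a caterpillar if and only if it contains no subgraph isomorphic to $\Sp2$ (the tree with three legs of length~$2$ at a common center), equivalently --- as $H$ is a tree --- no induced $\Sp2$. If $H$ were not a caterpillar, fix such a $\Sp2$; since $m\ge7$ and $H$ is connected with $n\ge m$ vertices, extend its seven vertices to a connected set $W$ with $|W|=m$, so that the $m$-card $H[W]$, being a subtree of $H$, has an induced $\Sp2$. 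But every induced subgraph of the caterpillar $G$ is a disjoint union of caterpillars and hence has no induced $\Sp2$, so no $m$-card of $G$ does --- a contradiction. Thus $H$ is a caterpillar.

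For weak reconstruction, encode an $n$-vertex caterpillar by its spine $v_1,\dots,v_k$ and its \emph{leaf-load sequence} $(\VEC a1k)$, where $a_i$ is the number of leaves at $v_i$ (so $a_1,a_k\ge1$ and $n=k+\sum_i a_i$); the caterpillar is determined by this sequence up to reversal. From $\cD_m$ one first reconstructs the degree sequence --- routine given $m>n/2$, using the reconstructible star-subgraph counts $\sum_v\binom{d(v)}{j}$ and the fact that a tree has at most one vertex of degree exceeding $n/2$ --- and hence also $k$, the leaf count $L=n-k$, the multiset $\{\VEC a1k\}$, and the pair of endpoint loads $\{a_1,a_k\}$; more generally, Kelly-type averaging over $\cD_m$ reconstructs the number of copies (induced or not) of every tree on at most $m$ vertices. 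One then reconstructs long initial and terminal segments of $(\VEC a1k)$. The ``end chunk'' formed by $v_1,\dots,v_p$ together with all of their leaves is an induced sub-caterpillar $S$ with $G-S$ connected, and conversely every induced sub-caterpillar $S$ with $G-S$ connected is an end chunk of $G$ (deleting an internal spine vertex, or omitting a leaf while keeping its spine neighbor, disconnects $G$). Since the complements of the $m$-cards run over all $(n-m)$-subsets of $V(G)$, one can see which $(n-m)$-subsets yield connected cards and, analyzing further which sub-caterpillars occur as end chunks, recover up to reversal the longest prefix and the longest suffix of $(\VEC a1k)$ whose end chunk has at most $m$ vertices.

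It remains to fit the prefix and suffix together, and I expect this to be the main obstacle. Let $p^*$ be the largest index with $v_1,\dots,v_{p^*}$ and their leaves totalling at most $m$ vertices, and $q^*$ the smallest such index from the right. A short count using $m>n/2$ gives $q^*\le p^*+2$: if $q^*\ge p^*+3$, the vertex sets of positions $1,\dots,p^*{+}1$ and of positions $q^*{-}1,\dots,k$ are disjoint yet each has more than $m$ vertices, exceeding $n$; and when $q^*\in\{p^*+1,p^*+2\}$ the (at most one) uncovered spine position has its leaf count and location forced by the already-known $L$. Thus the prefix and suffix together cover the whole spine, and in the overlapping case $q^*\le p^*$ one must orient the two recovered segments (using $\{a_1,a_k\}$ and the structural data) and align them along their common part. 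The only way this alignment can fail to be unique is that the central portion of $(\VEC a1k)$ seen from both ends is periodic --- which is exactly the structure behind the caterpillars of Theorem~\ref{catsharp} that share a $\FL{n/2}$-deck. Here one squeezes the last drop from the strict inequality: with $m$ an integer and $n\ge48$, the two visible end chunks overlap in enough spine positions (and, failing that, the reconstructed counts of somewhat larger brooms and double-brooms, together with the known $L$ and $k$, locate the distinguished spine vertices within the periodic stretch) to force a unique alignment, whereas at $m=\FL{n/2}$ this ambiguity genuinely persists. Assembling the three parts yields Theorem~\ref{main}.
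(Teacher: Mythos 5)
Your recognition argument is fine and agrees with the paper's: Kelly's Lemma gives the edge count, Theorem~\ref{acyc} gives acyclicity, and a tree is a caterpillar iff it has no induced copy of the subdivided star on seven vertices (noted in the paper's introduction). Your sharpness claim is Theorem~\ref{catsharp}. The paper also does not prove degree-sequence reconstruction from scratch; it imports it from~\cite{KNWZa}, and your remark about star counts would not suffice on its own: the algebraic star-count method of~\cite{GJST} only delivers the degree sequence once $m\gtrsim\sqrt{2n\ln 2n}$, which for $n$ between $48$ and $66$ is \emph{weaker} than $m>n/2$, exactly the range the theorem must cover.

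The weak-reconstruction step has a genuine gap, and it is where you flagged it. Your central mechanism is that ``end chunks'' can be read off from the deck because $G-S$ is connected iff $S$ is an end chunk. The converse direction fails: in a caterpillar, an $m$-card $G-S$ is connected iff its vertex set is a subtree, i.e.\ its spine vertices form a consecutive run $v_i,\dots,v_j$ together with \emph{some subset} of the leaves of each of $v_i,\dots,v_j$; the deleted set $S$ can thus consist of pieces from both ends together with scattered interior leaves, and is by no means forced to be a single end chunk. Worse, the deck only records isomorphism classes of cards with multiplicity, not which $(n-m)$-subsets were deleted, so the phrase ``one can see which $(n-m)$-subsets yield connected cards'' asserts information the deck does not contain. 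Converting connectivity-type observations into actual data requires a counting engine, which in the paper is the Greenwell--Hemminger exclusion argument (Lemma~\ref{counting}) applied to \emph{batons} $B_{j:a,b}$ and \emph{tritons} $B_{j,j':a,b,c}$, anchored at the highest-degree vertices rather than at the spine ends; most of Sections~\ref{highr}--\ref{sec:3branch} is devoted to making these subgraphs fit in cards of size at most $(n+2)/2$ under a delicate case split (low vs.\ high diameter, then the number of maximum-degree vertices, then the number of branch vertices). Your second gap is the alignment of prefix and suffix, which you correctly identify as the main obstacle but then leave unresolved (``I expect this,'' ``squeezes the last drop''). The paper resolves it in Lemmas~\ref{stitch} and~\ref{lastfew} via an explicit periodicity analysis, with separate treatment of the residue classes of $r\bmod 6$ and small sporadic configurations (Figure~\ref{r2mod6}); this is real mathematical content, not a bookkeeping step, and it is where the threshold $n\ge48$ actually enters.

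In short: your architecture (recognition via acyclicity, then weak reconstruction by recovering the ordered leaf-load sequence, plus sharpness from Theorem~\ref{catsharp}) matches the paper's, but the proposed mechanism for recovering the order --- end chunks read off from card connectivity --- does not work as stated, and the alignment problem you defer to is exactly the hard part that the paper's baton/triton machinery and stitching lemmas are built to solve.
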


We will prove the sharpness of Theorem~\ref{main} at the end of this section.
Because the number of vertices in subgraphs is always an integer, this phrasing
for $m$ in terms of $n$ is equivalent to the threshold on $n$ in terms of
$\ell$ in Conjecture~\ref{Nconj}.  Similarly, the statement of the result
in~\cite{GJST} was that an $n$-vertex tree is reconstructible from its
$m$-deck when $m=8n/9+(4/9)\sqrt{8n+5}$.

In light of Kelly's Lemma, we can speak of ``the deck'' of given information as
being the multiset of all induced subgraphs with at most $(n+2)/2$ vertices.
We will write it as $\cD$.  The individual subgraphs are the ``cards'' in the
deck.  We aim to retrieve the original caterpillar when given $\cD$.

It is easy to see that a tree is a caterpillar if and only if it does not
contain the $7$-vertex tree obtained from $K_{1,3}$ by subdividing each edge.
Hence when $n\ge12$ we can tell from $\cD$ whether the unknown tree is a
caterpillar, and it then remains only to determine which caterpillar has
the given deck.

It is still possible that when $n\ge6$ the conclusion of Theorem~\ref{main} has
no exceptions.  Giles~\cite{Gil} proved that all trees are $2$-reconstructible,
handling the case $n=6$.  When $n=7$ there are $11$ isomorphism classes of
trees, and they are distinguished by their numbers of $4$-vertex paths and
independent $4$-sets.  Only one is not a caterpillar.  The threshold $n\ge48$
in the statement of Theorem~\ref{main} is an artifact of the proof.  Some of
our lemmas invoke thresholds on $n$ to ensure that certain subgraphs are
visible in cards.  The only one that relies on $n\ge48$ is Lemma~\ref{Case3};
otherwise $n\ge27$ would suffice.  We will include ``$n\ge48$'' in statements
as a flag indicating that a threshold on $n$ will be invoked; varying the
threshold in various lemmas would lead to unnecessary distraction from the
main result.

Within the family of caterpillars, a particular caterpillar is specified by its
ordered list of vertex degrees along the spine (the reverse list specifies the
same caterpillar).  Hence it is natural to seek first the multiset of vertex
degrees.  Groenland et al.~\cite{GJST} used algebraic arguments about
polynomial equations (obtained by counting cards that are stars of various
sizes) to prove that the multiset of vertex degrees of an $n$-vertex graph is
determined by its $m$-deck when $m\ge\sqrt{2n\ln2n}$.  Although this is the
strongest known result for large $n$ for reconstructing vertex degrees, note
that $(n+1)/2\ge\sqrt{2n\ln2n}$ only when $n\ge67$, so for $n$ of moderate size
this result does not give us the degree list.  Meanwhile, Kostochka et
al.~\cite{KNWZa} gave a combinatorial argument implying that the degree list of
any $n$-vertex acyclic graph is determined by its $m$-deck when $m>n/2$ (again
for $n\ge6$).  Hence we may assume that the multiset of vertex degrees is known
from the deck.

In particular, given the deck $\cD$ of an unknown caterpillar $G$, we know the
number of nonleaf vertices in $G$; we call this $r$.  Let $\VEC d1r$ denote the
degrees of the nonleaf vertices in nonincreasing order.  Let $\VEC v1r$ be the
vertices of the spine in order as a path, and let $d(v)$ denote the degree of
vertex $v$.  Although $\VEC d1r$ and $d(v_1),\ldots,d(v_r)$ form the same
multiset, they are rarely in the same order; our task is to use $\cD$ to
discover the second order.



Before closing this section, we note one fundamental counting tool that
will be useful at many points in the paper.

\begin{lemma}\label{leaves}
The vertex degrees of an $n$-vertex caterpillar with $n-r$ leaves satisfy
\begin{equation}\label{d'}
n-r-2=\SE i1r (d_i-2) .
\end{equation}
With $m$ vertices of maximum degree,
\begin{equation}\label{up}
d_1\le \FL{\frac{n-r-2}{m}}+2.
\end{equation}
\end{lemma}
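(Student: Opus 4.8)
The plan is to derive \eqref{d'} from a double count of edges (the degree-sum formula) and then obtain \eqref{up} by an averaging argument combined with integrality. Both identities are elementary, so the work is just careful bookkeeping; I record them separately because they get reused throughout the paper.

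First I would observe that an $n$-vertex caterpillar is a tree, hence has exactly $n-1$ edges; equivalently, its edge set partitions into the $r-1$ edges of the spine and the $n-r$ pendant edges joining leaves to the spine, which again totals $n-1$. Summing vertex degrees and using $\sum_v d(v)=2(n-1)$, and noting that the $n-r$ leaves contribute $n-r$ to this sum (each having degree $1$), we get $\sum_{i=1}^r d_i = 2(n-1)-(n-r) = n+r-2$. Subtracting $2r$ from both sides yields $\sum_{i=1}^r(d_i-2) = n-r-2$, which is \eqref{d'}.

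For \eqref{up}, every nonleaf vertex has degree at least $2$, so each summand $d_i-2$ in \eqref{d'} is nonnegative. By hypothesis the maximum degree $d_1$ is attained at $m$ of the nonleaf vertices, so at least $m$ of these summands equal $d_1-2$ while the rest are $\ge 0$; hence $n-r-2 = \sum_{i=1}^r(d_i-2) \ge m(d_1-2)$. Therefore $d_1-2 \le (n-r-2)/m$, and since $d_1-2$ is an integer, $d_1-2 \le \lfloor(n-r-2)/m\rfloor$, giving \eqref{up}.

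There is no real obstacle here. The only point needing a moment's care is the nonnegativity of each $d_i-2$, i.e.\ that spine endpoints are not leaves, which is immediate from the definition of the spine as the subgraph induced by the nonleaf vertices (the degenerate cases $r\le 1$ cause no trouble, since \eqref{d'} and \eqref{up} remain valid as written — for $r=1$ one has $d_1=n-1$ and $n-r-2=n-3$). The value of stating the lemma explicitly is conceptual: \eqref{d'} measures the total ``excess degree'' available to distribute among the spine vertices, and \eqref{up} bounds the largest spine degree in terms of its multiplicity, which is precisely the quantitative handle needed when degree information is read off from small cards.
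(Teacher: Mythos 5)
Your proof is correct, and both halves reach the same conclusions, but the derivation of \eqref{d'} is routed differently from the paper's. The paper's one-line argument is purely local: it observes that a longest path in the caterpillar has $r+2$ vertices (the spine plus one leaf at each end), and then counts the remaining $n-r-2$ vertices by the spine vertex to which each is attached, noting that $v_i$ supports exactly $d_i-2$ such off-path leaves. Your argument instead invokes the global degree-sum identity $\sum_v d(v)=2(n-1)$ and subtracts the contribution $n-r$ of the leaves. Both are elementary and correct; the paper's version has the small advantage of making the term $d_i-2$ directly meaningful as a count of leaves hanging off $v_i$ away from the longest path, a picture that recurs later (e.g.\ the $d'(v_i)$ notation in Section~\ref{lowr}), whereas yours is the more routine handshake computation. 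Your treatment of \eqref{up} — dropping nonnegative terms, dividing by $m$, then applying integrality to pass to the floor — is exactly what the paper does, and your remark about the degenerate small-$r$ cases is a fair point, though not needed for the lemma as used.
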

\begin{proof}
A longest path has $r+2$ vertices.  Counting the $n-r-2$ vertices not on the
path according to their neighbors on the spine yields~\eqref{d'}.  Keeping only
the $m$ vertices of maximum degree in~\eqref{d'} yields~\eqref{up}.
\end{proof}

In Section~\ref{lowr} we consider the ``low-diameter'' case $r\le(n-6)/2$.
In the subsequent sections we consider the more difficult ``high-diameter''
case, beginning with a general strategy in Section~\ref{highr}.  We then
consider cases in terms of the number of vertices of maximum degree and the
number of vertices of degree at least $3$.

As mentioned after Conjecture~\ref{Nconj}, for even $n$ N\'ydl~\cite{N90}
presented two $n$-vertex caterpillars having the same $n/2$-deck.
A proof of this appears in~\cite{KW} using a special case of the results
of~\cite{SW}.  Here we present a direct self-contained proof generalizing the
sharpness claim of Theorem~\ref{main}.

Let $T_{a,b}$ denote the tree that is the union of paths of lengths $1$, $a$,
and $b$ having a common vertex.  Note that $T_{a,b}$ is a caterpillar with
$a+b+2$ vertices.  A special case of the result below is that when $k=\FL{n/2}$,
the caterpillars $T_{k-1,n-k-1}$ and $T_{k-2,n-k}$ have the same $k$-deck,
which proves sharpness of Theorem~\ref{main} for all $n$ at least $6$.

\begin{theorem}\label{catsharp}
When $b> a\ge k-2$, the caterpillars $T_{a,b}$ and $T_{a+1,b-1}$ have the
same $k$-deck.
\end{theorem}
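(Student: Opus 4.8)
The plan is to identify each $k$-card of $T_{a,b}$ with a $k$-card of $T_{a+1,b-1}$ via an explicit bijection, exploiting the fact that both trees are ``spiders'' with three legs (of lengths $1,a,b$ resp.\ $1,a+1,b-1$) meeting at a center $c$. Any induced subgraph on $k$ vertices of such a spider is itself a disjoint union of subpaths; more precisely, if the card contains the center $c$, it is a subtree that is a spider with three legs of lengths $p,q,s$ where $p\le1$, $q\le a$, $s\le b$ (and $p+q+s=k-1$), possibly with $p=0$ or a leg of length $0$, together with extra disjoint path-components coming from the ``far ends'' of the long legs; if the card avoids $c$, it is simply a disjoint union of at most three paths whose lengths are bounded by the leg lengths. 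So the first step is to set up this normal form: every $k$-vertex induced subgraph of a three-legged spider is determined by a small amount of combinatorial data (which legs it meets, whether it contains the center, and the multiset of component sizes).

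Next I would partition the $k$-cards of $T_{a,b}$ into two groups: those that use at most $k-1$ consecutive vertices from the $b$-leg counted from the leaf end (equivalently, avoid the edge of the $b$-leg incident to $c$ far enough that the ``$b$ versus $b-1$'' distinction is invisible), and those that do not. For the first group, the hypothesis $b>a\ge k-2$ guarantees there is enough room: a $k$-vertex piece fits inside $T_{a+1,b-1}$ in exactly the same way, because we only ever need up to $k-2\le a$ vertices along the $a$-leg and we have strictly decreased nothing that was being used. The delicate group is the one where a card ``sees'' the full short-leg/long-leg interface near the center. Here is where the transformation $T_{a,b}\leadsto T_{a+1,b-1}$ does its work: a card that runs from deep in the $a$-leg through $c$ and into the $b$-leg can be matched with a card of $T_{a+1,b-1}$ that uses one more vertex on the (now longer) first leg and one fewer on the (now shorter) second leg. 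Because $a\ge k-2$, the $a$-leg already has at least $k-2$ vertices, so lengthening it to $a+1$ never creates a card that didn't already exist; and because $b>a$, shortening the $b$-leg to $b-1$ still leaves it the longest leg, so no card is lost on that side either. I would make this precise by checking that the multiset of component-size-data is preserved under the swap ``move one unit of available length from the second leg to the first leg,'' which is symmetric in an obvious way once the normal form is in place.

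The main obstacle is the bookkeeping at the center: a $k$-card containing $c$ can split its $k-1$ non-center vertices among the three legs in several ways, and when a leg is ``used up'' to its end the card acquires extra disconnected path-components from the truncated far end of the \emph{other} long leg, so one must verify that these leftover components match up across the bijection. The clean way to handle this is to argue by the contrapositive of ``a card is lost'': suppose some $k$-vertex subgraph $H$ occurs with different multiplicity in the two decks; reading off from $H$ its leg-usage data, the inequalities $a\ge k-2$ and $b\ge a+1$ force the usage along each leg to stay within the bounds available in \emph{both} trees, a contradiction. I expect the whole argument to be a page of careful case analysis (center in/out of the card; which legs are saturated), with no hard idea beyond the observation that shifting length between two legs that are both already ``long enough'' is invisible at scale $k$.

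\medskip
\noindent\textit{Remark.} An alternative, possibly shorter route: encode a caterpillar by the generating function recording, for each $j$, the number of induced subpaths and independent sets of each size, and show these counts agree for $T_{a,b}$ and $T_{a+1,b-1}$ up to size $k$ directly. But I would expect the explicit-bijection approach above to be more transparent and less error-prone, so that is the one I would carry out.
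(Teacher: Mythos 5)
Your proposal takes a genuinely different route from the paper's proof, but as written it has a real gap at its core: the bijection is never actually defined, and you acknowledge this yourself ("I expect the whole argument to be a page of careful case analysis"). The central move you describe — "matched with a card of $T_{a+1,b-1}$ that uses one more vertex on the (now longer) first leg and one fewer on the (now shorter) second leg" — is not a well-defined map on cards, and the delicacy you flag (cards that acquire leftover disconnected components from the far ends of legs) is exactly where it breaks down if handled naively. For instance, counting copies of $P_3 + K_1$ in $T_{2,4}$ versus $T_{3,3}$ (the case $k=4$, $a=2$, $b=4$) gives contributions $\{2,2,3,3,3,3,4\}$ and $\{2,2,2,3,3,4,4\}$ grouped by the choice of $P_3$; these multisets disagree even though the totals match (both $20$), so a bijection that respects your "leg-usage data" position by position does not exist, and the bijection you are gesturing at must permute more aggressively than the sketch suggests. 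The contrapositive argument you offer as a fallback is too vague to fill this hole.

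The paper avoids the whole issue by induction on $n = a+b+2$. It fixes the leaf $v$ at the far end of the $b$-leg (resp.\ $(b-1)$-leg), which has a unique $j$-vertex path through it for each $j \le k$ because $b \ge k$, and stratifies the $k$-deck by the size $j$ of the component containing $v$. For $j \in \{k,k-1\}$ the two decks visibly agree; for $j \le k-2$, deleting that path-component and one more vertex leaves exactly the $(k-j)$-deck of the smaller spider $T_{a,\,b-j-1}$ or $T_{a+1,\,b-j-2}$, and the hypotheses $b > a \ge k-2$ propagate so the induction closes. This conditioning-and-recursing step is the key idea your proposal is missing; once you have it, there is nothing to check at the center, and no explicit bijection is needed. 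If you want to salvage the direct-bijection approach, you would need to define the map on isomorphism classes with multiplicity (not on positioned cards), which is substantially harder to make precise than the sketch suggests.
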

\begin{proof}
Let $n=a+b+2$; both caterpillars are $n$-vertex trees.
We use induction on $n$.  When $n=3$, we have $(b,a,k)=(1,0,2)$ and the
claim holds because $T_{0,1}\cong T_{1,0}$.

For $n\ge4$, always the trees are the same when $b=a+1$, so we may assume
$b\ge a+2\ge k$.  Let $G=T_{a,b}$ and $H=T_{a+1,b-1}$, and let $v$ denote in
both graphs the leaf at the end of the path of length $b$ or $b-1$ from the
branch vertex.  For $j\le k$, in both $G$ and $H$ there is exactly one
$j$-vertex path containing $v$, because $b\ge k$.

Let $\cD_{k,j}(G)$ and $\cD_{k,j}(H)$ be the subsets of the two $k$-decks in
which the component containing $v$ has $j$ vertices, for $0\le j\le k$ ($j=0$
corresponds to $v$ being omitted).  When $j=k$, both $\cD_{k,j}(G)$ and
$\cD_{k,j}(H)$ consist of a single card that is a $k$-vertex path.  When $j<k$,
we have specified $j$ vertices to be used and one vertex to be omitted.  When
$j=k-1$, one more vertex must be picked from the remaining $n-k$ vertices, so
both $\cD_{k,j}(G)$ and $\cD_{k,j}(H)$ consist of $n-k$ cards, each a
$(k-1)$-vertex path plus an isolated vertex.

For $j\le k-2$, the cards in the two subsets correspond to induced subgraphs
with $k-j$ vertices chosen from $G'$ or $H'$, where since $b\ge k\ge j+2$
we can write $G'=T_{a,b-j-1}$ and $H'=T_{a+1,b-j-2}$.
Since $b\ge k$, we have $b-j-2\ge k-j-2$.
Hence $\min\{a,b-j-2\}\ge (k-j)-2$, so the induction hypothesis applies to
yield $\cD_{k-j}(G')=\cD_{k-j}(H')$.  Adding a $j$-vertex path to each card
yields $\cD_{k,j}(G)=\cD_{k,j}(H)$, and the union over all $j$ completes the
proof.
\end{proof}

\section{Low-diameter Caterpillars}\label{lowr}


\begin{definition}\label{basic}
Given the deck of an unknown caterpillar $G$, let $\#H$ denote the number of
induced subgraphs of $G$ isomorphic to $H$; we are given $\#H$ whenever
$\C{V(H)}\le(n+2)/2$.  Let $P_t$ denote an $t$-vertex path, and let
$\la \VEC u1t\ra$ denote a path with vertices $\VEC u1t$ in order.  Let the
spine of our unknown caterpillar $G$ with $r$ nonleaf vertices be
$\la\VEC v1r\ra$.  Let $d'(v_i)$ denote the number of leaf neighbors of $v_i$;
that is, $d'(v_i)=d(v_i)-1$ when $i\in\{1,r\}$, and otherwise
$d'(v_i)=d(v_i)-2$.  For $1\le i\le r$, let $\ib=r+1-i$; thus $v_i$ and
$v_{\ib}$ are the same distance from the nearest end of the spine.  The $i$th
\emph{level pair} is the unordered pair $\{d(v_i),d(v_{\ib})\}$ of vertex
degrees.  The motivation for the term ``level pair'' is picturing the spine as
in Figure~\ref{3maxfig} in Section~\ref{3max}.
\end{definition}

\begin{theorem}
When $r\le (n-6)/2$, every $n$-vertex caterpillar $G$ with $r$ nonleaf vertices
is reconstructible from $\cD$.
\end{theorem}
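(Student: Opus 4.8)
The plan is to reconstruct, up to reversal, the sequence $\bigl(d(v_1),\ldots,d(v_r)\bigr)$ of spine degrees --- equivalently the sequence $\bigl(d'(v_1),\ldots,d'(v_r)\bigr)$ of leaf‑counts --- by counting carefully chosen \emph{broom} cards. By the discussion preceding this theorem we may assume $r$ and the degree multiset $\{d_1,\ldots,d_r\}$ are already known from $\cD$, and if $r\le 2$ then $G$ is a star or a double star and is determined by this multiset, so assume $r\ge 3$. Write $m=\FL{(n+2)/2}$. The point of the hypothesis $r\le(n-6)/2$ is that $m-r\ge 4$; consequently, for any nonnegative integers $a_1,\ldots,a_r$ with $\SE i1r a_i\le m-r$, the broom $B(a_1,\ldots,a_r)$ obtained from the path $\la v_1,\ldots,v_r\ra$ by attaching $a_i$ pendant leaves at $v_i$ has at most $m$ vertices, so $\cD$ determines $\#B(a_1,\ldots,a_r)$.

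The engine will be a rigidity observation. When $a_1\ge1$ and $a_r\ge1$, every vertex of the spine of $B(a_1,\ldots,a_r)$ has at least two neighbors inside any induced copy of it in $G$, hence maps to a spine vertex of $G$; since that spine is an induced path on $r$ vertices and $G$ has exactly $r$ spine vertices, the copy must occupy all of $v_1,\ldots,v_r$, with its leaves chosen among the leaves of $G$ at the appropriate spine vertices. Hence
\[
\#B(a_1,\ldots,a_r)=\PE i1r\binom{d'(v_i)}{a_i}+\PE i1r\binom{d'(v_i)}{a_{\ib}},
\]
where the two products coincide and are counted once exactly when $(a_1,\ldots,a_r)$ is a palindrome, and a product vanishes once some $a_i>d'(v_i)$. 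Thus, for \emph{every} choice of $(a_1,\ldots,a_r)$ with $a_1,a_r\ge1$ and $\SE i1r a_i\le 4$, the deck hands us this symmetric function of the unknown leaf‑counts, even though we do not yet know those counts.

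I would then mine these values. Imposing $a_1=a_r=1$ leaves two free units. The bare path $P_{r+2}$ gives $P:=d'(v_1)d'(v_r)$; spending the two free units as a single ``$2$'' at position $1$ gives also $d'(v_1)+d'(v_r)$, hence the unordered pair $\{d'(v_1),d'(v_r)\}$, which fixes an orientation. Putting one free unit at position $i$ and one at position $\ib$ yields (after dividing by $P$) the level‑pair product $d'(v_i)d'(v_{\ib})$; putting both free units at a single interior position $i$ yields $\binom{d'(v_i)}2+\binom{d'(v_{\ib})}2$; together these determine every unordered level pair $\{d'(v_i),d'(v_{\ib})\}$. Finally, one unit at position $i$ and one at position $j$ (for $i<j$) yields the ``cross‑sum'' $d'(v_i)d'(v_j)+d'(v_{\ib})d'(v_{\jb})$. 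Sweeping inward one level at a time: at level $i$ the unordered pair $\{d'(v_i),d'(v_{\ib})\}$ is known, and a cross‑sum against an already‑determined \emph{unbalanced} level $j$ (one with $d'(v_j)\ne d'(v_{\jb})$) reveals which value sits at $v_i$. Every level preceding the first unbalanced level is itself balanced, hence already determined; and if every level is balanced, $G$ is a palindrome determined by its level pairs, so the orientation choice is harmless.

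The main obstacle I foresee is the tightness of the leaf budget in the extremal case $r=(n-6)/2$, where only two free slots exist: one cannot query products of three or more binomial coefficients, so the relations above must be combined precisely. Several degeneracies then require care --- a long balanced outer segment (no unbalanced earlier level available yet), an endpoint leaf‑count equal to $1$ (which annihilates a coefficient), or ties among interior leaf‑counts --- and in each one must verify that some available combination of cross‑sums and level pairs still separates $v_i$ from $v_{\ib}$, occasionally by spending the two free units differently (e.g.\ as a ``$2$'' at an endpoint rather than two ``$1$''s). Showing that a determining set of such relations always survives the degeneracies is the technical heart of the proof; when $r$ is bounded well below $(n-6)/2$ the surplus budget makes this routine.
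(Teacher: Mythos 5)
Your proposal is essentially the paper's proof recast in \emph{broom} language: the paper's $P_{r+2}$, $Y_i$, $Z_i$, and $X_{j,k}$ are exactly your brooms $B(a_1,\ldots,a_r)$ with $a_1,a_r\ge1$ and $\sum a_i\le4$, your rigidity observation and counting identity are correct, and the overall strategy --- first the unordered level pairs, then orientation via cross-products against the first unbalanced level --- matches the paper precisely. The degeneracies you defer at the end are not genuine obstacles: the paper reads off the linear quantity $d'(v_i)+d'(v_{\ib})$ directly from the broom with a single extra leaf at position $i$ (its $Y_i$), which avoids the quadratic-root ambiguity your product/sum-of-binomials pair would otherwise need to resolve, and the ``long balanced outer segment'' concern disappears once the orientation sweep starts at the least $j$ with $d(v_j)\ne d(v_{\jb})$ rather than from the outside in.
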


\begin{proof}
Longest paths have $r+2$ vertices and contain the spine.  Consider all cards
consisting of a longest path plus one more leaf.  Since we cannot distinguish
the two ends, we index a longest path in such a card as $\la\VEC u0{r+1}\ra$ so
that the neighbor $u_i$ of the extra leaf satisfies $1\le i\le \CL{r/2}$.  Let
$Y_i$ denote the resulting graph.  Note that $u_i\in\{v_i,v_\ib\}$.

We first determine the level pairs $\{d(v_i),d(v_\ib)\}$.
For $i=1$, we have $\#P_{r+2}=d'(v_1)d'(v_r)$ and
$\#Y_1=\CH{d'(v_1)}2d'(v_r)+\CH{d'(v_r)}2d'(v_1)$.
Substituting $d'(v_r)=\#P_{r+2}/d'(v_1)$ into the second equation yields
a quadratic equation, the two roots of which are $d'(v_1)$ and $d'(v_r)$.
We thus obtain the level pair $\{d(v_1),d(v_r)\}$.

Let $M=d'(v_1)d'(v_r)$; note that $M$ is the number of longest paths in $G$.
Consider $i$ with $2<i\le\FL{r/2}$.  Recall that $d'(v_i)=d(v_i)-2$ for
$i\notin\{1,r\}$, so $\#Y_i=[d'(v_i)+d'(v_\ib)]M$.
We also seek the product $d'(v_i)d'(v_\ib)$.
Let $Z_i$ be the graph with $r+4$ vertices obtained from the path
$\la \VEC u0{r+1}\ra$ with $r+2$ vertices by giving each of $u_i$ and
$u_\ib$ one leaf neighbor.  Note that $\#Z_i=d'(v_i)d'(v_\ib)M$.
Since $r+4\le (n+2)/2$, we now know the sum and product of $d'(v_i)$ and
$d'(v_\ib)$ and find the pair as the zeros of a quadratic polynomial.
We thus obtain $\{d(v_i),d(v_\ib)\}$.

If $d(v_i)=d(v_\ib)$ for $1\le i\le\FL{r/2}$, then $G$ is symmetric and the
level pairs suffice to determine $G$.  Otherwise, let $j$ be the least index
such that $d(v_j)\ne d(v_\jb)$; note that $j\le{r/2}$.  By symmetry, we may
assume $d(v_j)<d(v_\jb)$.  Let $X_{i,k}$ be the graph with $r+4$ vertices
obtained from $\VEC u0{r+1}$ by giving each of $u_i$ and $u_k$ one leaf
neighbor; thus $Z_i=X_{i,\ib}$.  With $j$ as above, we focus on $X_{j,k}$ with
$j<k\le{r/2}$.  If $j\ge2$, then
$\#X_{j,k}=[d'(v_j)d'(v_k)+d'(v_\jb)d'(v_\kb)]M$, but
$\#X_{1,k}=\CH{d'(v_1)}2d'(v_k)+\CH{d'(v_r)}2d'(v_\kb)$.

If $d'(v_k)$ and $d'(v_\kb)$ are equal, then we already know them; suppose they
differ.  Since $d'(v_j)$ and $d'(v_\jb)$ are known and unequal, we can
determine which of the pair $\{d'(v_k),d'(v_\kb)\}$ is $d'(v_k)$, because
$ac+bd$ differs from $ad+bc$ when $a\ne b$ and $c\ne d$.  Choosing $d'(v_k)$
correctly is the only way to obtain $\#X_{j,k}/M$ (or $\#X_{1,k}$ when $j=1$)
as the value.  Doing this for each $k$ completes the reconstruction (when $r$
is odd, $d(v_{(r+1)/2})$ is the remaining degree).
\end{proof}

Henceforth we may assume $r\ge(n-5)/2$.

\section{High-diameter Strategy}\label{highr}

The low-diameter case is easy because we can see the entire spine on a card.
We used longest paths to anchor the cards so that we knew (up to reflecting the
spine) the identity of any vertex of degree $3$ on the card.  In the
high-diameter case $r\ge(n-5)/2$, we will instead use high-degree vertices as
anchors.

\begin{remark}
{\it The plan.}
Knowing $r$ and the vertex degrees, we first seek the unordered degree pairs
for all pairs of vertices separated by a fixed distance $j$, aiming to do so
when $j\le (r-1)/2$.  From that, we will determine the level pairs
$\{d(v_k),d(v_\kb)\}$ for all $k$.  In some cases, we also need triples of
vertex degrees, which we will describe more precisely later.  From this
information we will determine the ordered list $(d(v_1),\ldots,d(v_r))$ to
complete the reconstruction (up to reflection).
\end{remark}

In order to obtain degree pairs and degree triples, we will apply a general
counting tool.

\begin{definition}
Given a family $\cF$ of graphs, an {\it $\cF$-subgraph} of a graph $G$ is an
induced subgraph of $G$ belonging to $\cF$.  Let $s(F,G)$ denote the number of
occurrences of $F$ as an induced subgraph in $G$.  Let $\hat s(F,G)$ be the
number of occurrences of $F$ as a maximal $\cF$-subgraph in $G$ (maximality
with respect to induced subgraphs).
\end{definition}

The special case of the next lemma for classical reconstruction ($\ell=1$) is
due to Greenwell and Hemminger~\cite{GH}.  Similar statements for general
$\ell$ appear for example in~\cite{GJST,KNWZa,KNWZt} and elsewhere.  For
completeness, we include the proof from~\cite{KNWZa} rephrased for $m$-decks;
it is slightly simpler than proofs in the literature involving inclusion chains
of subgraphs because we do not need an explicit formula for $\hat s(F,G)$.

\begin{lemma}\label{counting}
Fix an $n$-vertex graph $G$, and let $\cF$ be a family of graphs such that
every $\cF$-subgraph of $G$ is an induced subgraph of a unique maximal
$\cF$-subgraph in $G$.  If the value of $\hat s(F,G)$ is known for every
$F\in\cF$ with at least $m$ vertices, then for all $F\in \cF$ the $m$-deck of
$G$ determines $\hat s(F,G)$.
\end{lemma}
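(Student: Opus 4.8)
The plan is to prove Lemma~\ref{counting} by downward induction on the number of vertices in $F$, using the hypothesis that $\hat s(F,G)$ is known for all sufficiently large $F\in\cF$ as the base of the induction and the $m$-deck as the source of information about small subgraph counts.

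First I would observe the two ingredients the proof needs. The first is a double-counting identity: for a fixed $F\in\cF$, the number $s(F,G)$ of induced copies of $F$ in $G$ can be expressed as a sum, over all graphs $F'\in\cF$ with $\C{V(F')}\ge\C{V(F)}$, of $\hat s(F',G)$ times the number $a(F,F')$ of induced copies of $F$ inside $F'$; this is exactly where the hypothesis that every $\cF$-subgraph lies in a \emph{unique} maximal $\cF$-subgraph is used, so that each induced copy of $F$ is counted once, according to the maximal $\cF$-subgraph containing it. The coefficients $a(F,F')$ depend only on $F$ and $F'$, not on $G$, and $a(F,F)=1$. Rearranging gives
\[
\hat s(F,G)=s(F,G)-\sum_{F'} a(F,F')\,\hat s(F',G),
\]
where the sum runs over $F'\in\cF$ with strictly more vertices than $F$. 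The second ingredient is Kelly's Lemma: when $\C{V(F)}\le m$, the $m$-deck of $G$ determines $s(F,G)$, since each $m$-card contributes a known number of induced copies of $F$ and each induced copy of $F$ in $G$ is counted the same number of times across the deck.

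Then I would run the induction. If $\C{V(F)}\ge m$, then $\hat s(F,G)$ is known by hypothesis, so there is nothing to prove. Otherwise $\C{V(F)}<m$, and I assume inductively that $\hat s(F',G)$ is determined by the $m$-deck for every $F'\in\cF$ with $\C{V(F')}>\C{V(F)}$ (the induction is well-founded because $G$ is finite, so only finitely many values of $\C{V(F')}$ occur and they are bounded by $n$). By Kelly's Lemma, $s(F,G)$ is determined by the $m$-deck since $\C{V(F)}<m\le n$. Plugging both into the displayed identity shows $\hat s(F,G)$ is determined. This completes the induction and hence the proof.

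I do not expect a serious obstacle here; the lemma is essentially a bookkeeping statement. The one point that deserves care is the justification that the decomposition $s(F,G)=\sum_{F'}a(F,F')\hat s(F',G)$ is valid, i.e.\ that summing over maximal $\cF$-subgraphs partitions the set of induced copies of $F$ in $G$ — this is precisely the uniqueness condition in the hypothesis, and it also implicitly requires that $F$ itself, being an $\cF$-subgraph, is contained in \emph{some} maximal $\cF$-subgraph, which holds because $G$ is finite. A second minor point is to note that the sum in the identity is finite and involves only $F'$ with $\C{V(F)}\le\C{V(F')}\le n$, so that the induction terminates. No explicit formula for $\hat s(F,G)$ is needed, which is why this argument is slightly cleaner than the inclusion-chain versions in the literature.
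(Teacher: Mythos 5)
Your proof is correct and follows essentially the same argument as the paper: downward induction on $\C{V(F)}$ (equivalently, the paper's upward induction on $\C{V(G)}-\C{V(F)}$), the same double-counting identity $s(F,G)=\sum_{F'}s(F,F')\hat s(F',G)$ justified by the uniqueness hypothesis, and Kelly's Lemma to read off $s(F,G)$ from the $m$-deck before solving for the diagonal term. The minor points you flag (the identity requires each $\cF$-subgraph to lie in some maximal one, and the sum is finite) are correctly handled and align with what the paper implicitly assumes.
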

\begin{proof}
Let $t=\C{V(G)}-\C{V(F)}$; we use induction on $t$.  When $t\le n-m$, the value
$\hat s(F,G)$ is given in the hypothesis.  When $t> n-m$, each induced subgraph
of $G$ isomorphic to $F$ lies in a unique maximal $\cF$-subgraph $H$ of $G$.
Grouping the induced copies of $F$ according to the choices for the isomorphism
class of $H$ yields
$$s(F,G)=\sum_{H\in{\cF}} s(F,H)\hat s(H,G).$$
Since $\C{V(F)}< m$, we know $s(F,G)$ from the deck, and we know
$s(F,H)$ when $F$ and $H$ are known.  By the induction hypothesis, we know all
values of the form $\hat s(H,G)$ when $F$ is an induced subgraph of $H$ except
$\hat s(F,G)$.  Therefore, we can solve for $\hat s(F,G)$.
\end{proof}

Applying Lemma~\ref{counting} to determine all the maximal $\cF$-subgraphs
from the $m$-deck requires determining those with at least $m$ vertices.
This is often the difficult part.  We refer to the use of Lemma~\ref{counting}
to iteratively obtain the smaller maximal $\cF$-subgraphs (with multiplicity)
by discarding those contained in larger maximal $\cF$-subgraphs as the
{\bf exclusion argument}.  We then find $\hat s(F,G)$ for each $F\in\cF$ in
nonincreasing order of $\C{V(F)}$.

We will apply Lemma~\ref{counting} to several families of subgraphs.

\begin{definition}\label{brooms}
A \emph{baton} is a caterpillar obtained from a path by attaching some positive
number of leaves at both ends.  A \emph{triton} is obtained from a baton by
also appending a positive number of leaves at one internal vertex.
A \emph{$j$-baton} is a baton in which the original path has $j$ edges, and
$B_{j:a,b}$ is the $j$-baton with $j+a+b-1$ vertices having vertices of degrees
$a$ and $b$ (adjacent to the leaves and called the \emph{key vertices})
separated by distance $j$.  For a baton we require $a,b\ge2$.
A \emph{$j,j'$-triton} is a triton in which the distances from the
internal specified vertex to the other vertices where leaves have been added
are $j$ and $j'$.  Let $B_{j,j':a,b,c}$ denote the $j,j'$-triton having
vertices of degrees $a$, $b$, and $c$ (called the \emph{key vertices}) in
order as shown in Figure~\ref{tritonfig} ($a,c\ge2$); it has
$j+j'+a+b+c-3$ vertices.
\end{definition}

\begin{figure}[h]
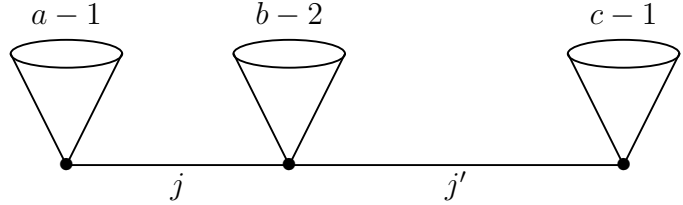

\begin{center}
\gpic{
\expandafter\ifx\csname graph\endcsname\relax
   \csname newbox\expandafter\endcsname\csname graph\endcsname
\fi
\ifx\graphtemp\undefined
  \csname newdimen\endcsname\graphtemp
\fi
\expandafter\setbox\csname graph\endcsname
 =\vtop{\vskip 0pt\hbox{%
    \graphtemp=.5ex
    \advance\graphtemp by 0.773in
    \rlap{\kern 0.292in\lower\graphtemp\hbox to 0pt{\hss $\bu$\hss}}%
    \graphtemp=.5ex
    \advance\graphtemp by 0.773in
    \rlap{\kern 1.458in\lower\graphtemp\hbox to 0pt{\hss $\bu$\hss}}%
    \graphtemp=.5ex
    \advance\graphtemp by 0.773in
    \rlap{\kern 3.208in\lower\graphtemp\hbox to 0pt{\hss $\bu$\hss}}%
    \special{pn 11}%
    \special{pa 292 773}%
    \special{pa 3208 773}%
    \special{fp}%
    \graphtemp=.5ex
    \advance\graphtemp by 0.890in
    \rlap{\kern 0.875in\lower\graphtemp\hbox to 0pt{\hss $j$\hss}}%
    \graphtemp=.5ex
    \advance\graphtemp by 0.890in
    \rlap{\kern 2.333in\lower\graphtemp\hbox to 0pt{\hss $j'$\hss}}%
    \special{ar 292 190 292 73 0 6.28319}%
    \special{ar 1458 190 292 73 0 6.28319}%
    \special{ar 3208 190 292 73 0 6.28319}%
    \special{pa 0 190}%
    \special{pa 292 773}%
    \special{fp}%
    \special{pa 292 773}%
    \special{pa 583 190}%
    \special{fp}%
    \special{pa 1167 190}%
    \special{pa 1458 773}%
    \special{fp}%
    \special{pa 1458 773}%
    \special{pa 1750 190}%
    \special{fp}%
    \special{pa 2917 190}%
    \special{pa 3208 773}%
    \special{fp}%
    \special{pa 3208 773}%
    \special{pa 3500 190}%
    \special{fp}%
    \graphtemp=.5ex
    \advance\graphtemp by 0.000in
    \rlap{\kern 0.292in\lower\graphtemp\hbox to 0pt{\hss $a-1$\hss}}%
    \graphtemp=.5ex
    \advance\graphtemp by 0.000in
    \rlap{\kern 1.458in\lower\graphtemp\hbox to 0pt{\hss $b-2$\hss}}%
    \graphtemp=.5ex
    \advance\graphtemp by 0.000in
    \rlap{\kern 3.208in\lower\graphtemp\hbox to 0pt{\hss $c-1$\hss}}%
    \hbox{\vrule depth0.890in width0pt height 0pt}%
    \kern 3.500in
  }%
}%
}
\vspace{-.5pc}
\caption{The $j,j'$-triton $B_{j,j':a,b,c}$\label{tritonfig}}
\end{center}
\end{figure}
\vspace{-1pc}

\begin{lemma}\label{jbaton}
Fix $j,j'\in\NN$, with $r$ being the number of nonleaf vertices in the unknown
caterpillar $G$.  If $\cD_m$ determines the maximal $j$-batons having at least
$m$ vertices (with multiplicity), then it determines the multiset of $r-j$
unordered degree pairs $\{d(v_i), d(v_{i+j})\}$ for $1\le i\le r-j$, without
specifying $i$ in any such pair.
Similarly, if $\cD_m$ determines the maximal $j,j'$-tritons having at
least $m$ vertices (with multiplicity), then it determines the multiset of
degree triples consisting of
$(d(v_i),d(v_{i+j}),d(v_{i+j+j'}))$ for $1\le i\le r-j-j'$ and
$(d(v_i),d(v_{i-j}),d(v_{i-j-j'}))$ for $j+j'+1\le i\le r$,
without knowing any $i$ or whether such a triple is increasing or decreasing
in subscripts.
\end{lemma}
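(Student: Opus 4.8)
The plan is to invoke Lemma~\ref{counting} twice: once with $\cF$ the family of all $j$-batons, and once with $\cF$ the family of all tritons whose two arm-lengths form the pair $\{j,j'\}$ (see Definition~\ref{brooms}). In each case the hypothesis we are granted here — that $\cD_m$ determines, with multiplicity, the maximal members of $\cF$ having at least $m$ vertices — is precisely the input Lemma~\ref{counting} requires, so the exclusion argument will hand us $\hat s(F,G)$ for every $F\in\cF$. It will then remain only to read off the asserted multiset of degree pairs (respectively triples) from these numbers.

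First I would check that $\cF=\{j\text{-batons}\}$ satisfies the hypothesis of Lemma~\ref{counting}: every $j$-baton occurrence in $G$ lies in a unique maximal $j$-baton. The structural facts needed about $G$ are: in any $j$-baton subgraph $W$, the two key vertices have degree at least $2$ (since $a,b\ge2$ for a baton), hence are nonleaf vertices of $G$, so they are spine vertices $v_i$ and $v_{i+j}$ for some $i$, and the path of the baton runs along the spine; in particular $1\le i\le r-j$ (and if $j\ge r$ there are no $j$-batons at all, so the asserted multiset is empty). Moreover, among $j$-batons containing $W$ exactly one is maximal: it consists of the spine path $\la v_i,\dots,v_{i+j}\ra$, all leaf neighbors in $G$ of $v_i$ and of $v_{i+j}$, and in addition $v_{i-1}$ when $i>1$ and $v_{i+j+1}$ when $i+j<r$, attached as pendant vertices; adding anything else would lengthen the baton path or create a second interior leaf bundle, yielding a baton with a different arm-length or a triton rather than a $j$-baton. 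A short degree count shows that in this maximal baton the key vertices have degrees exactly $d(v_i)$ and $d(v_{i+j})$: the pendant $v_{i-1}$ supplies the extra unit that turns $d'(v_i)+1$ into $d(v_i)$ when $v_i$ is interior to the spine, and nothing extra is needed when $v_i$ is an end of the spine. Hence this maximal baton is $B_{j:d(v_i),d(v_{i+j})}$, and the same reasoning shows that $W$ determines its key vertices and that every maximal baton containing $W$ has the same ones, so the containment is unique.

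Consequently Lemma~\ref{counting} gives $\hat s(B_{j:a,b},G)$ for all $a,b$. By the previous paragraph the maximal $j$-batons of $G$ are exactly the $r-j$ subgraphs indexed by $i\in\{1,\dots,r-j\}$, with the $i$th one isomorphic to $B_{j:d(v_i),d(v_{i+j})}$, and distinct $i$ give distinct occurrences since their key-vertex pairs differ. Therefore $\hat s(B_{j:a,b},G)$ is the number of indices $i$ with $\{d(v_i),d(v_{i+j})\}=\{a,b\}$, so knowing all these numbers is the same as knowing the multiset $\{\,\{d(v_i),d(v_{i+j})\}:1\le i\le r-j\,\}$, which proves the first assertion.

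The triton case runs identically. Here one uses $a,c\ge2$ to force the two end key vertices to be spine vertices and $b\ge3$ to single out the interior key vertex as the unique interior-spine vertex of the triton carrying a leaf bundle; as before, a triton occurrence $W$ is contained in a unique maximal triton, obtained by absorbing all leaf neighbors of the three key vertices together with the two neighboring spine vertices as pendants, so the maximal triton with key vertices $v_p,v_q,v_s$ (where $\{q-p,\,s-q\}=\{j,j'\}$) is $B_{q-p,\,s-q:\,d(v_p),d(v_q),d(v_s)}$. Lemma~\ref{counting} then yields $\hat s(B_{j,j':a,b,c},G)$ for all $a,b,c$; using $B_{j,j':a,b,c}\cong B_{j',j:c,b,a}$ and reindexing those maximal tritons whose middle key vertex sits at distance $j'$ from the first key vertex (by the position of the last key vertex), this count equals the number of $i$ with $1\le i\le r-j-j'$ and $(d(v_i),d(v_{i+j}),d(v_{i+j+j'}))=(a,b,c)$ plus the number of $i$ with $j+j'+1\le i\le r$ and $(d(v_i),d(v_{i-j}),d(v_{i-j-j'}))=(a,b,c)$, which is exactly the multiset named in the statement. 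The only real work is this first verification — pinning down the maximal $\cF$-subgraphs and the uniqueness of containment — and within it the observation that a maximal baton or triton swallows its neighboring spine vertices as pendants, which is what makes the key-vertex degrees come out to $d(v_\cdot)$ rather than $d'(v_\cdot)+1$; once that is in hand, the rest is bookkeeping.
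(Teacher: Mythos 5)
Your proof is correct and takes essentially the same route as the paper's: apply Lemma~\ref{counting} with $\cF$ the family of $j$-batons (respectively $j,j'$-tritons), verify that every such induced subgraph lies in a unique maximal one, identify the maximal $j$-baton with key vertices $v_i,v_{i+j}$ as the induced subgraph on $\{v_{i-1},\dots,v_{i+j+1}\}$ together with the leaf neighbors of the two key vertices (so the key-vertex degrees are exactly $d(v_i)$ and $d(v_{i+j})$), and then read the degree pairs (respectively triples) off the multiset of maximal batons (tritons). You just spell out in greater detail the structural observations — that the handle of a baton or triton must lie along the spine of $G$, and that absorbing the adjacent spine vertices $v_{i-1}$ and $v_{i+j+1}$ as pendants brings the key-vertex degrees up to the full $d(v_i)$ and $d(v_{i+j})$ rather than $d'(v_i)+1$ — which the paper compresses into two sentences. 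The reindexing bookkeeping for the two orientations of the triton triples matches the paper's remark about $2(r-j-j')$ maximal $j,j'$-tritons when $j\ne j'$ and the doubled listing when $j=j'$.
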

\begin{proof}
In a caterpillar, every $j$-baton occurs in a unique maximal $j$-baton.
Thus we can apply Lemma~\ref{counting} when $\cF$ is the family of $j$-batons.
The graph induced by $\VEC vi{i+j}$ and the other neighbors of $v_i$ and
$v_{i+j}$ in $G$ is a maximal $j$-baton in $G$, and these are all
the maximal $j$-batons.  Given that we know $\hat s(F,G)$ for every $j$-baton
$F\in\cF$ with at least $m$ vertices, Lemma~\ref{counting} implies that
$\cD_m$ determines the multiset of maximal $j$-batons.  
Each maximal $j$-baton gives one of the desired degree pairs.

The same argument holds for $j,j'$-tritons, applying Lemma~\ref{counting}
if we know the maximal ones having at least $m$ vertices.  Note that when
$j\ne j'$ there are $2(r-j-j')$ maximal $j,j'$-tritons, but when $j=j'$ there
are only $r-j-j'$ corresponding subgraphs and we listed them twice, each triple
and its reverse.
\end{proof}

The idea of the next two lemmas, which are quite similar, is due to
Hunter~\cite{Hun}.

\begin{lemma}\label{levelpair}
Fix a value $q$ with $q\le(r-1)/2$.  If for each $j$ with $j\le q$ the multiset
of degree pairs $\{d(v_i),d(v_{i'})\}$ with $|i'-i|=j$ is known,
then for $1\le k\le q$ the level pair $\{d(v_k),d(v_\kb)\}$ is known.
If $q=\FL{(r-1)/2}$, then the remaining degrees provide the remaining level.
\end{lemma}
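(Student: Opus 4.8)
The plan is to recover, for each level $k\le q$, the two symmetric functions $d(v_k)+d(v_{\kb})$ and $d(v_k)^2+d(v_{\kb})^2$ from first and second power sums of the given degree-pair multisets, and then to solve a quadratic. Recall that the multiset $\{\VEC d1r\}$ of all vertex degrees is known from $\cD$, so the numbers $\sigma_1=\SE i1r d(v_i)$ and $\sigma_2=\SE i1r d(v_i)^2$ are known. For $1\le j\le q$, let $S_j$ be the given multiset of degree pairs at distance $j$, and set
\[
\Sigma_j=\sum_{\{a,b\}\in S_j}(a+b),\qquad T_j=\sum_{\{a,b\}\in S_j}(a^2+b^2),
\]
the sums taken with multiplicity; both are computable from $S_j$. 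Also put $\Sigma_0=2\sigma_1$ and $T_0=2\sigma_2$.

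The key point is a counting identity. Since $S_j=\{\{d(v_i),d(v_{i+j})\}\st 1\le i\le r-j\}$, each degree $d(v_t)$ occurs in $\Sigma_j$ once for every $i\in\{1,\dots,r-j\}$ with $t\in\{i,i+j\}$; thus it is counted twice when $j+1\le t\le r-j$ and once otherwise. The hypothesis $j\le q\le (r-1)/2$ is exactly what guarantees that the two ``boundary'' index blocks $\{1,\dots,j\}$ and $\{r-j+1,\dots,r\}$ are disjoint, and together they are precisely $\{v_k\st 1\le k\le j\}\cup\{v_{\kb}\st 1\le k\le j\}$. Hence
\[
\Sigma_j=2\sigma_1-\SE k1j\bigl(d(v_k)+d(v_{\kb})\bigr),\qquad
T_j=2\sigma_2-\SE k1j\bigl(d(v_k)^2+d(v_{\kb})^2\bigr),
\]
which also holds trivially for $j=0$. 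Telescoping, for $1\le k\le q$ we obtain
\[
d(v_k)+d(v_{\kb})=\Sigma_{k-1}-\Sigma_k,\qquad
d(v_k)^2+d(v_{\kb})^2=T_{k-1}-T_k,
\]
with known right-hand sides. Then the product $d(v_k)d(v_{\kb})=\tfrac12\bigl[(\Sigma_{k-1}-\Sigma_k)^2-(T_{k-1}-T_k)\bigr]$ is known, and two positive integers with known sum and known product form a known unordered pair (the roots of the corresponding monic quadratic). This determines the level pair $\{d(v_k),d(v_{\kb})\}$ for every $k\le q$.

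For the last sentence, suppose $q=\FL{(r-1)/2}$. If $r$ is odd, the level pairs for $k=1,\dots,(r-1)/2$ have been found, and deleting their degrees (with multiplicity) from the known degree multiset leaves the single value $d(v_{(r+1)/2})$, the central level. If $r$ is even, the level pairs for $k=1,\dots,r/2-1$ have been found, and the same deletion leaves $\{d(v_{r/2}),d(v_{r/2+1})\}$, which is exactly the level-$\tfrac r2$ pair. Either way all level pairs are recovered. I expect no real obstacle here: the only thing to check carefully is that $q\le(r-1)/2$ makes the two boundary index blocks disjoint, so that the counting identity holds in the stated form; everything else is a routine manipulation of the first two power sums.
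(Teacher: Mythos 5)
Your proof is correct and rests on the same key observation as the paper's: in the multiset of degree pairs at distance $j$ (with $j\le(r-1)/2$), each interior spine vertex contributes its degree twice and each of the $2j$ boundary vertices once, so the "defect" relative to the doubled degree multiset is exactly the boundary degrees. The paper extracts the level pair $\{d(v_k),d(v_\kb)\}$ by iterative multiset deletion (peeling off the previously found pairs), whereas you telescope the first and second power sums and then solve a quadratic; both are routine consequences of the same counting identity, and your version has the minor advantage of giving each level pair directly without iterating, at the small cost of detouring through $T_j$ and the product formula rather than reading the pair off as a multiset.
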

\begin{proof}
We apply the same argument iteratively for $k$ from $1$ to $q$.
Suppose that the pairs $\{d(v_i),d(v_\ib)\}$ are known for $1\le i<k$; the
assumption is vacuous when $k=1$.

When $k<i<\ov k$, the nonleaf vertex $v_i$ is a key vertex in two maximal
$k$-batons, but for $i$ outside that range $v_i$ is a key vertex in only
one maximal $k$-baton.  (When $i=k$ and $d(v_1)=2$, the maximal baton
containing $\la \VEC v0i\ra$ and all neighbors of $v_i$ and $v_0$ is a
$(k-1)$-baton but not a $k$-baton.)

Consider the multiset of nonleaf vertex degrees, with the degree of each
nonleaf vertex written twice.  For each maximal $k$-baton $B$, delete from this
multiset two entries that are the degrees of the key vertices in $B$.  Over the
entire process we delete $d(v_i)$ twice if $k<i<\ov k$ and once if $i$ is
outside that range.  From the remaining multiset, delete one copy of each entry
in $\{d(v_i),d(v_\ib)\}$ for $1\le i<k$, determined earlier.  This leaves only
$d(v_k)$ and $d(v_\kb)$, each listed twice and deleted once.

When $q=\FL{(r-1)/2}$ and we have done this for $k\le q$, only one or two
values (depending on the parity of $r$) remain in the multiset.  They are
$d(v_{(r+1)/2})$ or $\{d(v_{r/2}),d(v_{r/2+1})\}$.
\end{proof}

\begin{lemma}\label{triples}
Fix values $s$ and $q$ with $s<q\le(r-1)/2$.  If for each $j$ with $j\le s$ the
multiset of maximal $j$-batons is known, and for each pair $(j,j')$ with
$j+j'\le q$ the multiset of maximal $j,j'$-tritons is known, then
for all $k$ and $s$ with $2\le k+s\le q$ the following set of two ordered
degree pairs is known:
$$\{(d(v_k),d(v_{k+s})),(d(v_\kb),d(v_{\ov{k+s}}))\}.$$
\end{lemma}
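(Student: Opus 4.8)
The plan is to imitate the counting in the proof of Lemma~\ref{levelpair}, now using maximal tritons in place of maximal batons to supply the missing \emph{orientation}, and to build the two-element sets $\Pi_k:=\{(d(v_k),d(v_{k+s})),(d(v_\kb),d(v_{\ov{k+s}}))\}$ up one level at a time.

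First I would record what the hypotheses give through Lemma~\ref{jbaton}: for each $j\le s$, the multiset of unordered pairs $\{d(v_i),d(v_{i+j})\}$, and for each $(j,j')$ with $j+j'\le q$, the multiset of degree triples of the maximal $j,j'$-tritons. When $j\ne j'$ the triton $B_{j,j':a,b,c}$ remembers which of its two outer key vertices lies at distance $j$ from the branch vertex, so in that case we know each triple $(d(v_i),d(v_{i+j}),d(v_{i+j+j'}))$ up only to the global reflection $i\mapsto\ib$; when $j=j'$ we know it only up to reversal. Taking outer pairs of triton triples recovers the unordered gap-$j$ pairs for every $j\le q$ (the only gap-$j$ pairs not seen this way span a maximal run of degree-$2$ spine vertices, and those can be read off directly---precisely the bookkeeping already present in Lemma~\ref{levelpair}). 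Hence by Lemma~\ref{levelpair} I may assume that the level pairs $\{d(v_k),d(v_\kb)\}$ are known for all $k\le q$.

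I would then determine $\Pi_k$ by induction on $k$, for $1\le k\le q-s$ (the range forced by $2\le k+s\le q$); and, if $s\ge 2$, I would first run the argument with $s$ replaced by $1$ (legitimate since the hypotheses for gap $1$ are weaker), so that the oriented gap-$1$ pairs are in hand. If the level pair at $k$ is a repeated value, or the level pair at $k+s$ is, or the two unordered gap-$s$ pairs that occur at the mirror positions $k$ and $\ov{k+s}$ coincide, then $\Pi_k$ is already forced by the level pairs; so assume none of these holds. For $k=1$, the vertices $v_1$ and $v_r$ are the two spine endpoints, and $\Pi_1$ can be read from the $s$-batons and level pairs using this special structure (base case). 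For $k\ge 2$, the orientation step is to attach a third, lower-level, reference vertex: look at the maximal $1,s$-triton on the spine segment $\la v_{k-1},\ldots,v_k,\ldots,v_{k+s}\ra$; since $s<q$ we have $1+s\le q$, so these tritons are among those we control. Such a triton exhibits its branch vertex $v_k$ together with the degree $d(v_{k-1})$ on the distance-$1$ side and the degree $d(v_{k+s})$ on the distance-$s$ side. We already know the oriented gap-$1$ pair $(d(v_{k-1}),d(v_k))$ at level $k-1$ (by the inductive hypothesis when $s=1$, and from the $s=1$ case when $s\ge2$), so among the triton triples we can pick out those whose ``distance-$1$ part'' matches it; the distance-$s$ coordinates of the selected triples are then the candidates for $d(v_{k+s})$ and its mirror $d(v_{\ov{k+s}})$. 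Reconciling these candidates with the known unordered gap-$s$ pair at level $k$, and subtracting off the contributions already attributed to lower levels and to degree-$2$ spine vertices---a multiplicity count of exactly the type in the proof of Lemma~\ref{levelpair}, where each spine vertex is ``used up'' a predictable number of times---isolates $\Pi_k$. Once $\FL{(r-1)/2}$ levels have been processed, the remaining degrees give the central level.

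The step I expect to be the main obstacle is this last reconciliation. Since we see only \emph{multisets} of unlabelled subgraphs, a given pair or triple of degrees typically occurs at many unrelated positions, so ``identify the correct triton'' must be replaced by an exact count; and the two spine endpoints and the maximal runs of degree-$2$ spine vertices---which never appear as key vertices of a baton or triton---have to be tracked by hand, both here and in setting up the level pairs. The hypothesis $s<q$ is exactly what makes the auxiliary $1,s$-triton fit inside $q$ consecutive spine vertices, and the whole argument is the natural analogue of the delicate bookkeeping already carried out for Lemma~\ref{levelpair}.
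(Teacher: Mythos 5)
Your plan differs from the paper's in a way that creates real gaps: you fix the triton shape at $1,s$ (bootstrapping through the $s=1$ case), whereas the paper's proof uses $k,s$-tritons at step $k$, with the triton length tracking the level being resolved. First, your base case is not justified: you claim $\Pi_1=\{(d(v_1),d(v_{1+s})),(d(v_r),d(v_{r-s}))\}$ ``can be read from the $s$-batons and level pairs,'' but the multiset of gap-$s$ unordered pairs together with the level pairs at $1$ and $1+s$ does not in general determine which member of one level pair is matched to which member of the other; even the paper's $k=1$ step needs the $1,s$-tritons and a double-count cancellation, not just batons. Second, and more centrally, the step you defer as your ``main obstacle'' --- reconciling candidates by ``a multiplicity count of exactly the type in Lemma~\ref{levelpair}'' --- is exactly where the paper's choice of $k,s$-tritons is load-bearing. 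The cancellation works because for $k<i<\kb-s$ the maximal $s$-baton with key vertices $v_i,v_{i+s}$ lies in exactly two maximal $k,s$-tritons (branch at $v_i$ or at $v_{i+s}$, reaching back distance $k$), so after recording each $s$-baton's key-degree pair in both orders and deleting the distance-$s$ ordered pair contributed by each $k,s$-triton, exactly the pairs at levels $i\le k$ and their mirrors survive; removing the pairs already known for $i<k$ leaves $\Pi_k$.

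The $1,s$-tritons do not have this property for $k\ge 2$: the corresponding cancellation removes everything except $\Pi_1$. And your substitute idea --- select the triton triples whose distance-$1$ coordinates match the ``oriented gap-$1$ pair'' at level $k-1$, then read off the distance-$s$ coordinate --- does not go through as stated, because (i) the inductive hypothesis for $s=1$ gives you only the \emph{set} $\Pi_{k-1}^{(1)}$, not an oriented pair, and (ii) many unrelated positions on the spine can share the same two degree coordinates, so matching by degree is not matching by position, and the exact count you would need is neither supplied nor supported by the $1,s$-triton structure. The fix is to use $k,s$-tritons, which is the paper's argument.
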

\begin{proof}
The argument is similar to that of Lemma~\ref{levelpair}.  Fixing $s$, we
obtain the pair for $k$ iteratively, beginning with $k=1$.  Suppose that the
pairs $\{(d(v_i),d(v_{i+s})),(d(v_\ib),d(v_{\ov{i+s}}))\}$
are known for $1\le i<k$; the assumption is vacuous when $k=1$.

For each maximal $s$-baton $B$, add to a multiset $L$ two ordered pairs,
consisting of the degrees of the key vertices in $B$ written in both orders
(the two pairs may be the same).  These pairs have the form
$(d(v_i),d(v_{i+s}))$ or $(d(v_i),d(v_{i-s}))$.  We aim to cancel most of $L$,
leaving only the ordered pairs $(d(v_k),d(v_{k+s}))$ and
$(d(v_\kb),d(v_{\ov{k+s}}))$.  When $k<i<\kb-s$, the maximal $s$-baton
containing $\{v_i,v_{i+s}\}$ occurs in two maximal $k,s$-tritons,
with key vertices $(v_{i-k},v_i,v_{i+s})$ and $(v_{i+s+k},v_{i+s},v_i)$.
For each maximal $k,s$-triton $B$, delete from $L$ the two ordered degree pairs
at distance $s$ that arise from $B$.
(When $s=k$, we get four ordered pairs from $B$ to delete.)

The deletions leave in $L$ one copy each of the ordered pairs $(v_i,v_{i+s})$
with $i\le k$ and $(v_{i+s},v_i)$ with $i+s\ge\kb$.  We have previously
determined those with $i<k$ and $i+s>\kb$ (not knowing which come from which
end).  Deleting them leaves us with the two ordered pairs $(d(v_k),d(v_{k+s}))$
and $(d(v_\kb),d(v_{\ov{k+s}}))$, not knowing which is which.
\end{proof}

Say that the \emph{length} of $B_{j:a,b}$ is $j$,
and the \emph{length} of $B_{j,j':a,b,c}$ is $j+j'$.

\begin{theorem}\label{suffic}
Fix $q$ with $1\le q\le (r-1)/2$.  If the maximal batons and tritons in $G$
that have length at most $q$ and at least $\FL{(n+2)/2}$ vertices are known,
then the lists $(d(v_1),\ldots,d(v_q))$ and
$(d(v_{\ov1}),\ldots,d(v_{\qb}))$ (with specified subscripts) are known.
\end{theorem}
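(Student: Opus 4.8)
The plan is to bootstrap from the previous three lemmas, using Lemma~\ref{jbaton} to pass from maximal batons and tritons to degree pairs and degree triples, then Lemma~\ref{levelpair} to get level pairs, then Lemma~\ref{triples} to get the two-element sets of ordered degree pairs, and finally to stitch these ordered pairs together into the actual lists. The hypothesis is exactly that we know the maximal batons and tritons of length at most $q$ that have at least $\FL{(n+2)/2}$ vertices; by Kelly's Lemma we also know the full deck $\cD$, and hence we know every maximal baton or triton with \emph{fewer} than $\FL{(n+2)/2}$ vertices directly as a card. So for each $j\le q$ and each pair $(j,j')$ with $j+j'\le q$, the exclusion argument of Lemma~\ref{counting} applies and we know the complete multiset of maximal $j$-batons and maximal $j,j'$-tritons in $G$.

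First I would invoke Lemma~\ref{jbaton}: knowing the maximal $j$-batons for all $j\le q$ gives the multiset of unordered degree pairs $\{d(v_i),d(v_{i+j})\}$ at each distance $j\le q$, and knowing the maximal $j,j'$-tritons gives the corresponding multiset of degree triples. Then Lemma~\ref{levelpair} (with the value $q$ of the theorem, which satisfies $q\le(r-1)/2$) yields the level pairs $\{d(v_k),d(v_\kb)\}$ for $1\le k\le q$. Then, applying Lemma~\ref{triples} with $s$ ranging over $1,\ldots,q-1$, for every pair $(k,s)$ with $2\le k+s\le q$ we obtain the unordered pair of ordered pairs $\{(d(v_k),d(v_{k+s})),(d(v_\kb),d(v_{\ov{k+s}}))\}$; taking $k=1$ in particular gives, for each $s$ with $s\le q-1$, the pair $\{(d(v_1),d(v_{1+s})),(d(v_{\ov1}),d(v_{\ov{1+s}}))\}$.

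The remaining step is the assembly. Fix one of the two spine ends and call it end $1$; this is the choice of orientation that we are allowed to make (the theorem only asks for the lists up to reflection). I would argue by induction on the index: suppose $(d(v_1),\ldots,d(v_{t}))$ and $(d(v_{\ov1}),\ldots,d(v_{\tb}))$ are both known with correct subscripts, for some $t<q$. From Lemma~\ref{triples} with $k=1$ and $s=t$ (legal since $1+t\le q$), we know the two-element set $\{(d(v_1),d(v_{1+t})),(d(v_{\ov1}),d(v_{\ov{1+t}}))\}$. Since $d(v_1)$ and $d(v_{\ov1})$ are already known, the first coordinate of each ordered pair tells us which pair belongs to which end \emph{unless} $d(v_1)=d(v_{\ov1})$. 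If $d(v_1)\ne d(v_{\ov1})$ we immediately read off $d(v_{1+t})$ and $d(v_{\ov{1+t}})$ with correct subscripts and the induction continues. If $d(v_1)=d(v_{\ov1})$, I would walk inward along the already-determined prefixes until finding the least index $p\le t$ with $d(v_p)\ne d(v_\pb)$ (if no such $p$ exists, the two prefixes are identical and the caterpillar is symmetric on $\{v_1,\ldots,v_t,v_\tb,\ldots,v_{\ov1}\}$, so either labelling of the next pair is consistent with an automorphism and we may choose arbitrarily); using Lemma~\ref{triples} with $k=p$ and the appropriate $s$ so that $p+s=t+1$ (legal because $p+s\le t+1\le q$), the known unequal values $d(v_p)\ne d(v_\pb)$ in the first coordinates disambiguate which ordered pair extends which end, exactly as in the non-symmetric matching argument used in Section~\ref{lowr}. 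This pins down $d(v_{t+1})$ and $d(v_{\ov{t+1}})$ with their subscripts, completing the induction up to $t=q$.

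The main obstacle is precisely this disambiguation when a prefix of the degree sequence is palindromic: Lemma~\ref{triples} only delivers the ordered pairs as an unordered pair, so without some asymmetry already visible we cannot canonically assign them to the two ends. The resolution is the standard one — either the prefixes have already diverged, in which case the earliest point of divergence breaks the tie, or they have not, in which case there is a genuine partial automorphism and the choice is immaterial; one must be a little careful that the index arithmetic $p+s=t+1\le q$ always keeps us within the range where Lemma~\ref{triples} is available, which it does because we only ever extend one index at a time and stop at $q$. Everything else is a routine unwinding of the three preceding lemmas together with Kelly's Lemma to supply the small maximal batons and tritons for free.
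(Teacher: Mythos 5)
Your proposal is correct and follows essentially the same route as the paper: invoke Lemmas~\ref{jbaton}, \ref{levelpair}, and~\ref{triples} to obtain the level pairs and the unordered pairs of ordered degree pairs, then anchor the assignment on the least index where the level pair has distinct degrees. The only cosmetic difference is that you phrase the assembly as a one-step-at-a-time induction, re-locating the disambiguating index $p$ at each step, whereas the paper simply fixes $h=\min\{k : d(v_k)\ne d(v_\kb)\}$ once, fixes the orientation by choosing $d(v_h)<d(v_\hb)$, and reads off $d(v_{h+s})$ and $d(v_{\ov{h+s}})$ for all $s\le q-h$ directly from the Lemma~\ref{triples} data; these are logically equivalent.
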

\begin{proof}
By Lemmas~\ref{jbaton}, \ref{levelpair}, and~\ref{triples},
we know the degree pairs $\{d(v_k),d(v_\kb)\}$ for $1\le k\le q$ and the
sets of two ordered pairs $\{(d(v_k),d(v_{k+s})),(d(v_\kb),d(v_{\ov{k+s}}))\}$
for $k+s\le q$.

If $d(v_k)=d(v_\kb)$ for all $k$ at most $q$, then there is nothing to prove.
Otherwise, from the level pairs $\{d(v_k),d(v_\kb)\}$ we know
the least index $h$ such that $d(v_h)\ne d(v_\hb)$, and it satisfies $h\le q$.
We may assume $d(v_h)<d(v_\hb)$ by symmetry.
Since we know the unordered pairs of \emph{ordered} pairs of the form
$(d(v_h),d(v_{h+s}))$ and $(d(v_\hb),d(v_{\ov{h+s}}))$ for $h+s\le q$ and
$d(v_h)\ne d(v_{\hb})$, we also know $d(v_{h+s})$ and $d(v_{\ov{h+s}})$.
For the levels below $h$, the two choices are the same.
\end{proof}

\vspace{-.5pc}
The difficulty in applying Theorem~\ref{suffic} is finding the maximal batons
and tritons having at least $\FL{(n+2)/2}$ vertices and length up to $(r-1)/2$.
If the hypotheses of Theorem~\ref{suffic} are known with $q=\CL{r/2}$, then
the caterpillar is known.  If we only know this for some smaller $q$, then we
can still determine the list of degrees if we know the middle part of the list.

\begin{lemma}\label{stitch}
Fix an integer $q$ with $q\le(r+1)/2$.  Suppose that the maximal batons with
length less than $q$ and the lists of degrees $(d(v_1),\ldots,d(v_{q-1}))$ and
$(d(v_{\ov1}),\ldots,d(v_{\ov{q-1}}))$ (with known subscripts) are determined
by the deck.  If also the ``middle portion'' $(d(v_{q}),\ldots,d(v_\qb))$ is
known up to reversal, then the unknown caterpillar is determined.
\end{lemma}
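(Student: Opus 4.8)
The plan is to observe that the listed data pins the spine-degree list $(d(v_1),\ldots,d(v_r))$ down to just two possibilities, and then to use the maximal batons of length less than $q$ to choose between them. Write $A$ for the list consisting of the known prefix $(d(v_1),\ldots,d(v_{q-1}))$, then the middle block $(d(v_q),\ldots,d(v_\qb))$, then the known suffix $(d(v_{\qb+1}),\ldots,d(v_r))$ --- which is the reverse of the given list $(d(v_{\ov1}),\ldots,d(v_{\ov{q-1}}))$ --- and write $B$ for the same list with the middle block reversed. These three blocks partition the spine, so the caterpillar has spine-degree list $A$ or $B$, and since a caterpillar is determined by that list up to reversal we are done unless $A$ and $B$ are distinct and not reverses of each other. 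If the middle block is a palindrome then $A=B$; if $d(v_i)=d(v_{\ov i})$ for all $i\le q-1$ (the prefix is the reverse of the suffix) then $B$ is the reverse of $A$. So we may assume neither holds; then there is a least index $h\le q-1$ with $d(v_h)\ne d(v_{\ov h})$, both of these values are known, and after possibly reversing the whole caterpillar (which only swaps $A$ and $B$) we may assume $d(v_h)<d(v_{\ov h})$. Let $t_0$ be the least offset at which the two orderings of the middle block differ; then fixing the single value $d(v_{q+t_0})$ decides between $A$ and $B$.

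To decide $d(v_{q+t_0})$ I would use, for each $j\le q-1$, the multiset $\mathcal M_j$ of unordered degree pairs $\{d(v_i),d(v_{i+j})\}$, which the maximal batons of length less than $q$ provide via Lemma~\ref{jbaton}. Classify each pair by where its endpoints lie: pairs with both endpoints in a fixed end are known; pairs with both endpoints in the middle block contribute a sub-multiset of $\mathcal M_j$ that is unchanged under reversing the middle block, hence is determined. Cancelling all of these from $\mathcal M_j$ leaves a multiset each of whose pairs joins a known end-value to a middle value at an offset that is the same for $A$ and $B$, with only the two candidate values interchanged between the candidates. Because the prefix is known exactly and $d(v_h)\ne d(v_{\ov h})$, the value $d(v_h)$ serves as an asymmetric anchor: moving inward from each end and using the surviving pairs for $j=1,2,\ldots$, we determine the middle values $d(v_q),d(v_{q+1}),\ldots$ one at a time (the initial palindromic segment of the middle needs no work, since those values agree in $A$ and $B$), and at offset $t_0$ the surviving pair tied to the anchor is not invariant under interchanging the two candidate values, so it forces $d(v_{q+t_0})$.

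The main obstacle is the bookkeeping in this last step and the proof that it cannot stall. Two points need care. First, the ``ambiguous'' pairs form two windows --- one adjacent to the prefix, one adjacent to the suffix --- which may overlap, so one must follow the cancellations on both sides simultaneously and show that if, against expectation, the surviving multiset were invariant under interchanging the two candidate middle values, then all the relevant end-levels would be symmetric, contradicting the choice of $h$. Second, when $q$ is small the distance $q-h$ at which the anchor first reaches the middle can exceed $q-1$; the fix is that the initial and final palindromic parts of the middle block have already-known values, so one first shrinks the genuinely ambiguous window until the anchor (from the prefix or the suffix side) reaches it within a baton of length less than $q$ --- and, if some surviving degree pair happens to be symmetric under the interchange while the corresponding triple is not, one invokes the maximal tritons of length less than $q$, available from the same exclusion argument, in place of that pair.
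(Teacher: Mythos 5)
Your framing is the same as the paper's: the known prefix, the known suffix, and the middle-up-to-reversal leave exactly two candidate degree lists $A$ and $B$, and the multisets of unordered degree pairs coming from the short maximal batons (via Lemma~\ref{jbaton}) are the tool for choosing between them. But the core combinatorial step is not carried out; you explicitly flag it as ``the main obstacle'' with ``two points that need care'' and neither point is resolved, so as written this is a plan rather than a proof.

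Three concrete problems. First, you anchor on the \emph{least} index $h\le q-1$ with $d(v_h)\neq d(v_{\ov h})$, but the paper anchors on the \emph{largest} such index. That choice is what makes the argument close: with the maximal $h$, every level $i$ with $h<i<q$ is symmetric ($d(v_i)=d(v_{\ov i})$), so at the single distance $j=q-h$ the $U$--$W$ pairs coming from those intermediate levels contribute identically to the pair multiset under both orientations of $W$, and the only asymmetric contribution is the one involving level $h$. The count then pins down whether $\{d(v_h),d(v_q)\}$ occurs, i.e.\ whether $a=d(v_q)$ or $a=d(v_\qb)$. With the least $h$ there can be arbitrarily many asymmetric end-levels between $h$ and $q$, which is precisely the uncontrolled bookkeeping you acknowledge and do not tame. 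Second, the paper first shrinks $W$ until $d(v_q)\neq d(v_\qb)$ (your $t_0=0$), which keeps the one distance it uses below $q$. You instead probe $d(v_{q+t_0})$ directly, and the distance from your anchor to $v_{q+t_0}$ can exceed $q-1$; you note this but only gesture at a repair. Third, you invoke maximal tritons of length less than $q$ as a fallback, but the lemma's hypothesis provides only batons. This is not a cosmetic point: Lemma~\ref{3-11} invokes Lemma~\ref{stitch} from a hypothesis that supplies batons only, so a proof of the lemma that relies on tritons would not support that application.

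To repair the argument along the paper's lines: reduce until $d(v_q)\neq d(v_\qb)$ by moving symmetric outer levels of the middle into the known part, set $h$ to be the maximum index below $q$ with $d(v_h)\neq d(v_{\ov h})$, and work only with the pair multiset at the single distance $j=q-h$. The symmetry of the levels strictly between $h$ and $q$ lets you subtract a known number of $\{a,c\}$ pairs that do not come from $\{v_h,v_q\}$, and the presence or absence of the remaining $\{a,c\}$ pair decides the orientation of $W$. No tritons are needed.
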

\begin{proof}
Let $W=\{\VEC v{q}{\ov{q}}\}$, and let $U$ be the set of vertices outside $W$.
We may assume $d(v_{q})\ne d(v_\qb)$, since otherwise we can move level $q$
into $U$ and apply the argument below with $q-1$ instead of $q$.  Let
$a=\max\{d(v_q),d(v_{\qb})\}$.

If $d(v_i)=d(v_{\ib})$ for all $v_i\in U$, then both choices for ordering $W$
yield the same caterpillar.  Hence we may let $h$ be the maximum index less
than $q$ such that $d(v_h)\ne d(v_\hb)$.  Let $j=q-h$, and let
$c=\max\{d(v_h),d(v_\hb)\}$.  By symmetry, we may label the indices so that
$c=d(v_h)$.
 
Let $L$ be the multiset of unordered degree pairs of the form
$\{d(v_i),d(v_{i+j})\}$, known from the maximal batons (here $j$ is fixed).
Delete from $L$ all members that arise when both vertices are in $W$ or both in
$U$; these pairs are known.  The pairs that remain in $L$ arise from a vertex
in $W$ and a vertex in $U$ in some level $i$ with $h\le i<q$.

Let $t$ be the number of copies of $\{a,c\}$ that remain in $L$ (note that $a$
may equal $c$).  If $t=0$, then $a=d(v_\qb)$, which completes the
reconstruction.

If $t\ge1$, then we eliminate from $L$ instances of $\{a,c\}$ at distance $j$ 
that do not arise as $\{d(v_h),d(v_q)\}$.  Such an instance involves a vertex
with level between $h$ and $q$.  For $h<i<q$, we have $d(v_i)=d(v_{\ib})$, so we
get the same number of such pairs no matter which of the two orderings of
$W$ is used.  Hence we know the number of copies of $\{a,c\}$ to eliminate.

\nobreak
We then check again whether $\{a,c\}$ remains in $L$.  If it does, then
$a=d(v_q)$.  Otherwise, $a=d(v_\qb)$.  This determines the order
on $W$ and completes the reconstruction.
\end{proof}

The first application of Lemma~\ref{stitch} is a slight reduction in the length
of the maximal tritons we need to know.

\begin{corollary}\label{length}
If the maximal batons with length at most $(r-1)/2$ and the tritons with length
at most $(r-2)/2$ are determined by the deck, then the unknown caterpillar is
reconstructible from the deck.
\end{corollary}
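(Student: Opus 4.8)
The plan is to derive Corollary~\ref{length} by combining Theorem~\ref{suffic} with Lemma~\ref{stitch}, exploiting that Lemma~\ref{stitch} requires only a very short ``middle portion'' of the degree list to be known up to reversal. The point is that the tritons we are handed have length only up to $(r-2)/2$ rather than $(r-1)/2$, so Theorem~\ref{suffic} by itself recovers the degree list only out to an index slightly short of the center; but the missing piece is just two or three vertices wide, and such a short piece is pinned down up to reflection by level pairs alone.

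Concretely, I would set $q=\FL{(r-2)/2}$ — the largest triton length we are guaranteed, since baton and triton lengths are integers — and $q'=q+1$, noting that $q'\le(r+1)/2$ and that the middle portion $(d(v_{q'}),\ldots,d(v_{\ov{q'}}))$ has $r-2q$ entries with $r-2q\in\{2,3\}$. First I would recover all level pairs: by Lemma~\ref{jbaton} the maximal $j$-batons with $j\le(r-1)/2$ (all of which we are given) yield the degree pairs at every distance $j\le(r-1)/2$, so Lemma~\ref{levelpair} applied with parameter $\FL{(r-1)/2}$ produces every level pair $\{d(v_k),d(v_\kb)\}$ together with ``the remaining level'' — the degree $d(v_{(r+1)/2})$ when $r$ is odd and the unordered pair $\{d(v_{r/2}),d(v_{r/2+1})\}$ when $r$ is even. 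The two ends of the middle portion are $v_{q'}$ and $v_{\ov{q'}}$, so they form the level pair at index $q'$ (which is one of the level pairs found when $r$ is odd, and is precisely ``the remaining level'' when $r$ is even), and when the middle portion has three entries its central entry is $d(v_{(r+1)/2})$; hence the middle portion is now known up to reversal.

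Next, all maximal batons and tritons of length at most $q\le(r-1)/2$ are known (in particular those with at least $\FL{(n+2)/2}$ vertices), so Theorem~\ref{suffic} with parameter $q$ gives the lists $(d(v_1),\ldots,d(v_q))$ and $(d(v_{\ov1}),\ldots,d(v_\qb))$ with specified subscripts. Finally I would invoke Lemma~\ref{stitch} with parameter $q'=q+1$: its hypotheses hold because $q'\le(r+1)/2$, the maximal batons of length less than $q'$ (that is, of length at most $q$) are known, the two lists just obtained are exactly the required lists $(d(v_1),\ldots,d(v_{q'-1}))$ and $(d(v_{\ov1}),\ldots,d(v_{\ov{q'-1}}))$ (here $q'-1=q$), and the middle portion is known up to reversal; its conclusion is that the unknown caterpillar is determined.

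I do not expect a genuine obstacle: the work is bookkeeping. The points to check are the off-by-one relations among $q$, $q'$, $(r-1)/2$, $(r-2)/2$, and $\FL{(r-1)/2}$ in the even and odd cases, and the elementary fact that a sequence of length two or three is determined up to reversal once its unordered end-pair and (if it has three terms) its middle term are known — which is precisely the data that ``the remaining level'' of Lemma~\ref{levelpair} supplies. One also tacitly uses that in the high-diameter regime $r$ is large, so that $q=\FL{(r-2)/2}\ge1$ and Theorem~\ref{suffic} is applicable.
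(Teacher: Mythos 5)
Your proof is correct and follows essentially the same route as the paper: set a parameter near the center, apply Lemma~\ref{levelpair} to determine the two- or three-vertex middle portion up to reversal, apply Theorem~\ref{suffic} with the slightly smaller parameter to get the two outer lists with specified subscripts, and invoke Lemma~\ref{stitch} to glue them. The only difference from the paper is a shift of notation (the paper takes $q=\FL{r/2}$ and applies Theorem~\ref{suffic} with $q-1$; you take $q=\FL{(r-2)/2}$ and apply Lemma~\ref{stitch} with $q'=q+1$), which amounts to the same thing.
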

\begin{proof}
Let $q=\FL{r/2}$.  The middle portion of the spine between $v_q$ and
$v_{\qb}$ has only two vertices (when $r$ is even) or three vertices (when $r$
is odd).  By Lemma~\ref{levelpair} determining the level pairs, we thus know
$(\VEC vq{\qb})$ up to reversal.

By Theorem~\ref{suffic}, to determine the lists $(d(v_1),\ldots,d(v_{q-1}))$
and $(d(v_{\ov1}),\ldots,d(v_{\ov{q-1}}))$ we only need the maximal batons
and tritons with length at most $q-1$ that have at least $n-\ell$ vertices.
Lemma~\ref{stitch} will then complete the reconstruction.
\end{proof}

Next we further reduce the length of tritons needed to complete the
reconstruction.  Let $R$ denote the set $\{\VEC v1r\}$ of nonleaf vertices.

\begin{lem}\label{lastfew}\label{3-11}
For $n\ge48$, if the maximal batons with length at most $(r-1)/2$ are known and
the lists $(d(v_1),\ldots,d(v_{q-1}))$ and $(d(v_{r}),\ldots,d(v_{\ov{q-1}}))$
(with known subscripts) are known, where $q\ge(r-3)/2$, then the unknown
caterpillar is determined by the deck.
\end{lem}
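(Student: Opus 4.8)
The plan is to reduce this to Lemma~\ref{stitch} with $q' = q$ (or a slightly smaller value), so that the only remaining task is to determine the ``middle portion'' $(d(v_q),\ldots,d(v_\qb))$ up to reversal. Since $q \ge (r-3)/2$, this middle portion has at most six vertices: it consists of $v_q,\ldots,v_{\qb}$, whose indices span a range of $r+1-2q \le 4$, so there are at most five vertices, and by Lemma~\ref{levelpair} we already know the unordered level pairs $\{d(v_k),d(v_\kb)\}$ for all $k$ (these follow from knowing all maximal $j$-batons of length at most $(r-1)/2$). Thus the middle portion is known up to the independent choices of ordering within each of its two or three level pairs. The work is to resolve these few remaining binary choices using maximal batons alone — no tritons of large length — and then invoke Lemma~\ref{stitch}.

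First I would set $q_0$ to be the smallest index with $q_0 \ge q$ such that the middle portion $(d(v_{q_0}),\ldots,d(v_{\overline{q_0}}))$ is symmetric under reversal; if no unresolved choices remain we are immediately done by Lemma~\ref{stitch}. Otherwise, let $h < q_0$ be the largest index below the middle with $d(v_h) \ne d(v_\hb)$ (which exists, or else both orderings of the middle give the same caterpillar); by symmetry label so that $d(v_h) > d(v_\hb)$. For each $j$ with $q_0 - h \le j \le (r-1)/2$, the multiset of unordered degree pairs $\{d(v_i),d(v_{i+j})\}$ is known from the maximal $j$-batons. Deleting from this multiset all pairs both of whose endpoints lie in the already-determined ``outer'' part (indices $< q_0$ from one end, $> \overline{q_0}$ from the other) leaves exactly the pairs with at least one endpoint strictly inside the middle portion. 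Since the middle portion has at most a handful of vertices and the two endpoints $v_h, v_\hb$ have known distinct degrees, for a suitable choice of $j$ the count of pairs of the specific form $\{d(v_h), x\}$ at distance $j$ distinguishes the two orderings of the middle: in one ordering a vertex of a given degree sits at distance $j$ from $v_h$, in the other it does not, and the number of such pairs contributed by the two ordered configurations differs because we can exactly account (as in Lemma~\ref{stitch}) for all pairs not straddling the $h$-to-middle gap.

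The main obstacle I anticipate is a visibility/size issue rather than a structural one: the maximal $j$-batons that carry the distinguishing information have $j + d(v_a) + d(v_b) - 1$ vertices where $a,b$ are the key vertices, and we only know maximal batons that appear in $\cD$, i.e.\ have at most $(n+2)/2$ vertices, or that were established by earlier lemmas to have length at most $(r-1)/2$. When $r$ is close to $n/2$ — exactly the high-diameter regime where $r \ge (n-5)/2$ — a baton of length $j \le (r-1)/2$ already has roughly $n/4$ vertices before counting the leaf bushes at its ends, so if both key vertices have large degree the baton may exceed $(n+2)/2$ vertices and not appear directly in $\cD$. This is presumably why the hypothesis already grants us ``the maximal batons with length at most $(r-1)/2$ are known'' (rather than merely visible in $\cD$), and why the threshold $n \ge 48$ is invoked: one needs $n$ large enough that the few degrees $d(v_k)$ for $k$ in the narrow window $[(r-3)/2, (r+3)/2]$ cannot all be so large as to defeat every choice of $j$; a counting bound via Lemma~\ref{leaves}, equation~\eqref{d'}, controls how many of these central degrees can exceed $3$, and for $n \ge 48$ there is always a usable $j$. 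Once a distinguishing $j$ is found and the middle portion is oriented, Lemma~\ref{stitch} (applied with its $q$ equal to our $q_0$, using the maximal batons of length less than $q_0 \le (r-1)/2$, which are among those we are given) finishes the reconstruction.
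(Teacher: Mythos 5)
Your high-level plan — reduce to Lemma~\ref{stitch} by determining the middle portion up to reversal, and try to orient it using only short batons — does match the paper's structure. But the proposal treats the hard part as a side remark, and that is where the whole difficulty of the lemma lives.

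You assert that ``for a suitable choice of $j$ the count of pairs of the specific form $\{d(v_h),x\}$ at distance $j$ distinguishes the two orderings,'' but you do not justify why such a $j$ exists, and in fact this is not automatic. The paper's proof works precisely by supposing that the two candidate orderings (``Type 1'' and ``Type 2'') give equal new-baton counts for every short baton, and then showing via a careful induction on distance (the Claim involving congruences modulo $p$, where $p=\qb-q\in\{3,4\}$) that this assumption forces a rigid alternating pattern of degrees $b,b'$ all the way out along the spine. Only after propagating this pattern does one reach a contradiction — and even then, when $r\equiv2\pmod6$ the pattern can be consistent with the available batons, so the paper must appeal to a longer baton $B_{r/2:a_0+1,b_0+1}$ (bounded using~\eqref{d'}) and, in the residual case $r=8$, to an explicit count of the triton $B_{2,2:2,3,2+t}$. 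Your proposal contains none of this machinery; the claimed ``suitable $j$'' simply does not exist in those stubborn cases, and your argument would stall exactly where the paper begins to work.

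Two smaller points. First, the paper's proof does not in fact apply Lemma~\ref{stitch} after determining the middle portion — it carries out a direct argument, because the case where no short baton distinguishes the two Types is precisely the case where Lemma~\ref{stitch}'s hypotheses are not available. Second, your explanation of the role of $n\ge48$ is not correct: the paper states explicitly that this lemma only requires $n\ge27$; the $48$ appears in the statement merely as a uniform flag, and the binding constraint $n\ge48$ is used elsewhere (Lemma~\ref{Case3}).
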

\begin{proof}
We may assume $q=\FL{(r-2)/2}$.  Unknown between the lists
$(d(v_1),\ldots,d(v_{q-1})$ and $(d(v_{\ov1}),\ldots,d(v_{\ov{q-1}}))$
is the segment $(d(v_q),\ldots,d(v_{\qb}))$ in the middle consisting of four
vertex degrees (when $r$ is even) or five vertex degrees (when $r$ is odd).
Let $W=\{\VEC vq{\qb}\}$.

When we know all the short maximal batons, by Lemma~\ref{levelpair} we know the
level pairs.  Here each level pair $\{d(v_k),d(v_\kb)\}$ for $k<q$ has already
been allocated to $v_k$ and $v_{\kb}$ (and $d(v_{(r+1)/2}$ is known when $r$ is
odd).  We must allocate the remaining two level pairs.

Let $A=\{v_q,v_\qb\}$ and $B=\{v_{q+1},v_{\ov{q+1}}\}$.
Let $U$ denote the set of spine vertices outside $A\cup B$; the degrees of all
vertices in $U$ are known by hypothesis.  If equality holds for the two degrees
in the level pair from $A$ or in the level pair from $B$, then we know the
degrees of the vertices in $W$ in order, up to reversal.  In this case
Lemma~\ref{stitch} applies to complete the reconstruction.
Letting $\{a,a'\}=\{d(v)\st v\in A\}$ and $\{b,b'\}=\{d(v)\st v\in B\}$, we may
therefore assume $a\ne a'$ and $b\ne b'$.  Hence we may also assume (after
switching the names of $a$ and $a'$, if needed)
\begin{equation}\label{neq}
a\neq b'\neq b\neq a'\neq a.
\end{equation}
With this notation, the ordered pairs $(a,b)$ and $(a',b')$ are symmetric
in the argument.

\medskip
With $q=\FL{(r-2)/2}$, we have
$q=(r-3)/2$ when $r$ is odd, and $q=(r-2)/2$ when $r$ is even.
By symmetry, we may assume that the list of degrees of the middle
vertices from $v_q$ to $v_\qb$ on the spine is $a,b,c,b',a'$ (Type 1) or
$a',b,c,b',a$ (Type 2).  Here $c$ is an actual value when $r$ is odd,
but $c$ is nonexistent (and $b$ is next to $b'$) when $r$ is even.

For every short baton $H$, we know the number of copies of $H$ with both key
vertices in $U$, with both key vertices in $A$, with both key vertices in
$B$, and (when $r$ is odd) with $v_{(r+1)/2}$ as a key vertex.
We know the last category because (1) we know the level pairs and (2) the
distances from $v_{(r+1)/2}$ to the two vertices involved in a level pair are
the same.

These known copies of $H$ we call ``old''; the remaining copies are ``new''.
Under Type 1 or Type 2, the number of old copies of any short baton $H$ is
the same.  If for some short baton $H$ the number of new copies of $H$ is
different under Type 1 and Type 2, then we know the list of middle degrees
up to reversal, and Lemma~\ref{stitch} applies to complete the reconstruction.
Hence we may assume that for each short baton $H$ the number of new copies of
$H$ is the same for both types.

First consider $B_{1:a,b}$.  Type 1 has a new copy of $B_{1:a,b}$ with key
vertices $v_q$ and $v_{q+1}$.  Type 2 does not have such a baton intersecting
$B$.  Since \eqref{neq} guarantees $a',b'\notin \{a,b\}$, having a new copy of
$B_{1:a,b}$ in Type 2 requires $d(v_{\qb+1})=b$.
By the symmetry noted after~\eqref{neq}, considering new copies of
$B_{1:a',b'}$ yields $d(v_{q-1})=b'$.  Since $b'\neq b$, this fixes the
arrangement of $B$ with respect to $U$.  For the remainder of the argument, we
move the batons with both key vertices in $B\cup U$ into the ``old'' category
and restrict new batons to be precisely those having one key vertex in $A$
and the other in $B\cup U$.  At this point, up to reversal, the
central portion of degrees from $v_{q-1}$ to $v_{\qb+1}$ (with $c$ nonexistent
for even $r$) is
\begin{equation}\label{7tuple}
{b',a,b,c,b',a',b}\; \mbox{(Type 1)\quad or\quad}
{b',a',b,c,b',a,b}\; \mbox{(Type 2).}
\end{equation} 

\medskip
Let $p=\qb-q$, so $p=4$ when $r$ is odd and $p=3$ when $r$ is even.
The point is that the distance between the vertices in $A$ is $p$.
By induction on $i$ we now prove for $-1\le i\le \FL{(r-1)/2}$ the
following {\bf Claim:}
\begin{equation}\label{ieven}
d(v_{q-i})=b\; \mbox{and}\; d(v_{\qb+i})=b'\; \mbox{when $i\equiv-1\!\!\mod p$,}
\end{equation}
and
\begin{equation}\label{iodd}
d(v_{q-i})=b'\; \mbox{and}\; d(v_{\qb+i})=b\; \mbox{when $i\equiv1\!\!\mod p$.}
\end{equation}
The base cases $i=-1$ and $i=1$ hold in both cases described in~\eqref{7tuple}.
For the induction step, note that since
$i\leq (r-1)/2$, we have $q-(i-p)\ge1$, and symmetrically $\qb+i-p\le r$.
Hence the vertices $v_{q-i+p}$ and $v_{\qb+i-p}$ exist on the spine,
and the induction hypothesis applies to them.
For the details of the induction step, we consider the two congruence
classes for $i$ separately; it may help to refer to~\eqref{7tuple}.

First suppose $i\equiv-1\!\mod p$.  By~\eqref{ieven}, $d(v_{q-(i-p)})=b$.
Type 2 has a new copy of $B_{i:a,b}$ with key vertices $v_{\qb}$ and
$v_{q-(i-p)}$.  Since $a'\notin \{a,b\}$, one key vertex of
a new copy of $B_{i:a,b}$ in Type 1 must be $v_{q}$, so
its other key vertex must have degree $b$.  Among the two vertices in $R$ at
distance $i$ from $v_{q}$, the vertex $v_{\qb+i-p}$ has degree $b'$,
by~\eqref{ieven}.  Therefore, $d(v_{q-i})=b$.

Symmetrically, $d(v_{\qb+i-p})=b'$, by~\eqref{ieven}.  Thus Type 2 has a new
copy of $B_{i:a',b'}$ with key vertices $v_{q}$ and $v_{\qb+i-p}$.  One
key vertex of a new copy of $B_{i:a',b'}$ in Type 1 must be $v_{\qb}$.  Among
the two vertices in $R$ at distance $i$ from $v_{\qb}$, the vertex
$v_{q-(i-p)}$ has degree $b$, by~\eqref{ieven}.  Therefore, $d(v_{\qb+i})=b'$.
This finishes the induction step when $i\equiv-1\!\mod4$.

Now suppose $i\equiv1\!\mod p$.  We have $d(v_{q-(i-p)})=b'$, by~\eqref{iodd}.
Now Type 1 has a new copy of $B_{i:a',b'}$ with key vertices $v_{qb}$ and
$v_{q-(i-p)}$.  Since $a\notin \{a',b'\}$, one key vertex of a new copy of
$B_{i:a',b'}$ in Type 2 must be $v_{q}$.  The other key vertex has degree $b'$.
Among the two vertices in $R$ at distance $i$ from $v_{q}$, the vertex
$v_{\qb+i-p}$ has degree $b$, by~\eqref{ieven}.  Therefore, $d(v_{q-i})=b'$.

Symmetrically, $d(v_{\qb+i-p})=b$, by~\eqref{iodd}.  Thus Type 1 has a new
copy of $B_{i:a,b}$ with key vertices $v_{q}$ and $v_{\qb+i-p}$.  One key
vertex of a new copy of $B_{i:a,b}$ in Type 2 must be $v_{\qb}$.  Among the two
vertices in $R$ at distance $i$ from $v_{\qb}$, the vertex $v_{q-(i-p)}$ has
degree $b'$, by~\eqref{iodd}.  Therefore, $d(v_{\qb+i})=b$.  This completes
the proof of the claim.

\medskip
Under the assumption that in both structures Type 1 and Type 2 the number of
new copies of each short baton is the same, we have obtained for
$i$ equal to $\FL{(r-1)/2}$ or $\FL{(r-1)/2}-1$ (whichever is odd when $r$
is odd or whichever is not divisible by $3$ when $r$ is even) a vertex
$v_{q-i}$ with degree in $\{b,b'\}$.  When $r$ is odd and $q=(r-3)/2$, this
yields $q-i\le 0$, so there is no such vertex.  When $r$ is even and
$(r-2)/2$ is not divisible by $3$, again $q-i-0$ and there is no such vertex.
In these cases, the contradiction implies that somewhere along the way we
discover that only one of Type 1 and Type 2 for the central vertices can
generate the deck.

This leaves the case $3\mid (r-2)/2$ (that is, $r\equiv2\!\mod 6$),
where we must work much harder.  Because we are only guaranteed knowing the
maximal batons with length at most $(r-1)/2$, the last step we can perform in
the iteration is $i=(r-4)/2\equiv-1\!\mod3$, which yields
$(d(v_1),d(v_r))=(b,b')$.  The previous step $i=(r-6)/2\equiv1\!\mod3$
told us $(d(v_2),d(v_{r-1}))=(b',b)$.

We know all old batons of all lengths, since we know the entire caterpillar
except for the assignment of $\{a,a'\}$ to $\{v_q,v_\qb\}$.  After discarding
old batons, the only new batons with length $r/2$ are those with key vertices
$\{v_q,v_{r-1}\}$ and $\{v_\qb,v_2\}$.  In Type 1 the new maximal batons of
length $r/2$ are $B_{r/2:a,b}$ and $B_{r/2:a',b'}$.  In Type 2 we instead have
$B_{r/2:a',b}$ and $B_{r/2:a,b'}$.  It suffices to determine which of these
pairs of batons occurs.

Let $b_0=\min\{b,b'\}$ and $b_1=\max\{b,b'\}$.
Let $a_0=\min\{b,b'\}$ and $a_1=\max\{b,b'\}$.
We look for $B_{r/2:a_0+1,b_0+1}$ as a new baton.  If it occurs, then
the correct choice of the unknown caterpillar is the Type containing
$B_{r/2:a_1,b_1}$ as a new baton.  Otherwise, the caterpillar is the Type
not having $B_{r/2:a_1,b_1}$ as a new baton.

It suffices to show that $B_{r/2:a_0+1,b_0+1}$ is small enough to fit in a card.
The number of vertices in $B_{r/2:a_0+1,b_0+1}$ is $r/2+a_0+b_0+1$.
Thus we want to prove $a_0+b_0\le (n-r+1)/2$.  We have at least $(r+1)/3$
vertices each with degrees $b$ and $b'$, plus at least one vertex each with
degrees $a$ and $a'$.  From the equation
$\SE i1r(d(v_i)-2)=n-r-2$ we obtain
\begin{equation}\label{a0b0}
2(a_0-2)+2\FR{r+1}3(b_0-2)\le n-r-3-\FR{r+1}3.
\end{equation}
When either $b_0$ or $a_0$ reaches its maximum under~\eqref{a0b0}, the other
is restricted to value $2$.  Since we want to bound $a_0+b_0$, the more
dangerous extreme is when $b_0=2$ and $a_0$ is maximized.  In this
case~\eqref{a0b0} requires $a_0\le (n-r+1)/2-(r+1)/6$.  With $b_0=2$,
we now have $b_0+a_0\le(n-r+1)/2$ as long as $2-(r+1)/6\le0$, which holds 
when $r\ge11$.  This is guaranteed by $n\ge27$, since $r\ge(n-5)/2$.

Now consider the non-extreme choices for $a_0$ and $b_0$.  By~\eqref{a0b0},
every increase of $1$ in $b_0$ (starting from $b_0=2$) requires $a_0$ to
decrease by $(r+1)/3$.  That amount is an integer and is at least $1$.  Hence
all other choices of $(a_0,b_0)$ also result in $B_{r/2:a_0+1,b_0+1}$ fitting
in a card.

Since $r\equiv2\mod6$, the only case not covered is $r=8$ and $b_0=2$ (the gain
discussed above will be good enough when $b_0\ge3$).
The degree list in order along the spine in this remaining case is
$(b,b',a,b,b',a',b,b')$ (Type 1) or $(b,b',a',b,b',a,b,b')$ (Type 2).  We fail
only if $a_0=(n-r-2)/2$ (so $n$ is even), with $\{b,b'\}=\{2,3\}$ and
$\{a,a'\}=\{(n-10)/2,(n-8)/2\}$.  By symmetry, we may assume $b=2$.

The baton $B_{r/2:a_0+1,b_0+1}$ that we were seeking has $(n+4)/2$ vertices.
This is truly too big only when $(n+4)/2>n-\ell$, which is equivalent to
$n<2\ell+4$.  Since $n$ here is even, we have $n=2\ell+2$.
Also, $(n-10)/2=a_0\ge2$, so $n\ge14$.
For the remaining cases, let $(n,\ell)=(14+2t,6+t)$, where $t\ge0$.
We seek a subgraph with $8+t$ vertices whose multiplicity differs in the
decks of the Type 1 and Type 2 caterpillars.

The remaining cases are shown in Figure~\ref{r2mod6} (recall $q=3$ and $\qb=6$).
We count copies of the triton $B_{2,2:2,3,2+t}$, which has $8+t$ vertices (when
$t=0$, it consists of a $7$-vertex path plus one pendant edge at the center).
In the display below, we list the number of copies according to which vertex 
plays the role of the central key vertex.  Although $\#B_{2,2:2,3,2+t}$ depends
on $t$, in each case the Type 2 caterpillar has more copies of
$B_{2,2:2,3,2+t}$ than the Type 1 caterpillar does.
Hence the deck distinguishes the two Types.
\end{proof}
\begin{center}
\begin{tabular}{c c c c c c c c c}
 & &Type 1& & & & & Type 2 \\
 &$v_3$&$v_5$&$v_6$&  &  &$v_3$&$v_5$&$v_6$\\
$t=0$&0&1&2&~\qquad~&$t=0$&2&2&0\\
$t=1$&1&1&2& &$t=1$&2&2&1\\     
$t>1$&0&1&0& &$t>1$&0&t+1&0
\end{tabular}
\end{center}

\begin{figure}
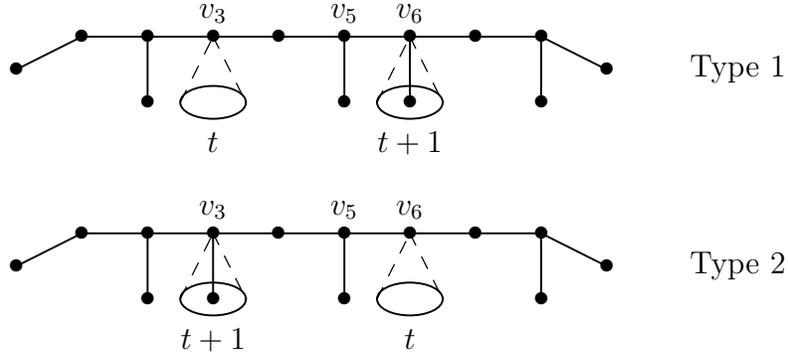

\begin{center}
\gpic{
\expandafter\ifx\csname graph\endcsname\relax
   \csname newbox\expandafter\endcsname\csname graph\endcsname
\fi
\ifx\graphtemp\undefined
  \csname newdimen\endcsname\graphtemp
\fi
\expandafter\setbox\csname graph\endcsname
 =\vtop{\vskip 0pt\hbox{%
    \graphtemp=.5ex
    \advance\graphtemp by 0.309in
    \rlap{\kern 0.086in\lower\graphtemp\hbox to 0pt{\hss $\bu$\hss}}%
    \graphtemp=.5ex
    \advance\graphtemp by 0.137in
    \rlap{\kern 0.429in\lower\graphtemp\hbox to 0pt{\hss $\bu$\hss}}%
    \graphtemp=.5ex
    \advance\graphtemp by 0.137in
    \rlap{\kern 0.773in\lower\graphtemp\hbox to 0pt{\hss $\bu$\hss}}%
    \graphtemp=.5ex
    \advance\graphtemp by 0.481in
    \rlap{\kern 0.773in\lower\graphtemp\hbox to 0pt{\hss $\bu$\hss}}%
    \graphtemp=.5ex
    \advance\graphtemp by 0.137in
    \rlap{\kern 1.116in\lower\graphtemp\hbox to 0pt{\hss $\bu$\hss}}%
    \graphtemp=.5ex
    \advance\graphtemp by 0.137in
    \rlap{\kern 1.459in\lower\graphtemp\hbox to 0pt{\hss $\bu$\hss}}%
    \special{pn 11}%
    \special{pa 86 309}%
    \special{pa 429 137}%
    \special{fp}%
    \special{pa 429 137}%
    \special{pa 1803 137}%
    \special{fp}%
    \special{pa 773 137}%
    \special{pa 773 481}%
    \special{fp}%
    \graphtemp=.5ex
    \advance\graphtemp by 0.000in
    \rlap{\kern 1.116in\lower\graphtemp\hbox to 0pt{\hss $v_3$\hss}}%
    \graphtemp=.5ex
    \advance\graphtemp by 0.000in
    \rlap{\kern 2.146in\lower\graphtemp\hbox to 0pt{\hss $v_6$\hss}}%
    \graphtemp=.5ex
    \advance\graphtemp by 0.309in
    \rlap{\kern 3.863in\lower\graphtemp\hbox to 0pt{\hss Type 1\hss}}%
    \graphtemp=.5ex
    \advance\graphtemp by 0.481in
    \rlap{\kern 1.803in\lower\graphtemp\hbox to 0pt{\hss $\bu$\hss}}%
    \graphtemp=.5ex
    \advance\graphtemp by 0.137in
    \rlap{\kern 1.803in\lower\graphtemp\hbox to 0pt{\hss $\bu$\hss}}%
    \graphtemp=.5ex
    \advance\graphtemp by 0.481in
    \rlap{\kern 2.146in\lower\graphtemp\hbox to 0pt{\hss $\bu$\hss}}%
    \graphtemp=.5ex
    \advance\graphtemp by 0.137in
    \rlap{\kern 2.146in\lower\graphtemp\hbox to 0pt{\hss $\bu$\hss}}%
    \graphtemp=.5ex
    \advance\graphtemp by 0.137in
    \rlap{\kern 2.489in\lower\graphtemp\hbox to 0pt{\hss $\bu$\hss}}%
    \graphtemp=.5ex
    \advance\graphtemp by 0.137in
    \rlap{\kern 2.833in\lower\graphtemp\hbox to 0pt{\hss $\bu$\hss}}%
    \graphtemp=.5ex
    \advance\graphtemp by 0.481in
    \rlap{\kern 2.833in\lower\graphtemp\hbox to 0pt{\hss $\bu$\hss}}%
    \graphtemp=.5ex
    \advance\graphtemp by 0.309in
    \rlap{\kern 3.176in\lower\graphtemp\hbox to 0pt{\hss $\bu$\hss}}%
    \special{pa 1803 481}%
    \special{pa 1803 137}%
    \special{fp}%
    \special{pa 1803 137}%
    \special{pa 2833 137}%
    \special{fp}%
    \special{pa 2833 137}%
    \special{pa 3176 309}%
    \special{fp}%
    \special{pa 2146 481}%
    \special{pa 2146 137}%
    \special{fp}%
    \special{pa 2833 137}%
    \special{pa 2833 481}%
    \special{fp}%
    \special{ar 1116 481 172 86 0 6.28319}%
    \special{ar 2146 481 172 86 0 6.28319}%
    \special{pn 8}%
    \special{pa 944 481}%
    \special{pa 1116 137}%
    \special{pa 1288 481}%
    \special{da 0.069}%
    \special{pa 1974 481}%
    \special{pa 2146 137}%
    \special{pa 2318 481}%
    \special{da 0.069}%
    \graphtemp=.5ex
    \advance\graphtemp by 0.000in
    \rlap{\kern 1.803in\lower\graphtemp\hbox to 0pt{\hss $v_5$\hss}}%
    \graphtemp=.5ex
    \advance\graphtemp by 0.704in
    \rlap{\kern 1.116in\lower\graphtemp\hbox to 0pt{\hss $t$\hss}}%
    \graphtemp=.5ex
    \advance\graphtemp by 0.704in
    \rlap{\kern 2.146in\lower\graphtemp\hbox to 0pt{\hss $t+1$\hss}}%
    \graphtemp=.5ex
    \advance\graphtemp by 1.339in
    \rlap{\kern 0.086in\lower\graphtemp\hbox to 0pt{\hss $\bu$\hss}}%
    \graphtemp=.5ex
    \advance\graphtemp by 1.167in
    \rlap{\kern 0.429in\lower\graphtemp\hbox to 0pt{\hss $\bu$\hss}}%
    \graphtemp=.5ex
    \advance\graphtemp by 1.167in
    \rlap{\kern 0.773in\lower\graphtemp\hbox to 0pt{\hss $\bu$\hss}}%
    \graphtemp=.5ex
    \advance\graphtemp by 1.511in
    \rlap{\kern 0.773in\lower\graphtemp\hbox to 0pt{\hss $\bu$\hss}}%
    \graphtemp=.5ex
    \advance\graphtemp by 1.167in
    \rlap{\kern 1.116in\lower\graphtemp\hbox to 0pt{\hss $\bu$\hss}}%
    \graphtemp=.5ex
    \advance\graphtemp by 1.167in
    \rlap{\kern 1.459in\lower\graphtemp\hbox to 0pt{\hss $\bu$\hss}}%
    \graphtemp=.5ex
    \advance\graphtemp by 1.511in
    \rlap{\kern 1.116in\lower\graphtemp\hbox to 0pt{\hss $\bu$\hss}}%
    \special{pn 11}%
    \special{pa 86 1339}%
    \special{pa 429 1167}%
    \special{fp}%
    \special{pa 429 1167}%
    \special{pa 1803 1167}%
    \special{fp}%
    \special{pa 773 1167}%
    \special{pa 773 1511}%
    \special{fp}%
    \special{pa 1116 1167}%
    \special{pa 1116 1511}%
    \special{fp}%
    \graphtemp=.5ex
    \advance\graphtemp by 1.030in
    \rlap{\kern 1.116in\lower\graphtemp\hbox to 0pt{\hss $v_3$\hss}}%
    \graphtemp=.5ex
    \advance\graphtemp by 1.030in
    \rlap{\kern 2.146in\lower\graphtemp\hbox to 0pt{\hss $v_6$\hss}}%
    \graphtemp=.5ex
    \advance\graphtemp by 1.339in
    \rlap{\kern 3.863in\lower\graphtemp\hbox to 0pt{\hss Type 2\hss}}%
    \graphtemp=.5ex
    \advance\graphtemp by 1.511in
    \rlap{\kern 1.803in\lower\graphtemp\hbox to 0pt{\hss $\bu$\hss}}%
    \graphtemp=.5ex
    \advance\graphtemp by 1.167in
    \rlap{\kern 1.803in\lower\graphtemp\hbox to 0pt{\hss $\bu$\hss}}%
    \graphtemp=.5ex
    \advance\graphtemp by 1.167in
    \rlap{\kern 2.146in\lower\graphtemp\hbox to 0pt{\hss $\bu$\hss}}%
    \graphtemp=.5ex
    \advance\graphtemp by 1.167in
    \rlap{\kern 2.489in\lower\graphtemp\hbox to 0pt{\hss $\bu$\hss}}%
    \graphtemp=.5ex
    \advance\graphtemp by 1.167in
    \rlap{\kern 2.833in\lower\graphtemp\hbox to 0pt{\hss $\bu$\hss}}%
    \graphtemp=.5ex
    \advance\graphtemp by 1.511in
    \rlap{\kern 2.833in\lower\graphtemp\hbox to 0pt{\hss $\bu$\hss}}%
    \graphtemp=.5ex
    \advance\graphtemp by 1.339in
    \rlap{\kern 3.176in\lower\graphtemp\hbox to 0pt{\hss $\bu$\hss}}%
    \special{pa 1803 1511}%
    \special{pa 1803 1167}%
    \special{fp}%
    \special{pa 1803 1167}%
    \special{pa 2833 1167}%
    \special{fp}%
    \special{pa 2833 1167}%
    \special{pa 3176 1339}%
    \special{fp}%
    \special{pa 2833 1167}%
    \special{pa 2833 1511}%
    \special{fp}%
    \special{ar 1116 1511 172 86 0 6.28319}%
    \special{ar 2146 1511 172 86 0 6.28319}%
    \special{pn 8}%
    \special{pa 944 1511}%
    \special{pa 1116 1167}%
    \special{pa 1288 1511}%
    \special{da 0.069}%
    \special{pa 1974 1511}%
    \special{pa 2146 1167}%
    \special{pa 2318 1511}%
    \special{da 0.069}%
    \graphtemp=.5ex
    \advance\graphtemp by 1.030in
    \rlap{\kern 1.803in\lower\graphtemp\hbox to 0pt{\hss $v_5$\hss}}%
    \graphtemp=.5ex
    \advance\graphtemp by 1.734in
    \rlap{\kern 1.116in\lower\graphtemp\hbox to 0pt{\hss $t+1$\hss}}%
    \graphtemp=.5ex
    \advance\graphtemp by 1.734in
    \rlap{\kern 2.146in\lower\graphtemp\hbox to 0pt{\hss $t$\hss}}%
    \hbox{\vrule depth1.734in width0pt height 0pt}%
    \kern 4.000in
  }%
}%
}
\vspace{-.5pc}
\caption{Caterpillars with $r=8$ \label{r2mod6}}
\end{center}
\end{figure}

\begin{remark}\label{btplan}
At this point we summarize what the results of this section imply about the
goal of proving that $n$-vertex caterpillars are reconstructible from the
multiset of subgraphs with at most $(n+2)/2$ vertices.  We will consider cases
for the degree list; we know when these cases occur because we know the degree
list.

\smallskip
\begin{narrower}

\noindent
If in a particular class of caterpillars the deck determines all maximal
batons with length at most $(r-1)/2$ and all maximal tritons with length at
most $(r-4)/2$, then caterpillars in that class are $\ell$-reconstructible.

\end{narrower}
\smallskip

\noindent
To summarize how this works, the maximal batons give us the degree pairs for
each fixed distance between the vertices on the spine.  The distance pairs then
give us the level pairs as in Lemma~\ref{levelpair}.  Incorporating the
knowledge of the tritons then gives us the lists $(d(v_1),\ldots,d(v_q))$ and
$(d(v_{\ov1}),\ldots,d(v_\qb))$ for $q=\FL{(r-4)/2}$, as in
Theorem~\ref{suffic}.  Finally, we apply Lemma~\ref{lastfew} to stitch the
lists together.

To apply this method, we need to determine the short maximal batons and
shorter maximal tritons, where we define a baton to be {\it short} if it
has length at most $(r-1)/2$ and define a triton to be {\it shorter} if it
has length at most $(r-4)/2$.

When there is a unique vertex of maximum degree, the technique is different.
We will be able to determine exactly where on the spine that vertex is and
then use it to find the other vertex degrees.  See Section~\ref{1max}.

In these arguments it will also be helpful to have at least four branch
vertices, in order to obtain better upper bounds on the high degrees.
For the case where $G$ has at most three branch vertices, see
Section~\ref{sec:3branch}.
\end{remark}

\section{Short Batons when $d_1=d_2\ge d_4\ge3$}\label{2maxbaton}

The first step in applying the plan in Remark~\ref{btplan} is to determine the
short maximal batons.  In order to obtain the desired threshold, we must
access only subgraphs with at most $(n+2)/2$ vertices (these will automatically
have at most $(n+1)/2$ vertices when $n$ is odd, since the number of vertices
is an integer.  In this section we find the short maximal batons when there are
at least two vertices of maximum degree and at least four branch vertices,
so throughout this section we assume $d_1=d_2\ge d_4\ge3$.


We first summarize what we know and introduce more notation.

\begin{remark}\label{batalph}
We have the deck $\DD$ of an $n$-vertex tree $G$, the multiset of all induced
subgraphs with at most $(n+2)/2$ vertices.  With $n\ge12$, we know that $G$ is
a caterpillar.  We know the nonincreasing degree list $\VEC d1n$ with $r$
nonleaves and may assume the case $r\ge (n-5)/2$.  Let $R$ be the set of
nonleaf vertices; index $R$ as $\VEC v1r$ in order along the spine and also as
$\VEC w1r$ with $d(w_i)=d_i$.

The baton $B_{j:a,b}$ is {\it short} if $j\le (r-1)/2$.
Throughout this section, let $\alpha=\FL{(n-r+1)/4}$.
The number of vertices in a short baton whose key vertices have degree at most
$\alpha+1$ is at most $(r-1)/2+2\alpha+1$, which is at most $(n+2)/2$.
Thus {\it all short batons with maximum degree at most $\alpha+1$
fit in cards, so we can count them.}

By Lemma~\ref{counting}, to determine all the short maximal batons it suffices
to determine all the short maximal batons with at least $\FL{(n+2)/2}$ vertices.
\end{remark}

\begin{lemma}\label{d3alpha}
If $d_1=d_2$ and $d_4>2$ and $d_3\leq \alpha$, then the deck determines
all short batons.
\end{lemma}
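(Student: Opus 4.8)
The plan is to reduce everything to a single counting identity. By Remark~\ref{batalph} it suffices to determine $\hat s(F,G)$ for every short baton $F$ having at least $\mu:=\FL{(n+2)/2}$ vertices; Lemma~\ref{counting}, applied for each fixed $j\le(r-1)/2$ to the family of $j$-batons, then yields all short maximal batons. First I would dispose of the case $d_1\le\alpha$: every short baton $B_{j:a,b}$ then has $a,b\le\alpha$ and $j\le(r-1)/2$, hence at most $(r-1)/2+2\alpha-1<\mu$ vertices, so there is nothing to determine. So assume $d_1\ge\alpha+1$. Since $d_1=d_2$ and $d_3\le\alpha<d_1$, exactly two vertices of $G$ have degree $d_1$ — call them $x=v_p$ and $y=v_{p'}$ — and every other vertex has degree at most $d_3\le\alpha$. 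Also, since $d_4\ge3$, equation~\eqref{d'} gives $n-r-2\ge 2(d_1-2)+(d_3-2)+(d_4-2)\ge 2d_1-2$, so $d_1\le(n-r)/2$; this is the only place the hypothesis $d_4>2$ enters.

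By (the proof of) Lemma~\ref{jbaton}, the maximal $j$-baton with anchors $v_i,v_{i+j}$ is $B_{j:d(v_i),d(v_{i+j})}$. If it has at least $\mu$ vertices, then $\max\{d(v_i),d(v_{i+j})\}\ge\alpha+1>d_3$, using $j\le(r-1)/2$ and $(r-1)/2+2\alpha-1<\mu$; hence one of its anchors is $x$ or $y$. So every short baton $F$ with $\ge\mu$ vertices and $\hat s(F,G)\ne0$ has the form $B_{j:d_1,c}$ with $j\le(r-1)/2$ and $2\le c\le d_1$, and I only need, for all such $j$ and $c$, the number $\hat s(B_{j:d_1,c},G)$ of maximal $j$-batons of that type.

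To get these numbers I would count, for $j\le(r-1)/2$ and $2\le c\le\alpha$, the baton $B_{j:\alpha+1,c}$, which has $j+\alpha+c\le(r-1)/2+2\alpha<\mu$ vertices and so occurs in cards. In an induced copy of $B_{j:\alpha+1,c}$ in $G$, the key vertex of degree $\alpha+1$ has $G$-degree $>d_3$, hence is $x$ or $y$; the rest of the copy is then forced to run along the spine away from that vertex, the only choices being the direction, which $\alpha$ of that vertex's $d_1-1$ off-path neighbours to include, and which $c-1$ of the far key vertex's remaining neighbours to include. This yields
\[
s(B_{j:\alpha+1,c},G)=\binom{d_1-1}{\alpha}\sum_{v\in\{x,y\}}\ \sum_{\substack{\sigma\in\{+1,-1\}\\ 1\le p_v+\sigma j\le r}}\binom{d(v_{p_v+\sigma j})-1}{c-1},
\]
where $p_x=p$ and $p_y=p'$. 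Since $d_1\ge\alpha+1$, the factor $\binom{d_1-1}{\alpha}$ is a known positive integer, so the deck gives the inner double sum $T_{j,c}$ for $2\le c\le\alpha$. Writing $n_{j,e}$ for the number of pairs $(v,\sigma)$ above with $d(v_{p_v+\sigma j})=e$, we have $T_{j,c}=\sum_e n_{j,e}\binom{e-1}{c-1}$; since $0\le n_{j,e}\le4$ for all $e$ and $n_{j,d_1}\in\{0,2\}$ (it is $2$ exactly when $j=\operatorname{dist}(x,y)$), this near-triangular system has a unique admissible solution, and I recover the multiset of values $d(v_{p_v+\sigma j})$ for every $j\le(r-1)/2$. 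From it, $\hat s(B_{j:d_1,c},G)=n_{j,c}$ for $c\le\alpha$ and $\hat s(B_{j:d_1,d_1},G)=n_{j,d_1}/2$, which is all the data required; Lemma~\ref{counting} then produces all short maximal batons, hence all short batons.

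The step I expect to demand the most care is establishing the displayed identity and inverting the system: one must verify that each induced copy of $B_{j:\alpha+1,c}$ arises exactly once in the stated way — watching the degenerate configurations in which $x$ or $y$ is an end of the spine, or lies within distance $(r-1)/2$ of the other, so that $y$ is itself one of the vertices $v_{p_x\pm j}$ — and must check that the bounds $0\le n_{j,e}\le4$ and $n_{j,d_1}\in\{0,2\}$ (together with $d_1\ge\alpha+1$ and $\alpha\ge d_3\ge3$) really pin down the solution of the near-triangular system, even for small $\alpha$. The two hypotheses beyond $d_1=d_2$ are used exactly here: $d_3\le\alpha$ confines all the high degree to the two vertices $x,y$, and $d_4>2$, via $d_1\le(n-r)/2$, keeps every companion baton $B_{j:\alpha+1,c}$ small enough to sit in a card.
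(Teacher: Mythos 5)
Your approach is correct but takes a genuinely different route from the paper's. Both proofs begin from the same observation (Remark~\ref{batalph} plus Lemma~\ref{counting}): it suffices to determine $\hat s(F,G)$ for short maximal batons $F$ with at least $\FL{(n+2)/2}$ vertices, and any such baton must have a key vertex of degree $d_1$. From there the paper does a two-step exclusion — it reads off $\hat s(B_{j:d_1,d_1},G)$ from $\#B_{j:\alpha+1,\alpha+1}$, then $\hat s(B_{j:d_1,d_3},G)$ from the remaining copies of $B_{j:\alpha+1,d_3}$, and finishes with a degree-sum size bound (using $\min\{d_4,d_3-1\}$) showing all other short maximal batons already fit in cards. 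You instead gather $\#B_{j:\alpha+1,c}$ for every $2\le c\le\alpha$ and recover all of the $\hat s(B_{j:d_1,c},G)$ at once by inverting a near-triangular linear system. Your uniqueness step does go through: solving from $c=\alpha$ downward, the two candidate values of $n_{j,\alpha}$ corresponding to $n_{j,d_1}\in\{0,2\}$ differ by $2\binom{d_1-1}{\alpha-1}\ge 2\alpha\ge 6>4$, so exactly one lies in $[0,4]$, and the rest of the back-substitution is then forced. Your method trades the paper's final size-bound computation for a clean linear-algebra argument; both are valid, and yours is arguably more uniform since it handles all degree levels $c$ simultaneously rather than peeling off $d_1$ and $d_3$ by hand.

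One small slip in the bookkeeping of hypotheses: you say $d_4>2$ enters via $d_1\le(n-r)/2$ to keep $B_{j:\alpha+1,c}$ inside a card, but that baton fits unconditionally — Remark~\ref{batalph} already gives $(r-1)/2+2\alpha+1\le(n+2)/2$ since $\alpha\le(n-r+1)/4$ by definition, with no appeal to $d_4$. The real role of $d_4>2$ in your proof is to force $\alpha\ge d_3\ge d_4\ge 3$, which is exactly what makes $2\binom{d_1-1}{\alpha-1}>4$ in the uniqueness step; you do cite this correctly in the "pin down" sentence, so the displayed derivation of $d_1\le(n-r)/2$ is simply unused and the attribution in your final sentence is inaccurate, but the proof itself is unaffected.
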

\begin{proof}
If $d_2\le \alpha$, then by Remark~\ref{batalph} every short baton fits in a
card, which suffices.  Hence we may assume $d_1=d_2>\alpha\geq d_3$.
By Remark~\ref{batalph}, we see all copies of $B_{j:\alpha+1,\alpha+1}$
Every copy of $B_{j:\alpha+1,\alpha+1}$ in the deck (if any exist) is contained
in the unique copy of $B_{j:d_1,d_1}$ in $G$, and $B_{j:d_1,d_1}$ contains
${d_1-1\choose \alpha}^2$ copies of $B_{j:\alpha+1,\alpha+1}$.  Thus we know
the number of copies of $B_{j:d_1,d_1}$ when $j\leq (r-1)/2$ (it is always $0$
or $1$).

If $\#B_{j:d_1,d_1}=1$ with $j\le(r-1)/2$, then the unique copy of
$B_{j:d_1,d_1}$ in $G$ contains ${d_1-1\choose a-1}{d_1-1\choose b-1}$ copies
of $B_{j:a,b}$, for each $a$ and $b$ with $2\leq a,b\leq \alpha+1$.  Delete
this many copies of $B_{j:a,b}$ from the deck; they do not contribute to the
count of maximal batons.

Each remaining copy of $B_{j:\alpha+1,d_3}$ occurs in some copy of
$B_{j:d_1,d_3}$ in $G$, and each copy of $B_{j:d_1,d_3}$ contains
${d_1-1\choose \alpha}$ copies of $B_{j:\alpha+1,d_3}$.  This tells us the
number of maximal batons in $G$ that are copies of $B_{j:d_1,d_3}$,
for $j\leq (r-1)/2$.  As in the previous paragraph, when $G$ has $q_j$ copies
of $B_{j:d_1,d_3}$ as maximal batons, we delete from the remaining deck
$q_j {d_1-1\choose a-1}{d_3-1\choose b-1}$ copies of $B_{j:a,b}$, for
$2\leq a,b\leq \alpha+1$.

Every remaining short baton
lies in a maximal baton (of the same length) having a key
vertex with degree at most $\min\{d_4,d_3-1\}$, and its maximum degree is at
most $d_1$.  Letting $N$ be its number of vertices, we have
\begin{equation}\label{d2}
N\le\frac{r-1}{2}+d_1+\min\{d_4,d_3-1\}-1\leq\frac{r+1}{2}+(d_2+d_4-2).
\end{equation}
Since $d_2=d_1$ and $d_4\le d_3$, \eqref{d'} yields
$d_2-2+d_4-2\le (n-r-2)/2$.
If $d_3>d_4$, then this improves to $d_2-2+d_4-2\le(n-r-3)/2$,
and then $N\leq(r+1)/2+(n-r+1)/2=(n+2)/2$. Otherwise, $d_3-1=d_4-1$,
which improves the upper bound in~\eqref{d2} by $1$, yielding
$$
N\leq \frac{r+1}{2}+(d_2+d_4-3)\leq\frac{r+1}{2}+\frac{n-r}{2}=\frac{n+1}{2}. $$
In both cases, all remaining short maximal batons have at most
$(n+2)/2$ vertices, and Lemma~\ref{counting} applies.
\end{proof}

Let $W_k=\{w_1,\ldots,w_k\}$ ($k$ specified vertices of highest degrees).
\smallskip

\begin{lemma}\label{d4alpha}
If $d_1=d_2$ and $2<d_4\le\alpha\le d_3-1$, then the deck determines
all short batons.
\end{lemma}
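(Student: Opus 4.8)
The setup mirrors Lemma~\ref{d3alpha}, but now the obstruction is that $d_3$ is large (exceeding $\alpha$) while $d_4$ is small (at most $\alpha$). So we can no longer see batons whose key vertices both have degree near $d_3$; instead we must peel off the maximal batons in \emph{two} waves: first those whose key-vertex degrees lie in $\{d_1,d_2,d_3\}$ (the ``large'' degrees, of which there are exactly three vertices realizing $d_3$ counted as $w_3$, possibly with $d_1=d_2=d_3$ or $d_1=d_2>d_3$), and only afterwards the batons all of whose key vertices have degree at most $d_4\le\alpha$, which by Remark~\ref{batalph} automatically fit in cards. The key point is that $B_{j:\alpha+1,\alpha+1}$ is always visible, and a copy of it sits inside a maximal short baton only if both key vertices of that maximal baton have degree $\ge\alpha+1$, i.e. both are among $\{w_1,w_2,w_3\}$ (when $d_3>\alpha$) or among $\{w_1,w_2\}$ (when $d_3=\alpha$, but then $d_3\le\alpha$ contradicts the hypothesis $\alpha\le d_3-1$, so in fact $d_3\ge\alpha+1$ always here). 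Thus $B_{j:\alpha+1,\alpha+1}$ detects exactly the maximal batons of the form $B_{j:d_i,d_{i'}}$ with $i,i'\in\{1,2,3\}$.

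**First wave.** For each $j\le (r-1)/2$, count $\#B_{j:\alpha+1,\alpha+1}$ from the deck. Each such copy lies in a unique maximal baton with both key vertices in $W_3=\{w_1,w_2,w_3\}$; since $w_1,w_2,w_3$ are three fixed vertices on the spine at three fixed pairwise distances, for each $j$ there are at most three maximal batons of this type, namely $B_{j:d_1,d_2}$, $B_{j:d_1,d_3}$, $B_{j:d_2,d_3}$ (those whose key-vertex pair in $W_3$ actually has distance $j$). The number of copies of $B_{j:\alpha+1,\alpha+1}$ inside each is a known binomial product, so from the total count we recover how many maximal batons of each of these (at most three) shapes occur at distance $j$. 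Having identified them, for every pair $2\le a,b\le\alpha+1$ we delete from the deck the $\binom{d-1}{a-1}\binom{d'-1}{b-1}$ copies of $B_{j:a,b}$ contained in each identified $B_{j:d,d'}$; these are ``used up'' and do not contribute to counting any further maximal baton.

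**Second wave.** After the first wave, every remaining short baton lies in a maximal short baton at least one of whose key vertices has degree at most $d_4$. Its maximum degree is at most $d_1$, so if $N$ denotes its vertex count, $N\le \frac{r-1}{2}+d_1+d_4-1$. Using \eqref{d'} together with $d_1=d_2$ and $d_4\le d_3$ — and improving the bound by~$1$ when $d_3>d_4$ as in Lemma~\ref{d3alpha}, or else using $d_3-1=d_4-1$ to shave the $+d_4$ term — one checks $N\le (n+2)/2$ (this is exactly the same arithmetic as inequality~\eqref{d2} and the two cases following it, with $d_4$ in place of $\min\{d_4,d_3-1\}$ and the role of $d_3$ only as the bound $d_4\le d_3$; since here $d_3\ge\alpha+1\ge d_4+1>d_4$, we are in the favorable case and the slack is even larger). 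Hence all remaining short maximal batons fit in cards, so their multiplicities are known from the deck, and Lemma~\ref{counting} (the exclusion argument) now determines all short maximal batons, as required.

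**Main obstacle.** The delicate part is bookkeeping in the first wave: when $d_1=d_2=d_3$ (or when two of the three pairwise distances among $w_1,w_2,w_3$ coincide) several of the shapes $B_{j:d_i,d_{i'}}$ collapse, and one must be careful that the single count $\#B_{j:\alpha+1,\alpha+1}$ still pins down the individual maximal-baton multiplicities — it does, because the vertices $w_1,w_2,w_3$ are fixed, so the \emph{list} of (distance, degree-pair) data for pairs within $W_3$ is a fixed finite object and only its consistency with the observed counts matters, not any inversion of a linear system. The other place to be careful is confirming $d_3\ge\alpha+1$ throughout (forced by the hypothesis $\alpha\le d_3-1$), so that $B_{j:\alpha+1,\alpha+1}$ genuinely cannot hide inside a maximal baton with a key vertex of degree $\le d_4\le\alpha$; this is what makes the two waves cleanly separated.
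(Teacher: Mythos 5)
The overall architecture of your argument is sound and matches the paper's: peel off the short maximal batons with both key vertices in $W_3$ using the visible counts $\#B_{j:\alpha+1,\alpha+1}$, then observe that every remaining short baton fits in a card, and finish with the exclusion argument of Lemma~\ref{counting}. Your second-wave arithmetic (bounding the vertex count of a short baton with at most one key vertex in $W_3$ by $(n+2)/2$ using $d_4<d_3$) is correct and is exactly what the paper does.

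However, the first wave contains a genuine gap at its central step. You assert that ``from the total count we recover how many maximal batons of each of these (at most three) shapes occur at distance $j$,'' and in your ``main obstacle'' paragraph you try to discharge this by noting that $w_1,w_2,w_3$ are fixed vertices, so only consistency of the observed counts matters. This conflates the actual caterpillar $G$, whose configuration is indeed fixed, with what the deck \emph{determines}: the whole point is to rule out a second, nonisomorphic configuration producing the same counts, and that requires inverting the numerics, not just citing existence. The numerics genuinely can collide. When $d_1=d_2>d_3$, the contribution of a maximal $B_{j:d_1,d_1}$ to $\#B_{j:\alpha+1,\alpha+1}$ is $\binom{d_1-1}{\alpha}^2$ and the contribution of each maximal $B_{j:d_1,d_3}$ is $\binom{d_1-1}{\alpha}\binom{d_3-1}{\alpha}$; if $\binom{d_1-1}{\alpha}=2\binom{d_3-1}{\alpha}$, then ``one short $B_{j:d_1,d_1}$, no short $B_{j':d_1,d_3}$'' and ``no short $B_{j:d_1,d_1}$, two short $B_{j':d_1,d_3}$'' give the same total, and both are geometrically realizable. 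The paper's proof devotes most of its length to precisely this ambiguity: when $d_4<\alpha$ it passes to $\#B_{j:\alpha,\alpha}$ and uses that the ratios $\binom{d_1-1}{\alpha}/\binom{d_1-1}{\alpha-1}$ and $\binom{d_3-1}{\alpha}/\binom{d_3-1}{\alpha-1}$ differ, and when $d_4=\alpha$ it argues via~\eqref{d'} that $d_1\le\alpha+3$ and then that the ratio $\binom{d_1-1}{\alpha}/\binom{d_3-1}{\alpha}=2$ is impossible. Your proposal does not address this at all, so the first wave as written is unproved.
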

\begin{proof}
Our main task is to obtain the short maximal batons whose key vertices
are both in $W_3$, after which an exclusion argument determines the other
short maximal batons.

If $d_2=d_3$, then any copy of $B_{j:\alpha+1,\alpha+1}$ in the deck lies in a
copy of $B_{j:d_1,d_1}$ in $G$, of which there are at most two.
Each copy of $B_{j:d_1,d_1}$ contains ${d_1-1\choose \alpha}^2$ 
copies of $B_{j:\alpha+1,\alpha+1}$.  Thus the number of copies of 
$B_{j:\alpha+1,\alpha+1}$ in the deck tells us the number of 
copies of $B_{j:d_1,d_1}$ in $G$.  Thus we know the numbers of all
short maximal batons having both key vertices in $W_3$.

Now suppose $d_2>d_3$.  Since $d_3>d_4$, in this case only one vertex has
degree $d_3$.  Each copy of $B_{j:\alpha+1,\alpha+1}$ in the deck (if exists)
is contained either in a copy of $B_{j:d_1,d_1}$ (there is at most one) or in a
copy of $B_{j:d_1,d_3}$ (there are at most two).  A copy of $B_{j:d_1,d_1}$ (if
it exists) contains ${d_1-1\choose \alpha}^2$ copies of
$B_{j:\alpha+1,\alpha+1}$, and each copy of $B_{j:d_1,d_3}$ contains
${d_1-1\choose \alpha}{d_3-1\choose \alpha}$ copies of
$B_{j:\alpha+1,\alpha+1}$.  So, if over all $j$ with $j\le(r-1)/2$ the total
number of short batons in $\DD$ with key vertices of degree $\alpha+1$
is ${d_1-1\choose \alpha}^2+2{d_1-1\choose \alpha}{d_3-1\choose \alpha}$, then
we know all short batons whose key vertices are both in $W_3$.

We also know these batons if the total is
${d_1-1\choose \alpha}^2+{d_1-1\choose \alpha}{d_3-1\choose \alpha}$
or ${d_1-1\choose \alpha}{d_3-1\choose \alpha}$.  The only confusion is
when the total equals both ${d_1-1\choose \alpha}^2$ and
$2{d_1-1\choose \alpha}{d_3-1\choose \alpha}$, which requires
${d_1-1\choose \alpha}=2{d_3-1\choose \alpha}$.  If this happens and
$d_4<\alpha$, then we consider copies of $B_{j:\alpha,\alpha}$, which now must
be contained in a copy of $B_{j:d_1,d_1}$-baton or $B_{j:d_1,d_3}$ in $G$.
Since ${d_1-1\choose \alpha}\big/ {d_1-1\choose \alpha-1}\neq 
{d_3-1\choose \alpha}\big/ {d_3-1\choose \alpha-1}$, this distinguishes the
two possibilities.

On the other hand, if $d_4=\alpha$, then $d_1>d_3>\alpha$.
Now, since $d_1\ge\alpha+2$,~\eqref{d'} yields
$$
n-r-2\geq \sum_{i=1}^4(d_i-2)\geq 2d_1+2\alpha+1-8\ge 4\alpha-3\ge n-r-5.
$$
Keeping $\sum_{i=1}^4(d_i-2)\le n-r-2$ thus requires 
$d_1\leq \alpha+3$.  If ambiguity occurs, then
$$
2={d_1-1\choose \alpha}\Big/ {d_3-1\choose \alpha}\geq {d_1-1\choose \alpha}\Big/ {d_1-2\choose \alpha}
=\frac{d_1-1}{d_1-1-\alpha}\geq\frac{\alpha+2}{2}.
$$
Since $\alpha=d_4\geq 3$, we have $(\alpha+2)/2>2$, so the ambiguity does not
occur.

Thus in all cases we know all short maximal batons whose key vertices
lie in $W_3$.
More simply than in Lemma~\ref{d3alpha} (since here $d_4<d_3$), we derive
that every short baton in $G$ with at most one key vertex in $W_3$ has at most
$(n+2)/2$ vertices.  Again Lemma~\ref{counting} applies.
\end{proof}

\begin{lemma}\label{d4alpha+}
If $d_1=d_2$ and $d_4\ge\max\{3,\alpha+1\}$ with $2\alpha+1\le (n-r+1)/2$, then
the deck determines all short batons.
\end{lemma}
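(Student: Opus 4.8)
The plan is to follow the template of Lemmas~\ref{d3alpha} and~\ref{d4alpha}: first identify the short maximal batons that are too large to appear as a card, and then apply the exclusion argument (Lemma~\ref{counting}) to recover all short maximal batons. The new feature is that now every branch vertex has degree at least $\alpha+1$, which makes the structure of $G$ very rigid, so that in almost all configurations there are no large short maximal batons at all.

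First I would extract this rigidity from~\eqref{d'}. By the definition of $\alpha$ we have $n-r-2\le4\alpha$, and since $d_1=d_2$ and $d_1,\ldots,d_4\ge d_4\ge\alpha+1$, summing only the four largest terms in~\eqref{d'} gives $\sum_{i=1}^{4}(d_i-1-\alpha)\le(n-r-2)-4(\alpha-1)\le4$ with all four summands nonnegative; hence $d_1=d_2\le\alpha+3$. Moreover, if $d_4\ge\alpha+2$ then each summand is at least $1$, forcing $d_1=d_2=d_3=d_4=\alpha+2$, and then $\sum_{i=1}^{r}(d_i-2)=4\alpha=n-r-2$ shows $G$ has exactly four branch vertices; and if $d_1=d_2=\alpha+3$ then $d_3=d_4=\alpha+1$, again with exactly four branch vertices. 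Recall also that the degree of a key vertex $v_i$ within its maximal baton---call it the \emph{baton-degree} of $v_i$---equals $d(v_i)-1$ when $v_i$ is an interior spine vertex and $d(v_i)$ when $v_i$ is an end of the spine, and that a short baton $B_{j:a,b}$ fits on a card as soon as $a+b\le2\alpha+3$: it has at most $(r-1)/2+a+b-1$ vertices, and $(r-1)/2+2\alpha+2\le(n+2)/2$ is exactly the hypothesis $2\alpha+1\le(n-r+1)/2$ (cf.\ Remark~\ref{batalph}).

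Now in every configuration with $d_1=d_2\le\alpha+2$ (namely $d_1=\alpha+1$, or $d_1=d_2=\alpha+2$, or $d_4\ge\alpha+2$), every interior spine vertex has baton-degree at most $\alpha+1$, so baton-degree at least $\alpha+2$ can occur only at an end of the spine; a short maximal baton with two keys of baton-degree at least $\alpha+2$ would then have both keys at the ends of the spine, forcing length $r-1>(r-1)/2$---impossible. Hence every short maximal baton has key-degrees summing to at most $(\alpha+2)+(\alpha+1)=2\alpha+3$ and fits on a card, so Lemma~\ref{counting} immediately determines all short batons. This leaves the single configuration $d_1=d_2=\alpha+3$, $d_3=d_4=\alpha+1$ with exactly four branch vertices $w_1,w_2,w_3,w_4$; here $\sum_{i=1}^{r}(d_i-2)=4\alpha$, so a short baton is too large for a card precisely when its key-degrees sum to at least $2\alpha+4$. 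Checking the few possible pairs of baton-degrees, the only short maximal baton that can be too large is the (at most one) maximal baton $B$ joining $w_1$ and $w_2$: it occurs exactly when $j_0:=\mathrm{dist}(w_1,w_2)\le(r-1)/2$, and it is $B_{j_0:\alpha+2,\alpha+2}$ if both of $w_1,w_2$ are interior and $B_{j_0:\alpha+3,\alpha+2}$ if one of them is an end of the spine.

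To pin down $B$, I would use that a maximal $j$-baton contains a copy of $B_{j:\alpha+2,\alpha+1}$ only if it has a key of baton-degree at least $\alpha+2$, hence only if $w_1$ or $w_2$ is one of its keys. Since the counts $\#B_{j:\alpha+1,\alpha+1}$ and $\#B_{j:\alpha+2,\alpha+1}$ are visible for every $j\le(r-1)/2$, and the number of copies of $B_{j:a,b}$ inside a maximal baton of given type is an explicit product of binomial coefficients, the visible counts yield a small linear system---with boundedly many unknowns, since $G$ has only four branch vertices---whose solution reveals $j_0$ and the type of $B$; the exclusion argument (Lemma~\ref{counting}) then recovers the remaining short maximal batons, completing the proof. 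The main obstacle is precisely this last configuration: isolating the unique large maximal baton among the visible short batons of the same length, and disposing of the boundary case $j_0=(r-1)/2$---where $B_{j_0:\alpha+2,\alpha+2}$ just fails, or just manages, to fit on a card depending on the parity of $r$---together with the finitely many small values of $n$, which I expect to need a short ad hoc argument in the spirit of the end of the proof of Lemma~\ref{lastfew}.
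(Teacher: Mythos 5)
Your proposed proof hinges on the claim that the degree of a key vertex $v_i$ within its maximal $j$-baton equals $d(v_i)-1$ when $v_i$ is an interior spine vertex. That claim is false: by the definition of maximal $j$-baton in Lemma~\ref{jbaton}, the baton induced by $v_i,\ldots,v_{i+j}$ and the other neighbors of $v_i,v_{i+j}$ includes the spine neighbor $v_{i-1}$ (when $i>1$) as a \emph{leaf} of the baton, so $v_i$ has degree exactly $d(v_i)$ in the maximal baton regardless of whether it is interior or an end of the spine. This is also what the paper's own arguments implicitly use (e.g., in Lemma~\ref{d3alpha} and Lemma~\ref{levelpair}, where the key-vertex degrees deleted from the multiset are $d(v_i)$, and the unique copy of $B_{j:d_1,d_1}$ is the maximal baton with both keys among $\{w_1,w_2\}$).

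This off-by-one error invalidates the central step. In the configuration $d_1=d_2=\alpha+2>\alpha+1=d_3=d_4$, the maximal short $j$-baton with keys $w_1,w_2$ (both interior) is $B_{j:\alpha+2,\alpha+2}$ with $j+2\alpha+3$ vertices; at $j$ near $(r-1)/2$ this can reach $(n+4)/2$ vertices and \emph{does not} fit in a card, contrary to your assertion that in all configurations with $d_1\le\alpha+2$ every short maximal baton is visible. The same issue occurs when $d_1=\cdots=d_4=\alpha+2$. And in the remaining configuration $d_1=d_2=\alpha+3$, $d_3=d_4=\alpha+1$, the baton $B_{j:d_1,d_3}=B_{j:\alpha+3,\alpha+1}$ (keys $w_1$ or $w_2$ together with $w_3$ or $w_4$) has key-degrees summing to $2\alpha+4$ and also fails to fit in cards for the largest allowed $j$ when $r$ is odd---so there are more ``large'' short maximal batons than your ``only the $w_1$--$w_2$ baton'' claim admits. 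The paper's proof precisely identifies which maximal short batons may overflow the card bound in each subcase and determines them from counts of $B_{j:\alpha+2,\alpha+1}$, exploiting bounds like $2(\alpha+1)\ge6$ to separate the contributions; the ``small linear system'' you gesture at would need to carry this out, and as written your proposal does not.
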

\begin{proof}
When $j\le(r-1)/2$, the number of vertices in a copy of
$B_{j:\alpha+2,\alpha+1}$ is at most $(r-1)/2+2\alpha+2$, which in the
specified case is at most $(n+2)/2$.  This is small enough to fit in cards,
so we see all such batons.

If $\sum_{i=1}^4 d_i\geq 4\alpha+9$, then by~\eqref{d'} we have
$$
n-r-2\ge\sum_{i=1}^4(d_i-2)\ge4\alpha+1=4\FL{\frac{n-r+1}{4}}+1\geq n-r-2+1,
$$
a contradiction.  Thus $\sum_{i=1}^4 d_i\leq 4\alpha+8$.  Since
$d_4\ge\alpha+1$, either $\alpha+2\ge d_1\ge d_4\ge\alpha+1$ or
$\alpha+3=d_1=d_2=d_3+2=d_4+2$.  Note also that when $d_4=\alpha+1$ we have
$\alpha\ge2$, since we have assumed $d_4>2$.
  
Suppose first that  $\alpha+3=d_1=d_2=d_3+2=d_4+2$.   Each copy of
$B_{j:\alpha+2,\alpha+1}$ lies either in the unique copy of $B_{j:d_1,d_1}$ in
$G$ or in some copy of $B_{j:d_1,\alpha+1}$ that is a maximal baton in $G$.
The copy of $B_{j:d_1,d_1}$ (if it exists) contains 
$2{d_1-1\choose \alpha+1}{d_1-1\choose \alpha}$ copies of
$B_{j:\alpha+2,\alpha+1}$, and every copy of $B_{j:d_1,d_3}$ that is a maximal
baton in $G$  contains ${d_1-1\choose \alpha+1}$ copies of
$B_{j:\alpha+2,\alpha+1}$.  Since for a given
$j$, each of $w_1$ and $w_2$ is in at most two copies of
$B_{j:d_1,d_3}$, the total number of such batons in $G$ is at most
$4$.  On the other hand, $2{d_1-1\choose \alpha}\geq 2(\alpha+1)\geq 6$,
since $\alpha\ge2$.  It follows
that the number of copies of $B_{j:\alpha+2,\alpha+1}$ in $\DD$ uniquely
determines the numbers of maximal batons in $G$ that are copies of
$B_{j:d_1,d_2}$ and $B_{j:d_1,d_3}$.

Thus, we know all short batons in $G$ with both key vertices having degree
at least $d_4$.  The number of vertices in a short baton in $G$ having at least
one key vertex of degree less than $d_4$ is at most
$d_1+d_4-2+(r-1)/2$, and we compute
$$
d_1+d_4-2+\frac{r-1}{2}\leq 2\alpha+2+\FR{r-1}2\le
\FR{n-r+1}2+1+\FR{r-1}2=\FR{n+2}2.
$$ 
Thus such a baton has at most $\FL{(n+2)/2}$ vertices, which 
is small enough to fit in a card.

We now know all short maximal batons with at least $\FL{(n+2)/2}$ vertices,
which by Lemma~\ref{counting} is enough to determine all short maximal batons
in $G$.

\medskip
Now suppose $\alpha+2\geq d_1\geq d_4\geq\alpha+1$.  If $d_1=d_4=\alpha+2$,
then $d_5=2$, and no copy of $B_{j:\alpha+2,\alpha+1}$ is a maximal baton in
$G$.  Thus $\#B_{j:\alpha+2,\alpha+1}$ tells us all maximal batons with two
branch vertices, which suffices.

Hence we may assume $\alpha+1=d_4\ge3$, so $2(\alpha+1)\ge6$.  Each copy of
$B_{j:\alpha+2,\alpha+1}$ in $\DD$ lies in a copy of $B_{j:d_1,d_1}$ in $G$ or
is itself a copy of $B_{j:d_1,d_4}$ that is a maximal baton in $G$.
Now every copy of $B_{j:d_1,d_1}$ in $G$ contains $2(\alpha+1)$
copies of $B_{j:\alpha+2,\alpha+1}$.

If $d_1=d_3=\alpha+2>\alpha+1=d_4$, then $d_6=2$ and $d_5\leq 3$.
In this case only $w_4$ and $w_5$ can have degree $d_4$, so at most four
copies of $B_{j:d_1,d_4}$ can be maximal batons in $G$, while every copy of
$B_{j:d_1,d_1}$ in $G$ contains $2(\alpha+1)$ copies of $B_{j:d_1,d_4}$ as
cards in $\DD$.  Since $2(\alpha+1)\geq 6$, the number of copies of
$B_{j,\alpha+2,\alpha+1}$ in $\DD$ tells us how many copies of $B_{j:d_1,d_1}$
and $B_{j:d_1,d_4}$ occur as maximal batons in $G$.  All other short batons fit
in cards.

If $d_1=d_2=\alpha+2>\alpha+1=d_3=d_4$, then only $w_1$ and $w_2$ have degree
$\alpha+2$.  They are the only candidates for key vertices of degree $d_1$,
so again $G$ can only have four maximal batons that are copies of
$B_{j:d_1,d_4}$.  Still any copy of $B_{j:d_1,d_1}$ in $G$ (there is at most
one) contains $2(\alpha+1)$ copies of $B_{j:d_1,d_4}$.  Since
$2(\alpha+1)\geq 6$, again we know how many maximal batons in $G$ are copies of
$B_{j:d_1,d_1}$ or $B_{j:d_1,d_4}$.  Again, all other short batons fit in
cards.

Finally, when $d_1=d_2=d_3=d_4=\alpha+1$, all short batons fit in cards.
\end{proof}

\begin{lemma}\label{d4alpha++}
If $d_1=d_2$ and $d_4\ge\alpha+1$ with $2\alpha+1\ge (n-r+2)/2$, then the deck
determines all short batons.
\end{lemma}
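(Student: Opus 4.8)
The plan is to first pin down the degree list, which in this regime is extremely rigid, and then, as in Lemmas~\ref{d3alpha}--\ref{d4alpha+}, to determine the short maximal batons having at least $\FL{(n+2)/2}$ vertices and finish with Lemma~\ref{counting}.

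First I would extract the degree structure. The hypothesis $2\alpha+1\ge(n-r+2)/2$ is equivalent to $n-r\le4\alpha$, and $d_1=d_2\ge d_3\ge d_4\ge\alpha+1$. Feeding this into~\eqref{d'} yields a dichotomy: either $d_1=d_2=d_3=d_4=\alpha+1$, so the maximum degree is $\alpha+1$; or $d_1=d_2=\alpha+2$, $d_3=d_4=\alpha+1$, and $d_i=2$ for $i\ge5$, with $n-r=4\alpha$, so $G$ has exactly four branch vertices. In both cases $d_4\ge3$ forces $\alpha\ge2$. In the first case every short baton has maximum degree at most $\alpha+1$, so by Remark~\ref{batalph} it fits in a card; the exclusion argument then yields all short maximal batons and we are done. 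So all of the work is in the second case.

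In the second case the only short batons that need not fit in a card are those with both key vertices of degree at least $\alpha+1$, namely copies of $B_{j:\alpha+2,\alpha+2}$ or $B_{j:\alpha+2,\alpha+1}$ for the few largest short lengths $j$; everything else, including every $B_{j:\alpha+1,\alpha+1}$ and every $B_{j:\alpha+2,2}$, fits in a card (here we use $\alpha\ge2$). For each $j\le(r-1)/2$ I would count $C_j:=\#B_{j:\alpha+1,\alpha+1}$. Since the only vertices of degree $\ge\alpha+1$ are the four branch vertices, every copy of $B_{j:\alpha+1,\alpha+1}$ lies in a unique maximal $j$-baton with both key vertices among them; writing $N_{BB}^{(j)},N_{BM}^{(j)},N_{MM}^{(j)}$ for the numbers of maximal $j$-batons of types $B_{j:\alpha+2,\alpha+2}$, $B_{j:\alpha+2,\alpha+1}$, $B_{j:\alpha+1,\alpha+1}$, we obtain
$$C_j=(\alpha+1)^2N_{BB}^{(j)}+(\alpha+1)N_{BM}^{(j)}+N_{MM}^{(j)}.$$
Because there are only two vertices of each of the degrees $\alpha+2$ and $\alpha+1$, we have $N_{BB}^{(j)},N_{MM}^{(j)}\in\{0,1\}$ and $N_{BM}^{(j)}\le3$, and moreover $N_{BM}^{(j)}\le2$ whenever $N_{BB}^{(j)}=1$ (three big--medium pairs at a common distance force the four branch vertices into an arithmetic progression with alternating degrees, which has no big--big pair at that distance). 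For $\alpha\ge3$ this makes the expansion of $C_j$ ``in base $\alpha+1$'' unique, so all three quantities, hence all short maximal batons with both key vertices of degree $\ge\alpha+1$, are determined.

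The main obstacle is the sub-case $\alpha=2$, where the bound $2\alpha+1\ge(n-r+2)/2$ is too weak to make the decoding automatic: a single copy of $B_{j:4,4}$ gives $C_j=9$, and so do three copies of $B_{j:4,3}$, and this is the only ambiguity. I would resolve it using the rigidity of the alternating configuration. If $C_{j_0}=9$ for some $j_0$ (necessarily unique), then in the ``three $B_{j_0:4,3}$'' scenario the branch vertices sit at positions $p,p+j_0,p+2j_0,p+3j_0$ with degrees $4,3,4,3$, which forces $C_{2j_0}=10$ when $2j_0\le(r-1)/2$ and forces $C_j=0$ for every other short $j$ when $2j_0>(r-1)/2$; in the ``one $B_{j_0:4,4}$'' scenario we have $N_{BB}^{(2j_0)}=0$, hence $C_{2j_0}\ne10$, and the two medium vertices together with a big vertex cannot be pairwise far enough apart for all other $C_j$ to vanish. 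So the observed profile of $C_j$-values selects the correct scenario. Finally, all short maximal batons with at least $\FL{(n+2)/2}$ vertices are now known (they lie among the heavy batons just identified, every other short baton fitting comfortably into a card), so Lemma~\ref{counting}, applied for each $j\le(r-1)/2$, delivers all short maximal batons.
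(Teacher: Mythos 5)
Your proof is correct and follows the same core route as the paper's: you reduce to the rigid degree profile $d_1=d_2=\alpha+2$, $d_3=d_4=\alpha+1$, $d_i=2$ for $i\ge5$, $\alpha=(n-r)/4$ (which is exactly what the paper's equality argument in~\eqref{d'} yields), and you decode $\#B_{j:\alpha+1,\alpha+1}$ via the identity $C_j=(\alpha+1)^2N_{BB}+(\alpha+1)N_{BM}+N_{MM}$ together with the combinatorial constraints on $N_{BB},N_{BM},N_{MM}$. The paper does the same thing, just by enumerating the possible totals $\{0,1,\beta,\beta+1,2\beta,2\beta+1,3\beta\}$ and $\beta^2+\{0,1,\beta,\beta+1,2\beta\}$ explicitly. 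Where you genuinely diverge is the $\alpha=2$ tie-break: the paper observes that the ambiguous case forces $j\le(r-1)/3$, so that $B_{j:d_1,d_4}=B_{j:4,3}$ fits in a card, and distinguishes the two scenarios by the single count $\#B_{j:4,3}$; you instead look at the profile of $C_j$ across lengths, testing $C_{2j_0}$ when it is visible and falling back to the geometric impossibility of three branch vertices being pairwise farther than $(r-1)/2$ apart when $2j_0$ is not. Both resolutions are valid; yours uses only the family $B_{j:\beta,\beta}$ and is a bit more self-contained, while the paper's is a one-line count at a single $j$. One exposition point worth tightening: for $\alpha=2$ you assert $C_j=9$ is the \emph{only} ambiguity, but under only the constraints you state explicitly ($N_{BB},N_{MM}\le1$, $N_{BM}\le3$, and $N_{BM}\le2$ when $N_{BB}=1$), the triple $(0,3,1)$ would give $C_j=10$ and collide with $(1,0,1)$; you should note that the same rigidity observation you used for $N_{BB}$ (that $N_{BM}=3$ forces $W_4$ into an arithmetic progression with alternating degrees) also forces $N_{MM}=0$, so $(0,3,1)$ cannot occur. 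With that remark added, the argument is complete.
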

\begin{proof}
The hypothesis yields $\alpha\ge(n-r)/4$ and
$d_4\ge (n-r+4)/4$.  As before, by Remark~\ref{batalph}, $d_1=\alpha+1$ implies
that all short batons fit in cards, so we may assume $d_1\geq\alpha+2$.

By~\eqref{d'} and the lower bounds on $d_1$ and $d_4$,
\begin{align*}
n-r-2&= \sum_{i=1}^r(d_i-2)\ge2(d_1-2)+2(d_4-2)+\sum_{i=5}^r(d_i-2)\\
&\geq 2+4(\alpha-1)+\sum_{i=5}^r(d_i-2)
\geq (n-r-2)+\sum_{i=5}^r(d_i-2).
\end{align*}
This requires $d_5=2$, $\alpha=(n-r)/4$ and $d_1=d_3+1=d_4+1=\alpha+2$.
Since $d_4=\alpha+1$, again $\alpha\ge2$.  Let $\beta=\alpha+1$.

By Remark~\ref{batalph}, each copy of $B_{j:\beta,\beta}$ with $j\le(r-1)/2$
fits in a card.  With $d_5=2$, both branch vertices in each such baton are in
$W_4$.  The unique copy of $B_{j:d_1,d_1}$ in $G$ (if it exists) contains
${d_1-1\choose \alpha}^2$ (that is, $\beta^2$) copies of $B_{j:\beta,\beta}$.
The unique copy of $B_{j:d_3,d_3}$ that is a maximal baton in $G$ (if it
exists) is a copy of $B_{j:\beta,\beta}$.  Also, each copy of $B_{j:d_1,d_4}$
that is a maximal baton in $G$ contains ${d_1-1\choose \alpha}$ (that is,
$\beta$) copies of $B_{j:\beta,\beta}$.

Say that $W_4$ is {\it equally spaced} if there is some $i$ such that
$W_4=\{v_i,v_{i+j},v_{i+2j},v_{i+3j}\}$.
If $B_{j:d_1,d_1}\esub T$, then there may be two, one, or no copies of
$B_{j:d_1,d_4}$ that are maximal batons in $G$ (when there are two, $W_4$
must be equally spaced).  In these cases the numbers of copies of
$B_{j:\beta,\beta}$ in $\cD$ in addition to the $\beta^2$ contained in
$B_{j:d_1,d_1}$ are $2\beta$, ($\beta+1$ or $\beta$), or ($1$ or $0$),
respectively.  Since $\beta\ge3$, these values are all different and
distinguish the cases.

If $B_{j:d_1,d_1}\nosub T$, then there may be up to three copies of
$B_{j:d_1,d_4}$ that are maximal batons in $G$.  Having three such copies
requires $W_4$ to be equally spaced and prohibits $B_{j:d_4,d_4}$ as a 
maximal baton in $G$, yielding $3\beta$ copies of $B_{j:\beta,\beta}$ in $\cD$.
With two such maximal batons in $G$, the number of copies of
$B_{j:\beta,\beta}$ in $\cD$ is $2\beta+1$ or $2\beta$, depending on whether
$W_4$ is equally spaced.  With at most one copy of $B_{j:d_1,d_4}$ as a
maximal baton in $G$, the numbers of copies of $B_{j:\beta,\beta}$ can be
$\beta+1$, $\beta$, $1$, or $0$, depending on whether $B_{j:d_1,d_4}$ and
$B_{j:d_4,d_4}$ occur as maximal batons in $G$.  These values are all 
different and distinguish the cases.

The only ambiguity in using $\#B_{j:\beta,\beta}$ to determine the maximal
batons in $G$ having at least $n-\ell$ vertices is that when $\beta=3$ the case
with $B_{j:d_1,d_1}\esub T$ generating all $\beta^2$ copies of 
$B_{j:\beta,\beta}$ may be confused with the case with $B_{j:d_1,d_1}\nosub T$
in which $W_4$ is equally spaced and $\#B_{j:\beta,\beta}=3\beta$.
To have such confusion, we must have $j\le (r-1)/3$.  Now the number of
vertices in $B_{j:d_1,d_4}$ is bounded by $(r-1)/3+(n-r)/2+2$, since
$d_1=\alpha+2$ and $\alpha=(n-r)/2$.  This bound is at most $(n+2)/2$ when
$r\ge4$.  If $r\le3$, then we cannot have four branch vertices.  Hence
$B_{j:d_1,d_4}$ fits in a card, we can use $\#B_{j:d_1,d_4}$ to distinguish
the two cases (there are $2\beta+2$ copies in $B_{j:d_1,d_1}$ and only
three copies in the case where $W_4$ is equally spaced.

Thus we find all short maximal batons in $G$ whose key vertices are in $W_4$,
and all other short batons fit in cards.
\end{proof}

\section{Four or More Vertices of Maximum Degree}

In Section~\ref{highr}, we proved that for reconstruction of the unknown
$n$-vertex caterpillar $G$ it suffices to know the short maximal batons (length
at most $(r-1)/2$) and the shorter maximal tritons (length at most $(r-4)/2$).
In Section~\ref{2maxbaton}, we proved that the short maximal batons are
known when $G$ has at least two vertices of maximum degree.  Here we begin by
observing that the shorter tritons are also determined when the maximum degree
is small.  Keep in mind that the deck $\cD$ is the multiset of induced
subgraphs with at most $(n+2)/2$ vertices.

\begin{lemma}\label{maxdeg}
If $d_1\le (n-r+12)/6$, then $\cD$ determines all tritons of length at most
$(r-4)/2$.  Thus $n$-vertex caterpillars with $d_1=d_6$ are reconstructible
from $\cD$.
\end{lemma}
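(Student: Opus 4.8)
The plan is to read the hypothesis $d_1\le(n-r+12)/6$ as a size bound: it is precisely what forces every triton of length at most $(r-4)/2$ that can occur in $G$ to have at most $(n+2)/2$ vertices, hence to appear as a card in $\cD$. Once all such tritons can be counted directly, the exclusion argument of Lemma~\ref{counting} recovers the maximal ones. For the concluding sentence, the plan is to check that $d_1=d_6$ forces a bound on $d_1$ strong enough both to satisfy this hypothesis and to place every short baton (length at most $(r-1)/2$) in a card, after which Remark~\ref{btplan} completes the reconstruction.

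For the triton statement, I would fix $j,j'$ with $j+j'\le(r-4)/2$ and let $\cF$ be the family of $j,j'$-tritons. Every occurrence in $G$ of a member of $\cF$ has all three key-vertex degrees at most $d_1$, hence at most $\FR{r-4}2+3d_1-3$ vertices; since $3d_1\le\FR{n-r+12}2$, this is at most
$$\FR{r-4}2+\FR{n-r+12}2-3=\FR{n+2}2 .$$
Thus every such triton fits in a card, so $\cD$ determines $s(F,G)$ for every $F\in\cF$. In a caterpillar every $j,j'$-triton lies in a unique maximal $j,j'$-triton, so Lemma~\ref{counting} applies to $\cF$; its hypothesis holds because a member of $\cF$ with at least $\FL{(n+2)/2}$ vertices either does not occur in $G$ or occurs only as a maximal triton (being too large to be a proper induced sub-triton of anything in $G$), so in either case $\hat s(F,G)$ is read off from $\cD$. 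Hence $\cD$ determines $\hat s(F,G)$ for all $F\in\cF$, and ranging over all admissible pairs $(j,j')$ gives all maximal tritons of length at most $(r-4)/2$.

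For the second statement ($d_1=d_6$), I would first observe that if $d_1=2$ then $G=P_n$, which is determined by $n$; so assume $d_1\ge3$, whence $d_1=d_2=\cdots=d_6\ge3$ and $G$ has at least six branch vertices, as desired in Remark~\ref{btplan}. By~\eqref{d'},
$$n-r-2=\SE i1r(d_i-2)\ge\SE i16(d_i-2)=6(d_1-2) ,$$
so $d_1\le(n-r+10)/6<(n-r+12)/6$ and also $n-r\ge8$. The first part then supplies all maximal tritons of length at most $(r-4)/2$. For the short batons, a $j$-baton with $j\le(r-1)/2$ occurring in $G$ has at most $\FR{r-1}2+2d_1-1$ vertices, which by $d_1\le(n-r+10)/6$ and $n-r\ge8$ is at most $(n+2)/2$; hence every short baton is a card, and the same exclusion argument determines all maximal short batons. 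Having all maximal short batons (length at most $(r-1)/2$) and all maximal shorter tritons (length at most $(r-4)/2$), we conclude by Remark~\ref{btplan} that $G$ is reconstructible from $\cD$. The step I expect to need the most care is this conclusion: reconstruction needs the short batons in addition to the shorter tritons, so one must use the full strength of $d_1=d_6$ — six vertices of degree at least $3$ consume enough of the leaf budget $n-r-2$ to keep the short batons small — rather than merely the inequality $d_1\le(n-r+12)/6$.
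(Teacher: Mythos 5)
Your proof is correct, and the triton computation matches the paper's exactly, but you handle the batons by a different (and in this special case simpler) route than the paper does. The paper dispatches the short maximal batons by citing the machinery of Section~\ref{2maxbaton}, which applies because $d_1=d_6\ge 3$ puts us in the case $d_1=d_2$ with $d_4\ge 3$; you instead observe that the same degree budget argument that controls the tritons also forces every short baton to fit in a card directly. Concretely, with $d_1\le(n-r+10)/6$ and $n-r\ge 8$ the bound $\frac{r-1}{2}+2d_1-1\le\frac{n+2}{2}$ follows from $n-r\ge 5$, so all short batons are visible and the exclusion argument of Lemma~\ref{counting} recovers the maximal ones without any of the elaborate case analysis of Section~\ref{2maxbaton}. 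This is a genuine shortcut when $d_1=d_6$: six vertices of maximum degree make $d_1$ so small that nothing overflows a card. You also explicitly dispatch $d_1=2$ (the path), which the paper leaves implicit. Your reliance on Remark~\ref{btplan} to finish is exactly the paper's intended conclusion. One point worth flagging as you did: the argument that a member of $\cF$ with exactly $\FL{(n+2)/2}$ vertices must be maximal if it occurs (since you already showed no triton of that length in $G$ is larger) is the precise reason the hypothesis of Lemma~\ref{counting} is met, and spelling it out the way you did is cleaner than the paper's unexplained appeal to knowing the large maximal subgraphs.
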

\begin{proof}
Tritons with length at most $(r-4)/2$ have at most $(r-4)/2+3d_1-3$ vertices.
This is at most $(n+2)/2$ when $d_1\le (n-r+12)/6$.  Hence all shorter
tritons are then known.

If $d_1=d_6$, then the short maximal batons are known by
Section~\ref{2maxbaton}.  Also, by~\eqref{d'}, $n-r-2\ge6(d_1-2)$, so
$d_1\le(n-r+10)/6$.
\end{proof}

We may henceforth assume $d_1>d_6$.
The next lemmas provide sufficient conditions for tritons to fit in cards.
Recall that $B_{j:a,b}$ has $j+a+b-1$ vertices and $B_{j,j':a,b,c}$ has
$j+j'+a+b+c-3$ vertices.

\begin{lemma}\label{short4}
Suppose $d_1=d_4>d_6$.  If $j+j'\le(r-4)/2$, then the triton 
$B_{j,j':d_6+1,d_6+1,d_6+1}$ fits in a card.
\end{lemma}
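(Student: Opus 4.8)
The statement is purely a vertex-count estimate, so my plan is to compute the number of vertices in $B_{j,j':d_6+1,d_6+1,d_6+1}$ and show it does not exceed $(n+2)/2$. By Definition~\ref{brooms}, $B_{j,j':a,b,c}$ has $j+j'+a+b+c-3$ vertices, so $B_{j,j':d_6+1,d_6+1,d_6+1}$ has exactly $j+j'+3d_6$ vertices; using the hypothesis $j+j'\le(r-4)/2$ this is at most $(r-4)/2+3d_6$. Since the inequality $(r-4)/2+3d_6\le(n+2)/2$ is equivalent to $6(d_6-1)\le n-r$, everything reduces to proving this last inequality.

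For that I would apply the identity~\eqref{d'} from Lemma~\ref{leaves}, namely $n-r-2=\SE i1r(d_i-2)$. Each nonleaf degree is at least $2$, so all the summands are nonnegative and we may keep only the six largest — this is legitimate because $d_6\ge2$ forces $r\ge6$, while in the alternative $d_6=1$ we have $6(d_6-1)=0\le n-r$ trivially. Now $d_1=d_4$ together with the degree list being nonincreasing gives $d_1=d_2=d_3=d_4$, and $d_4>d_6$ upgrades this to $d_i\ge d_6+1$ for $i\le4$; also $d_5\ge d_6$. Hence
\[ n-r-2=\SE i1r(d_i-2)\ge\SE i16(d_i-2)\ge 4(d_6-1)+2(d_6-2)=6d_6-8, \]
so $n-r\ge 6d_6-6=6(d_6-1)$, exactly as needed. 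Since the vertex count of the triton is an integer bounded by $(n+2)/2$, it is bounded by $\FL{(n+2)/2}$, so the triton fits in a card.

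I do not expect any real obstacle; the only delicate point is extracting enough from~\eqref{d'}: one must use that \emph{all four} of $d_1,\dots,d_4$ exceed $d_6$ — which is precisely what the hypothesis $d_1=d_4>d_6$ provides — and also retain the contributions of $d_5$ and $d_6$ themselves, since discarding either part would weaken the bound below $6(d_6-1)$ once $d_6\ge3$.
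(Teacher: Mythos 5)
Your proof is correct and follows essentially the same route as the paper's: count the vertices of the triton as $j+j'+3d_6\le(r-4)/2+3d_6$, then use $d_1=\cdots=d_4\ge d_6+1$ together with $d_5\ge d_6$ in~\eqref{d'} to deduce $6d_6\le n-r+6$, which is exactly what is needed. The only blemish is the parenthetical about ``the alternative $d_6=1$'': since $\VEC d1r$ are nonleaf degrees (all $\ge 2$) and $d_6$ being meaningful already presupposes $r\ge6$ (which holds comfortably here since $r\ge(n-5)/2$), that contingency never arises, but this does not affect the argument.
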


\smallskip
\noindent
{\it Proof.}
The triton has at most $(r-4)/2+3d_6$ vertices.  Since $d_4\ge d_6+1$, we
obtain $6d_6\le -4+\SE i16 d_i$.  Now for the number $N$ of vertices
in the triton we use~\eqref{d'} to obtain
$$
\qquad\qquad N\le \FR{r-4}2+3d_6\le\FR{r-4}2-2+\FR12\SE i16 d_i
\le \FR{r-8}2+\FR{n-r+10}2=\FR{n+2}2.\qquad\qquad\Box
$$

\bigskip

Lemma~\ref{lastfew} enables us to handle the case $d_1=d_5$.
\smallskip

\begin{corollary}\label{5max}
If the unknown caterpillar has exactly five vertices of maximum degree
(that is, $d_1=d_5>d_6$), then it is determined by the deck.
\end{corollary}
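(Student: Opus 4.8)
The plan is to verify, for caterpillars with $d_1=d_5>d_6$, the two conditions of Remark~\ref{btplan}: that the deck $\cD$ determines all \emph{short} maximal batons (length at most $(r-1)/2$) and all \emph{shorter} maximal tritons (length at most $(r-4)/2$); reconstruction then follows from the results of Section~\ref{highr}.

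First I would record the basic consequences of the hypothesis. Since we may assume $r\ge(n-5)/2\ge22$, the caterpillar has at least six nonleaf vertices, so $d_6\ge2$; together with $d_1=d_5>d_6$ this forces $d_1=d_2=\cdots=d_5\ge3$, hence $d_1=d_2\ge d_4\ge3$ and also $d_1=d_4>d_6$. The condition $d_1=d_2\ge d_4\ge3$ places us in the setting of Section~\ref{2maxbaton}. A routine case check shows that exactly one of Lemmas~\ref{d3alpha}, \ref{d4alpha}, \ref{d4alpha+}, and~\ref{d4alpha++} applies: split first on whether $d_3\le\alpha$ (Lemma~\ref{d3alpha}); if not, on whether $d_4\le\alpha$ (then $\alpha\le d_3-1$ and Lemma~\ref{d4alpha} applies); if not, on whether $2\alpha+1\le(n-r+1)/2$ (Lemma~\ref{d4alpha+}) or $2\alpha+1\ge(n-r+2)/2$ (Lemma~\ref{d4alpha++}), which is exhaustive because $2\alpha+1$ is an integer. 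Thus $\cD$ determines all short maximal batons.

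Next I would determine the shorter maximal tritons, splitting on the size of $d_1$. If $d_1\le(n-r+12)/6$, then Lemma~\ref{maxdeg} already gives all maximal tritons of length at most $(r-4)/2$, and Remark~\ref{btplan} finishes. So assume $d_1>(n-r+12)/6$. The key observation is that, since $d_5=d_1>d_6$ and there are exactly five vertices of maximum degree, every vertex of $G$ has degree $d_1$ or degree at most $d_6$. I claim that any maximal $(j,j')$-triton with $j+j'\le(r-4)/2$ whose three key vertices are not all of degree $d_1$ has at most $(n+2)/2$ vertices, hence is visible on a card: such a triton has at most $\FR{r-4}{2}+(2d_1+d_6)-3$ vertices, while~\eqref{d'} gives $n-r-2\ge5(d_1-2)+(d_6-2)$, so $2d_1+d_6\le n-r+10-3d_1<\FR{n-r+8}{2}$ using $d_1>(n-r+12)/6$; substituting shows the triton has fewer than $(n-2)/2$ vertices. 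Consequently the only maximal tritons of length at most $(r-4)/2$ that might fail to fit on a card are the copies of $B_{j,j':d_1,d_1,d_1}$. By Lemma~\ref{short4} (using $d_1=d_4>d_6$), for each $j,j'$ with $j+j'\le(r-4)/2$ the triton $B_{j,j':d_6+1,d_6+1,d_6+1}$ fits on a card, so $\#B_{j,j':d_6+1,d_6+1,d_6+1}$ is known. Every copy of this triton has all three key vertices of degree exceeding $d_6$, hence equal to $d_1$; since in a caterpillar every triton lies in a unique maximal triton (as in the proof of Lemma~\ref{jbaton}), each such copy lies in a maximal triton isomorphic to $B_{j,j':d_1,d_1,d_1}$, which contains exactly $\binom{d_1-1}{d_6}^2\binom{d_1-2}{d_6-1}$ copies of $B_{j,j':d_6+1,d_6+1,d_6+1}$. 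Dividing recovers the number of maximal tritons isomorphic to $B_{j,j':d_1,d_1,d_1}$ for every admissible $(j,j')$. Hence for each such $(j,j')$ we know all maximal $(j,j')$-tritons with at least $\FL{(n+2)/2}$ vertices, so Lemma~\ref{counting} (the exclusion argument) yields all maximal $(j,j')$-tritons; ranging over $(j,j')$ gives all maximal tritons of length at most $(r-4)/2$, and Remark~\ref{btplan} completes the proof.

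The main obstacle is this last step when $d_1$ is large, where the oversized maximal tritons cannot be read directly off the cards; what rescues the argument is that with only five maximum-degree vertices, any maximal triton too large to fit on a card must have all three key vertices of maximum degree, so it is forced to be $B_{j,j':d_1,d_1,d_1}$ and can be counted indirectly through the smaller probe $B_{j,j':d_6+1,d_6+1,d_6+1}$, which Lemma~\ref{short4} guarantees fits on a card.
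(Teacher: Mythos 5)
Your proposal is correct and matches the paper's approach: probe the oversized maximal triton $B_{j,j':d_1,d_1,d_1}$ indirectly via the smaller $B_{j,j':d_6+1,d_6+1,d_6+1}$ (which fits in a card by Lemma~\ref{short4}), then recover the remaining shorter maximal tritons by the exclusion argument. Your write-up is somewhat more explicit than the paper's terse closing sentence, in particular in verifying (via the split on $d_1$ against $(n-r+12)/6$ and the computation from~\eqref{d'}) that every shorter maximal triton with at most two maximum-degree key vertices itself fits in a card, which is what makes the exclusion argument legitimate.
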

\begin{proof}
It suffices to discover the shorter tritons.
A vertex of degree at least $d_6+1$ in a subgraph of
$G$ has degree $d_1$ in $G$.  On the other hand, a $j,j'$-triton with three
branch vertices of degree $d_1$ is a maximal triton and contains exactly $M$
copies of $B_{j,j':d_6+1,d_6+1,d_6+1}$, where
$M={d_1-1\choose d_6}^2{d_1-2\choose d_6-1}$.
Thus $\#B_{j,j':d_1,d_1,d_1}=\#B_{j,j':D,D,D}/M$, where $D=d_6+1$.
By Lemma~\ref{short4}, $B_{j,j':D,D,D}$ fits in a card.

We must also find the other shorter maximal tritons with at least
$\FL{(n+2)/2}$ vertices.  These may exist with one or two vertices of maximum
degree.  Since we know the number of these with three vertices of maximum
degree, and those with at most two vertices are contained in
$B_{j,j':D,D,d_6}$, we can find the remaining ones via an exclusion argument.
\end{proof}

Now suppose $d_1=d_4>d_5$.  As before, it suffices to determine the shorter
tritons.  We split the task into two cases, depending on whether
$d_5\leq 1+(n-r)/{6}$.

\begin{lemma}\label{d14low5}
Suppose $d_1=d_4>d_5$.  If $d_5\le (n-r+6)/6$, then the deck determines all
shorter maximal tritons (length at most $(r-4)/2$) having at least
$\FL{(n+2)/2}$ vertices.
\end{lemma}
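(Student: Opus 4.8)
The plan is to determine, for each pair $(j,j')$ with $j+j'\le(r-4)/2$, all maximal $j,j'$-tritons of $G$ that have at least $\FL{(n+2)/2}$ vertices; recall from Lemma~\ref{jbaton} that in a caterpillar every $j,j'$-triton lies in a unique maximal one. Write $D=d_5+1$. Since $d_1=d_2=d_3=d_4>d_5$, every vertex of $G$ of degree at least $D$ in fact has degree exactly $d_1$, and there are exactly four such vertices; also $d_1\ge3$ because $d_1>d_5\ge2$. First I would dispose of the small tritons: a $j,j'$-triton all of whose key vertices have degree at most $d_5$ has at most $\FR{r-4}2+3d_5-3$ vertices, which is at most $\FR{n+2}2-3$ because $3d_5\le\FR{n-r+6}2$ by hypothesis, so such a triton fits in a card and has fewer than $\FL{(n+2)/2}$ vertices. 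Hence every maximal $j,j'$-triton with at least $\FL{(n+2)/2}$ vertices has a key vertex of degree $d_1$, and it remains to determine all maximal $j,j'$-tritons having a key vertex of degree $d_1$.

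To find these, for a triton $F=B_{j,j':a,b,c}$ whose set of key vertices of degree $d_1$ is nonempty, let $\hF$ be the triton obtained from $F$ by deleting leaves at each of those key vertices until its degree drops to $D$. Then every key degree of $\hF$ is at most $D=d_5+1$, so $\hF$ has at most $\FR{r-4}2+3d_5\le\FR{n+2}2$ vertices and thus lies in the deck; moreover $\hF$ is a genuine triton, since a middle key vertex of $F$ has degree at least $3$ and so does the corresponding vertex of $\hF$. Each induced copy of $\hF$ in $G$ has, at every key position carrying degree $D$, a vertex of $G$ of degree at least $D$, hence of degree $d_1$, so the copy lies in a maximal $j,j'$-triton $H$ whose key degrees equal $d_1$ at those positions and are at least the matching degree of $\hF$ elsewhere. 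Grouping the copies of $\hF$ by the maximal triton containing them, as in the proof of Lemma~\ref{counting}, gives
$$s(\hF,G)=\sum_H s(\hF,H)\,\hat s(H,G),$$
where $H$ runs over the finitely many maximal $j,j'$-tritons with the stated property and each coefficient $s(\hF,H)$ is a positive product of binomial coefficients determined by $\hF$ and $H$.

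I would then solve for the numbers $\hat s(H,G)$ by processing the isomorphism classes $H$ in order of decreasing size of the set of their key vertices of degree $d_1$, and within a fixed such size in order of decreasing remaining (i.e. non-$d_1$) key degrees. When all three key vertices of $H$ have degree $d_1$ there is a single class, namely $B_{j,j':d_1,d_1,d_1}$, and the identity above with $\hF=B_{j,j':D,D,D}$ has this as its only unknown, so we recover $\hat s(B_{j,j':d_1,d_1,d_1},G)$. For $H$ with exactly two key vertices of degree $d_1$ (two shapes, according to whether the middle key vertex is among them), the identity above applied to the corresponding $\hF$ with exactly two key degrees equal to $D$ involves only the already-known fully-degree-$d_1$ triton, the target class itself, and target classes with strictly larger non-$d_1$ key degree; so after subtracting the known contributions the system is triangular in that non-$d_1$ degree and we solve it. The same works for $H$ with exactly one key vertex of degree $d_1$, subtracting the contributions of the tritons with more key vertices of degree $d_1$. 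This determines every maximal $j,j'$-triton having a key vertex of degree $d_1$, hence every maximal $j,j'$-triton with at least $\FL{(n+2)/2}$ vertices, for every admissible $(j,j')$, which is what was required.

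I expect the main obstacle to be the bookkeeping in the last two steps: checking that every reduction triton $\hF$ fits in a card from the single hypothesis $d_5\le(n-r+6)/6$ together with $j+j'\le(r-4)/2$ (the bound $\C{V(\hF)}\le(n+2)/2$ is essentially tight), and verifying that the resulting linear systems are triangular with positive diagonal coefficients, equivalently that no reduction triton $\hF$ can embed into a maximal triton whose key degrees fail to dominate those of $\hF$ position by position. This last point is exactly where the gap $d_4>d_5$ is essential, since it forces every ``large'' key vertex to take the single value $d_1$, and it is also why the all-small maximal tritons never interfere (they cannot contain any $\hF$, which always has a key vertex of degree $D>d_5$).
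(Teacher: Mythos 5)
Your proof is correct and takes essentially the same route as the paper's: both replace the high-degree key vertices of a candidate maximal triton by degree $D=d_5+1$ to obtain a small ``reduction'' triton fitting in a card, verify the bound $\FR{r-4}{2}+3d_5\le\FR{n+2}{2}$, note that every key vertex of degree $\ge D$ must actually have degree $d_1$ (which makes the resulting linear system triangular), and then run the exclusion argument of Lemma~\ref{counting} in decreasing order of the number of degree-$d_1$ key vertices and of the remaining key degrees. The only stylistic difference is that you explicitly isolate and discard the tritons with no degree-$d_1$ key vertex at the start rather than handling them last, which is fine since such a triton has at most $(n+2)/2-3<\FL{(n+2)/2}$ vertices and so is not among the ones the lemma asks for.
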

\begin{proof}
Let $L=1+d_5$, so any triton containing a vertex of degree $L$ is a proper
subgraph of a triton in which that vertex has degree $d_1$.  Also, since
$B_{j,j':a,b,c}$ has $j+j'+a+b+c-3$ vertices, a shorter triton with maximum
degree $L$ has at most $(r-4)/2+3L-3$ vertices.  We compute
$$\FR{r-4}2+3L-3\le \FR{r-4}2+3d_5\le \FR{r-4}2+3\FR{n-r+6}6=\FR{n+2}2.$$
Thus every shorter triton with maximum degree $L$ fits in a card, and we can
count the copies of each.  We use these counts to determine the shorter maximal
tritons containing at least one vertex of maximum degree.

Every copy of $B_{j,j':L,L,L}$ lies in a copy of $B_{j,j':d_1,d_1,d_1}$,
which is the unique maximal triton containing it, and every copy of
$B_{j,j':d_1,d_1,d_1}$ contains $M$ copies of $B_{j,j':L,L,L}$, where
$M=\CH{d_1-1}{L-1}^2\CH{d_1-2}{L-2}$.  Hence
$\#B_{j,j':d_1,d_1,d_1}=\#B_{j,j':L,L,L}/M$.

Now consider the copies of $B_{j,j':a,L,L}$, where $a\le d_5$.  We know
$\#B_{j,j':a,L,L}$, since such subgraphs fit in cards.  Each copy of 
$B_{j,j':a,L,L}$ occurs in a unique copy of $B_{j,j':a,d_1,d_1}$, but that
copy of $B_{j,j':a,d_1,d_1}$ need not be a maximal triton.

We start with $B_{j,j':d_5,L,L}$.  Each copy of $B_{j,j':d_1,d_1,d_1}$
contains $2\CH{d_1-1}{d_5-1}\CH{d_1-2}{L-2}\CH{d_1-1}{L-1}$ copies of
$B_{j,j':d_5,L,L}$.  When we delete these from the count $\#B_{j,j':d_5,L,L}$,
the remaining copies lie in distinct copies of $B_{j,j':d_5,d_1,d_1}$, and
those copies of $B_{j,j:d_5,d_1,d_1}$ are maximal tritons.

We proceed inductively with this exclusion argument as $a$ decreases.
From $\#B_{j,j':a,L,L}$ we must delete all copies that arise as subgraphs
of maximal tritons that are copies of $B_{j,j':a',d_1,d_1}$ for $a<a'\le d_5$
or $B_{j,j':d_1,d_1,d_1}$, all of which have already been counted.
The remaining count is the number of maximal tritons that are copies of
$B_{j,j':a,d_1,d_1}$.

The same technique enables us to count the maximal tritons that are copies
of $B_{j,j':d_1,b,d_1}$, for $2\le b\le d_5$.  Having determined the
maximal $j,j'$-tritons having two vertices of maximum degree, we can then use
the same exclusion approach to determine the maximal $j,j'$-tritons having one
vertex of maximum degree, starting with copies of $B_{j,j':d_5,d_5,d_1}$
and $B_{j,j':d_5,d_1,d_5}$ and working down to smaller $j,j'$-tritons.

Since all shorter tritons having no vertices of degree $d_1$ fit in cards,
the exclusion argument now permits finding all the maximal tritons among them.
\end{proof}


\begin{lemma}\label{d14high5}
Suppose $d_1=d_4>d_5$.  If $d_5\ge(n-r+7)/6$, then the deck determines all
shorter maximal tritons having at least $\FL{(n+2)/2}$ vertices.
\end{lemma}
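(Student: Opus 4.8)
The plan is to use \eqref{d'} together with the hypotheses $d_1=d_4>d_5$ and $d_5\ge(n-r+7)/6$ to show that the degree list is very rigid, and then for each relevant length $(j,j')$ to count a card-sized ``probe'' triton whose multiplicity in $\cD$ pins down the multiplicities of the maximal $j,j'$-tritons having at least $\FL{(n+2)/2}$ vertices---which is exactly what the lemma asks for. The method mirrors the baton arguments in Section~\ref{2maxbaton}, with tritons in place of batons.

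First I would record the structural consequences. Writing $D=d_1=\cdots=d_4$, substituting $d_5\ge(n-r+7)/6$ and $D\ge d_5$ into \eqref{d'} shows that the four vertices of degree $D$ already use more than two thirds of the excess $n-r-2$ and that the vertex realizing $d_5$ uses more than half of what remains; in particular $D\le(5(n-r)+41)/24$, and the combined excess of all the other vertices is small. If moreover $D\le(n-r+12)/6$, then every shorter triton has at most $(r-4)/2+3D-3\le(n+2)/2$ vertices, so all shorter tritons fit in cards and there is nothing to prove; hence I may assume $D>(n-r+12)/6$.

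Next I would isolate which shorter tritons can have at least $\FL{(n+2)/2}$ vertices; call these \emph{large}. Since $B_{j,j':a,b,c}$ has $j+j'+a+b+c-3$ vertices and a shorter triton has $j+j'\le(r-4)/2$, a large shorter triton satisfies $(a-2)+(b-2)+(c-2)>\tfrac12(n-r-2)$, so its three key vertices together account for more than half of the total excess. It follows from \eqref{d'} that only a bounded number of vertices of $G$ have degree large enough to serve as a key of such a triton, so for each length $(j,j')$ there are only boundedly many large maximal $j,j'$-tritons, and, using $d_1=d_4>d_5$, their ordered key-degree patterns fall into a short list: all three equal to $D$; exactly two equal to $D$, with the third key in any of the three positions; or at most one equal to $D$, which forces $d_5$ still larger and the remaining keys to lie at level $d_5$ or just below.

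Finally, for each length $(j,j')$ and each admissible pattern I would choose a probe triton $F$ of the same length but with lowered key degrees---lowered by one step when $D$ is moderate, and otherwise lowered by a larger amount or lowered in only some of the keys (passing, say, to $B_{j,j':D-1,D-1,t}$ or $B_{j,j':t,D,t}$ for a suitable $t$)---so that $F$ has at most $(n+2)/2$ vertices and hence occurs in $\cD$ with known multiplicity. Each maximal triton of a fixed pattern contains a known number of induced copies of $F$, this number being a product of binomial coefficients $\binom{d_i-1}{\,\cdot\,}$ counting the ways to retain leaf neighbors at the key vertices; processing patterns in nonincreasing order of size and subtracting the copies of $F$ forced by already-counted larger maximal tritons (the exclusion argument) recovers the multiplicity of each large maximal triton. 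When two distinct patterns would give the same count of $F$, I would break the tie as in Lemmas~\ref{d4alpha+} and~\ref{d4alpha++}, either by comparing a second probe or by using that $\binom{D-1}{s}/\binom{D-1}{s-1}$ differs from $\binom{d_5-1}{s}/\binom{d_5-1}{s-1}$, the near-equality of $D$ and $d_5$ in this regime keeping the relevant ratios small enough for the integer arithmetic to separate the cases. This determines all shorter maximal tritons with at least $\FL{(n+2)/2}$ vertices, as claimed. The main obstacle is this probe-and-disambiguation step when $D$ is large: one must check that a probe small enough to fit in a card still inverts uniquely to the maximal-triton multiplicities, and settle the residual near-collision and small-parameter cases directly.
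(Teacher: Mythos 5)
Your overall framework — classify which shorter tritons can be large, choose a card-sized probe triton of the same length and lowered key degrees, count copies in the deck, and peel off maximal tritons by the exclusion argument — is indeed the same strategy the paper uses. However, two specific steps that carry the weight of the argument are not present in your sketch, and without them the sketch does not yet constitute a proof.

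First, the paper's probe for a triton with two keys in $W_4$ is $B_{j,j':d_5+1,d_6+1,d_5+1}$, and the reason this fits in a card is a quantitative strict gap between $d_5$ and $d_6$. Setting $\alpha = d_5 - (n-r+6)/6 \ge 1/6$, equation~\eqref{d'} forces
$d_6-2 \le \frac{n-r+6}{6}-2-5\alpha$, hence $d_6 \le d_5 - 1$ and in fact $d_6$ is bounded well below $d_5$. This is precisely what makes the estimate~\eqref{fitt} go through: $\tfrac{r-4}{2}+2d_5+d_6 < \tfrac{n+2}{2}$. Your sketch derives a bound on $D$ rather than on $d_6$, and asserts the existence of a suitable probe without verifying it fits; the derivation of the $d_6$ gap is the step that makes that assertion true. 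Your remark that ``the near-equality of $D$ and $d_5$ keeps the ratios small'' is also aimed at the wrong quantity: what the argument actually exploits is the separation $d_5 - d_6$, not the closeness of $D$ and $d_5$.

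Second, the paper must handle a combinatorial subtlety: for a fixed $(j,j')$, the vertex $w_5$ may lie in \emph{two} maximal tritons isomorphic to $B_{j,j':d_1,d_5,d_1}$, and if so the divisibility test no longer isolates a single maximal triton. The paper detects this situation using the already-known short batons (it happens only when $w_5$ has two $W_4$-neighbors at distance $j$ and two at distance $j'$) and treats it separately; only after excluding it does the divisibility-by-$\binom{d_1-1}{d_5}^2\binom{d_1-2}{d_6-1}$ argument cleanly determine whether a type-$(\mathrm{ii})$ triton exists. Your proposal does not mention this multiplicity issue, and tie-breaking by binomial-ratio comparisons in the style of Lemma~\ref{d4alpha} is not what is needed here; one needs the baton-based structural check first. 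Filling in these two points would bring your sketch in line with the paper's argument.
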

\begin{proof}
We first prove $d_5\ge3$.  Otherwise, $2\ge(n-r+7)/6$, so $n-r\le5$.
Now~\eqref{d'} implies
$$
3\ge n-r-2\ge\SE i14(d_i-2)\ge4.
$$

The inequality $d_6<d_5$ is now immediate if $d_6\le 2$.
When $d_6\ge 3$, we prove a stronger inequality.
Let $\alpha=d_5-(n-r+6)/6$, so $\alpha\ge1/6$.  Since $d_1\ge d_5+1$,
using~\eqref{d'} yields
\begin{align}\label{d6}
d_6-2&\leq n-r-2-5(d_5-2)-4\le (n-r-1)-5(d_5-1)\notag\\
&\le n-r-1-5\left(\FR{n-r}6+\alpha\right)= \frac{n-r+6}{6}-2-5\alpha.
\end{align}
In particular, $d_6\le (n-r+6)/6-5\alpha\le d_5-1$.

Again let $W_k=\{w_1,\ldots,w_k\}$.
Given $j$ and $j'$ with $j+j'\le (r-4)/{2}$, vertex $w_5$ can be in at most
two maximal tritons that are copies of $B_{j,j':d_1,d_5,d_1}$ in $G$.
Moreover, if $w_5$ is in two such maximal tritons, then $j'\neq j$ and $w_5$
has two vertices in $W_4$ at distance $j$ in $G$ and two vertices in $W_4$ at
distance $j'$.  Since we know all short batons, we can determine whether this
occurs.  If it does, then we know all shorter tritons whose key vertices are
all in $W_5$.  Otherwise, we know that at most one maximal triton is a copy of
$B_{j,j':d_1,d_5,d_1}$.

A similar statement holds for copies of $B_{j,j':d_5,d_1,d_1}$, with the only
difference being that then two such maximal tritons may also occur when $j=j'$.
We still see whether this happens from the deck.  The case of
$B_{j,j':d_1,d_1,d_5}$ is symmetric.

Having restricted to one of these $j,j'$-tritons occurring at most once
as a maximal triton, we want to determine whether it in fact appears.
Let $\beta$ be the number of vertices in $B_{j,j':d_5+1,d_6+1,d_5+1}$.
Using~\eqref{d6}, we compute that it fits in a card.  That is,
\begin{equation}\label{fitt}
\beta\le\frac{r-4}{2}+2d_5+d_6\leq
\frac{r-4}{2}+2\alpha+3\frac{n-r+6}{6} -5\alpha<\frac{n+2}{2}.
\end{equation}
Every such triton is contained in a maximal triton in $G$ that is a copy of
(i)~$B_{j,j':d_1,d_1,d_1}$ or (ii) $B_{j,j':d_1,d_5,d_1}$.  Each triton of
type (i) in $G$ contains ${d_1-1\choose d_5}^2{d_1-2\choose d_6-1}$ copies of
$B_{j,j':d_5+1,d_6+1,d_5+1}$, while each triton of type (ii)  contains
fewer copies, only ${d_1-1\choose d_5}^2{d_5-2\choose d_6-1}$.
Since we have restricted to the case where there is at most one triton of type
(ii), we decide whether there is such a triton by checking whether the number
of copies of $B_{j,j':d_{5}+1,d_6+1,d_5+1}$ is divisible by
${d_1-1\choose d_5}^2{d_1-2\choose d_6-1}$; if so, then all the copies of 
$B_{j,j':d_5+1,d_6+1,d_5+1}$ are contained in copies of $B_{j,j':d_1,d_1,d_1}$.
We thus find the number of maximal $j,j'$-tritons of types (i) and (ii)
in $G$.

Now, consider shorter tritons of the form $B_{j,j':d_6+1,d_5+1,d_5+1}$ that are
not subgraphs of the tritons of types (i) and (ii) in $G$.  As in~\eqref{fitt},
such shorter tritons fit in cards, so we can count them.  The copies we have
not excluded are contained in copies of $B_{j,j':d_5,d_1,d_1}$.  We know
how many copies of $B_{j,j':d_6+1,d_5+1,d_5+1}$ lie in each copy of
$B_{j,j':d_5,d_1,d_1}$, so we now know how the number of copies of
$B_{j,j':d_5,d_1,d_1}$ that are maximal tritons in $G$.  Thus, we know the
all the shorter maximal tritons in $G$ whose key vertices all lie in $W_5$. 

The tritons $B_{j,j':d_5+1,d_5+1,d_6}$, $B_{j,j':d_5+1,d_6,d_5+1}$,
and $B_{j,j':d_6,d_5+1,d_5+1}$ fit in cards, since they are smaller than
$B_{j,j':d_6+1,d_5+1,d_5+1}$.  They tell us how many maximal $j,j'$-tritons in
$G$ have exactly two key vertices in $W_4$ and one key vertex of degree $d_6$.
Now, using exclusion, we obtain all the shorter maximal tritons. 
\end{proof}

\begin{theorem}
For $n\ge48$, every $n$-vertex caterpillar with at least four vertices
of maximum degree is reconstructible from the deck.
\end{theorem}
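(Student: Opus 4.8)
The plan is to assemble the machinery of Sections~\ref{highr} and~\ref{2maxbaton} together with the triton lemmas of this section into a short case analysis driven by the number of vertices of maximum degree. First I would dispose of the degenerate situations: if $d_1\le 2$ then $G$ is a path (or smaller), the unique caterpillar with its degree list, hence reconstructible from $\cD$; so we may assume $d_1\ge3$, and then ``at least four vertices of maximum degree'' gives $d_1=d_2=d_3=d_4\ge3$, so $G$ has at least four branch vertices. If $r\le(n-6)/2$ we are done by the low-diameter theorem of Section~\ref{lowr}, so I may assume $r\ge(n-5)/2$. By the plan summarized in Remark~\ref{btplan} (formalized through Theorem~\ref{suffic}, Corollary~\ref{length} and Lemma~\ref{lastfew}), it suffices to determine from $\cD$ all short maximal batons (length at most $(r-1)/2$) and all shorter maximal tritons (length at most $(r-4)/2$); the threshold $n\ge48$ is invoked only through Lemma~\ref{lastfew}.

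The short maximal batons are available in every case. Since $d_1=d_2\ge d_4\ge3$, one of Lemmas~\ref{d3alpha}, \ref{d4alpha}, \ref{d4alpha+}, \ref{d4alpha++} applies, according to whether $d_3\le\alpha$, or $d_4\le\alpha<d_3$, or $d_4\ge\alpha+1$ with $2\alpha+1\le(n-r+1)/2$, or $d_4\ge\alpha+1$ with $2\alpha+1\ge(n-r+2)/2$ (with $\alpha=\FL{(n-r+1)/4}$). These four hypotheses are exhaustive: the first splits off $d_3\le\alpha$, and when $d_3\ge\alpha+1$ and $d_4\ge\alpha+1$ the integrality of $2\alpha+1$ forces exactly one of the two remaining arithmetic conditions. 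In each case the cited lemma determines all short maximal batons.

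It remains to find the shorter maximal tritons, and here I split on the number of vertices of maximum degree. If $d_1=d_6$, then~\eqref{d'} gives $d_1\le(n-r+10)/6$ and Lemma~\ref{maxdeg} completes the reconstruction. If $d_1=d_5>d_6$, then Corollary~\ref{5max} completes it. In the remaining case $d_1=d_4>d_5$, I apply Lemma~\ref{d14low5} when $d_5\le(n-r+6)/6$ and Lemma~\ref{d14high5} when $d_5\ge(n-r+7)/6$ (exhaustive by integrality of $d_5$); each determines all shorter maximal tritons having at least $\FL{(n+2)/2}$ vertices, while every shorter triton with fewer vertices fits in a card and is counted directly. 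Applying Lemma~\ref{counting} to the family of tritons of length at most $(r-4)/2$ (each contained in a unique maximal such triton, as in the proof of Lemma~\ref{jbaton}) then recovers all shorter maximal tritons, and together with the short maximal batons from the previous paragraph, Remark~\ref{btplan} finishes the reconstruction.

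I do not expect a genuine obstacle in this theorem: all the delicate estimates reside in the lemmas already established. The only points requiring care are verifying that the case splits are exhaustive — in particular the boundary arithmetic comparing $2\alpha+1$ with $(n-r\pm1)/2$ and $d_5$ with $(n-r+6)/6$ — and confirming once more that the sub-threshold shorter tritons are small enough to appear in cards, so that the exclusion argument via Lemma~\ref{counting} closes cleanly in the case $d_1=d_4>d_5$.
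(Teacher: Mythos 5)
Your proposal is correct and follows essentially the same route as the paper: the paper's own proof of this theorem is a two-sentence wrap-up citing Section~\ref{2maxbaton}, the triton lemmas of the section (Lemma~\ref{maxdeg}, Corollary~\ref{5max}, Lemmas~\ref{d14low5}--\ref{d14high5}), Theorem~\ref{suffic}, and Lemma~\ref{lastfew}, and your write-up simply spells out the exhaustive case split on which lemma applies. The only cosmetic difference is that you explicitly mention the degenerate case $d_1\le 2$ and the integrality arguments making the hypotheses exhaustive, which the paper leaves implicit.
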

\begin{proof}
We have shown that the short maximal batons and the shorter maximal tritons
are determined by the deck.  Theorem~\ref{suffic} and Lemma~\ref{lastfew}
complete the reconstruction.
\end{proof}

\section{Three Maximum-degree Vertices}\label{3max}

In this section we reconstruct high-diameter caterpillars with exactly
three vertices of maximum degree and at least four branch vertices:
$d_1=d_3>d_4\ge3$.  We recognize being in this case from the degree list.

Our approach has three main steps.
First we find the positions of the three vertices of maximum degree.
Next we consider the position of a branch vertex whose degree appears only once
in the degree list.  Finally, we show that the deck determines the shorter
tritons.  As we have observed, that suffices for the reconstruction.

By Section~\ref{2maxbaton} we know all short maximal batons.
Hence by Lemma~\ref{levelpair} we know all level pairs.
Throughout this section, we use the following terminology and notation.

\begin{definition}\label{W3def}
Recall that $\VEC w1r$ indexes vertices on the spine $P$ in order of degree,
so that $d(w_i)=d_i$, and $W_t=\{w_1,\ldots,w_t\}$.  In particular, with
$d_1=d_3$, we index $\{w_1,w_2,w_3\}$ in nondecreasing order from the center of
$P$.  A {\it $W_3$-baton} is a baton whose key vertices are both in $W_3$.
The {\it depth} $f(v)$ of a vertex $v$ on $P$ is the distance of
$v$ from the ``middle'' or ``top'' of $P$.  That is, for the level pair
$\{v_k,v_\kb\}$ we have $f(v_k)=f(v_\kb)=\C{s-k}$, where $s=(r+1)/2$.  The
depth of a vertex is a half-integer when $r$ is even.  To describe the
depths of the vertices in $W_3$, we use $a,b,c$ as follows:
\begin{equation}\label{leveldist}
f(w_1)=a\le f(w_2)=a+b\le f(w_3)=a+b+c\le\FR{r-1}2.
\end{equation}
Also let $h=s-a-b-c$, so $w_3\in \{v_h,v_\hb\}$.
The notation appears in Figure~\ref{3maxfig}, showing a difficult case in the
next lemma, where we aim to decide which square is the location of $w_2$.
\end{definition}

\begin{figure}[h!]
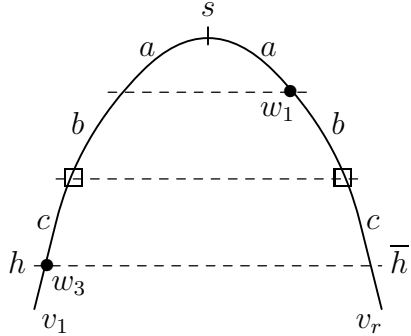

\begin{center}
\gpic{
\expandafter\ifx\csname graph\endcsname\relax
   \csname newbox\expandafter\endcsname\csname graph\endcsname
\fi
\ifx\graphtemp\undefined
  \csname newdimen\endcsname\graphtemp
\fi
\expandafter\setbox\csname graph\endcsname
 =\vtop{\vskip 0pt\hbox{%
    \special{pn 11}%
    \special{pa 91 1568}%
    \special{pa 318 659}%
    \special{pa 1000 -22}%
    \special{pa 1682 659}%
    \special{pa 1909 1568}%
    \special{sp}%
    \special{pn 8}%
    \special{pa 91 1341}%
    \special{pa 1909 1341}%
    \special{da 0.045}%
    \special{pa 205 886}%
    \special{pa 1795 886}%
    \special{da 0.045}%
    \special{pa 477 432}%
    \special{pa 1523 432}%
    \special{da 0.045}%
    \graphtemp=.5ex
    \advance\graphtemp by 0.205in
    \rlap{\kern 0.682in\lower\graphtemp\hbox to 0pt{\hss $a$\hss}}%
    \graphtemp=.5ex
    \advance\graphtemp by 0.614in
    \rlap{\kern 0.318in\lower\graphtemp\hbox to 0pt{\hss $b$\hss}}%
    \graphtemp=.5ex
    \advance\graphtemp by 1.114in
    \rlap{\kern 0.136in\lower\graphtemp\hbox to 0pt{\hss $c$\hss}}%
    \graphtemp=.5ex
    \advance\graphtemp by 0.205in
    \rlap{\kern 1.318in\lower\graphtemp\hbox to 0pt{\hss $a$\hss}}%
    \graphtemp=.5ex
    \advance\graphtemp by 0.614in
    \rlap{\kern 1.682in\lower\graphtemp\hbox to 0pt{\hss $b$\hss}}%
    \graphtemp=.5ex
    \advance\graphtemp by 1.114in
    \rlap{\kern 1.864in\lower\graphtemp\hbox to 0pt{\hss $c$\hss}}%
    \graphtemp=.5ex
    \advance\graphtemp by 0.432in
    \rlap{\kern 1.432in\lower\graphtemp\hbox to 0pt{\hss $\bu$\hss}}%
    \graphtemp=.5ex
    \advance\graphtemp by 1.341in
    \rlap{\kern 0.159in\lower\graphtemp\hbox to 0pt{\hss $\bu$\hss}}%
    \graphtemp=.5ex
    \advance\graphtemp by 0.523in
    \rlap{\kern 1.364in\lower\graphtemp\hbox to 0pt{\hss $w_1$\hss}}%
    \graphtemp=.5ex
    \advance\graphtemp by 1.432in
    \rlap{\kern 0.273in\lower\graphtemp\hbox to 0pt{\hss $w_3$\hss}}%
    \graphtemp=.5ex
    \advance\graphtemp by 1.632in
    \rlap{\kern 0.201in\lower\graphtemp\hbox to 0pt{\hss $v_1$\hss}}%
    \graphtemp=.5ex
    \advance\graphtemp by 1.632in
    \rlap{\kern 1.845in\lower\graphtemp\hbox to 0pt{\hss $v_r$\hss}}%
    \graphtemp=.5ex
    \advance\graphtemp by 0.000in
    \rlap{\kern 1.000in\lower\graphtemp\hbox to 0pt{\hss $s$\hss}}%
    \graphtemp=.5ex
    \advance\graphtemp by 1.341in
    \rlap{\kern 0.000in\lower\graphtemp\hbox to 0pt{\hss $h$\hss}}%
    \graphtemp=.5ex
    \advance\graphtemp by 1.341in
    \rlap{\kern 2.000in\lower\graphtemp\hbox to 0pt{\hss $\hb$\hss}}%
    \special{pn 11}%
    \special{pa 1000 91}%
    \special{pa 1000 182}%
    \special{fp}%
    \graphtemp=.5ex
    \advance\graphtemp by 0.886in
    \rlap{\kern 0.295in\lower\graphtemp\hbox to 0pt{\hss $\marker$\hss}}%
    \graphtemp=.5ex
    \advance\graphtemp by 0.886in
    \rlap{\kern 1.705in\lower\graphtemp\hbox to 0pt{\hss $\marker$\hss}}%
    \hbox{\vrule depth1.659in width0pt height 0pt}%
    \kern 2.000in
  }%
}%
}
\vspace{-.5pc}
\caption{Notation for Section~\ref{3max}\label{3maxfig}}
\end{center}
\end{figure}
\vspace{-1pc}

Since $r\ge(n-5)/2$ in the high-diameter case, the condition $r\ge9$ is
ensured by $n\ge23$.

\begin{lemma}\label{W3}
In the high-diameter case with exactly three vertices of maximum degree
(and $r\ge9$), the deck determines the positions of the three maximum-degree
vertices.
\end{lemma}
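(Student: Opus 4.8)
The plan is as follows. Since $d_1=d_2=d_3>d_4\ge3$, Section~\ref{2maxbaton} supplies all short maximal batons, so by Lemma~\ref{levelpair} we know every level pair. Reading off from the level pairs which depths contain maximum-degree vertices, and with what multiplicity, recovers the parameters $a\le a+b\le a+b+c$ of Definition~\ref{W3def}, namely the depths of $w_1,w_2,w_3$ respectively (two of these depths coincide exactly when $b=0$ or $c=0$). If $b=0$ or $c=0$, then two of $w_1,w_2,w_3$ share a depth and so occupy both positions at that depth, while the third is placed by the reflection convention; hence all positions are determined, and the same holds if $a=0$ (which occurs only for odd $r$, with $w_1$ the central vertex). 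We may therefore assume $a,b,c\ge1$, and, using the reflection symmetry of $G$, we fix $w_3=v_h$ with $h=\FR{r+1}2-(a+b+c)\ge1$. It then remains only to decide, for each of $w_1$ and $w_2$, on which side of the centre of $P$ it lies; this leaves at most four candidate configurations, no two of which are related by the global reflection (which would move $w_3$), so they are genuinely distinct placements, and we must select the correct one.

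The primary tool is the multiset of maximal $(d_1,d_1)$-batons. The only vertices of degree $d_1$ are $w_1,w_2,w_3$, so for each $j\le(r-1)/2$ the number of maximal batons isomorphic to $B_{j:d_1,d_1}$ equals the number of pairs among $w_1,w_2,w_3$ at distance $j$; since these three vertices are collinear on the spine, their three pairwise distances are explicit functions of $a,b,c$ in each of the four configurations. Placing $w_1$ on the same side as $w_3$ contributes the distance $b+c\le a+b+c\le(r-1)/2$, which is visible, while the opposite placement contributes $2a+b+c$; likewise $w_2$ on the side of $w_3$ contributes the visible distance $c$ (and otherwise $2a+2b+c$), and $w_1,w_2$ on a common side contributes the visible distance $b$ (and otherwise $2a+b$). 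The four resulting distance multisets are pairwise distinct, and, using \eqref{d'} together with the presence of at least four branch vertices to bound $d_1$ and hence the depths, one determines in each instance which of the ``opposite-side'' distances $2a+b$, $2a+b+c$, $2a+2b+c$ lie at or below $(r-1)/2$. Tracking this shows that the counts of maximal copies of $B_{j:d_1,d_1}$ for $j\le(r-1)/2$ already pin down the configuration, except in the residual case where $b=c$ (so two of the maximum-degree vertices are equidistant from $w_3$) and the cross distance $2a+b$ is too large to be seen, which forces $a>b$.

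In that residual case the two surviving candidates both place a maximal copy of $B_{b:d_1,d_1}$ in $G$ with multiplicity one, the key vertices being $w_2$ and $w_3$ in one candidate and $w_1$ and $w_2$ in the other; equivalently, we must decide whether $w_2$ lies on the side of $w_3$ or on the side of $w_1$. The distinguishing feature is that in the former case $w_3$ is the maximum-degree vertex nearest an end of the spine and the $B_b$-baton sits near that end, whereas in the latter the $B_b$-baton sits nearer the centre. To detect this we extend the baton by an auxiliary structure whose position is already determined --- either a maximal induced path running along the spine from the deeper key vertex to the near end, or a maximal triton obtained by attaching the already-located branch vertex at depth $a$ (whose degree $y$ satisfies $2\le y<d_1$). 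The multiplicity of this enlarged configuration in $\cD$ differs between the two candidates; the delicate point is to verify, using \eqref{d'} and the four branch vertices to bound $d_1$ (which makes $a$ large in this regime), that the enlarged configuration has at most $(n+2)/2$ vertices and is therefore counted by the deck. This verification is the source of the numerical threshold on $n$.

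I expect the residual case $b=c>0$, $a>b$ to be the main obstacle: there the natural baton invariants coincide, and one must simultaneously exhibit a distinguishing subgraph and confirm that it fits among the cards of $\cD$, which forces the careful size estimates based on \eqref{d'} and the hypothesis of at least four branch vertices.
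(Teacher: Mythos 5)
Your proposal follows the paper's overall strategy: determine the depths $a,b,c$ from the level pairs, dispose of the cases where some of $a,b,c$ vanish, use the lengths of the visible $W_3$-batons to distinguish among the four placement types, and isolate the residual case $b=c$ with $2a+b>(r-1)/2$. Your derivation that this forces $a>b$ is correct and matches the paper, and your count of the four configurations and their pairwise distances is the same as the paper's.

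The gap is in the residual case, which is the only genuinely hard part of the lemma. You offer two candidate distinguishing subgraphs but pin down neither. The second option, a triton attaching ``the already-located branch vertex at depth $a$'' (the mirror of $w_1$), cannot fit in a card: in the Type-3 placement the distance from $w_2$ to the mirror of $w_1$ is $2a+b\ge r/2$, so the triton has length at least $r/2$; adding even $2d_4+2$ for the leaves at the two high-degree key vertices already exceeds $(n+2)/2$ (indeed a short computation with \eqref{d'} gives $2d_4\ge (n-r-3)/2$ is impossible only after using the paper's much smaller length bound $c+h\le r/4$, not $2a+b\ge r/2$). The first option, ``a maximal induced path from the deeper key vertex to the near end,'' is too vague to check --- if the $d_1$-degree key vertices carry all their leaves, the size has less slack than the paper's bound, and you do not say how to count its copies or how the count distinguishes the two placements. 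The crucial device the paper uses --- the tritons $B_{c,h:d_4+1,d_4+1,2}$ and $B_{c,h:d_4+1,d_4+1,3}$, whose key degrees in the card are $d_4+1$ rather than $d_1$ and whose length $c+h\le r/4$ follows from $2a+b\ge r/2$ together with $a+b+c\le(r-1)/2$ --- is exactly what makes the size estimate close with $r\ge9$, and then the counts are distinguished via the inequality $\binom x2+\binom y2<\binom z2$ when $x+y=z$. You correctly flag that a size verification is needed, but the construction that makes the verification succeed, and the counting argument that extracts the placement from it, are both missing.
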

\begin{proof}
By Sections~\ref{2maxbaton} and~\ref{highr}, we know the short maximal batons
and the level pairs.  Hence we know the differences $a,b,c$ in
Definition~\ref{W3def}.  When two vertices in $W_3$ have the same level, the
claim is trivial (up to reflection).  Hence we may assume $b,c>0$.
If $a=0$, then we see $W_3$-batons with lengths $b$ and $b+c$.
If we also see a $W_3$-baton with length $c$, then $w_2$ and $w_3$ are in the
same half of $P$; otherwise, they are in different halves.  Thus henceforth
we may also assume $a>0$.

Our goal is to determine the ``Type'' of $W_3$, where $W_3$ is {\em Type 0} if all
of $W_3$ is in the same half of $P$ and {\em Type i} for $i\in\{1,2,3\}$ if $w_i$
is in the half opposite $W_3-\{w_i\}$.  There are always three $W_3$-batons; we
consider cases by the number of short $W_3$-batons.  Since $W_3$ has at least two
vertices on the same side of $P$, there is always at least one short $W_3$-baton
with length $b$, $c$, or $b+c$.  Short $W_3$-batons may also occur with length
$b+2a$, $b+c+2a$, or $c+2a+2b$ when the key vertices are in opposite halves
of $P$.

\medskip
{\bf Case 1:} {\it All three $W_3$-batons are short.}  
We see all three $W_3$-batons and their lengths.  The triples of lengths are
as follows:
\begin{center}
\begin{tabular}{l l l l}
Type 0:&$\{b,c,b+c\}$   &Type 2:&$\{b+c,2a+b,2a+2b+c\}$ \\
Type 1:&$\{c,2a+b,2a+b+c\}$   &Type 3:& $\{b,2a+b+c,2a+2b+c\}$.
\end{tabular}
\end{center}
The triples of lengths are distinct in the four cases, so they distinguish the
cases.  In all cases these are the triples of lengths of the $W_3$-batons,
but in other cases we don't see them all.

\medskip
{\bf Case 2:} {\it There are exactly two short $W_3$-batons.}
In each triple of lengths listed above, the third length is the sum of the
other two, so to have only two short $W_3$-batons they must be the two shorter
lengths in each triple.  For the four Types, the lengths of the two shortest
$W_3$-batons are distinct pairs, so the lengths of the two short $W_3$-batons
distinguish the cases.

\medskip
{\bf Case 3:} {\it There is only one short $W_3$-baton, $B$.}
Type 0 has three short $W_3$-batons, so that is excluded.
Among the other types, the two vertices of $W_3$ in the same half of $P$
guarantee a short $W_3$-baton with length in $\{b,c,b+c\}$.
If $B$ has length $b+c$, then we see no $W_3$-batons with the shorter lengths
$b$ or $c$, and $W_3$ is Type 2.
If $B$ has length $b$ or $c$, and $b\ne c$, then $W_3$ is Type 3 or Type 1,
respectively.  Hence we may assume that $B$ has length $b$ and $b=c$, and
we need to distinguish between Type 3 and Type 1.

In both types, $w_1$ and $w_3$ are in opposite halves of $P$, and we need to
determine which half contains $w_2$ (see Figure~\ref{3maxfig} in
Section~\ref{3max}).  By reflection, we may assume $w_1=v_{s+a}$ and $w_3=v_h$.
If $2a+b\le(r-1)/2$, then $W_3$ is Type 3, since $B$ being the only short
$W_3$-baton forbids a $W_3$-baton of length $2a+b$.

Hence we may assume $2a+b\geq r/2$.  Now let
$H=B_{c,h:d_4+1,d_4+1,2}$ and $H'=B_{c,h:d_4+1,d_4+1,3}$.  Since $d_4\ge3$,
any copy of $H$ or $H'$ in $G$ has exactly two vertices in $W_3$.  Let
$M=\CH{d_1-1}{d_4}\CH{d_1-2}{d_4-1}$; there are $M$ ways of
choosing the neighbors of the high-degree vertices when forming such tritons.
If $W_3$ is Type 1, then $w_2=v_{h+c}$, and the third key vertex of copies of
$H$ or $H'$ is $v_{2h+c}$.  If $W_3$ is Type 3, then $w_2=v_{s+a+b}$ and the
third key vertex may be $v_{s+a+b+h}$ or $v_{s+a-h}$.  In the two cases we
obtain the following counts.
\begin{center}
\begin{tabular}{c | c c}
 & Type 1& Type 3 \\
\noalign{\hrule}
$\#B_{c,h:d_4+1,d_4+1,2}/M$ & $d(v_{2h+c})-1$ & $d(v_{s+a+b+h})-1+d(v_{s+a-h})-1$\\
$\#B_{c,h:d_4+1,d_4+1,3}/M$ & $\CH{d(v_{2h+c})-1}2$ &
$\CH{d(v_{s+a+b+h})-1}2+\CH{d(v_{s+a-h})-1}2$
\end{tabular}
\end{center}
The counts distinguish the two cases, because if positive
integers $x$ and $y$ sum to $z$, then $\CH x2+\CH y2<\CH z2$.

It remains only to show that $H'$ (and hence also $H$) fits in a card, so we
can count the copies.  By~\eqref{d'} and $d_3>d_4$, we have
$n-r-2\geq 4d_4-5$, so $d_4\le(n-r+3)/4$.
Also $h+c=s-a-b$, so $2a+b\ge r/2$ yields $2h+2c+b\le 2s-r/2=(r+2)/2$.
Since $b\ge1$, this implies $h+c\le r/4$.  Now
$$
|V(H')|= c+h+2d_4+2\le \FR{r}4+\FR{n-r+3}2+2=\FR{n+7}2-\FR{r}4.
$$
We need $|V(H')|<(n+3)/2$, which is guaranteed when $r\ge9$.
\end{proof}

\begin{lemma}\label{3bat1}
In the high-diameter case with $d_1=d_3>d_4>d_5$, if we know the
positions of the vertices of $W_3$ on $P$, then we know the position of $w_4$.
\end{lemma}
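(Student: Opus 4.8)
\emph{Setup and reduction.} By Section~\ref{2maxbaton} we know all short maximal batons in $G$, hence by Lemma~\ref{levelpair} all level pairs, and (as throughout) the degree multiset $\VEC d1r$. Since $d_4>d_5$, the vertex $w_4$ is the unique vertex of degree $d_4$, so there is a unique level whose level pair contains the value $d_4$, and since $d_4$ occurs once that pair has the form $\{d_4,x\}$ with $x\ne d_4$. Thus $w_4$ lies at a known depth and is one of the two vertices $v_k,v_\kb$ of that level pair. If $v_k=v_\kb$ (the central level, $r$ odd) we are done; otherwise the task is exactly to decide which side of the spine contains $w_4$. We are given the positions of $w_1,w_2,w_3$ on the spine; these are the only vertices of degree $d_1$, and in fact $w_1,w_2,w_3,w_4$ are precisely the vertices of degree at least $d_4$.

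\emph{Main tool: distances via batons.} For each $j$ with $j\le(r-1)/2$, the short maximal batons determine the multiset of unordered degree pairs $\{d(v_i),d(v_{i+j})\}$, so in particular we read off the number of pairs at distance $j$ with degree set $\{d_1,d_4\}$. Each such pair consists of $w_4$ and one of $w_1,w_2,w_3$, since no other pair of vertices has both degrees at least $d_4$. Hence the deck gives us, for every $j\le(r-1)/2$, the number $c_j$ of vertices of $\{w_1,w_2,w_3\}$ at distance exactly $j$ from $w_4$, and therefore also the number $3-\sum_j c_j$ lying at distance more than $(r-1)/2$ from $w_4$. Now each of the two candidate positions $v_k,v_\kb$, since $w_1,w_2,w_3$ are already placed, determines a concrete multiset of distances to $\{w_1,w_2,w_3\}$, and hence concrete values of these statistics. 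If $v_k$ and $v_\kb$ disagree on this data --- a different truncated multiset of short distances to $W_3$, or a different count of long ones --- then exactly one candidate is consistent with the deck and $w_4$ is identified.

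\emph{The hard case, and the main obstacle.} It remains to treat configurations in which \emph{both} candidates are consistent with the baton data, i.e.\ the multiset of distances from $w_4$ to $W_3$, truncated at $(r-1)/2$, is the same whether $w_4=v_k$ or $w_4=v_\kb$. This is a rigid situation: after reflecting the spine it says that $\{w_1,w_2,w_3\}$ and its mirror image present the same truncated distances to $v_k$. I would break such a tie with one additional subgraph whose multiplicity is sensitive to the left--right \emph{order} of $w_4$ relative to the maximum-degree vertices, rather than only to distances --- naturally a triton of the form $B_{j,j':d_1,d_4,d_1}$ (with $w_4$ the central key vertex) or $B_{j,j':d_1,d_1,d_4}$ (with $w_4$ at an end), built from $w_4$ and two vertices of $W_3$, which occurs with different multiplicity under the two placements. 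When a third rare degree is available ($d_5>d_6$, so $w_5$ is also unique) one can instead use the batons $B_{j:d_1,d_5}$ and $B_{j:d_4,d_5}$ to locate $w_4$ relative to $w_5$. The main obstacle is the one recurring throughout the paper: one must verify that the chosen distinguishing subgraph has at most $(n+2)/2$ vertices, so that it appears on a card and can simply be counted --- with everything else about $G$ known, we then compare its count to the value it would have under each candidate. No exclusion argument is used, which is what keeps this lemma independent of the triton analysis in the rest of Section~\ref{3max}. The size bound uses \eqref{d'} together with $d_1=d_3>d_4\ge3$ to bound $2d_1+d_4$, and the depth bound $f(w_4)\le(r-1)/2$ together with the hard-case rigidity to bound the triton length $j+j'$; as in Lemma~\ref{lastfew}, a few extremal values of $r$ may require separate treatment.
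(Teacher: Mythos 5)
The first two paragraphs of your proposal match the paper's strategy (use the known short batons and level pairs to get the depth of $w_4$ and the truncated distance data to $W_3$, and note that this resolves the placement unless a left--right tie occurs). But the third paragraph — the hard case — is where the actual content of the lemma lives, and your proposal has a genuine gap there. The distinguishing subgraph you propose, $B_{j,j':d_1,d_4,d_1}$ or $B_{j,j':d_1,d_1,d_4}$, has $j+j'+2d_1+d_4-3$ vertices. The bound from~\eqref{d'} with $d_1=d_3>d_4\ge3$ is $3d_1+d_4\le n-r+6$, hence $2d_1+d_4\le n-r+6-d_1\le n-r+2$, giving $|V|\le j+j'+n-r-1$. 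Since $r\ge(n-5)/2$ we have $n-r-1\ge(n-3)/2$, so this already exceeds $(n+2)/2$ for every positive length $j+j'$: in the worst case (smallest $r$, smallest $d_1$) the triton never fits in a card, and in general it fits only for very short lengths, not the lengths actually needed. The paper sidesteps this by keeping the total key-degree budget near $2d_4$ rather than $2d_1+d_4$: it uses $H=B_{b/2+c,h:d_4,d_4+1,2}$, whose $d_4$-key-vertex is forced to be $w_4$ (since $d_4$ is unique and the length is a non-$W_3$ distance), and whose $d_4+1$-key-vertex is forced into $W_3$; the size computation then works out precisely because the geometric tie forces $c=2a$ and $2a+b/2\ge r/2$, which bound $h+c$ by $r/4$.

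Your fallback of locating $w_5$ first is also not available: the hypothesis $d_4>d_5$ does not make $d_5$ unique, and even when $d_5>d_6$ the paper locates $w_5$ via Lemma~\ref{3bat11} only \emph{after} $w_4$ is placed, so using $w_5$ here would be circular. Finally, the appeal to ``hard-case rigidity'' to bound $j+j'$ is exactly the nontrivial content you are skipping: the paper's Cases 1--3 on the depth of $w_4$ (and the further split on whether $c=2a$, $b/2+c=2a+b/2$, and $2a+3b/2\ge r/2$) are where that bound is actually extracted, and ``a few extremal $r$ may require separate treatment'' does not describe what happens — the paper needs only $r\ge9$, uniformly, with no exceptional cases.
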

\begin{proof}
We continue using the notation of Definition~\ref{W3def}.  Also, let a
{\it $W_4$-baton} be a maximal baton whose key vertices have degrees $d_4$ and
$d_1$; the lengths of $W_4$-batons are the distances from $w_4$ to vertices of
$W_3$.  

Again we know the short maximal batons and the level pairs; we also know the
$W_3$-batons.  If $W_3$ has two vertices on the same level, then the pair of
distances from $w_4$ to these vertices is known.  From the level pairs,
$f(w_4)$ is known, so we have two options for the length of the third
$W_4$-baton.  At least one of the options is short, so we can tell 
whether $w_4$ is in the same half of $P$ with the third vertex of $W_3$.

Hence we may again assume that the vertices of $W_3$ have distinct depths,
so $b,c>0$.  If $f(w_4)=f(w_j)$ for some $1\leq j\leq 3$, then we know the
position of $w_4$.  Hence we may assume strict inequalities in specify the
case below for the depth of $w_4$.

\medskip
{\bf Case 1:} {\it $f(w_4)<a$ or $f(w_4)>a+b+c$}.  In this case the length of
the shortest $W_4$-baton determines which half of $P$ contains $w_4$.  If we see
no short $W_4$-baton, then $w_4$ is in the opposite half of $P$ from all of $W_3$.

\medskip
{\bf Case 2:} {\it $a+b<f(w_4)<a+b+c$}.
If $f(w_4)\neq a+b+c/2$ or $w_2$ and $w_3$ are in the same half of $P$, then 
the length of the shortest $W_4$-baton determines the position of $w_4$.
Hence we may assume $f(w_4)= a+b+c/2$ and that $w_2$ and $w_3$ are in opposite
halves of $P$.

One of the distances from $w_4$ to $\{w_2,w_3\}$ is $c/2$, and the other is
at least $2a+2b+c/2$.  We determine the position of $w_4$ by testing for
$W_4$-batons of length $b+c/2$, which is between the two distances from $w_4$
to $\{w_2,w_3\}$.  If such a $W_4$-baton exists, then $w_4$ is in the same
half of $P$ as $w_1$; otherwise, it is in the opposite half.

\medskip
{\bf Case 3:} {\it $a<f(w_4)<a+b$}.  This begins like Case 2 but becomes
harder.  If $f(w_4)\neq a+b/2$ or $w_1$ and $w_2$ are in the same half of $P$,
then again the length of the shortest $W_4$-baton determines the position of
$w_4$.  Hence we may assume $f(w_4)= a+b/2$ and that $w_1$ and $w_2$ are in
opposite halves of $P$.
 
One of the distances from $w_4$ to $\{w_1,w_2\}$ is $b/2$, and the other is
either $2a+b/2$ or $2a+3b/2$.  If no $W_4$-baton has length $b/2+c$, then $w_4$
is the opposite half of $P$ from $w_3$.  If there are two $W_4$-batons with
length $b/2+c$, or if there is one and $b/2+c\notin\{2a+b/2,2a+3b/2\}$,
then $w_4$ is in the same half of $P$ as $w_3$. 

Hence we may assume one $W_4$-baton has length $b/2+c$ and
$b/2+c\in\{2a+b/2,2a+3b/2\}$.  First suppose $b/2+c=2a+3b/2$.
This requires $2a+3b/2<r/2$, so we can see whether there is a $W_4$-baton
with length $2a+3b/2$.  If not, then $w_4$ is in the same half of $P$ as $w_1$;
otherwise, $w_4$ is in the same half of $P$ as $w_2$. 

Finally, suppose that one $W_4$-baton has length $b/2+c$ and $b/2+c=2a+b/2$,
which requires $c=2a$.  Since there is exactly one $W_4$-baton of this length,
$w_3$ must be in the same half of $P$ as $w_1$ (and $w_2$ is in the opposite
half).  If $2a+3b/2<r/2$, then we can check the existence of a $W_4$-baton with
length $2a+3b/2$; it exists if and only if $w_4$ is on the opposite side of $P$
from $w_2$.

Hence we may assume $2a+3b/2\geq r/2$.  Let $H=B_{b/2+c,h:d_4,d_{4}+1,2}$.
Note that the key vertex of degree $d_4$ in any copy of $H$ in $G$ must be
$w_4$, since no two vertices in $W_3$ are at distance $b/2+c$.  Now $w_4$ can be
on the same side of $P$ as $w_2$ at distance $b/2+c$ from $w_1$, or on the
opposite side from $w_2$ at distance $b/2+c$ from $w_3$.  However, the path
does not extend far enough beyond $w_3$ to complete a copy of $H$ in the latter
case.  Hence if $G$ contains $H$, then $w_4$ is in the same half of $P$ as
$w_2$; otherwise it is not.

It remains only to show that $H$ fits in a card in order to test whether it
appears.  Let $N$ be its number of vertices.  Again~\eqref{d'} implies
$d_4\le(n-r+3)/4$.  Using $h=s-a-b-c$, and letting $x=a+b/2$, we have
$$
N=\frac{b}{2}+c+h+2d_4=\frac{r+1}{2}-a-\frac{b}{2}+2d_4
\le\FR{r+1}2-x+\FR{n-r+3}2=\FR{n+2}2+1-x.
$$
It thus suffices to have $x\ge1$.  Because $2a=c\ge1$, we have $a\ge1/2$.
Also $b$ is a positive integer, so $a+b/2\ge1$.
\end{proof}

\begin{lemma}\label{3bat11}
In the high-diameter case with $d_1=d_3>d_4>2$, let $i$ and $j$ be indices
such that both $d_i$ and $d_j$ are at least $3$ and occur only once
in the degree list.  If the deck determines the position of $w_i$ on $P$,
then the deck also determines the position of $w_{j}$.
\end{lemma}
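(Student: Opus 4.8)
The plan is to imitate the strategy of Lemma~\ref{3bat1}, exploiting the fact that a degree occurring only once in the degree list makes the corresponding spine vertex recognizable inside a baton. Throughout I would use the notation of Definition~\ref{W3def}. We may assume $d_i\ne d_j$ (otherwise $i=j$ and there is nothing to prove). Since $d_i$ occurs only once in the degree list, every maximal baton of $G$ having a key vertex of degree $d_i$ has $w_i$ as that key vertex, and likewise for $d_j$. Hence $G$ contains exactly one maximal baton whose key vertices have degrees $d_i$ and $d_j$: it runs between $w_i$ and $w_j$, and its length is the distance between $w_i$ and $w_j$ on $P$. By Section~\ref{2maxbaton} we already know all short maximal batons of $G$ (those of length at most $(r-1)/2$), and by Lemma~\ref{levelpair} we know every level pair together with its depth; in particular, since $d_j$ occurs only once, we know the depth $f(w_j)$.

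First I would dispose of the easy reductions. If $f(w_j)=0$, then $w_j=v_s$ is the central vertex and its position is determined. If $f(w_j)=f(w_i)$, then $w_i$ and $w_j$ lie on the same level of $P$; the level pair at depth $f(w_i)$ is $\{d_i,d_j\}$, and since the position of $w_i$ is known, $w_j$ is the other vertex of that level. So from now on assume $0<f(w_j)\ne f(w_i)$, and also $f(w_i)>0$ (see the remark on obstacles below). Because we know $f(w_j)$, there are exactly two candidate positions for $w_j$, namely the vertices at depth $f(w_j)$ in the two halves of $P$; their distances from the known vertex $w_i$ are $\delta_-=|f(w_i)-f(w_j)|$ and $\delta_+=f(w_i)+f(w_j)$, and $\delta_-<\delta_+$. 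Note that $\delta_-\le(r-1)/2$, since depths never exceed $(r-1)/2$. Because the position of $w_i$ is known exactly, learning which of $\delta_-,\delta_+$ is the true distance between $w_i$ and $w_j$ tells us which candidate is $w_j$.

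The key step is then a single lookup among the already-known short maximal batons. If one of them has key vertices of degrees $d_i$ and $d_j$, then its length $\ell$ is the distance between $w_i$ and $w_j$; since $\ell\in\{\delta_-,\delta_+\}$ and $\delta_-\ne\delta_+$, this pins the position of $w_j$. If no such short maximal baton exists, then the distance between $w_i$ and $w_j$ exceeds $(r-1)/2$ (otherwise the maximal $\{d_i,d_j\}$-baton joining them would be short, hence known); since $\delta_-\le(r-1)/2$, this forces the distance to equal $\delta_+$, so $w_j$ is the far candidate. In every case the position of $w_j$ is determined. Observe that no ``fits in a card'' estimate is needed here, because Section~\ref{2maxbaton} already supplies all short maximal batons.

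The main obstacle is the degenerate configuration $f(w_i)=0$, i.e.\ $w_i=v_s$: then $\delta_-=\delta_+=f(w_j)$ and the two candidate positions for $w_j$ are reflections of one another through $w_i$, so no baton with key vertices of degrees $d_i,d_j$ can separate them. This case needs a separate argument --- for instance replacing $w_i$ by an already-located anchor of positive depth (such as one produced by Lemma~\ref{3bat1} or by the known positions of $W_3$), or running a triton test in the spirit of the end of the proof of Lemma~\ref{3bat1}, where the extra vertex of the triton lands on one side of $w_i$ but fails to fit beyond the other side. Apart from that one configuration, the remaining verifications are routine: that the level-pair bookkeeping indeed yields $f(w_j)$ and the partner of $w_i$ at level $f(w_i)$ when $f(w_i)=f(w_j)$, and the trivial bound $\delta_-\le(r-1)/2$.
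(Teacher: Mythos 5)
Your proposal follows the paper's proof precisely: from the short maximal batons and level pairs one knows $f(w_j)$, and the single test of whether $B_{h:d_i,d_j}$ with $h=|f(w_i)-f(w_j)|$ is a maximal baton decides whether $w_j$ is in the same half of the spine as $w_i$ or the opposite half, since $h\le(r-1)/2$ is automatic. The degenerate case you flag, $f(w_i)=0$ (so $w_i$ is the central vertex of an odd-length spine and $\delta_-=\delta_+=f(w_j)$, making the baton test uninformative), is in fact also glossed over by the paper's one-sentence proof, where ``in the same half of $P$ as $w_i$'' becomes vacuous; your proposed remedies (a triton anchored at $w_i$ in the spirit of the end of Lemma~\ref{3bat1}, or replacing $w_i$ with an already-located anchor of positive depth) are the natural way to close that gap. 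Apart from leaving that one configuration as a sketch, the argument is correct and matches the paper's.
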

\begin{proof}
We know the short maximal batons, and these give us the level pairs.  Hence
along with the position of $w_i$ we also know the depth of $w_j$.
Let $h=\C{f(w_i)-f(w_j)}$.  If the deck has $B_{h:d_i,d_j}$ as a
maximal baton, then $w_{j}$ is in the same half of $P$ as $w_i$; otherwise,
they are in opposite halves.
\end{proof}

\begin{theorem}
In the high-diameter case with $d_1=d_3>d_4>2$ and $n\ge48$, 
all $n$-vertex caterpillars are reconstructible from their decks.
\end{theorem}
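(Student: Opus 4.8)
The plan is to carry out the three-step program announced before Lemma~\ref{W3} and then invoke Theorem~\ref{suffic} together with Lemma~\ref{3-11}. Since $d_1=d_2$ and $d_4\ge3$, Section~\ref{2maxbaton} determines all short maximal batons, so Lemma~\ref{levelpair} gives all level pairs $\{d(v_k),d(v_{\ov k})\}$. Next, Lemma~\ref{W3} locates the three maximum-degree vertices $w_1,w_2,w_3$ on the spine; and when $d_4>d_5$, Lemmas~\ref{3bat1} and~\ref{3bat11} locate $w_4$ and hence every branch vertex whose degree occurs exactly once in the degree list. By Remark~\ref{btplan}, it now suffices to show that the deck determines every \emph{shorter} maximal triton (that is, every maximal triton of length at most $(r-4)/2$): Theorem~\ref{suffic} then yields the lists $(d(v_1),\ldots,d(v_{q-1}))$ and $(d(v_{\ov1}),\ldots,d(v_{\ov{q-1}}))$ for $q=\FL{(r-2)/2}$, and Lemma~\ref{3-11} stitches in the middle block.

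By Lemma~\ref{counting} applied to the family of $j,j'$-tritons, it suffices to determine the shorter maximal tritons having at least $\FL{(n+2)/2}$ vertices; every shorter triton with fewer vertices fits in a card and is counted directly. Call a shorter maximal triton \emph{large} if it has more than $\FL{(n+2)/2}$ vertices; since its length is at most $(r-4)/2$, a large triton $B_{j,j':a,b,c}$ satisfies $a+b+c-6>(n-r)/2$. Using $\SE i1r(d_i-2)=n-r-2$, the bound $d_4\le(n-r+3)/4$ that follows from $d_1=d_3>d_4$, and the fact that the key vertices of a triton together with the maximum-degree vertices not among them form a set of distinct vertices, one checks that every large triton has at least one key vertex of maximum degree; if it has three, those key vertices are precisely $w_1,w_2,w_3$, so for each $(j,j')$ we know from their positions and degrees whether such a triton occurs and what it is.

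It remains to handle the large shorter maximal tritons with exactly one or exactly two key vertices of maximum degree. Here we imitate the exclusion-and-probing method of Section~\ref{2maxbaton} and the analysis of Lemmas~\ref{d14low5} and~\ref{d14high5}, with the degree $d_4$ playing the role of the ``second-highest'' degree there. When $d_4\le(n-r+6)/6$, the triton obtained by replacing each maximum-degree key vertex by one of degree $d_4+1$ fits in a card; counting these probe tritons, dividing out the appropriate products of binomial coefficients, and running the exclusion argument in decreasing order of the remaining key degrees recovers the large maximal tritons. When $d_4>(n-r+6)/6$, the equation $\SE i1r(d_i-2)=n-r-2$ forces a strong upper bound on $d_5$, modeled on~\eqref{d6}, so smaller probe tritons (with leaf counts at the key vertices reduced as in the proof of Lemma~\ref{W3}) fit in cards; the one remaining issue is a divisibility ambiguity in reading off the multiplicities of the maximal tritons, which is resolved because the known positions of $w_1,w_2,w_3$ (and of $w_4$ when $d_4>d_5$) bound the number of candidate maximal tritons by a small constant. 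The subcase $d_4=d_5$ is similar once one uses, as in Lemma~\ref{d4alpha++}, that there are then at least two vertices of the second-highest degree. Having determined all short maximal batons and all shorter maximal tritons, Theorem~\ref{suffic} and Lemma~\ref{3-11} complete the reconstruction.

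The main obstacle is this last step: extracting the large shorter maximal tritons with only one or two key vertices of maximum degree. Unlike the baton computations of Section~\ref{2maxbaton}, the probe tritons carry three binomial factors, so the exclusion bookkeeping is more delicate; and when $d_4$ is large---in particular when $d_4=d_5$, so that $d_4$ is not unique and Lemma~\ref{3bat1} does not apply---the probe tritons must be shrunk substantially, and the refined size estimates, modeled on those in Lemmas~\ref{W3} and~\ref{d14high5}, are needed to keep them inside cards.
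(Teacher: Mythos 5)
Your setup is correct, and your first reduction — find $W_3$ (Lemma~\ref{W3}), find $w_4$ when $d_4>d_5$ (Lemmas~\ref{3bat1},~\ref{3bat11}), then reduce to finding the shorter maximal tritons — matches the paper. Your observation that a large shorter maximal triton must have a maximum-degree key vertex is also correct: writing $a+b+c$ for the key degrees and noting that the three key vertices and $w_1,w_2,w_3$ are distinct when no key vertex has maximum degree, one gets $a+b+c\le \min\{3d_4,\,n-r+7-3d_4\}\le(n-r+7)/2 < (n-r+12)/2$, so the triton fits in a card, a contradiction.

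The gap is in the third step. You split on whether $d_4\le(n-r+6)/6$, by analogy with Lemmas~\ref{d14low5} and~\ref{d14high5}. That split is the wrong axis here, and in the large-$d_4$ branch your sketch does not actually close: the probe $B_{j,j':d_4+1,d_4+1,a}$ fits in a card only when $a<(n-r+12)/6$, which need not cover all relevant third degrees once $n-r$ exceeds roughly $15$, and the ``strong bound on $d_5$'' you invoke only gives $d_5<(n-r+3)/3$, which is not strong enough by itself to make a probe built from degree $d_5+1$ fit. The paper avoids this by defining $t$ to be the least index $\ge4$ with $d_t=d_{t+1}$ and exploiting that Lemmas~\ref{3bat1}--\ref{3bat11} give the positions of \emph{all} of $w_4,\ldots,w_{t-1}$ (not just $w_4$); hence all shorter maximal tritons with all three key vertices in $W_{t-1}$ are already known. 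The probe is then $B_{j,j':d_4+1,d_4+1,d_{t+1}}$ (and its reflection) in the case $t\ge5$, which fits because the $t+1\ge6$ branch vertices force $4d_4+2d_{t+1}\le n-r+7$; and in the case $t=4$ (i.e.\ $d_4=d_5$) the probe is $B_{j,j':d_4+1,d_4+1,d^*_6+1}$ with $d^*_6=\min\{d_6,d_5-1\}$, which fits because $d_1>d_4$, $d_2>d_5$, $d_3\ge d^*_6+2$ give $2(d_4+d_5+d^*_6)\le n-r+6$. Without identifying $t$ and the clean probe built from $d_{t+1}$ (or $d^*_6$), the size estimates you gesture at are not established, and the ``divisibility ambiguity'' you mention to patch things up is spurious: once $W_{t-1}$ is located, the exclusion is deterministic and no counting ambiguity arises.

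So the strategy is aligned with the paper's at the top level, but the crucial part — choosing the right probe tritons and proving they fit in cards — is not carried out, and the case organization you propose ($d_4$ small vs.\ large) does not by itself make it go through. You should replace that split by the paper's split on $t$, and prove the two explicit size inequalities above.
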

\begin{proof}
By Section~\ref{2maxbaton} we know the short maximal batons.
By Section~\ref{highr} we know the level pairs and that it suffices to
determine the shorter maximal tritons.

Let $t$ be the smallest index at least $4$ such that $d_t=d_{t+1}$.
By the lemmas above, we know the positions of all vertices in $W_{t-1}$, so we
also know all shorter maximal tritons whose three key vertices all lie in
$W_{t-1}$, though these tritons may not all fit in cards.  Our goal is to
determine all shorter maximal tritons with at most two vertices in $W_{t-1}$.

\medskip
{\bf Case 1:}  {\it $t=4$, so $d_1=d_3>d_4=d_5$}.
Let $d^*_6=\min\{d_6,d_5-1\}$, so $d^*_6\leq d_1-2$.  For all $j$ and $j'$ with
$j+j'\leq (r-4)/2$, consider the tritons $H$ and $H'$ defined by
$H=B_{j,j':d_4+1,d_4+1,d^*_6+1}$ and $H=B_{j,j':d_4+1,d^*_6+1,d_4+1}$. 
We first show that $H$ and $H'$ fit in cards.
Since $d_1>d_4$, $d_2>d_5$, and $d_3\ge d^*_6+2$,~\eqref{d'} yields
$$
n-r-2\geq \sum_{i=1}^{6}(d_i-2)\geq -12+2(d_4+d_5+d^*_6)+4.
$$
For the number $N$ of vertices, with $d_4=d_5$, we have
$$
N\le\frac{r-4}{2}+d_4+d_4+d^*_6\leq \frac{r-4}{2}+\frac{n-r+6}{2}=\frac{n+2}{2}.
$$
Thus $H$ and $H'$ fit in cards.

Each copy of $H$ or $H'$ has at least two key vertices in $W_3$, and the third
key vertex has degree at least $d_4$, since $d_4=d_5$.  Since $H$ and $H'$ fit
in cards, we can compute $\#H$ and $\#H'$.  After excluding the copies that
are contained in $B_{j,j':d_1,d_1,d_1}$, we obtain the number of copies of
$B_{j,j':d_1,d_1,d_4}$ and $B_{j,j':d_1,d_4,d_1}$ that are maximal tritons.
All other shorter maximal tritons are contained in $H$ or $H'$ and thus also
fit in cards.  Hence by Lemma~\ref{counting} we can find all the shorter
maximal tritons.

\medskip
{\bf Case 2:} {\it $t\geq 5$}.
If $d_t=2$, then we already know the positions of all branch vertices
and hence know the caterpillar.  Hence we may assume
$d_{t+1}\geq 3$.
For all $j,j'$ with $j+j'\leq (r-4)/2$, consider the tritons $H$ and $H'$
given by
$H=B_{j,j':d_4+1,d_4+1,d_{t+1}}$ and $H=B_{j,j':d_4+1,d_{t+1},d_4+1}$. 
Since $t+1\geq 6$ and $d_1>d_4$,~\eqref{d'} yields
$$
n-r-2\ge \sum_{i=1}^{t+1}(d_i-2)\geq
3(d_1-2)+(d_4-2)+2(d_{t+1}-2)\geq 3+4(d_4-2)+2(d_{t+1}-2),
$$
which implies $4d_4+2d_{t+1}\le n-r-7$.  Now,
$$
|V(H)|=|V(H')|\le\frac{r-4}{2}+2d_4+d_{t+1}-1
\leq \frac{r-4}{2}+\frac{n-r+7}{2}-1=\frac{n+1}{2}.
$$
Hence these tritons fit in cards.

As in Case 1, we know the shorter maximal tritons with key vertices in $W_4'$.
Again we can exclude from the count of $H$ and $H'$ the copies that arise from
such larger tritons to obtain the number of copies of $H$ and $H'$ that are
maximal tritons.
All other shorter maximal tritons are contained in $H$ or $H'$, and as in
Case 1 we can find them.
\end{proof}

\section{Two Maximum-degree Vertices}\label{2max}

In this section we reconstruct high-diameter caterpillars with exactly two
vertices of maximum degree and at least four branch vertices:
$d_1=d_2>d_3\ge d_4\ge3$.  We recognize being in this case from the degree list.
Throughout this section we consider only such caterpillars.

\begin{lemma}\label{2maxrd3}
In this section we may assume that the number $r$ of nonleaf vertices satisfies
\begin{equation}\label{eq:rbounds}
\FR{n-5}2 \le r \leq n-8,
\end{equation}
and also we may assume $d_3\le(n-r+1)/3$.
\end{lemma}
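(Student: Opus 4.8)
The plan is to obtain both inequalities as direct consequences of the counting identity~\eqref{d'}, with no reduction to earlier results needed. The left inequality $\FR{n-5}2\le r$ is simply the standing assumption of the high-diameter case, in force since the end of Section~\ref{lowr}, so only the bound $r\le n-8$ and the bound $d_3\le(n-r+1)/3$ require an argument, and each is a two-line calculation from~\eqref{d'} together with the section's standing hypothesis $d_1=d_2>d_3\ge d_4\ge3$.

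First I would establish $r\le n-8$. Since $G$ has at least four branch vertices, the degree $d_4$ is defined and satisfies $d_4\ge 3$; moreover $d_1=d_2>d_3$ forces $d_1=d_2\ge 4$ and $d_3\ge 3$. Discarding all terms of the sum in~\eqref{d'} past the fourth (each of which is nonnegative), we get $n-r-2=\SE i1r(d_i-2)\ge\SE i14(d_i-2)\ge 2+2+1+1=6$, hence $r\le n-8$.

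Next I would establish $d_3\le(n-r+1)/3$. The strict inequality $d_1=d_2>d_3$ gives $d_1-2\ge d_3-1$ and $d_2-2\ge d_3-1$, while $d_4\ge 3$ gives $d_4-2\ge 1$. Substituting into~\eqref{d'} and again discarding the nonnegative tail, $n-r-2\ge (d_3-1)+(d_3-1)+(d_3-2)+1=3d_3-3$, which rearranges to $d_3\le(n-r+1)/3$.

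I do not anticipate any real obstacle: the whole statement follows mechanically from~\eqref{d'} once the standing hypotheses are recalled. The only point worth flagging is that the bound on $d_3$ genuinely uses the presence of a fourth branch vertex of degree at least $3$; without that term one would only get the weaker $d_3\le(n-r+2)/3$, which is why this lemma is placed after restricting to caterpillars with at least four branch vertices.
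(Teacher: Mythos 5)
Your proof is correct and follows essentially the same route as the paper: both derive $r\le n-8$ from $\sum_{i=1}^4(d_i-2)\ge 6$ via~\eqref{d'}, and both derive $d_3\le(n-r+1)/3$ by lower-bounding the same four terms using $d_1=d_2\ge d_3+1$ and $d_4\ge3$. The arithmetic and the choice of which terms to keep match the paper exactly; your closing remark about why the fourth branch vertex is needed is a correct and useful observation, though the paper does not spell it out.
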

\begin{proof}
When $r\le (n-6)/2$ the small-diameter case in Section~\ref{lowr} applies,
so we may assume $r\ge (n-5)/2$.
Given $d_4\ge3$, with exactly two vertices of maximum degree we also have
$d_2\ge4$.  Thus by
\eqref{d'} we have $6\le \SE i14(d_i-2)\le n-r-2$, so $r\le n-8$.

With $d_2>d_3$ and $d_4\ge3$, we also have $3d_3+2+3-8\le n-r-2$,
which simplifies to $d_3\le(n-r+1)/3$.
\end{proof}

Let $b_1$ and $b_2$ be the indices of the two vertices of degree $d_1$;
that is, $\deg(v_{b_1}) = \deg(v_{b_2}) = d_1$.  By symmetry, we may assume
that $v_{b_1}$ is earlier along the spine and is at no higher a level than
$v_{b_2}$; that is, $b_1 \leq \min\{b_2, \bb_2\}$.  (Among two level
pairs, the one giving degrees of vertices closer to the center of the spine is
``higher'' in the sense of Figure~\ref{3maxfig} in Section~\ref{3max}.)

\begin{lemma}\label{b1b2}
All short maximal batons and the level pairs are determined by the deck,
as are the indices $b_1$ and $b_2$ of the maximum-degree vertices.
\end{lemma}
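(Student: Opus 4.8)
The plan is to chain together the machinery already in place and then add a short argument that pinpoints the two maximum‑degree vertices. First I would dispose of the "short maximal batons" and "level pairs" assertions. Since we are in the case $d_1=d_2$ and $d_4\ge3$, exactly one of Lemmas~\ref{d3alpha}, \ref{d4alpha}, \ref{d4alpha+}, and~\ref{d4alpha++} applies: these four lemmas split according to whether $d_3\le\alpha$, whether $d_4\le\alpha\le d_3-1$, or whether $d_4\ge\alpha+1$ (and in the last case according to whether $2\alpha+1\le(n-r+1)/2$), and this case split is exhaustive. Hence the deck determines all short maximal batons. Lemma~\ref{jbaton} then gives, for each $j$ with $j\le(r-1)/2$, the multiset of unordered degree pairs $\{d(v_i),d(v_{i+j})\}$, and Lemma~\ref{levelpair} with $q=\FL{(r-1)/2}$ converts these into the full list of level pairs.

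The substance is locating the two vertices of degree $d_1$. From the level pairs we read off the depth(s) (in the sense of Definition~\ref{W3def}) at which $d_1$ occurs. If some level pair is $\{d_1,d_1\}$, or if one occurrence of $d_1$ is at depth $0$ (forcing $r$ odd and putting that vertex at the spine center), then the positions of the two maximum‑degree vertices are forced up to reflection and we are done. Otherwise $d_1$ occurs in two distinct level pairs, say at depths $f_1<f_2$ with $f_1>0$, and the only remaining ambiguity (up to reflection) is whether these two vertices lie in the same half of the spine, hence at distance $f_2-f_1$, or in opposite halves, hence at distance $f_1+f_2$. The key observation is that there is a \emph{unique} maximal baton of $G$ both of whose key vertices have degree $d_1$ — namely $B_{j:d_1,d_1}$ with $j$ the distance between the two $d_1$-vertices — and that, because $f_1>0$ gives $f_2-f_1<f_2\le(r-1)/2$, this baton is short in the same‑half case. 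Therefore: if our list of short maximal batons contains one with both key vertices of degree $d_1$, then its length is $f_2-f_1$ or $f_1+f_2$, and these differ (again since $f_1>0$), so the length identifies the configuration; while if the list contains no such baton, the two vertices must lie in opposite halves (with $f_1+f_2>(r-1)/2$). In every case the configuration, and hence the pair $\{b_1,b_2\}$ under the stated convention, is determined by the deck.

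I do not expect a serious obstacle: the difficult content is already packaged in the Section~\ref{2maxbaton} lemmas and in Lemmas~\ref{jbaton} and~\ref{levelpair}. The only points that need care are (i) checking that the four baton lemmas genuinely cover every case with $d_1=d_2$ and $d_4\ge3$ (the split by the two floor‑type thresholds on $2\alpha+1$ being the least transparent part), and (ii) verifying the inequality $f_2-f_1\le(r-1)/2$, which is what makes the \emph{absence} of a $d_1$-$d_1$ baton from the list of short maximal batons informative rather than vacuous. Both follow from bounds already available, using \eqref{d'} and $r\ge(n-5)/2$.
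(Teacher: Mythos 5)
Your proof is correct and follows the same route as the paper's: obtain the short maximal batons via the Section~\ref{2maxbaton} lemmas, convert to level pairs via Lemma~\ref{levelpair}, and then resolve the remaining same-half/opposite-half ambiguity for the two $d_1$-vertices by checking for a short maximal baton $B_{j:d_1,d_1}$. The only cosmetic difference is that you phrase the dichotomy in terms of depths $f_1<f_2$ (distance $f_2-f_1$ vs.\ $f_1+f_2$) while the paper phrases it directly in terms of the spine index $b_2=b_1+j$; the bound $f_2-f_1<f_2\le(r-1)/2$ you flag as needing care is indeed immediate and corresponds to the paper's observation that at least one candidate for $v_{b_2}$ lies within distance $(r-1)/2$ of $v_{b_1}$.
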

\begin{proof}
Since $d_1=d_2$, by Section~\ref{2maxbaton} we know the multiset of short
maximal batons.  This gives us the multiset of degree pairs for each distance
at most $(r-1)/2$ along the spine.  By Lemma~\ref{levelpair},
we thus also know the level pairs $\{d(v_k),d(v_{\kb})\}$ for all $k$.

The value $b_1$ is the least $k$ such that the level pair $\{d(v_k),d(v_\kb)\}$
contains $d_1$.  When this level pair consists of two copies of $d_1$, we have
$b_2=\bb_1$ and call this the {\it symmetric case}.  When $b_2\ne\bb_1$,
another level pair contains $d_1$.  That pair has the degrees of two vertices,
at least one of which is within distance $(r-1)/2$ of $v_{b_1}$.  Since we know
the short maximal batons, we know whether $G$ contains $B_{j:d_1,d_1}$ for some
$j$ bounded by $(r-1)/2$, even though this subgraph may be too big to see in a
card.  If such a baton exists, then we know $b_2=b_1+j$.  If no such baton
exists, then $b_2>b_1+(r-1)/2$, and $b_2$ is the larger index in its level pair.
\end{proof}

We will split the analysis into cases depending on the values of $b_1$
and $b_2$.  We summarize the cases here.
	
{\bf Case 1:} 
$b_2 - b_1 \leq r/3 - 7/2$ (the vertices of maximum degree are close together).

{\bf Case 2:}
$b_1 \leq r/4 -2$ and $b_2\ne\bb_1$ (asymmetric with $b_1$ near the end of the
spine).

{\bf Case 3:}
$r/4 - 2 < b_1 < r/3 + 9/4 $ (asymmetric and $b_1$ somewhat central).

{\bf Case 4:}
$b_1 < r/3 + 9/4 $ and $b_2=\bb_1$ ($b_1$ and $b_2$ symmetric and near the
ends). 

\noindent
Note that $b_1\le\bb_2$ implies $b_2\le r+1-b_1$, and hence
$b_2-b_1\le r+1-2b_1$.  Thus if $b_1\ge r/3+9/4$, then $b_2-b_1\le r/3-7/2$
and Case 1 applies.  Therefore, the upper bounds on $b_1$ 
in Cases 3 and 4 cover all possibilities.
We will find that the argument for Case 3 needs $n\ge48$.

We next describe additional information that will
suffice to reconstruct the unknown caterpillar.  We call this the
``Climbing Lemma'' because we iteratively discover the allocation of 
higher (that is, as in Figure~\ref{3maxfig}, more central) level pairs.

\begin{lemma}[Climbing Lemma]\label{climbing}
In the asymmetric case $b_1<\min\{b_2,\bb_2\}$, the unknown caterpillar can
be reconstructed in either of the following cases:
		
(1) $b_2\le(r+1)/2$ and $(\deg(v_1), \ldots, \deg(v_{b_2}))$ is known.
		
(2) $b_2 > (r+1)/2$ and $(\deg(v_{b_2}), \ldots, \deg(v_r))$ is known.
\end{lemma}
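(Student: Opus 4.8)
The plan is to combine the known arm with the level pairs to pin down $d(v_i)$ for all $i$ outside one interval of spine vertices symmetric about the centre, and then to \emph{climb} from the maximum-degree vertex $v_{b_2}$ (which lies on the known arm in both cases) toward the centre, resolving one level pair per step.

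By Lemma~\ref{b1b2} we already know all short maximal batons, all level pairs, and the positions $b_1,b_2$; hence by Lemma~\ref{jbaton} the deck determines, for each $j\le(r-1)/2$, the multiset of degree pairs $\{d(v_i),d(v_{i+j})\}$ along the spine. Since $d_1$ occurs exactly twice among the vertex degrees, at $v_{b_1}$ and $v_{b_2}$, every such pair of the form $\{d_1,x\}$ with $x<d_1$ is contributed by one of the (at most four) vertices $v_{b_1\pm j},v_{b_2\pm j}$ with index in $\{1,\dots,r\}$. Now, using the level pairs, the list $(d(v_1),\dots,d(v_{b_2}))$ in case~(1) (respectively $(d(v_{b_2}),\dots,d(v_r))$ in case~(2)) determines $d(v_i)$ for every $i$ outside the open interval $I$ whose endpoints are $b_2$ and $\bb_2$. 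Write $s=(r+1)/2$, $\Delta=\C{b_2-s}$, and call $\C{i-s}$ the \emph{depth} of $v_i$; then the vertices with still-unknown degree are precisely those on the level pairs of depth $0,1,\dots,\Delta-1$, and since $b_1<\bb_2$ (equivalently $b_1+b_2\le r$), both $v_{b_1}$ and $v_{b_2}$ lie outside $I$. If $I=\nul$ there is nothing left to do.

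The climbing step runs for $\delta=\Delta-1,\Delta-2,\dots,0$. At stage $\delta$ all level pairs of depth exceeding $\delta$ have been resolved, so $d(v_i)$ is known for every $v_i$ of depth greater than $\delta$ and for every $v_i$ outside $I$. If the level pair of depth $\delta$ has two equal degrees there is nothing to do; otherwise let $v^\ast$ be whichever of $v_{s-\delta}$ and $v_{s+\delta}$ is nearer to $v_{b_2}$, so $v^\ast$ is at distance $j=\Delta-\delta$ from $v_{b_2}$, with $1\le j\le\Delta$ and $\Delta\le(r-3)/2$ (using $b_1\ge1$, and $b_1+b_2\le r$ in case~(2)), so the distance-$j$ degree pairs are available, and $v^\ast\in\{v_{b_2-j},v_{b_2+j}\}$. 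The key claim is that $v^\ast$ is the only one of the four vertices $v_{b_1\pm j},v_{b_2\pm j}$ whose degree is not yet known, and that the other vertex of the level pair of depth $\delta$ is not among these four at all. This is verified by short index estimates using $b_1\ge1$, $b_1<b_2$, and $b_1+b_2\le r$: for each of the other three candidates one shows that its index lies outside $\{1,\dots,r\}$, or lies in the part of the spine covered by the known arms, or that the vertex has depth exceeding $\delta$ and hence was resolved at an earlier stage. Granting the claim, the count (with multiplicities) of pairs equal to $\{d_1,x\}$ among the degree pairs at distance $j$ equals the number of $v\in\{v_{b_1\pm j},v_{b_2\pm j}\}$ with $d(v)=x$; subtracting the contributions of the three known candidates tells us whether $d(v^\ast)=x$, and applying this with $x$ equal to each of the two degrees in the level pair of depth $\delta$ pins down $d(v^\ast)$, hence the whole pair. (When $j=b_2-b_1$ the vertex $v_{b_1}$ or $v_{b_2}$ itself is among the four candidates, but then it contributes the pair $\{d_1,d_1\}\ne\{d_1,x\}$, so this is harmless; when $r$ is even all depths are half-integers and nothing changes.) After stage $\delta=0$, the whole list $d(v_1),\dots,d(v_r)$ is known and $G$ is reconstructed.

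The only real work is the index bookkeeping in the climbing step — showing that exactly one of the four candidate vertices is undetermined at each stage — which is where the asymmetry hypothesis $b_1<\bb_2$ is used. Everything else is a direct application of Lemmas~\ref{b1b2} and~\ref{jbaton}, in the spirit of Lemmas~\ref{levelpair} and~\ref{triples}; note that here, thanks to the maximum-degree anchor sitting on the known arm, no tritons are needed.
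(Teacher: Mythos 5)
Your proof is correct and follows essentially the same strategy as the paper's: at each step you use the multiset of degree pairs at distance $j$ (the paper phrases this via the maximal batons $B_{j:a,d_1}$, which is equivalent), observe that three of the four candidates $v_{b_1\pm j},v_{b_2\pm j}$ already have known degrees because $b_1<\bb_2$ keeps both max-degree vertices on the known side, and subtract to isolate $d(v_{b_2-j})$. The only cosmetic differences from the paper are that you index by the depth $\delta$ counting down rather than by $h$ counting up, and that you spell out a couple of degenerate possibilities (such as $j=b_2-b_1$) which, as you note, do not actually arise.
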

\begin{proof}
The key technique is to detect vertex degrees at fixed distances from $b_1$ and
$b_2$.  We prove the second case in detail; the first is similar. 
Since we know $(\deg(v_{b_2}),\ldots,\deg(v_r))$, the level pairs also
tell us $(\deg(v_1), \ldots, \deg(v_{\bb_2}))$.
We aim to discover $d(v_{b_2-h})$, which tells us also $d(v_{\bb_2+h})$.
We do this in order from $h=1$ to $h=\CL{b_2-(r+1)/2}$.  We use maximal batons
of length $h$, which by Section~\ref{2maxbaton} we know since $h\le(r-1)/2$.

Having done this for distances from $1$ through $h-1$, consider the maximal
batons of the form $B_{h:a,d_1}$.  These exist when $a$ is the degree of a
vertex at distance $h$ from $v_{b_1}$ or $v_{b_2}$.  Over all choices for $a$,
since there are exactly two vertices of degree $d_1$, there are three or four
such maximal batons (only three if $h\ge b_1$).  This tells us the multiset of
four values in the list $(d(v_{b_1-h}),d(v_{b_1+h}),d(v_{b_2+h}),d(v_{b_2-h}))$,
where we treat the first as $0$ if $h\ge b_1$.  Since $b_1<\min\{b_2,\bb_2\}$,
we already know the first three values in the list, so from the multiset of
four we obtain $d(v_{b_2-h})$.  Now we also know $d(v_{\bb_2+h})$ because we
know the level pairs.

In the case where $b_2\le(r+1)/2$, we just obtain $d(v_{b_2+h)})$ before
$d(v_{\bb_2-h})$.
\end{proof}
	
We proceed to prove reconstructibility from the deck in the four cases listed
after Lemma~\ref{b1b2} for the position of the two maximum-degree vertices
$v_{b_1}$ and $v_{b_2}$ (with $b_1\le\min\{b_2,\bb_2\}$).
	
	
\begin{lemma}\label{Case1}
If $b_2 -b_1 \leq r/3 - 7/2$ (Case 1), then the caterpillar is reconstructible.
\end{lemma}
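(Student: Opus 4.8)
The plan is as follows. By Section~\ref{2maxbaton} the deck determines all short maximal batons, hence by Lemma~\ref{levelpair} all level pairs, and by Lemma~\ref{b1b2} it determines $b_1$ and $b_2$; by Remark~\ref{btplan} it then suffices to recover the whole spine degree list $(d(v_1),\dots,d(v_r))$, and since the level pairs are known and $b_1\le(r+1)/2$, it even suffices to recover the degrees of $v_{b_1},\dots,v_r$. Write $j_0=b_2-b_1$, so the Case~1 hypothesis reads $j_0\le r/3-7/2$.

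The key point is that because $j_0$ is small, the pair $\{v_{b_1},v_{b_2}\}$ behaves as a single ``wide anchor'': any baton or triton using both of them as key vertices has base distance exactly $j_0$, so it has room for a long third arm while staying small enough to be read off — directly from a card, or (since Section~\ref{2maxbaton} has already determined every maximal baton of length at most $(r-1)/2$, however many vertices it has) directly from the baton data. First I would recover the degrees of the spine vertices strictly between the anchors: for a branch vertex $v_i$ with $b_1<i<b_2$, the subgraph induced by $\{v_{b_1},v_i,v_{b_2}\}$ and all their leaf neighbors is, because $G$ has only two maximum-degree vertices, the unique maximal triton of $G$ whose two outer key vertices have degree $d_1$, whose base has length $j_0$, and whose middle key vertex is $v_i$. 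Detecting it — directly when the degrees are small, and otherwise via bounded-degree probe tritons $B_{i-b_1,b_2-i:\delta,\delta',\delta}$ together with the exclusion argument of Section~\ref{2maxbaton} — and using that we know $b_1<b_2$, identifies $v_i$ and its degree; degree-$2$ positions in $(b_1,b_2)$ are then filled in from the level pairs. This yields $(d(v_{b_1}),\dots,d(v_{b_2}))$.

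Next I would sweep outward from the anchor block toward the nearer end of the spine. For each $t\ge1$, the maximal $t$-batons of $G$ with a key vertex of degree $d_1$ give the multiset of degrees at positions $b_1\pm t$ and $b_2\pm t$; processing $t$ in increasing order, the ``inner'' positions $b_1+t$ and $b_2-t$ lie either in $[b_1,b_2]$ (known) or at an outer position already resolved, so this pins down the unordered pair $\{d(v_{b_1-t}),d(v_{b_2+t})\}$. To split that pair I would reflect through the spine centre: the partners of $b_1-t$ and $b_2+t$ under $k\mapsto r+1-k$ are $\bb_1+t$ and $\bb_2-t$, and — choosing the order of resolution carefully — one of these already has a known degree, which propagates back through the level pair to resolve $d(v_{b_1-t})$ and $d(v_{b_2+t})$ separately. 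Once a short initial segment reaching the nearer end has been recovered — namely $(d(v_1),\dots,d(v_{b_2}))$ when $b_2\le(r+1)/2$, and $(d(v_{b_2}),\dots,d(v_r))$ when $b_2>(r+1)/2$ — the Climbing Lemma (Lemma~\ref{climbing}) completes the reconstruction in the asymmetric case $b_1<\bb_2$; and when $b_1=\bb_2$, which together with $j_0\le r/3-7/2$ forces both anchors near the centre, the two-sided sweep already determines every level and no climbing is needed.

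The hard part will be the size bookkeeping, which is what the slack ``$-7/2$'' in $j_0\le r/3-7/2$ is there for: the probe tritons for $(b_1,b_2)$ and the batons used in the outward sweep must fit in cards (or be recoverable from the known short maximal batons), which forces a case split on whether $b_2\le(r+1)/2$, using the bound $2d_1+d_3\le n-r+3$ from~\eqref{d'} together with $r\ge(n-5)/2$. Within that split, the delicate point is to verify that whenever the level-pair step would need to split $\{d(v_{b_1-t}),d(v_{b_2+t})\}$ using a value not yet known, the relevant position actually lies in the already-determined block $[b_1,b_2]$ or is handled by the Climbing Lemma rather than by the sweep — a bookkeeping argument about the interaction between the anchor reflection $k\mapsto b_1+b_2-k$ and the spine reflection $k\mapsto r+1-k$, where the explicit constants enter.
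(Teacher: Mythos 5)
Your overall plan (recover the block $[b_1,b_2]$, sweep outward, finish with the Climbing Lemma) matches the paper's, but the outward-sweep step as you describe it has a genuine gap, and it is precisely the step that carries the weight of the argument.

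To split the unordered pair $\{d(v_{b_1-t}),d(v_{b_2+t})\}$ you propose reflecting through the spine centre and claim that, with a careful order of resolution, one of $\bb_1+t$ or $\bb_2-t$ already has a known degree. In the asymmetric case $b_1<\bb_2$ this is false for the range of $t$ you actually need. Since $b_1<\bb_2$ gives $\bb_1>b_2$, the position $\bb_1+t$ always lies strictly to the right of $b_2+t$, hence outside every block you have resolved. And $\bb_2-t$ lies in the resolved block $[b_1-(t-1),b_2+(t-1)]$ only when $t\ge(r+2)/2-b_2$; when $b_2\le(r+1)/2$ and $b_1+b_2\le(r+3)/2$ this threshold exceeds $b_1-1$, so \emph{no} relevant $t$ qualifies. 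Concretely, $r=30$, $b_1=2$, $b_2=3$ is Case~1 ($j_0=1\le 30/3-7/2$), the only needed value is $t=1$, and $\bb_1+1=30$, $\bb_2-1=27$ are far outside the determined block $[1,4]$. More fundamentally, the level pairs are themselves unordered sets, so knowing $\{d(v_{b_1-t}),d(v_{\bb_1+t})\}$ and $\{d(v_{b_2+t}),d(v_{\bb_2-t})\}$ gives no way to align the two choices with each other: you need a genuine directional anchor, and the spine reflection does not supply one. The paper supplies it with the tritons $F_{i,j}=B_{i,j:2,d_3+1,2}$, whose central vertex must be $v_{b_1}$ or $v_{b_2}$; a fixed reference arm $j$ (the least $j$ with $d(v_{b_1+j})\ne d(v_{b_2-j})$, with a fallback when the interior is palindromic) makes $\#F_{i,j}$ a linear relation that distinguishes the two orderings, and the degenerate step $i=b_1$ then fixes the global orientation before handing off to the Climbing Lemma. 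Your symmetric case $b_1=\bb_2$ has the same problem: there $\bb_2-t=b_1-t$ and the reflection is the identity, so "the two-sided sweep already determines every level" does not follow.

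A smaller overclaim: your first step recovers $(d(v_{b_1}),\dots,d(v_{b_2}))$ only up to reversal, since a maximal $j,j'$-triton with outer keys $v_{b_1},v_{b_2}$ is indistinguishable from the $j',j$-triton in the deck; "identifies $v_i$ and its degree" is too strong. The paper also works "up to reversal" here and only resolves the orientation at the very end of the sweep; that bookkeeping has to be carried through, not asserted away.
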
 
\begin{proof}
We first obtain the list $(d(v_{b_1}),\ldots,d(v_{b_2}))$, up to reversal.
The idea is that a certain non-maximal baton whose key vertices can only be
$v_{b_1}$ and $v_{b_2}$ fits in a card with room for two more vertices.  We 
then use the small-diameter method from Section~\ref{lowr}.

Let $h=b_2-b_1$.  The number of vertices in $B_{h:d_3+1,d_3+1}$ is
$h+1+2d_3$.  The baton fits in a card with room for two more vertices
if $h+3+2d_3\le(n+2)/2$.  From $r\ge(n-5)/2$ we obtain $n-4r\le-2r+5$.
Using the bound on $d_3$ from Lemma~\ref{2maxrd3}, we compute
$$
4d_3\le n+\FR{n-4r+4}3\le n-\FR{2r}3+3.
$$
Now when $h\le r/3-7/2$, we have the desired bound:
$$
2h+6+4d_3\le \FR{2r}3-7+6+n-\FR{2r}3+3=n+2.
$$
Hence we can see and count the desired subgraphs.

Let $H=B_{h:d_3+1,d_3+1}$, and let $P$ denote the path in $H$ connecting the
two key vertices.  Let $H_i$ be the graph obtained from $H$ by
adding a pendant edge to the vertex at distance $i$ along $P$.
Let $H_{i,j}$ be the graph obtained from $H$ by adding one pendant edge
each to the vertices at distances $i$ and $j$ along $P$ from one end.

We can see copies of $H_i$ and $H_{i,j}$ in the deck, and in each copy the
key vertices must be $v_{b_1}$ and $v_{b_2}$.  Let $M=\CH{d_1-1}{d_3}^2$.
Letting $d'(v)$ denote the number of leaf neighbors of $v$ as in
Section~\ref{lowr}, we have $\#H_i=(d'(v_{b_1+i})+d'(v_{b_2-i}))M$ and
$\#H_{i,h-i}=d'(v_{b_1+i})d'(v_{b_2-i})M$ (except $\#H_i=d'(v_{b_1+h/2})M$
when $i=h/2$).  Knowing the sum and product, we can find the pairs
$\{d'(v_{b_1+i}),d'(v_{b_2-i})\}$.  Let $x_i$ and $x'_i$ denote these two
values.

If $x_i=x'_i$ for $1\le i\le h/2$, then we already know the list, which is
unchanged by reversal.  Otherwise, it suffices to know, for each $i$ and $j$
with $1\le i<j\le\FL{h/2}$ such that $x_i\ne x'_i$ and $x_j\ne x'_j$,
which of $\{x_i,x'_i\}$ is in the same half of the segment
$(d'(v_{b_1}),\ldots,d'(v_{b_2}))$ with which of $\{x_j,x'_j\}$.  To determine
this for a given pair $(i,j)$, consider $\#H_{i,j}/M$.  The value is
either $x_ix_j+x'_ix'_j$ or $x_ix'_j+x'_ix_j$.  Since $x_i\ne x'_i$ and
$x_j\ne x'_j$, the value when the larger elements in each pair are multiplied
is bigger, so we can determine the correct pairing.
We now know the list $(d(v_{b_1}),\ldots,d(v_{b_2}))$ up to reversal.

We want to extend this list outward, still only up to reversal.  Having
extended to $(v_{b_1-i+1},\dots,v_{b_2+i-1})$, consider the next step.
Since $i< b_1\le(r-1)/2$ and by Section~\ref{2maxbaton} we know the short
maximal batons, we know the values in the set
\begin{equation*}
\{\deg(v_{b_1+i}),\deg(v_{b_1-i}),\deg(v_{b_2+i}),\deg(v_{b_2-i}) \}. 
\end{equation*}
Since $b_1<b_1+i<b_2+i$ and $b_1-i<b_2-i<b_2$, in this set we already know
the pair $\{d(v_{b_1+i}),d(v_{b_2-i})\}$.  Hence we only need to determine
which value is which in the known pair $\{d(v_{b_1-i}),d(v_{b_2+i})\}$,
subject to the ordering of $(d(v_{b_1-i+1}),\ldots,d(v_{b_2+i-1}))$.
If the two values are equal, or if the list before this point is symmetric,
then there is nothing to do, since we are only determining the list up
to reversal.

Let $j$ be the least value such that $d(v_{b_1+j})\ne d(v_{b_2-j})$.
(If there is no such value of $j$ at most $h/2$, then see the variation
discussed below.)  By analogy with the earlier use of $H_{i,j}$, let
$F_{i,j}$ be the triton $B_{i,j:2,d_3+1,2}$.  Because $j<h/2$ and
$b_1<\min\{b_2,\bb_2\}$, we have $i+j<(r-1)/2$.  Since also $d_3\le(n-r+1)/3$,
the triton $F_{i,j}$ fits in a card, and we can count its occurrences.

The central key vertex in a copy of $F_{i,j}$ must be $v_{b_1}$ or $v_{b_2}$.
Letting in general $x_t=d(v_t)-1$, when $i\ne j$ we have
$$
\#F_{i,j}=\CH{d_1-2}{d_3-1}[x_{b_1-i}x_{b_1+j}+x_{b_1+i}x_{b_1-j}
+x_{b_2-i}x_{b_2+j}+x_{b_2+i}x_{b_2-j}],
$$
while
$$
\#F_{j,j}=\CH{d_1-2}{d_3-1}[x_{b_1-j}x_{b_1+j}+x_{b_2-j}x_{b_2+j}].
$$
Since $x_{b_1+j}\ne x_{b_2-j}$, the count $\#F_{j,j}$ distinguishes
between $x_{b_1-j}$ and $x_{b_2+j}$.
Since we also know the set $\{d(v_{b_1-i}),d(v_{b_2+i})\}$ and the ordered pair
$(d(v_{b_1+i}),d(v_{b_2-i}))$ for the fixed ordering
$(d(v_{b_1}),\ldots,d(v_{b_2}))$, from the count $\#F_{i,j}$ we can now
determine which way to allocate $\{d(v_{b_1-i}),d(v_{b_2+i})\}$.

If there is no such $j$, then the portion of the caterpillar between 
$v_{b_1}$ and $v_{b_2}$ is symmetric.  Let $j$ be the least index such that
$d(v_{b_1-j})\ne d(v_{b_2+j})$.    Letting $i$ be a larger index such that 
$d(v_{b_1-i})\ne d(v_{b_2+i})$, we use the triton $B_{i-j,j:2,3,d_3+1}$ in a
argument like that above.

When we reach $i=b_1$, there is no vertex $v_{b_1-i}$, and the term
$x_{b_1-i}x_{b_1+j}$ in the count for $F_{i,j}$ is replaced by $0$.
This allows us to fix the orientation (forward or backward) for 
$(d(v_1),\ldots,d(v_{b_2+b_1-1}))$.  If $b_2> (r+1)/2$, then the level pairs
now complete the degree list, while if $b_2\le(r+1)/2$ then the Climbing Lemma
(Lemma~\ref{climbing}) applies.
\end{proof}

	

\begin{lemma}\label{Case2}
If $b_1 \leq r/4 - 2$ and $b_2\ne\bb_1 $ (Case 2), then the caterpillar is
reconstructible.
\end{lemma}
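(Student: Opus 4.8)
The plan is to reduce to the Climbing Lemma (Lemma~\ref{climbing}). By Lemma~\ref{b1b2} we already know all short maximal batons --- equivalently, for each $j\le(r-1)/2$, the multiset of degree pairs $\{d(v_i),d(v_{i+j})\}$ --- as well as all level pairs and the indices $b_1\le\min\{b_2,\bb_2\}$. Since the level pairs make knowing $(d(v_1),\dots,d(v_k))$ equivalent to knowing $(d(v_{\bar k}),\dots,d(v_r))$, in both cases of Lemma~\ref{climbing} its hypothesis amounts to determining $(d(v_1),\dots,d(v_m))$, where $m$ is whichever of $b_2$ and $\bb_2$ is at most $(r+1)/2$. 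Here $b_1\le m\le(r+1)/2$, so $m-b_1\le(r-1)/2$ and $v_{b_1}$ is within short-baton distance of every vertex of the segment $v_1,\dots,v_m$.

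First I would recover the short stretches of the degree list immediately around the two maximum-degree vertices: a segment $(d(v_1),\dots,d(v_{b_1}))$ at the end (which contains $v_{b_1}$), and a segment $(d(v_{b_2-t}),\dots,d(v_{b_2+t}))$ centered at $v_{b_2}$. The method is the small-diameter technique of Section~\ref{lowr}, with the known degree-$d_1$ vertex $v_{b_1}$ (respectively $v_{b_2}$) playing the role of a longest path: counting suitable maximal batons and tritons anchored at $v_{b_1}$ yields, for each distance $i$, both the sum and the product of the leaf-counts $d'$ of the two spine vertices at distance $i$ from $v_{b_1}$, after which a triton whose two arms straddle $v_{b_1}$ resolves which of the two values sits on the $v_1$ side, exactly as in the proofs in Section~\ref{lowr} and Lemma~\ref{Case1}. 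Because $b_1\le r/4-2$ and, by Lemma~\ref{2maxrd3}, $d_3\le(n-r+1)/3$, all of these subgraphs have at most $(n+2)/2$ vertices and hence appear in cards. The complication is that $v_{b_1}$ and $v_{b_2}$ are \emph{both} vertices of degree $d_1$, so each can anchor subgraphs that should be attributed to the other; but $b_2-b_1>r/3-7/2$ (we are not in Case~1) and $b_1,b_2$ are known, so using the known value $b_2-b_1$ together with the level pairs we can recognize and subtract off the unwanted contributions.

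With these two blocks in hand I would climb: extend the known portion outward from $v_{b_1}$ toward $v_1$ (a short stretch), inward from $v_{b_1}$, and both ways from $v_{b_2}$, interleaving the steps so that at each new distance $h$ the degrees needed as inputs have already been found. At each step the maximal batons and tritons of length $h\le(r-1)/2$ are known; after subtracting the copies already accounted for by known vertices, what remains pins down the next degree, any residual ambiguity between the two members of a level pair being broken by the usual $ac+bd$ versus $ad+bc$ comparison against a level whose orientation is already fixed, just as in the proof of Lemma~\ref{Case1}. Continuing until the known portion reaches $v_m$, I would invoke Lemma~\ref{climbing} to complete the reconstruction.

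The main obstacle is the bookkeeping in this climbing step: one must order the interleaved climbs from $v_{b_1}$ and $v_{b_2}$ so that neither ever requires a value it has not yet determined, maintain a single global reversal (and not create a second), and verify throughout that every baton and triton invoked is small enough to appear in a card. The available inequalities $b_1\le r/4-2$, $d_3\le(n-r+1)/3$, $r\ge(n-5)/2$, and $b_2-b_1>r/3-7/2$ are exactly what make the card-size checks and the separation of the two anchors go through; the threshold $n\ge48$ is far more than this case needs.
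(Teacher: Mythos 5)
Your overall strategy -- know $b_1,b_2$ and all short maximal batons from Lemma~\ref{b1b2}, decode individual degrees by anchoring batons and tritons at the two maximum-degree vertices, and then invoke a general finishing lemma -- is the same one the paper uses, and your identification of the central complication (that $v_{b_1}$ and $v_{b_2}$ have the same degree and so anchor the same counted subgraphs) is correct.  But the finishing step is wrong, and this creates a genuine gap.

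You propose to reach the hypothesis of the Climbing Lemma (Lemma~\ref{climbing}), which requires knowing the degrees $(d(v_1),\ldots,d(v_{b_2}))$ individually when $b_2\le(r+1)/2$, or $(d(v_{b_2}),\ldots,d(v_r))$ when $b_2>(r+1)/2$ -- equivalently, the degrees on indices $1,\ldots,m$ where $m=\min\{b_2,\bb_2\}$.  Since $b_1<m$ in Case~2, $m$ can be as large as $\CL{r/2}$.  The tritons used to decode $d(v_{b_2\pm i})$ -- copies of $B_{b_1,i:2,d_3+1,2}$ and its augmentations, necessarily anchored at $v_{b_2}$ -- have $b_1+i+d_3+1$ vertices, and the card-size requirement is $b_1+i\le(n+2)/2-d_3-1$.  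With $d_3\le(n-r+1)/3$ and $r\le n-8$ (Lemma~\ref{2maxrd3}), this bound allows $b_1+i$ up to roughly $(n+2r-8)/6$, and one checks that $b_1+i\le(r+1)/2$ is guaranteed only when $r\le n-11$.  When $n-10\le r\le n-8$ the last two or three positions before the center are out of reach of any triton that fits in a card, so your climbing cannot establish the Climbing Lemma's hypothesis.  The paper only climbs to $\FL{(r-4)/2}$, which always fits, and then closes the small remaining gap with Lemma~\ref{3-11} (the stitching lemma, which only needs the degree list up to $(r-5)/2$).  Replacing Lemma~\ref{climbing} by Lemma~\ref{3-11} in your final step, and limiting the climbing target to $\FL{(r-4)/2}$, would repair this.

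Two smaller points: first, the disambiguation you describe ("recognize and subtract off the unwanted contributions" using $b_2-b_1$ and the level pairs) is vague where the paper is precise -- the actual argument is geometric: the key trick is a triton $B_{h,h:2,d_3+1,2}$ with $h>b_1$ whose arm of length $h$ cannot extend from $v_{b_1}$ toward $v_1$, forcing the high-degree vertex to be $v_{b_2}$.  Second, your assertion that "all of these subgraphs have at most $(n+2)/2$ vertices" is exactly the thing that fails near the center and needs to be verified, not assumed; the paper's inequality $|V(H)|\le n/2-1$ for $h\le r/4-1$ is a careful computation, not automatic.
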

\begin{proof}
By Lemma~\ref{b1b2}, we know $b_1$ and $b_2$.  By their definition, $b_2<\bb_1$.
Again by Sections~\ref{2maxbaton} and~\ref{highr} we know the short maximal
batons and the level pairs.  In particular, we know the short maximal batons
having a key vertex of degree $d_1$.  Therefore, for $i\le(r-1)/2$ we know
the existing values in the multiset
\begin{equation}\label{4deg}
\{\deg(v_{b_1+i}),\deg(v_{b_1-i}),\deg(v_{b_2+i}),\deg(v_{b_2-i}) \}. 
\end{equation}
By ``existing'', we mean that we drop $d(v_{b_1-i})$ from the set when
$i\ge b_1$, we drop $d(v_{b_2}-i)$ when $b_2\le i\le(r-1)/2$, and we drop
$d(v_{b_2+i})$ when $b_2>(r+1)/2$ and $i\ge \bb_2$.

Let $h=\min\{\FL{r/4-1},\bb_2-1\}$; note that $h>b_1$.  We can assume
$b_2-b_1\ge r/3-3$ by not being in Case 1, so automatically $h<b_2$.
If we allowed $h\ge\bb_2$ (that is, $b_2+h>r$), then we could not have a
baton or triton with key vertex $v_{b_2+h}$.

Consider the triton $H$ given by $H=B_{h,h:2,d_3+1,2}$.  The
high-degree vertex in a copy of $H$ cannot be $v_{b_1}$ because $v_{b_1}$ is
too close to $v_1$.  Hence it must be must be $v_{b_2}$, and
$\#H=\CH{d_1-2}{d_3-1}(d(v_{b_2-h})-1)(d(v_{b_2+h})-1)$.
We can compute $\#H$ because $H$ is small enough to fit in a card.
Using $d_3\le(n-r+1)/3$ and $r\le n-8$ from Lemma~\ref{2maxrd3},
\begin{equation}\label{Hbound}
|V(H)|=2h+2+d_3\le\FR r2+\FR{n-r+1}3
\le\FR {n+1}3+\FR r6\le \FR{n+1}3+\FR{n-8}6=\FR n2-1 .
\end{equation}
Thus in fact we can also count subgraphs obtained from $H$ by adding two
vertices.

Let $M=\#H/\CH{d_1-2}{d_3-1}$; we use $M$ to determine $d(v_{b_1+h})$.  
From \eqref{4deg}, we know the three values in the multiset
$\{d(v_{b_1+h})-1,d(v_{b_2-h})-1,d(v_{b_2+h})-1\}$.  Taking their product
and dividing by $M$ produces $d(v_{b_1+h})-1$.  Knowing $d(v_{b_1+h})$, we also
obtain $\{d(v_{b_2+h}),d(v_{b_2-h})\}$.

Next we find $\{d(v_{b_2+i}),d(v_{b_2-i})\}$ for $1\le i<h$.
Let $x=d(v_{b_2-i})-1$ and $y=d(v_{b_2+i})-1$.
Let $H'$ be obtained from $H$ by adding a pendant edge at one vertex at 
distance $i$ along the spine from the vertex of degree $d_3+1$.
Let $H''$ be obtained from $H$ by adding a pendant edge at both such vertices.
As observed above, $H'$ and $H''$ both fit in cards.
Now $x+y=\#H'/\#H$ and $xy=\#H''/\#H$.  Knowing their sum and product,
we find $x$ and $y$ and the desired degree pair.

Knowing these degree pairs for vertices at fixed distance from $v_{b_2}$, we
next determine the list $(d(v_{b_2-h}),\ldots,d(v_{b_2+h}))$, up to reversal.
Let $i$ be the least positive index such that $i<h$ and
$d(v_{b_2-i})\ne d(v_{b_2+i})$ (if there is no such index, then there is
nothing to do).  For $j$ with $i<j<h$ and $d(v_{b_2-j})\ne d(v_{b_2+j})$, we
need to determine which of $\{d(v_{b_2-j}),d(v_{b_2+j})\}$ to associate with
which of $\{d(v_{b_2-i}),d(v_{b_2+i})\}$ as degrees of vertices at distance
$i$ and $j$ from $v_{b_2}$ on the same side of $v_{b_2}$.  Consider the graph
$H_{i,j}$ obtained from $H$ by adding one pendant edge each at the two vertices
at distances $i$ and $j$ in the same direction from the vertex of degree
$d_3+1$.  We know that $H_{i,j}$ fits in a card, and
$$
\#H_{i,j}/\#H=(d(v_{b_2-i})-2)(d(v_{b_2-j})-2)+(d(v_{b_2+i})-2)(d(v_{b_2+j})-2).
$$
We can determine which among the values
$\{d(v_{b_2-i}),d(v_{b_2+i}),d(v_{b_2-j}),d(v_{b_2+j})\}$ belong to vertices on
the same side of $b_2$, because $ac+bd$ differs from $ad+bc$ when $a\ne b$ and
$c\ne d$.  With this we have the list $(d(v_{b_2-h+1},\ldots,d(v_{b_2+h-1}))$,
up to reversal.

If $d(v_{b_2-h})=d(v_{b_2+h})$, or if $(d(v_{b_2-h+1},\ldots,d(v_{b_2+h-1}))$
is unchanged under reversal, then we can extend the list to include $v_{b_2-h}$
and $v_{b_2+h}$.  Otherwise, we consider slightly different cards.  Using the
same value $i$ as in the previous paragraph, we have $x\ne x'$ and $y\ne y'$,
where $\{x,x'\}=\{d(v_{b_2-h})-1,d(v_{b_2+h})-1\}$ and
$\{y,y'\}=\{d(v_{b_2-i})-2,d(v_{b_2+i})-2\}$.
Let $H^*$ be the graph obtained from the triton $B_{h,h:3,d_3+1,2}$ by
adding a pendant edge at the vertex having distance $i$ from the central
key vertex in the direction away from the key vertex of degree $3$.  Note
that $H^*$ has two more vertices than $H$ and hence fits in a card.  Also,
the high-degree vertex in any copy of $H^*$ must be $v_{b_2}$.  Thus
$$
\#H^*/\CH{d_1-2}{d_3-1}= \CH x2y'x'+xy\CH{x'}2 =\FR{xx'}2 [(x-1)y'+(x'-1)y] .
$$
Again since $ac+bd$ differs from $ad+bc$ when $a\ne b$ and $c\ne d$, this
computation tells us the proper assignment of $d(v_{b_2-h})$ and $d(v_{b_2+h})$
to the ends of the list, yielding $(d(v_{b_2-h}),\ldots,d(v_{b_2+h}))$ up to
reversal.

Now we can determine the exact list $d(v_1),\ldots,d(v_{b_1+h})$.  From the set
described in~\eqref{4deg} for $i<b_1$, delete the pair
$\{d(v_{b_2+i}),d(v_{b_2-i})\}$ to obtain the pair
$\{d(v_{b_1+i}),d(v_{b_1-i})\}$.  For $b_1\le i\le h$, in~\eqref{4deg} we have
triples rather than $4$-sets, and deleting the pair yields $d(v_{b_1+i})$
exactly.  For $i<b_1$, let $H_i=B_{i,h:2,d_3+1,2}$.
Since $H_i$ is smaller than $H$, it fits in a card and we know $\#H_i$.  In
copies of $H_i$ in $G$, the high-degree vertex must be $v_{b_1}$ or $v_{b_2}$.
Because we know $\{(d(v_{b_2-i}),d(v_{b_2+h})),(d(v_{b_2+i}),d(v_{b_2-h}))\}$,
we know the number $N$ of copies of $H_i$ in which the high-degree vertex is
$v_{b_2}$.  After subtracting those, in the remaining copies the key vertex at
distance $h$ from $v_{b_1}$ must be $v_{b_1+h}$, whose degree we know.
The other key vertex must be $d(v_{b_1-i})$.  Since we already know the pair
$\{d(v_{b_1-i}),d(v_{b_1+i}) \}$, from $\#H_i-N$ we obtain $d(v_{b_1-i})$ and
$d(v_{b_1+i})$.  Over all such $i$, we now know $d(v_1),\ldots,d(v_{b_1+h})$.


We want to expand our knowledge to $d(v_1),\ldots,d(v_{\FL{(r-4)/2}})$.
For $b_1<b_1+i\le\FL{(r-4)/2}$, let $F_i$ be the triton $B_{b_1,i:2,d_3+1,2}$.
Since $|V(F_i)|=b_1+i+2+d_3\le r/2+d_3$, and we proved $r/2+d_3\le(n-2)/2$
in~\eqref{Hbound}, subgraphs consisting of $F_i$ plus two extra vertices
fit in cards.  Since there is only one nonleaf vertex at distance $b_1$ from
$v_{b_1}$, the high-degree vertex in any copy of $F_i$ must be $b_2$.  Thus
\begin{equation}\label{Fi}
\#F_i/\CH{d_1-2}{d_3-1}
=(d(v_{b_2-b_1})-1)(d(v_{b_2+i})-1)
+(d(v_{b_2-i})-1)(d(v_{b_2+b_1})-1) .
\end{equation}
Let $F'_i$ be the graph obtained from $F_i$ by adding one pendant edge to
the vertex at distance $b_1$ from the high-degree vertex on the path of length
$i$ from that vertex.  Again the high-degree vertex in a copy of $F'_i$ in $G$
must be $b_2$.  Thus $\#F'_i/\CH{d_1-2}{d_3-1}$ equals
\begin{equation}\label{F'i}
(\deg(v_{b_2 - b_1})- 1)(\deg(v_{b_2 + b_1})- 2)(\deg(v_{b_2 + i})- 1) 
+(\deg(v_{b_2 + b_1})- 1)(\deg(v_{b_2 - b_1})- 2)(\deg(v_{b_2 - i})- 1).
\end{equation}
Summing the counts and dividing by the common factors, we have
\begin{equation}\label{FFi}
\FR{\#F_i+\#F'_i}{\CH{d_1-2}{d_3-1}(\deg(v_{b_2-b_1})-1)(\deg(v_{b_2+b_1})-1)}
=\deg(v_{b_2 + i})- 1+\deg(v_{b_2 - i})- 1.
\end{equation}
Since $b_1<h$, we know $\{d(v_{b_2-b_1}),d(v_{b_2+b_1})\}$, and hence we obtain
$x+y$ where $x=d(v_{b_2+i})-1$ and $y=d(v_{b_2-i})-1$.
By making similar counts starting with the triton $B_{b_1,i:2,d_3+1,3}$ and a
subgraph obtained by adding one vertex to it (which has two vertices more than
$B_{b_1,i:2,d_3+1,2}$), we obtain $\CH{x}2+\CH{y}2$.  When $x+y=a$ and
$\CH x2+\CH y2=b$, we have $x^2+y^2=2b+a$ and $(x+y)^2=a^2$, so
$xy=(a^2-2b-a)/2$.  Knowing the sum and product, we obtain $\{x,y\}$.

Thus we obtain $\{d(v_{b_2+i}),d(v_{b_2-i})\}$.  Deleting these two from
the triple provided by~\eqref{4deg} when $i\ge b_1$ tells us $d(v_{b_1+i})$.
If $b_2+i>r$, then the terms involving $v_{b_2+i}$ in~\eqref{Fi}
and~\eqref{F'i} disappear, and the ratio in~\eqref{FFi} simplifies to
$d(v_{b_2-i})-1$.  Now~\eqref{4deg} simplifies to the pair
$\{d(v_{b_1+i}),d(v_{b_2-i})\}$, and we still obtain $d(v_{b_1+i})$.

Since we can do this for $b_1+i\le (r-4)/2$, we now know
$(d(v_1),\ldots,d(v_{\FL{(r-4)/2}}))$.  Since we know the level pairs, we also
know the degrees in order for the last $\FL{(r-1)/2}$ vertices on the spine.
Now Lemma~\ref{3-11} applies to complete the reconstruction.
\end{proof}

	
We next consider Case 3.
We will need algebraic lemmas.  We mention only the special cases we use,
forgoing more general statements.  The first lemma is an instance of what are
called Newton's Identities or Newton--Girard Formulae, expressing elementary
symmetric functions in terms of sums of powers (see
https://en.wikipedia.org/wiki/Newton\%27s\_identities).

\begin{lemma}\label{powprod}
If the sum of the $k$th powers of nonnegative integers $x,y,z$ are known
for $k\in\{1,2,3\}$, then $xyz$ is known.
\end{lemma}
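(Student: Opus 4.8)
The plan is to express $xyz$ as a fixed polynomial in the three given power sums, so that knowing the power sums pins down $xyz$. Write $p_k=x^k+y^k+z^k$ for $k\in\{1,2,3\}$, and let $e_1=x+y+z$, $e_2=xy+yz+zx$, $e_3=xyz$ be the elementary symmetric functions of $x,y,z$. The key observation is the chain of Newton identities for three variables: $p_1=e_1$, then $p_2=e_1p_1-2e_2$, and then $p_3=e_1p_2-e_2p_1+3e_3$. Solving these in turn for $e_1$, $e_2$, and $e_3$ gives $e_1=p_1$, next $e_2=(p_1^2-p_2)/2$, and finally $e_3=\tfrac16\bigl(2p_3-3p_1p_2+p_1^3\bigr)$. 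Since $xyz=e_3$, the values $p_1,p_2,p_3$ determine $xyz$.

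To keep the argument self-contained, I would prefer to derive the needed identities directly rather than merely cite Newton's formulae. Each of $x,y,z$ is a root of the cubic $t^3-e_1t^2+e_2t-e_3=0$, so $x^3=e_1x^2-e_2x+e_3$ and likewise for $y$ and $z$; summing these three equations yields $p_3=e_1p_2-e_2p_1+3e_3$. Combining this with the elementary identity $p_2=p_1^2-2e_2$ (obtained by expanding $(x+y+z)^2$) and $p_1=e_1$ recovers exactly the substitutions above. Substituting $e_1=p_1$ and $e_2=(p_1^2-p_2)/2$ into $p_3=e_1p_2-e_2p_1+3e_3$ and solving for $e_3$ gives the displayed closed form for $xyz$.

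There is essentially no obstacle here: the whole proof is a one-line algebraic manipulation, and the only point worth a remark is that the divisions by $2$ and $3$ cause no difficulty since we compute over the rationals (in fact $e_2$ and $e_3$ turn out to be integers, but we do not need this). Note also that the hypothesis that $x,y,z$ are nonnegative integers plays no role in the derivation; it is stated only because that is the setting in which the lemma will be applied later in the paper. Thus the proof is simply: compute $p_1,p_2,p_3$ from the hypotheses, then report $xyz=\tfrac16(2p_3-3p_1p_2+p_1^3)$.
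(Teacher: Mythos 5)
Your proposal is correct and matches the paper's approach: the paper also invokes Newton's identities (citing them just before the lemma) and arrives at the same formula, $6xyz=\alpha^3-3\alpha\beta+2\gamma$ with $\alpha=p_1$, $\beta=p_2$, $\gamma=p_3$, which is identical to your $xyz=\tfrac16(p_1^3-3p_1p_2+2p_3)$. You merely spell out the derivation that the paper compresses into the phrase ``straightforward computation.''
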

\begin{proof}
We have already used the analogue for two variables:
$x+y=\alpha$ and $x^2+y^2=\beta$ yields $xy=(\alpha^2-\beta)/2$.

For three variables, let $x+y+z=\alpha$, $x^2+y^2+z^2=\beta$, and
$x^3+y^3+z^3=\gamma$.  Straightforward computation yields
$6xyz=\alpha^3-3\alpha\beta+2\gamma.$
%
\end{proof}

\begin{lemma}\label{powbin}
The product $a^sb^t$ can be expressed as a linear combination of products
of the form $\CH ai\CH bj$ with $i\le s$ and $j\le t$.  In particular,
we use the following instances:
\begin{align*}
a^2b^2&=\left[2\CH a2+a\right]\left[2\CH b2+b\right],\\
a^3b^3&=\left[6\CH a3+6\CH a2+a\right]\left[6\CH b3+6\CH b2+b\right],\\
a^2b^4&=\left[2\CH a2+a\right]\left[24\CH b4+36\CH b3+14\CH b2+b\right].
\end{align*}
\end{lemma}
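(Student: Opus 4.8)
The plan is to use the standard fact that the polynomials $\binom{x}{0},\binom{x}{1},\ldots,\binom{x}{s}$ in $x$ form a basis for the space of polynomials of degree at most $s$. Indeed, $\binom{x}{i}=\frac{1}{i!}x(x-1)\cdots(x-i+1)$ has degree exactly $i$, so these $s+1$ polynomials are linearly independent and therefore span the $(s+1)$-dimensional space of polynomials of degree at most~$s$. Consequently $x^s$, which lies in that space, has a (unique) expansion $x^s=\sum_{i=0}^{s}c_{s,i}\binom{x}{i}$; in fact $c_{s,i}=i!\,S(s,i)$, where $S(s,i)$ is the Stirling number of the second kind, so the coefficients are integers. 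Writing the analogous expansion $y^t=\sum_{j=0}^{t}c_{t,j}\binom{y}{j}$ and multiplying yields
\[
a^s b^t=\sum_{i=0}^{s}\sum_{j=0}^{t}c_{s,i}c_{t,j}\binom{a}{i}\binom{b}{j},
\]
which is a linear combination of products $\binom{a}{i}\binom{b}{j}$ with $i\le s$ and $j\le t$, as claimed.

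For the three explicit instances I would just record the relevant one-variable expansions and verify them directly: $x^2=2\binom{x}{2}+x$, $\ x^3=6\binom{x}{3}+6\binom{x}{2}+x$, and $\ x^4=24\binom{x}{4}+36\binom{x}{3}+14\binom{x}{2}+x$. Each follows in a line by clearing denominators; for example $6\binom{x}{3}+6\binom{x}{2}+x=(x^3-3x^2+2x)+(3x^2-3x)+x=x^3$, and the degree-$4$ case is the analogous one-line check. Substituting $x=a$ in the first and third expansions, $x=b$ in all three, and forming the products $a^2\cdot b^2$, $a^3\cdot b^3$, and $a^2\cdot b^4$ produces exactly the three displayed identities.

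There is essentially no obstacle here; the only point requiring any care is confirming that the numerical coefficients in the three small expansions are as stated, which amounts to checking three elementary polynomial identities. If one prefers to avoid citing Stirling numbers for the integrality of the coefficients in the general statement, it suffices to proceed by induction on~$s$: from an integer-coefficient expansion $x^{s-1}=\sum_i c_{s-1,i}\binom{x}{i}$ one obtains one for $x^s$ by multiplying through by $x$ and using $x\binom{x}{i}=(i+1)\binom{x}{i+1}+i\binom{x}{i}$.
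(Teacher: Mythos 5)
Your proof is correct and follows essentially the same route as the paper: both arguments rest on the observation that the binomial coefficients $\binom{z}{0},\ldots,\binom{z}{k}$ form a basis for polynomials of degree at most $k$, so that $z^k$ has an expansion in that basis, and the two-variable identity is obtained by multiplying the one-variable expansions. Your extra remarks on integrality of the coefficients (via Stirling numbers or the induction using $x\binom{x}{i}=(i+1)\binom{x}{i+1}+i\binom{x}{i}$) are correct but not needed for the lemma as stated.
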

\begin{proof}
The pure powers $\{z^k\st k\ge0\}$ and the binomial coefficients
$\{\CH zk\st k\ge0\}$ both form bases for the set of polynomials in $z$.
Furthermore, within the subspace of polynomials of degree at most $k$, we can
write $z_k$ as a linear combination of $\{\CH zj\st 0\le j\le k\}$.  The linear
combination is found by successively matching the leading remaining coefficient
and restricting to smaller degree.
\end{proof}

\begin{lemma}\label{Case3}
If $b_1 \ge(r-7)/4$ and $b_2\ne\bb_1 $ (Case 3) and $n\ge48$,
then the unknown caterpillar is reconstructible.
\end{lemma}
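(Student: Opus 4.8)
The plan is to reduce via the Climbing Lemma and then reconstruct a degree list \emph{outward} from $v_{b_1}$ and $v_{b_2}$, using tritons whose middle key vertex is forced to be $v_{b_1}$ or $v_{b_2}$; the new feature relative to Cases~1 and~2 is that in Case~3 \emph{both} of these vertices are interior, so each triton count pools their contributions and must be disentangled. By Lemma~\ref{b1b2} we know the short maximal batons, all level pairs, and $b_1,b_2$; together with $b_1\le\min\{b_2,\bb_2\}$ and $b_2\ne\bb_1$ this is the asymmetric regime $b_1<\min\{b_2,\bb_2\}$, so the Climbing Lemma (Lemma~\ref{climbing}) reduces the problem to finding one of the segments $(d(v_1),\dots,d(v_{b_2}))$ or $(d(v_{b_2}),\dots,d(v_r))$, according to which side of $(r+1)/2$ holds $v_{b_2}$. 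We may also assume $b_1>r/4-2$ (else Lemma~\ref{Case2} applies) and $b_2-b_1>r/3-7/2$ (else Lemma~\ref{Case1} applies), so $b_1+b_2>\tfrac{5}{6}r-O(1)$: thus $v_{b_1}$ lies in roughly the first third of the spine, $v_{b_2}$ well past the middle, and once we know the degrees from one end through $v_{b_1}$, the level pairs supply the far end and leave only about $r/6$ further vertices (those between $v_{b_2}$ and $v_{\bb_1}$) still to determine.

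The engine is the forced-center triton. Since $d_1>d_3$, the middle key vertex of any copy of $B_{h,h':2,d_3+1,2}$ in $G$ --- or of the variants in which one or both outer key vertices are raised to degree $3$ or $4$ --- must be $v_{b_1}$ or $v_{b_2}$, and such a triton, kept short on one arm, has about $h+h'+d_3$ vertices and so fits in a card by $d_3\le(n-r+1)/3$ from Lemma~\ref{2maxrd3}. Counting its copies and dividing out the fixed binomial factor for the high-degree vertex's leaves gives a symmetric function of the (at most four) shifted degrees $d(v_{b_1-h})-1,\ d(v_{b_1+h'})-1,\ d(v_{b_2-h})-1,\ d(v_{b_2+h'})-1$, while the short-baton data independently gives the \emph{multiset} of the same values and records which of the four vertices exist. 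The plain triton already determines, generically, the partition of this multiset into the pair near $v_{b_1}$ and the pair near $v_{b_2}$ (via the principle that $ab+cd\ne ac+bd$ unless $a=d$ or $b=c$); in the degenerate cases the degree-$3$ and degree-$4$ variants supply further symmetric-function data, which through Lemma~\ref{powbin} (rewriting the binomial-coefficient-weighted counts as power products $x^sy^t$) and Lemma~\ref{powprod} (recovering a product from power sums) is enough to fix the partition and then orient each pair --- decide which value is the $+h$ vertex --- using also an asymmetric arm or consistency with the ordered partial list already built, subtracting at every step the terms whose vertices are already known (from earlier steps, from the opposite end via the level pairs, or from a previously fixed region). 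Iterating outward from $v_{b_1}$ through distances $1,\dots,b_1-1$ produces $(d(v_1),\dots,d(v_{b_1}))$, hence $(d(v_{\bb_1}),\dots,d(v_r))$ by the level pairs; the small remaining gap near $v_{b_2}$ is filled the same way centered at $v_{b_2}$, after which the Climbing Lemma finishes. (In the sub-case where $v_{b_2}$ lies left of center one instead builds $(d(v_1),\dots,d(v_{b_2}))$ outward from both $v_{b_1}$ and $v_{b_2}$; the arm lengths are still bounded by $b_1$ or by $b_2-b_1<r/4+O(1)$, so the same size estimates apply.)

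The hard part will be the disentangling just described: organizing the iteration so that at each distance enough of the four participating degrees are already pinned, or enough card-sized triton variants available, to break the partition/orientation ambiguity, through the parity-dependent case analysis this entails ($h=h'$ collapsing several products, or one or two of the four vertices missing near the ends). Closely tied to it is the card-size bookkeeping: it is exactly the extreme configuration of Case~3 --- $b_1$ near $r/3$, $d_3$ near its maximum $(n-r+1)/3$, and the interval $[v_{b_1},v_{b_2}]$ too long to see in a single card (which is why Case~1's ``see the whole middle at once'' method cannot be reused and the outward iteration with its slightly longer tritons is forced) --- where the size inequalities become tight, and that is why this lemma, alone among those in the paper, carries the hypothesis $n\ge48$.
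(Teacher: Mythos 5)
Your proposal correctly identifies the overall strategy and the right tools --- forced-position tritons, the binomial-to-power conversions of Lemma~\ref{powbin}, Newton's identities via Lemma~\ref{powprod}, and the Climbing Lemma to finish --- and you correctly flag that in Case~3 both maximum-degree vertices are interior so their contributions pool. But you then declare the disentanglement of those pooled contributions to be ``the hard part'' and leave it unspecified. That step is not bookkeeping: it fails without a specific structural choice your sketch omits.

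The paper fixes one arm of the triton so that the two arms sum to exactly $b_1$, and places the high-degree key vertex at one \emph{end}, not the middle: $F_{s,t}=B_{b_1-i,i:1+s,2+t,d_3+1}$. With that choice, the end key vertex at distance $b_1$ from the high-degree vertex is forced to be $v_{2b_1}$ when the high-degree vertex is $v_{b_1}$ (the alternative would be $v_0$, a leaf) and to be $v_{b_2\pm b_1}$ when it is $v_{b_2}$; so the count $\#F_{s,t}/M$ collapses to a sum of exactly \emph{three} products $\binom{x_j}{s}\binom{y_j}{t}$, with the multiset $\{x_1,x_2,x_3\}$ already known from the batons of length $b_1$. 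Newton's identities then recover $x_1y_1\cdot x_2y_2\cdot x_3y_3$ from power sums of order at most $3$; dividing by the known $x_1x_2x_3$ gives $y_1y_2y_3$, and dividing the known product $y_1y_2y_3y_4$ (the multiset from batons of length $i$) by that yields the sought $y_4=d(v_{b_1-i})-2$. Your centered triton $B_{h,h':2,d_3+1,2}$ with the high-degree vertex in the middle and generic arm lengths $h,h'$ produces up to \emph{four} products (two orientations for each of $v_{b_1},v_{b_2}$). That breaks the extraction twice over: a four-term Newton's argument needs the fourth power sum, which by Lemma~\ref{powbin} forces $s+t$ up to $8$ and exceeds the card-size budget that gives $n\ge48$ (the three-term version only needs $s+t\le6$); and even granting the power sums, the four products pair four ``left'' degrees with four ``right'' degrees, and neither of those multisets of outer-key-vertex degrees is a priori a full known product, so there is no analogue of the ``divide out the known factors'' step. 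You also skip the degenerate subcase in which one or more of $y_1,y_2,y_3$ is zero, which the paper handles by a separate two- and one-term analysis using $F_{2,4}$ --- again the specific source of the $n\ge48$ hypothesis. Finally, the sub-case in your last parenthetical ($v_{b_2}$ left of center) does not actually occur in Case~3: the bound $b_1\ge(r-7)/4$ together with $b_2-b_1>r/3-7/2$ forces $b_2>(r+1)/2$.
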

\begin{proof}
By Lemma~\ref{Case1}, we may assume $b_2-b_1> r/3-7/2$.  From this we obtain
$b_1\le \bb_2-1<2r/3+7/2-b_1$, so $b_1<r/3+7/4$, which yields $b_1\le(r+5)/3$.
With the lower bound on $b_1$, we also have $b_2 > 7r/12-21/4>(r+1)/2$. 

As in Case 2 (Lemma~\ref{Case2}), we know the multiset of degrees of four
vertices as listed in~\eqref{4deg}.  Our initial aim is to find the multiset
of three degrees $\{d(v_{b_1+i}),d(v_{b_2-i}),d(v_{b_2+i})\}$ in order to
determine $d(v_{b_1-i})$, where $i$ is a fixed positive integer less than $b_1$.

Define integers $x_j$ and $y_j$ as follows:
\begin{center}
\begin{tabular}{c c c c}
$x_1=d(v_{2b_1})-1$& $x_2=d(v_{b_2-b_1})-1$& $x_3=d(v_{b_2+b_1})-1$& \\
$y_1=d(v_{b_1+i})-2$& $y_2=d(v_{b_2-i})-2$&
$y_3=d(v_{b_2+i})-2$&$y_4=d(v_{b_2-i})-2$.
\end{tabular}
\end{center}
Note that each $x_j$ is positive and each $y_j$ is nonnegative.
As noted above, from the batons of length $b_1$ we know the multiset
$\{x_1,x_2,x_3\}$, and from the batons of length $i$ we know
the multiset $\{y_1,y_2,y_3,y_4\}$.  If we can determine $x_1y_1x_2y_2x_3y_3$
and it is nonzero, then we can divide it into the product of all seven numbers
to obtain $y_4$.

We use special tritons.
Let $F_{s,t}=B_{b_1-i,i:1+s,2+t,d_3+1}$ for $s,t\in\{1,2,3\}$.  Note that
$$|V(F_{1,1})|=b_1+d_3+3<\FR {r+5}3+3+\FR{n-r+1}3\le \FR n3+5.$$
Thus $F_{1,1}$ fits in a card when $n\ge24$, and in general
$F_{s,t}$ fits when $n\ge 6(2+s+t)$.  We use instances with $s+t\le 6$,
which can be counted when $n\ge48$.

The high-degree vertex in a copy of $F_{s,t}$ in $G$ can only be
$v_{b_1}$ or $v_{b_2}$.  If $v_{b_1}$ then the other key vertices are
$v_{b_1+i}$ and $v_{2b_1}$.  If $v_{b_2}$, then the triton may extend to
$v_{b_2+b_1}$ or to $v_{b_2-b_1}$.  The parameters $s$ and $t$ indicate
the number of leaf neighbors of two key vertices in the triton.

Let $M=\CH{d_1-1}{d_3}$.  We compute
\begin{equation}\label{eq:triplebase}
\#F_{s,t}/M =\CH{x_1}s\CH{y_1}t+\CH{x_2}s\CH{y_2}t+\CH{x_3}s\CH{y_3}t .
\end{equation}
In particular, 
$\#F_{1,1}/M=x_1y_1+x_2y_2+x_3y_3.$
By Lemma~\ref{powbin},
\begin{equation}\label{xysq}
\Bigl[4\#F_{2,2}+2\#F_{1,2}+2\#F_{2,1}+\#F_{1,1}\Bigr]/M
=x_1^2y_1^2+x_2^2y_2^2+x_3^2y_3^2.
\end{equation}
and $x_1^3y_1^3+x_2^3y_2^3+x_3^3y_3^3$ is given by
\begin{equation*}\label{xycub}
\Bigl[36(\#F_{3,3}+\#F_{2,3}+\#F_{3,2}+\#F_{2,2})
+6(\#F_{1,3}+\#F_{3,1}+\#F_{1,2}+\#F_{2,1})+\#F_{1,1}\Bigr]/M .
\end{equation*}
We can compute these terms because $F_{3,3}$ and its
subgraphs fit in cards when $n\ge48$.

Letting $\alpha,\beta,\gamma$ respectively denote the resulting values using
first, second, and third powers, Lemma~\ref{powprod} yields
$$
x_1y_1x_2y_2x_3y_3=(\alpha^3-3\alpha\beta+2\gamma)/6.
$$
Since we know $\{x_1,x_2,x_3\}$ from the batons of length $b_1$, we can divide
by their product to obtain the product $y_1y_2y_3$.
We also know $\{y_1,y_2,y_3,y_4\}$ from the batons of length $i$.
If $y_1y_2y_3\ne0$, then we can divide to obtain $y_4$ and thus $d(v_{b_1-i})$.

If $y_1y_2y_3=0$, then we cannot perform this division, and at least one
factor is $0$.  Let $p$ and $q$ be the other two indices.  Our counts of
$F_{s,t}$ give expressions involving indices $p$ and $q$, since the third term
becomes $0$.  We obtain $x_py_p+x_qy_q$ from $\#F_{1,1}$ and
$x_p^2y_p^2+x_q^2y_q^2$ using the linear combination in~\eqref{xysq}.  Now
using the identity $2ab=(a+b)^2-(a^2+b^2)$ we obtain $x_py_px_qy_q$.

We seek also $x_py_p^2x_qy_q^2$; dividing it by $x_py_px_qy_q$ yields $y_py_q$,
and dividing this into the product of the three nonzero entries in the multiset
$\{y_1,y_2,y_3,y_4\}$ gives us $y_4$.

To obtain $x_py_p^2x_qy_q^2$, first let $a=x_py_p^2$ and $b=x_qy_q^2$
We have $[2\#F_{1,2}+\#F_{1,1}]/M=x_py_p^2+x_qy_q^2=a+b$.
Once we determine $a^2+b^2$, we again apply $2ab=(a+b)^2-(a^2+b^2)$.
To obtain $a^2+b^2$, we start with $\#F_{2,4}/M$ as in~\eqref{eq:triplebase}.
We obtain terms involving $x^2y^4$ and must eliminate lower order terms.
We can compute these terms because, as observed earlier, $F_{2,4}$ and its
subgraphs fit in cards when $n\ge48$.
Using the expression for $a^2b^4$ in Lemma~\ref{powbin},
$$
\Bigl[48\#F_{2,4}+24\#F_{1,4}+72\#F_{2,3}+36\#F_{1,3}
+28\#F_{2,2}+14\#F_{1,2}+2\#F_{2,1}+\#F_{1,1}\Bigr]/M
=x_p^2y_p^4+x_q^2y_q^4 .
$$

If two entries in $\{y_1,y_2,y_3\}$ are $0$, then we discover this
when computing $x_py_px_qy_q$.  If $y_p$ remains positive, then
then $\#F_{1,1}/M=x_py_p$ and $[2\#F_{1,2}+\#F_{1,1}]/M=x_py_p^2$.
Now the ratio of these two numbers is $y_p$, and the remaining nonzero entry in
$\{y_1,y_2,y_3,y_4\}$ is $y_4$.  If all entries in $\{y_1,y_2,y_3\}$ are $0$,
then $\#F_{1,1}/M=0$ and again we obtain $y_4$ as $\max\{y_1,y_2,y_3,y_4\}$.

This completes the discovery of $y_4$ and hence $d(v_{b_2-i})$.

After performing these computations for $i$ from $1$ to $b_1-1$,
we have the list $d(v_1),\ldots,d(v_{b_1})$ in order.
Next we want to extend our knowledge to determine $d(v_{b_1+i})$ for
$1\le i\le \bb_2-b_1$.  Letting $F'_{s,t}= B_{b_1,i:1+s,d_3+1,1+t}$, we apply
arguments similar to those above using $F'_{s,t}$ instead of $F_{s,t}$.
This portion is simpler because the $-2$ in the definition of $y_j$ is replaced
by $-1$, and all $y_j$ are positive.  Thus the arguments to obtain $y_4$ for
each $i$ remain the same as the case above where all $y_j$ are positive.

Since we know the short maximal batons, and those give us the level pairs, we
now know not only $(d(v_1),\ldots,d(v_{\bb_2}))$ but also
$(d(v_{b_2},\ldots,d(v_r))$.  With this, the Climbing Lemma
(Lemma~\ref{climbing}) completes the reconstruction.
\end{proof}

\begin{lemma}\label{Case4}
If $b_1 < r/3+9/4$ and $b_2=\bb_1$ (Case 4),
then the caterpillar is reconstructible.
\end{lemma}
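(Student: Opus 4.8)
\medskip
\noindent\textbf{Proof plan.}
We are in the symmetric case $b_2=\bb_1$. By Lemma~\ref{b1b2} we know $b_1$ (hence $b_2$), all short maximal batons, and all level pairs; by Lemma~\ref{2maxrd3} we may use $(n-5)/2\le r\le n-8$ and $d_3\le(n-r+1)/3$. The hypothesis $b_1<r/3+9/4$ forces $b_2-b_1=r+1-2b_1>r/3-7/2$, so we lie outside Case~1 and both $v_{b_1}$ and $v_{b_2}$ lie near the two ends of the spine. A caterpillar is determined up to isomorphism by its spine degree list read up to reversal, and the level pairs $\{d(v_k),d(v_\kb)\}$ are already known; hence it suffices to determine, for each pair of nontrivial level pairs, how their two allocations correlate --- this correlation data is reflection-invariant, and knowing it for all pairs fixes the degree list up to one global reversal. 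Concretely, the plan is to pin down $(d(v_1),\dots,d(v_{q-1}))$ (and its mirror) for $q=\FL{(r-2)/2}$, after which Lemma~\ref{lastfew} completes the reconstruction.

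The engine is that $d_1=d_2>d_3$, so $v_{b_1}$ and $v_{b_2}$ are the only vertices of $G$ of degree exceeding $d_3$; hence any key vertex of card-degree at least $d_3+1$ in a triton is a copy of $v_{b_1}$ or of $v_{b_2}$. For $i<b_1$ and admissible $j$ I will count the tritons $B_{i,j:2,d_3+1,2}$ and the nearby variants obtained by appending pendant edges along an arm, exactly as in the proofs of Lemmas~\ref{Case1}--\ref{Case3}. The degree-$(d_3+1)$ key vertex of such a triton is $v_{b_1}$ or $v_{b_2}$; anchored at $v_{b_1}$ it records the degrees at distances $i$ and $j$ from $v_{b_1}$ on its two sides, and anchored at $v_{b_2}$ the degrees at distances $i$ and $j$ from $v_{b_2}$. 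Because $b_2=\bb_1$, the vertex at distance $i$ outward from $v_{b_1}$ and the one at distance $i$ outward from $v_{b_2}$ form the level pair $\{v_{b_1-i},v_{b_2+i}\}$, while the two vertices at distance $j$ inward form the level pair $\{v_{b_1+j},v_{b_2-j}\}$. Thus the raw count equals $\binom{d_1-2}{d_3-1}$ times a two-term sum $g(\alpha,\beta)+g(\alpha',\beta')$, where $g$ is a product-type expression, $\{\alpha,\alpha'\}$ and $\{\beta,\beta'\}$ are the two level pairs, and the only unknown is the pairing. Since $g(\alpha,\beta)+g(\alpha',\beta')\ne g(\alpha,\beta')+g(\alpha',\beta)$ whenever $\alpha\ne\alpha'$ and $\beta\ne\beta'$, each count yields exactly one relative allocation (and is vacuous if either pair is already symmetric). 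Taking $i=j$ links the level pair at position $b_1-i$ with the one at position $b_1+i$, and $i$ small with $j$ large links a fixed outer position with middle positions out to $q-1$; since the outer positions are thereby all linked through a common middle position, transitivity makes every relative allocation known.

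The routine parts are that each triton fits in a card and that the admissible $(i,j)$ reach every nontrivial level pair up to position $q-1$. The triton $B_{i,j:2,d_3+1,2}$ has $i+j+d_3$ vertices (a variant a bounded number more), and one only ever needs $i+j$ of order $r/2$; combining this with $d_3\le(n-r+1)/3$ and $r\ge(n-5)/2$ (so that $d_3$ and $r$ cannot both be large) keeps $i+j+d_3$ at most $(n+2)/2$, apart from a short list of boundary configurations (very small $b_1$, small $n-r$) handled directly --- this is where the threshold on $n$ flagged in the statement (here $n\ge27$ suffices) is used. Once $(d(v_1),\dots,d(v_{q-1}))$ and its mirror are known with $q=\FL{(r-2)/2}$, Lemma~\ref{lastfew} finishes.

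The real obstacle I anticipate is the degenerate bookkeeping. A key vertex of $B_{i,j:2,d_3+1,2}$ cannot sit at a degree-$2$ spine vertex, which is a problem exactly for a level pair $\{2,\gamma\}$ with $\gamma\ge3$; such a pair must be handled by extending the relevant arm past the degree-$2$ vertex, or by a dedicated triton. The simple ``$\alpha\beta+\alpha'\beta'$ versus $\alpha\beta'+\alpha'\beta$'' test can degenerate (equal values even after the shifts built into $g$), which forces the higher-order symmetric-function identities of Lemmas~\ref{powprod} and~\ref{powbin}, as in Case~3. Finally, if every level pair in the outer segment $v_1,\dots,v_{b_1-1}$ happens to be symmetric, an outward arm carries no information, so one must disentangle two \emph{middle} level pairs directly --- using a triton whose degree-$(d_3+1)$ key vertex is an endpoint of its path (still forced to $v_{b_1}$ or $v_{b_2}$) while its other two key vertices lie in the middle --- or pass to a secondary anchor of degree $d_3$. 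Any small central block of level pairs left undetermined is absorbed by Lemma~\ref{stitch} (equivalently Lemma~\ref{lastfew}).
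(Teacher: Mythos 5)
The high-level plan --- anchoring tritons $B_{i,j:2,d_3+1,2}$ at $v_{b_1}$ or $v_{b_2}$, exploiting that their central key of degree $d_3+1$ can only embed there, and using the symmetry $b_2=\bb_1$ to read off level pairs --- is the same as the paper's. However, the paper's Lemma~\ref{Case4} is more surgical than your version: it first finds the least $h>0$ with $d(v_{b_1+h})\ne d(v_{b_2-h})$, then uses tritons $B_{h,h'-h:d_3+1,d(v_{b_1+h})+1,2}$ whose middle key degree is pinned \emph{one above} $d(v_{b_1+h})$, forcing a single valid embedding (at $v_{b_2}$, reaching $v_{b_2-h}$) and producing a single-term count rather than the two- or four-term sums your scheme has to decode. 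Your proposal goes through Lemma~\ref{lastfew} rather than finishing directly, which is an acceptable variant.

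There is one genuine gap in your plan. Your transitivity scheme links every level pair to a ``common middle position,'' so it breaks if every middle level pair (levels $b_1$ through $(r+1)/2$) is symmetric while two or more outer level pairs (levels below $b_1$) are not. You only address the mirror situation (outer all symmetric, middle not). The paper handles the missing case by taking $F=B_{h,h'-h:d_3+1,3,2}$ with $h<h'<b_1$, observing that $\#F$ is a four-term sum, subtracting the two now-known inward terms (since the middle is symmetric), and comparing the two remaining outward terms. You would need this or an equivalent. Two further points: your worry about a degree-$2$ spine vertex sitting at a degree-$2$ key is unfounded --- the count uses $d(v)-1\ge 1$ leaf-or-spine choices at that key, so nothing collapses --- and the Newton/binomial machinery of Lemmas~\ref{powprod}--\ref{powbin} is not needed here, since a two-term test $\alpha\beta+\alpha'\beta'$ vs.\ $\alpha\beta'+\alpha'\beta$ can only tie when $(\alpha-\alpha')(\beta-\beta')=0$, i.e.\ exactly when there is nothing to disambiguate. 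Finally, you should say a word about $b_1=1$, where the outward arm from $v_{b_1}$ does not exist; the paper's single-term anchoring with inward arms sidesteps this cleanly, whereas your $i=j$ tritons do not apply there.
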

\begin{proof}
We know the short maximal batons and the level pairs.
Our first goal is to determine $(d(v_{b_1}),\ldots,d(v_{b_2}))$.
If $d(v_k)=d(v_\kb)$ for $b_1<k\le(r+1)/2$, then there is nothing to do.
Otherwise, let $h$ be the least positive integer such that
$d(v_{b_1+h})\ne d(v_{b_2-h})$.  Since $d(v_{b_1})=d(v_{b_2})=d_1$, we may
assume by symmetry that $d(v_{b_1+h})< d(v_{b_2-h})$.  

We want to distinguish, for $h'$ such that $d(v_{b_1+h'})\ne d(v_{b_2-h'})$,
which degree belongs to which vertex.  There is no such $h'$ between $1$ and
$h$; consider $h'>h$.  For such $h'$ with $b_1\le h'\le(r-2)/2$, let
$H=B_{h,h'-h:d_3+1,d(v_{b_1+h})+1,2}$.  In any copy of $H$, the key vertex of
degree $d_3+1$ must be $v_{b_1}$ or $v_{b_2}$.  Since $H$ has length at least
$b_1$, the second key vertex must be $v_{b_1+h}$ or $v_{b_2-h}$.  Since it has
degree greater than the known value $d(v_{b_1+h})$, it must be $v_{b_2-h}$.
Hence the key vertex having degree $2$ in $H$ must be $v_{b_2-h'}$.
Letting $M=\CH{d_1-1}{d_3}\CH{d(v_{b_2-h})-2}{d(v_{b_1+h})-1}$, we have
$\#H=M[d(v_{b_2-h'})-1]$, which tells us $d(v_{b_2-h'})$.

To perform the computation, we must guarantee that $H$ fits in a card.
Note that $|V(H)|=h'+d_3+d(v_{b_1+h})+1$.  Since $d(v_{b_1+h})<d(v_{b_2-h})$,
we have $d(v_{b_1+h})<d_3$.  By~\eqref{d'}, any four degrees sum to at most
$n-r+6$.  Hence
$$
2d_3+2d(v_{b_1+h})\le d_1-1+d_2-1+d_3-1+d_4\le n-r+3.
$$
If equality holds throughout, then there are only four branch vertices and we
already know the full caterpillar.  Hence we can reduce the upper bound by $1$.
Thus
$
2|V(H)|\le r+(n-r+2)= n+2 ,
$
and $H$ fits in a card.  Having assigned the degrees in the level pairs
from level $2b_1$ up to $b_1+h'$ when $h'\le(r-2)/2$,
we obtain all of $(d(v_{2b_1}),\ldots,d(v_{b_2-b_1}))$, since
$b_1+(r-2)/2\ge r/2$.  (If $r$ is odd and the largest $h'$ is $(r-3)/2$ with
$b_1=1$, then there is only one element in the remaining level $(r+1)/2$.)

To enlarge the known list to $(d(v_{b_1}),\ldots,d(v_{b_2}))$, consider
$h<h'<b_1$ with $d(v_{b_1+h'})\ne d(v_{b_2-h'})$.  Let $H'$ be the graph
obtained from $B_{h,b_1-h:d_3+1,d(v_{b_1+h})+1,2}$ by adding a pendant edge at
the vertex whose distance from the key vertex of degree $d_3+1$ is $h'$.

Arguing as before, copies of $H'$ have key vertices $v_{b_2}$, $v_{b_2-h}$, and
$v_{b_2-b_1}$, and the added pendant edge is at $v_{b_2-h'}$.  Thus
$\#H'=M[d(v_{b_2-b_1})-1][d(v_{b_2-h'})-2]$.
Since we already know $d(v_{b_2-b_1})$ from the previous step, $\#H'$ determines
$d(v_{b_2-h'})$.
Also $|V(H')|=b_1+d_3+d(v_{b_1+h})+1$.  We have the same bound on the degrees
as for $H$, and the bound on $b_1$ is smaller than the earlier bound
$(r-3)/2$ on $h$, so $H'$ also fits in a card.

Now consider the levels below $b_1$.  That is, we may have $h'$ with $1<h'<b_1$
such that $d(v_{b_1-h'})\ne d(v_{b_2+h'})$.
Let $H''=B_{h',h:2,d_3+1,2}$.  In any copy of $H''$, the central key vertex
must be $v_{b_1}$ or $v_{b_2}$, but there may be several ways for $H''$ to
appear.  Note first that $h'+h\le(r-2)/2$, by the definition of $h$, so
$$
|V(H'')|\le \FR{r-2}2+d_3+1\le \FR r2+\FR{n-r+1}3=\FR{n+1}3+\FR r6
\le\FR{n+1}3+\FR{n-8}6=\FR{n-2}2.
$$
Next, let $x_i=d(v_i)-1$.  When $h\ge b_1$, the path of length $h$ from 
$v_{b_1}$ or $v_{b_2}$ in the triton must extend to higher levels.  In that
case
\begin{equation}\label{Hcase4}
\#H''=\CH{d_1-2}{d_3-1}[x_{b_1+h}x_{b_1-h'}+x_{b_2-h}x_{b_2+h'}]
\end{equation}
Since $x_{b_1+h}\ne x_{b_2-h}$ and $x_{b_1-h'}\ne x_{b_2+h'}$,
we learn which of $\{x_{b_1-h'},x_{b_2+h'}\}$ belongs to the vertex on the same
side of the caterpillar as $v_{b_1+h}$.

Now suppose $h<b_1$.  In the special case $h'=h$, whether $x_{b_1-h'}$
equals $x_{b_2+h'}$ or not, again $H''$ can appear in only two ways,
and the formula for $\#H''$ is again~\eqref{Hcase4}.  Thus we determine
$x_{b_1-h}$ and $x_{b_2+h}$.  Finally, in the general case with $h<b_1$,
we have
$$
\#H''=\CH{d_1-2}{d_3-1}[x_{b_1+h}x_{b_1-h'}+x_{b_1-h}x_{b_1+h'}+
x_{b_2-h}x_{b_2+h'}+x_{b_2+h}x_{b_2-h}].
$$
We know everything in this formula except which of $\{x_{b_1-h'},x_{b_2+h'}\}$
is which.  Again since also $x_{b_1+h}\ne x_{b_2-h}$, matching the actual
count tells us that information.

\medskip
Resolving this for $1\le h'<b_1$ completes the reconstruction in the case
where the parameter $h$ that we have defined actually exists; that is, where
there is a level pair $\{d(v_{b_1+h}),d(v_{b_2-h})\}$ with $h>0$ consisting
of distinct degrees.  If $(d(v_{b_1}),\ldots,d(v_{b_2}))$ is unchanged under
reversal, then the argument above does not distinguish the degrees in the
level pairs lower than $b_1$.

If no level pairs have distinct degrees, then there is nothing to do.
Hence in the remaining case we can choose the least $h$ such that
$\{d(v_{b_1-h}),d(v_{b_2+h})\}$ has distinct degrees.  Since all levels above
that are symmetric, we may index these so that $d(v_{b_1-h})<d(v_{b_2+h})$.
Now consider any $h'$ with $h<h'<b_1$ such that
$\{d(v_{b_1-h'}),d(v_{b_2+h'})\}$ has distinct degrees.
Let $F=B_{h,h'-h:d_3+1,3,2}$; note that $F$ fits in a card.
In any copy of $F$ in $G$, the key vertex of high degree must be $v_{b_1}$ 
or $v_{b_2}$.  The other key vertices may occur in four ways.
With $x_i=d(v_i)-1$ and $y_i=d(v_i)-2$, we have
$$
\#F=\CH{d_1-1}{d_3}[y_{b_1-h}x_{b_1-h'}+y_{b_1+h}x_{b_1+h'}
+y_{b_2+h}x_{b_2+h'}+y_{b_2-h}x_{b_2-h'}] .
$$
The only unknown quantities in the formula are $x_{b_1-h'}$ and $x_{b_2+h'}$,
though we know the unordered pair.  Since $x_{b_1-h'}\ne x_{b_2+h'}$
and $y_{b_1-h}\ne y_{b_2+h}$, matching the actual count tells us which elements
of the pairs belong to vertices in the same half of the caterpillar.
\end{proof}

\section{Unique Maximum-degree Vertex}\label{1max}

In this section, we consider the case $d_1>d_2\ge d_4\ge3$ and assume these
inequalities throughout the section, along with being
in the high-diameter case $r\ge (n-5)/2$.  We recognize these properties
from the known degree list $\VEC d1n$ in nonincreasing order.
Notation is as in the preceding sections.  One subcase requires a
restriction to $n\ge17$.

\begin{lemma}\label{cond}
Fix $q$ with $q\le(r-1)/2$.  If the deck determines all maximal batons in $G$
having $w_1$ as a key vertex and length at most $q$, then it determines all
maximal batons with length at most $q$.
\end{lemma}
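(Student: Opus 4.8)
Since $d_1>d_2$, the vertex $w_1$ is the unique vertex of degree $d_1$, so every maximal baton having a key vertex of degree $d_1$ has $w_1$ as a key vertex; by hypothesis all such batons of length at most $q$ are known. It therefore remains to determine the maximal batons of length at most $q$ both of whose key vertices have degree at most $d_2$; call these \emph{$w_1$-free}. Fix $j\le q$. By Lemma~\ref{jbaton} every $j$-baton of $G$ lies in a unique maximal $j$-baton, so Lemma~\ref{counting} applies to the family of $j$-batons: once the maximal $j$-batons with at least $\FL{(n+2)/2}$ vertices are known, the exclusion argument (processing maximal $j$-batons in nonincreasing order of size and reading $s(B_{j:a,b},G)$ from a card whenever $B_{j:a,b}$ fits in one) yields all the rest. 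Among the maximal $j$-batons with more than $(n+2)/2$ vertices, those with a key vertex of degree $d_1$ are the known $w_1$-batons, so it suffices to find the \emph{oversized $w_1$-free} maximal $j$-batons, namely those with both key degrees at most $d_2$ and more than $(n+2)/2$ vertices.

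Let $\alpha=\FL{(n-r+1)/4}$ as in Remark~\ref{batalph}. If $d_2\le\alpha+1$, then every $w_1$-free baton of length at most $q\le(r-1)/2$ has at most $(r-1)/2+2(\alpha+1)-1\le(n+2)/2$ vertices, so no oversized $w_1$-free maximal $j$-baton exists; after deleting from each available small-baton count the known contribution of the $w_1$-batons, Lemma~\ref{counting} completes the proof. So assume $d_2\ge\alpha+2$. Then $d_1>d_2\ge\alpha+2$, and since also $d_3,d_4\ge3$, equation~\eqref{d'} (together with $\alpha\ge(n-r-2)/4$) shows that at most four vertices have degree exceeding $\alpha+1$, and these are among $w_1,\dots,w_4$. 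Moreover, writing $B_{j:a,b}$ with $a\ge b$ for a $w_1$-free maximal $j$-baton, the inequalities $d_1\ge a+1$ and the presence of a fourth branch vertex of degree at least $3$ give, via~\eqref{d'}, that $2a+b\le n-r+2$, hence $a+b\le\tfrac23(n-r+2)$. An oversized baton needs $a+b>(n-r+5)/2\ge2(\alpha+1)$, so its larger key vertex has degree $a\ge\alpha+2$ and thus lies in $W_4\setminus\{w_1\}\subseteq\{w_2,w_3,w_4\}$. Since for each of these at most three vertices there are at most two spine vertices at distance $j$, for each fixed $j$ there are only a bounded number of candidate oversized $w_1$-free maximal $j$-batons.

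To decide which candidates actually occur, count in the deck the copies of $B_{j:\alpha+1,\alpha+1}$, which fits in a card because it has at most $(r-1)/2+2\alpha+1\le(n+2)/2$ vertices. Each such copy lies in a unique maximal $j$-baton $H$ with both key degrees at least $\alpha+1$, contributing $\CH{a-1}{\alpha}\CH{b-1}{\alpha}$ copies when $H=B_{j:a,b}$. The possibilities for $H$ are: the known $w_1$-batons $B_{j:d_1,c}$ with $c\ge\alpha+1$ (subtract their contributions); the bounded list of $w_1$-free candidates identified above (oversized, or small and hence supplied lower in the exclusion order); and, in degenerate situations, $B_{j:\alpha+1,\alpha+1}$ or $B_{j:\alpha+2,\alpha+1}$ themselves as maximal batons. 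As the $w_1$-contributions are known and the remaining terms range over a bounded list of batons, the count pins down the oversized $w_1$-free maximal $j$-batons; any residual ambiguity — arising only when two candidates contribute equally many copies — is resolved exactly as in Lemmas~\ref{d3alpha} and~\ref{d4alpha}--\ref{d4alpha++}, by also counting $B_{j:\alpha,\alpha}$ or $B_{j:\alpha+2,\alpha+1}$ in cards (splitting on whether $2\alpha+1\le(n-r+1)/2$, so that the latter fits) and comparing binomial-coefficient ratios; this disambiguation is the main obstacle, and it is where, in one subcase, a mild lower bound on $n$ is invoked. Having found all maximal $j$-batons with at least $\FL{(n+2)/2}$ vertices for every $j\le q$, Lemma~\ref{counting} determines all maximal batons of length at most $q$. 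The work here is lighter than in the case $d_1=d_2$ precisely because the batons through the unique maximum-degree vertex are supplied by hypothesis, so only the bounded list of oversized $w_1$-free batons per length must be recovered.
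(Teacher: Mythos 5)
Your structural setup is sound: since $w_1$ is the unique vertex of degree $d_1$, all maximal batons with a degree-$d_1$ key are the given $w_1$-batons, and by Lemma~\ref{counting} it suffices to find the oversized $w_1$-free maximal $j$-batons. Your bound $a\ge\alpha+2$ for the larger key of such a baton (and hence that it lies in $\{w_2,w_3,w_4\}$) is also correct. But the counting mechanism you propose --- reading $\#B_{j:\alpha+1,\alpha+1}$ and subtracting the $w_1$-contributions --- has a genuine gap: a copy of $B_{j:\alpha+1,\alpha+1}$ requires \emph{both} key degrees to be at least $\alpha+1$, whereas an oversized $w_1$-free maximal baton $B_{j:a,b}$ can have the smaller key degree $b$ well below $\alpha+1$. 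Your own inequalities only force $b\ge4$ (from $2a+b\le n-r+2$ and $a+b>(n-r+5)/2$), and examples with $b$ strictly between $4$ and $\alpha$ do occur: e.g.\ $n=60$, $r=28$ (so $\alpha=8$), degree list $(d_1,d_2,d_3,d_4)=(15,14,6,3)$, and $j=13$ gives an oversized $B_{13:14,6}$ with $b=6<\alpha+1=9$, which contains no copy of $B_{13:9,9}$ and is therefore invisible to your count. The paper sidesteps exactly this in its Case 1 ($d_3<d_2$) by counting $B_{j:\alpha'+1,3}$ and $B_{j:\alpha'+1,2}$ with $\alpha'=\FL{(n-r)/3}\ge d_3$, so the high key degree still forces the key into $\{w_1,w_2\}$ but the secondary key degree $2$ or $3$ catches every branch vertex as the other endpoint; it then solves a two-variable system ($N_2,N_3$) for the degrees of the two vertices at distance $j$ from $w_2$, and iterates for $w_3$ if needed.

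A secondary problem is that you defer the disambiguation to ``the same technique as Lemmas~\ref{d3alpha} and~\ref{d4alpha}--\ref{d4alpha++}.'' Those lemmas live in the case $d_1=d_2$, where ties at the top of the degree list drive the case analysis, and they cannot be invoked verbatim here, where $w_1$ is unique and the structure of the ambiguity is different. The paper's Lemma~\ref{cond} has its own case split ($d_3<d_2$ versus $d_3=d_2$, with further subcases on $d_4$ relative to $\alpha$) and uses arguments not appearing in Section~\ref{2maxbaton}. You would need to redo this case analysis from scratch, starting with a counting gadget that, like $B_{j:\alpha'+1,3}$, has a low enough secondary key degree to see all oversized $w_1$-free batons.
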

\begin{proof}
Let $\alpha=\FL{(n-r+1)/4}$.  If $j\le(r-1)/2$, then
\begin{equation}\label{batfit}
\C{V(B_{j:\alpha+1,\alpha+1})}=j+2\alpha+1\le \FR{r-1}2+\FR{n-r+3}2=\FR{n+2}2.
\end{equation}
Thus every short baton with maximum degree at most $\alpha+1$ fits in a card.
If $d_2\le\alpha+1$, then we see all short batons not given by the hypothesis.
Hence we may assume $d_2>\alpha+1$.

\bigskip
{\bf Case 1:}  {\it $d_3 <d_2$}.
Here $d_1+d_2+d_3+d_4\ge 3d_3+6$.  Thus by~\eqref{d'}, $n-r-2\geq 3d_3-2$, so
$n-r\ge3d_3\ge9$.  Let $\alpha'=\FL{(n-r)/3}$.  Now when $j\le(r-1)/2$ we have
\begin{equation}\label{batfit2}
\C{V(B_{j:\alpha'+1,3})}\le\FR{r-1}2+3+\FR{n-r}3
=\FR{r+5}2+\FR{n-r}2-\FR{n-r}6\le\FR{n+5}2-\FR32=\FR{n+2}2.
\end{equation}
Thus $B_{j:\alpha'+1,3}$ fits in a card, and we can count its copies in $G$.

Since $\alpha'\ge d_3$, every copy of $B_{j:\alpha'+1,3}$ in $G$ lies in a copy
of $B_{j:d_1,3}$ or $B_{j:d_2,3}$, and similarly for copies of 
$B_{j:\alpha'+1,2}$.  For $j\leq q$, by the hypothesis we know the batons
having $w_1$ as a key vertex.  Hence we know the number of copies of
$B_{j:\alpha'+1,3}$ and $B_{j:\alpha'+1,2}$ contained in such batons.
Let $N_3$ and $N_2$, respectively, be the number of these batons remaining
(not containing $w_1$ as a key vertex).  These are contained in maximal
$j$-batons with $w_2$ as a key vertex.  Since only one vertex has degree $d_2$,
there are at most two such maximal batons.

Suppose that vertices $v_i$ and $v_{i'}$ occur at distance $j$ from $w_2$.
We obtain $N_2/\CH{d_2-1}{\alpha'}=(d(v_i)-1)+(d(v_{i'})-1)$ and
$N_3/\CH{d_2-1}{\alpha'} ={d(v_i)-1\choose 2}+{d(v_{i'})-1\choose 2}$.
Hence we obtain $\{d(v_i),d(v_{i'})\}$.  If there is no solution, then only
one of $\{v_i,v_{i'}\}$ exists, and $N_2$ determines its degree.

Now we know also the maximal $j$-batons having $w_2$ as a key vertex.
If $d_3\le\alpha+1$, then all remaining short batons fit in cards,
so we may assume $d_3\ge\alpha+2$.

If also $d_4\ge\alpha+2$, then $d_1+d_2+d_3+d_4\ge4(\alpha+2)+3$.
By~\eqref{d'}, we have
$$n-r-2\ge4\alpha+3\ge4(n-r-2)/4+3=n-r+1.$$
By this contradiction, we conclude $d_4\le\alpha+1$, and hence $d_4<d_3$.

Now $w_3$ is the only vertex of degree $d_3$.
By repeating the earlier argument with $d_3$ in place of $d_2$ and 
$d_4+1$ in place of $d_3+1$, we obtain the maximal $j$-batons whose key vertex
of largest degree is $w_3$.  Since $d_4\le\alpha+1$, we then see all the
remaining $j$-batons.

\medskip
{\bf Case 2:} {\it $d_3 =d_2$}.
If $d_4\geq \alpha+2$, then $d_1+d_2+d_3+d_4\ge4\alpha+9$.  By~\eqref{d'},
\begin{equation}\label{d234}
n-r-2\geq 4\alpha+1\geq 4(n-r-2)/4+1=n-r-1,
\end{equation}
a contradiction.  Thus $d_4\leq 1+\alpha<d_2=d_3$.

If $d_4\leq \alpha$, then each copy of $B_{j:\alpha+1,\alpha+1}$ in $G$ has both
key vertices in $W'$.  By hypothesis, we know how many of these have $w_1$ as
a key vertex.  Any remaining copy of $B_{j:\alpha+1,\alpha+1}$
must be contained in the unique maximal $j$-baton with key vertices of degree
$d_2$.  Therefore, when some copy of $B_{j:\alpha+1,\alpha+1}$ exists beyond
those with key vertex $w_1$, we know that the distance between $w_2$ and $w_3$
in $G$ is $j$.  The unique copy of $B_{j:\alpha+1,\alpha+1}$ with key vertices
$\{w_2,w_3\}$ contains $2\alpha$ copies of $B_{j:\alpha+1,\alpha}$.  Excluding
those $2\alpha$ copies (and those arising from maximal batons with key vertex
$w_1$) allows us to find the maximal $j$-batons with key vertices of degrees
$d_2$ and $\alpha$.  Continuing the exclusion argument allows us to find all
the maximal $j$-batons having a key vertex of degree $d_2$.
All remaining short batons fit in cards.

Finally, suppose $d_4= \alpha+1$.  We still have the same lower bounds on
$d_1,d_2,d_3$ as in~\eqref{d234}, so instead of that computation we get
$$
n-r-2\ge d_1+d_2+d_3+d_4-8 \geq 4\alpha\geq n-r-2.
$$
This equality requires all of the following: $r'=4$,
$d_1=1+d_2=1+d_3=2+d_4=\alpha+3$
and $\alpha=(n-r-2)/4$.

In this case
$$
\C{V(B_{j:\alpha+2,\alpha+1})}=j+2\alpha+2\le\FR{r-1}2+\FR{n-r+2}2=\FR{n+1}2,
$$
so these batons fit in cards.  Every such baton in $G$ not having $w_1$ as an
key vertex is contained either in $B_{j:d_2,d_2}$ (which occurs at most once)
or in a copy of $B_{j:d_2,\alpha+1}$ that is a maximal baton (since $r'=4$,
there are at most two of these.  Since $d_2=\alpha+2$, the instance of
$B_{j:d_2,d_2}$ (if it exists) contains $2(\alpha+1)$ copies of
$B_{j:\alpha+2,\alpha+1}$, and every instance of $B_{j:d_2,\alpha+1}$
contains only one copy of $B_{j:\alpha+2,\alpha+1}$ (namely, itself).
Note that $2(\alpha+1)\ge6$, since $w_4$ is a branch vertex.
Excluding the copies that come from larger batons, the number
of remaining copies of $B_{j:\alpha+2,\alpha+1}$ thus tells us whether we have
a copy of $B_{j:d_2,d_2}$ as a maximal baton and how many maximal batons
are copies of $B_{j:d_2,\alpha+1}$.  The remaining short maximal batons fit
in cards.
\end{proof}

In light of Lemma~\ref{cond}, our first aim is to find
the short maximal batons containing $w_1$.


\begin{lemma}\label{d2small}
If $d_2\le(n-r-3)/2$, then the unknown caterpillar is reconstructible from
$\cD$.
\end{lemma}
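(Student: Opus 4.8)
The plan is to use that $w_1$ is the unique vertex of maximum degree, hence recognizable inside any card, while the hypothesis $d_2\le(n-r-3)/2$ keeps all structure that avoids $w_1$ small enough to read off the deck. Put $\alpha=\FL{(n-r+1)/4}$ as in Remark~\ref{batalph}, and note that if $d_1\le\alpha+1$ then every short baton fits in a card by Remark~\ref{batalph}, so Lemma~\ref{cond} at once yields all short maximal batons; hence we may assume $d_1\ge\alpha+2$, so that $w_1$ is the only vertex of degree exceeding $\alpha+1$. By~\eqref{d'} together with having at least four branch vertices, $d_1+d_2\le n-r$, so $d_1$ is bounded even though it may be large. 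We also have $n-r\ge7$, since $d_1\ge4$ and $d_2,d_3,d_4\ge3$ force $\SE i14(d_i-2)\ge5$.

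In light of Lemma~\ref{cond}, the first task is to recover the maximal batons of length at most $(r-1)/2$ that have $w_1$ as a key vertex. For each $j\le(r-1)/2$ I would count in the deck the probe batons $B_{j:\alpha+1,c}$; the instances $c\in\{2,3\}$ always fit in a card, since $j+\alpha+c\le(n+2)/2$ follows from $n-r\ge c+1$. Any vertex of degree at least $\alpha+1$ inside a card has $G$-degree at least $\alpha+1$, hence is $w_1$ when $d_2\le\alpha$; so every such copy lies in one of the at most two maximal $j$-batons through $w_1$, namely the ones realized by the pairs $(v_{b-j},v_b)$ and $(v_b,v_{b+j})$ where $v_b=w_1$, and dividing the counts by $\CH{d_1-2}{\alpha}$ or $\CH{d_1-1}{\alpha}$ according to whether $w_1$ is interior (which is known once $b$ is pinned down) recovers $\{d(v_{b-j}),d(v_{b+j})\}$ from the two equations for $c=2$ and $c=3$. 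When $\alpha<d_2\le(n-r-3)/2$, the probe's high-degree key need not be $w_1$; but there are only boundedly many vertices of degree above $\alpha$, and we know their degrees from the degree list, so one sets up and solves the resulting small counting system, bringing in the level pairs as well. Checking that the probes needed in this subcase are small enough and that the system isolates exactly the batons through $w_1$ is the step I expect to be the main obstacle, and it is where the precise bound $d_2\le(n-r-3)/2$ does its work.

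Once the maximal $w_1$-batons of length at most $(r-1)/2$ are known, Lemma~\ref{cond} upgrades them to all maximal batons of that length, and Lemma~\ref{levelpair} then produces all level pairs; since $d_1$ occurs in exactly one level pair, this fixes the position $b$ of $w_1$ up to the harmless global reflection. The $w_1$-batons now give, for each $j$, the unordered pair $\{d(v_{b-j}),d(v_{b+j})\}$, while the failure of a $w_1$-baton of some length $j>(r-1)/2$ to occur forces the vertex at that distance into the half of the spine opposite $w_1$ (all depths being at most $(r-1)/2$). Combined with the level pairs, this determines, for every branch vertex whose depth differs from that of $w_1$, in which half of the spine it lies relative to $w_1$. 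The remaining ambiguities---branch vertices sharing a level, and the symmetric central block---are resolved with tritons anchored at $w_1$, probed in the form $B_{j,j':c,\alpha+1,c'}$ so that capping $w_1$'s degree at $\alpha+1$ removes the large term $d_1$ and keeps the probe visible, exactly in the spirit of Lemmas~\ref{triples} and~\ref{climbing}; for the short-range correlations this suffices, and feeding the resulting degree lists into Theorem~\ref{suffic} and finishing with Lemma~\ref{3-11} completes the reconstruction up to reflection.
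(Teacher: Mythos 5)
Your proposal has a genuine gap, and you in fact identify it yourself as ``the step I expect to be the main obstacle.'' You propose probing with $B_{j:\alpha+1,c}$ where $\alpha=\FL{(n-r+1)/4}$, on the grounds that a card vertex of degree $\alpha+1$ must be $w_1$. But that inference requires $d_2\le\alpha$, and the hypothesis only gives $d_2\le(n-r-3)/2\approx 2\alpha$. In the range $\alpha<d_2\le(n-r-3)/2$ there can be several vertices of degree exceeding $\alpha$, so the high-degree key of a $B_{j:\alpha+1,c}$ copy need not be $w_1$, and the ``small counting system'' you allude to is never set up. Along the way you also assert that $d_1\ge\alpha+2$ forces $w_1$ to be the only vertex of degree above $\alpha+1$; that does not follow, since $d_2$ may independently exceed $\alpha+1$.

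The missing idea is to probe at degree $d_2+1$ rather than $\alpha+1$. Since $w_1$ is the \emph{unique} vertex of degree exceeding $d_2$, the key vertex of degree $d_2+1$ in any copy of $B_{j:d_2+1,2}$ or $B_{j:d_2+1,3}$ must be $w_1$, so these probes isolate $w_1$ cleanly without any case split on whether $d_2\le\alpha$. The hypothesis $d_2\le(n-r-3)/2$ is exactly calibrated to make $B_{j:d_2+1,3}$ fit in a card for $j\le(r-1)/2$, namely $j+d_2+3\le(n+2)/2$. Comparing $\CH{\#H}{2}$ with $\#H'$ (where $H=B_{j:d_2+1,2}$ and $H'=B_{j:d_2+1,3}$) then reveals whether $w_1$ has one or two spine neighbors at distance $j$, which simultaneously locates $w_1$ and determines the maximal $w_1$-batons, after which Lemma~\ref{cond} and Lemma~\ref{levelpair} apply as you say. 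The same degree-$d_2+1$ anchor (not degree $\alpha+1$) is used in the triton probes $B_{i,j':d_2+1,3,2}$ that finish the three cases for the level of $w_1$; your proposed $B_{j,j':c,\alpha+1,c'}$ suffers from the same ambiguity and would not isolate $w_1$ in the problematic range.
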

\begin{proof}
Initially, the level of the maximum-degree vertex $w_1$ is unknown.
By symmetry, we may call it $h$, so that $w_1=v_h$ with $h\le(r+1)/2$.

Fix $j$ with $1\le j\le(r-1)/2$, and let $H=B_{j:d_2+1,2}$ and
$H'=B_{j:d_2+1,3}$.  Since
$$
\FR{r-1}2+\FR{n-r-3}2+3=\FR{n+2}2,
$$
the batons $H'$ and
$H$ fit in cards.  Since $d_2\ge3$, each copy of $H$ or $H'$
has $w_1$ as its higher-degree key vertex.  The other key vertex is at
distance $j$ from $w_1$.  Let $M=\CH{d_1-1}{d_2}$.  When $j<h$, we have
$\#H=x-1+y-1$ and $\#H'=\CH{x-1}2+\CH{y-1}2$.  where $x$ and $y$ are the
degrees of the two vertices at distance $j$ from $w_1$.  When $j\ge h$, we have 
$\#H=x-1$ and $\#H'=\CH{x-1}2$, where $x=d(v_{h+j})$.
In particular, we have $\CH{\#H}2=\#H'$ when $j\ge h$, but
$\CH{\#H}2>\#H'$ when $j<h$, so these cases are distinguishable.

Considering this over all such $j$, we find not only $h$ (including when
$h=\CL{r/2}$, by satisfying the case $j<h$ for all $j$ up to $\FL{(r-1)/2}$),
but also the maximal baton(s) of length $j$ containing $w_1$.  By
Lemma~\ref{cond}, the deck determines all short maximal batons.
Hence by Lemma~\ref{levelpair} it also determines the level pairs
$\{d(v_k),d(v_\kb)\}$ for $1\le k\le r/2$.  We now consider three cases
depending on the value of $h$.  Let $j_1=\FL{(r-1)/2}$.

\medskip
{\bf Case 1:}  $h\leq (r-1)/2$.
As observed above, we already know $d(v_{2h}),\ldots,d(v_{h+j_1})$; note that
$h+j_1\ge(r+1)/2$.  Let $H_i=B_{i,j_1-i:d_2+1,3,2}$ for $1\le i<h$.  Note
that the triton $H_i$ has the same number of vertices as the baton $H'$ and
hence also fits in a card.  Because $H_i$ has length at least $h$, in any copy
of it the key vertices are $v_h$, $v_{h+i}$, and $v_{h+j_1}$.  Since we already
know the degree of $v_h$ and $v_{h+j_1}$, the count $\#H_i$ tells us
$d(v_{h+i})$.  Since we knew $\{d(v_{h+i}),d(v_{h-i})\}$ from
$\#H$ and $\#H'$ when $j=i$, we now also know $d(v_{h-i})$.
After doing this for $1\le i<h$, we know $(d(v_1),\ldots,d(v_{\CL{r/2}}))$,
and the level pairs complete the reconstruction.

\medskip
{\bf Case 2:}  $h=r/2$. Since
$|V(B_{r/2:d_2+1,2})|= {r/2}+d_2+2\le{r/2}+(n-r+1)/2=(n+1)/2$,
this baton fits in a card.  Its key vertices can be only $v_{r/2}$ and $v_r$.
The count thus tells us $d(v_r)$ and hence $d(v_1)$.  Next we iterate the
following from $i=1$ to $i=r/2-1$.  The count $\#B_{r/2-i:d_2+1,2}$ tells us
$d(v_{i})+d(v_{r-i})$ (minus $2$).  We already know $d(v_i)$ from the previous
step, so we learn $d(v_{r-i})$ from the count and then $d(v_{i+1})$ from
the level pair (since $\ov{r-i}=i+1$).  Continuing, we learn the degrees of all
vertices.

\medskip
{\bf Case 3:}  $h= (r+1)/2$.
If at most one level pair has distinct values, then we know the caterpillar.
Let $k$ be the highest level with unequal values and $i$ be another such level;
thus $d(v_i)\neq d(v_\ib)$ and $d(v_k)\neq d(v_\kb)$ with $i<k\le(r-1)/2$. 
Let $F=B_{h-k,k-i:d_2+1,3,2}$.  Since
$$
|V(F)|=h-i+d_2+3\leq \FR{r+1}2-1+\frac{n-r-3}{2}+3\leq\frac{n+2}{2},
$$
$F$ fits in a card.  The key vertices in any copy of $F$ are
$\{v_{h},v_k,v_{i}\}$ or $\{v_{h},v_{\kb},v_{\ib}\}$. Thus,
$$
\#F
={d_1-1\choose d_2}[(d(v_k)-2)(d(v_{i})-1)+(d(v_{\kb})-2)(d(v_{\ib})-1)].
$$
Again since $ac+bd$ differs from $ad+bc$ when $a\ne b$ and $c\ne d$, this
computation tells us which of $\{d(v_i),d(v_\ib)\}$ in level $i$ lies in the
same half of the caterpillar with the larger of $\{d(v_k),d(v_\kb)\}$ in
level $k$.  Doing this over all levels $i$ with unequal values completes
the reconstruction.
\end{proof}

When $d_2$ is too big for Lemma~\ref{cond}, the vertices $w_1$ and $w_2$
absorb almost all the leaves, and the remaining possibilities for the other
branch vertices are very restricted.

\begin{lemma}\label{uncond}
If $d_2\ge(n-r-2)/2$ and $n\ge17$, then the unknown caterpillar is
reconstructible from $\cD$.
\end{lemma}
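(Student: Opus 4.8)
The plan is to use the rigidity that the hypothesis forces on $G$ and then separate the boundedly many remaining candidates by small induced subgraphs. \emph{Structural reduction.} Since $d_1\ge d_2+1$, the bound $d_2\ge(n-r-2)/2$ gives $d_1+d_2\ge n-r-1$, so by~\eqref{d'} we have $\sum_{i=3}^r(d_i-2)=(n-r+2)-(d_1+d_2)\le 3$. As $d_3\ge d_4\ge3$ already contributes $2$, this sum is $2$ or $3$, so $G$ has exactly four or five branch vertices and the non-maximum branch vertices have degree multiset $\{3,3\}$, $\{3,3,3\}$, or $\{4,3\}$; in particular $d_3\le5$ and $d'(w_1)+d'(w_2)\ge(n-r)-3$, so $w_1$ and $w_2$ hold almost all the leaves. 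We know which case holds, and we know $d_1$ and $d_2$, from the degree list, so $G$ is one of a bounded number of caterpillars, each specified up to reflection by the positions of its at most five branch vertices; it suffices to separate any two of them by an induced subgraph on at most $(n+2)/2$ vertices.

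First I would locate $w_1$ by the method of Lemma~\ref{d2small}: comparing $\#B_{j:d_1,2}$ with $\#B_{j:d_1,3}$, which since $d_1$ is the unique maximum degree both use $w_1$ as their high-degree key vertex, so $\CH{\#B_{j:d_1,2}}{2}=\#B_{j:d_1,3}$ holds exactly when only one spine vertex lies at distance $j$ from $w_1$. The difference here is that $B_{j:d_1,3}$ has about $j+(n-r)/2+2$ vertices and hence fits in a card only for $j$ up to roughly $(r-4)/2$; but this range suffices. If $w_1$ lies at a level at most that bound, then the (unique) level pair containing $d_1$ shows up among the level pairs that we obtain from the short maximal batons --- which we now know, via Lemma~\ref{cond} applied to the $w_1$-batons of accessible length and then Lemma~\ref{levelpair}. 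Otherwise every accessible test gives $\CH{\#B_{j:d_1,2}}{2}>\#B_{j:d_1,3}$, forcing $w_1$ into one of the $O(1)$ central levels, where the known level pairs and degree multiset plus a direct look at the central portion (visible in a card once we know $w_1$ is central) pin it down. With $w_1$ anchored at a known position $v_{b_1}$, the batons $B_{j:d_1,a}$ and the tritons with $w_1$ as a key vertex and the other key degrees in $\{2,3,4\}$ then fit in cards for a useful range of lengths, and these report the degrees of spine vertices at controlled distances from $w_1$.

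It remains to place $w_2$ and the two or three small branch vertices, up to the reflection fixing $w_1$'s side. Because there are at most four of them with known degrees, this is a finite case analysis: for each unresolved pair of arrangements I would exhibit a $w_1$-anchored (and, once $w_2$ is found, $w_2$-anchored) baton or triton whose count differs. The degree of $w_2$ is read from $B_{j:d_1,d_2}$ when that baton fits in a card (true when $r$ is not too close to $(n-5)/2$, as then $d_1,d_2$ are small), and otherwise $B_{j:d_1,d_2}$ is used as a maximal baton whose presence is detectable even when it is too large to see, exactly as in the proof of Lemma~\ref{b1b2}; the small branch vertices are then pinned by the $w_1$-anchored tritons together with level-pair information for the levels already in range. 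I expect the main obstacle to be organizing this case analysis so that in every subcase the distinguishing subgraph has at most $(n+2)/2$ vertices; this forces a split into the regime where $r$ is near $(n-5)/2$ (so $d_1,d_2$ are large but only $O(1)$ central levels remain) and the regime where $r$ is larger (so $d_1,d_2$ are small and larger subgraphs fit), and in one subcase the estimate is tight enough to require $n\ge17$. Covering every placement of the four or five branch vertices --- including symmetric placements and those with a branch vertex at an end of the spine --- within the card-size budget is the real work.
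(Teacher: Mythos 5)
Your structural reduction is correct and matches the paper's: from $d_2\ge(n-r-2)/2$, $d_1>d_2$, and~\eqref{d'} one gets $d_2=\FL{(n-r-1)/2}$ and $(d_1-d_2)+\sum_{i=3}^r(d_i-2)\le4$, so the branch vertices form one of a handful of degree patterns (minor slip: this gives $d_3\le4$, not $d_3\le5$). Your high-level plan — anchor $w_1$, then separate the finitely many remaining arrangements by anchored batons/tritons that fit in cards — is also the right shape and is what the paper does. But there is a genuine gap in the anchoring step, and the case analysis that you acknowledge as ``the real work'' is exactly where the paper's proof lives, so the proposal as written is an outline rather than a proof.

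The gap: you propose to locate $w_1$ by comparing $\#B_{j:d_1,2}$ with $\#B_{j:d_1,3}$, as in Lemma~\ref{d2small}. But Lemma~\ref{d2small} works under the opposite hypothesis $d_2\le(n-r-3)/2$, which keeps $B_{j:d_2+1,3}$ small. Here $d_1$ can be as large as $(n-r+2)/2$, so $|V(B_{j:d_1,3})|=j+d_1+2$ exceeds $(n+2)/2$ once $j$ is within about $2$ of $(r-1)/2$. You notice this and split off the case where $w_1$ sits at one of the few central levels, but the fallback there (``the known level pairs\ldots plus a direct look at the central portion'') is not an argument: at that point you have only applied Lemma~\ref{cond} with $q$ around $(r-4)/2$, so you do \emph{not} yet have all level pairs, and no card is exhibited that ``looks at the central portion'' within the $(n+2)/2$ budget when $w_1$ is central and $d_1\approx(n-r)/2$. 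The paper avoids this entirely by never counting batons keyed at degree $d_1$: it counts the smaller $B_{j:d_2,3}$ (or $B_{j:4,3}$, $B_{j:3,3}$ in the low-$d_2$ cases), which fit for \emph{all} $j\le(r-1)/2$, and distinguishes whether a copy is anchored at $w_1$ or $w_2$ by a multiplicity argument (e.g., $w_1$, having degree $>d_2$, contributes $0$ or $\ge4$ copies while $w_2$ contributes at most $2$). This is what lets Lemma~\ref{cond} be invoked with $q=(r-1)/2$. Your route never reaches that point.

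Beyond the anchoring gap, the remaining placement of $w_2,w_3,w_4$ (and possibly $w_5$) is not carried out. The paper's four cases (by $(d_2,d_3)$), each with several subcases involving tailored tritons such as $B_{j,j':d_2+1,3,3}$ and $B_{k-b,b:d_2+1,3,2}$ and delicate verifications that they fit in cards (including the spot where $n\ge17$ enters, in Subcase 2a), constitute most of the proof. Gesturing at a ``finite case analysis'' and at detecting $B_{j:d_1,d_2}$ as a maximal baton ``as in Lemma~\ref{b1b2}'' does not close this, in part because Lemma~\ref{b1b2} lives in the $d_1=d_2$ regime where Section~\ref{2maxbaton} already supplies all short maximal batons — machinery not available here until the anchoring problem above is actually solved.
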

\begin{proof}
First consider the possible degrees of branch vertices.
With $d_1>d_2$ and $d_3\ge d_4\ge3$, \eqref{d'} and the lower bound on $d_2$
yield
\begin{equation}\label{d2bound}
n-r-2\geq 1+2(d_2-2)+\SE i3r(d_i-2)\ge 1+2(d_2-2)+2\ge n-r-3 .
\end{equation}
In particular, $d_2=\FL{(n-r-1)/2}$.
Because the difference between the expressions on the ends of~\eqref{d2bound}
is $1$, we obtain
\begin{equation}\label{last}
(d_1-d_2)+\sum_{i=3}^r(d_i-2)\leq 4.
\end{equation}

With at least four branch vertices, we now have limited choices in
satisfying~\eqref{last}.  In each case below there are two main steps.
First we find the short maximal batons where $w_1$ is a key vertex, and
thus by Lemma~\ref{cond} we know all the short maximal batons.  We then
find the shorter maximal tritons, and hence by Lemma~\ref{3-11} the
caterpillar is reconstructible from $\cD$.
Let $R'$ denote the set of branch vertices.

\medskip
{\bf Case 1:} {\it $d_2=3$.}
By~\eqref{last} and $d_4\ge3$, there are only three possibilities for the
degrees of vertices in $R'$: $(5,3,3,3)$, $(4,3,3,3)$ and $(4,3,3,3,3)$. 
In all these cases, $n-r-2=\SE i1r(d_i-2)\ge5$.  Hence for $j\leq (r-1)/2$
we have
$$
\C{V(B_{j:3,3})}=j+5\le \FR{r-1}2+5 =\FR{n+2}2-\FR{n-r-7}2\le\FR{n+2}2.
$$
Thus $B_{j:3,3}$ fits in a card.
Similarly, for $j \leq (r-3)/2$ the baton $B_{j:4,3}$ fits in a card.
The vertex of degree $4$ in any copy of $B_{j:4,3}$ must be $w_1$ (which may
have degree $5$ in $G$).  Thus, $\#B_{j:4,3}$ tells us $\#B_{j:d_1,3}$ when
$j \leq (r-3)/2$.
 
It remains to consider the one integer $j$ in $\{(r-2)/2,(r-1)/2\}$.
If $\#B_{j:3,3}\leq 2$, then $B_{j:4,3}\not\esub G$, since a copy of
$B_{j:4,3}$ yields three copies of $B_{j:3,3}$ in the deck.
If $\#B_{j:3,3}=3$, then $\#B_{j:4,3}=1$, since we have at most four vertices
of degree $3$, and forming three copies of $B_{j:3,3}$ using them would put
distance $3j$ between the farthest vertices of degree $3$.  That requires
$r\ge 1+3j\ge (3r-4)/2$, simplifying to $r\le 4$, leaving no room on the spine
for $w_1$.

When $\#B_{j:3,3}\geq 4$, in all cases we can determine $\#B_{j:d_1,3}$.
Using an exclusion argument we also find $\#B_{j:d_1,2}$, since 
$B_{j:4,2}$ fits in a card.

\smallskip
{\bf Subcase 1a:} {\it Vertex degrees in $R'$ are $(4,3,3,3)$}.
Consider the shorter tritons, with $j+j'\le (r-4)/2$ and $j\le j'$.
Let $H=B_{j,j':3,3,3}$, $F=B_{j,j':4,3,3}$, $\hF=B_{j,j':3,3,4}$, and
$F'=B_{j,j':3,4,3}$, 
Here $n-r-2=\sum_{i=1}^r(d_i-2)=5$, so when $j+j'\leq (r-4)/2$ we have
$$
|V(H)|=\FR{r-4}2+6 =\frac{n+2}{2}-\frac{n-r-6}{2}<\frac{n+2}{2},
$$
and hence we know $\#H$.  The only larger tritons that may contain $H$ are $F$
and $F'$, and they contain three and two copies of $H$, respectively.  Because
$G$ has only three vertices of degree $3$, there can only be one copy of $H$
not contained in $F$ or $F'$.  Using these observations, if $\#H=1$, then
neither $F$ nor $F'$ occurs in $G$.  If $\#H=2$, then $F$ does not appear and
$F'$ must appear.  If $\#H=3$, then $F$ (or $B_{j,j':3,3,4})$ appears (once)
and $F'$ does not, except in the special situation where the degrees appear in
the order $(3,4,3,3)$ with distances $(j,j'-j,j)$, where one copy of $F'$
appears.  These possibilities are distinguished by 
$\#B_{j:4,3}$ and $\#B_{j':4,3}$.

If $\#H=4$, then $G$ has one copy of $F$ or $\hF$ and one copy of $H$ that is
a maximal triton.  The branch degrees in order are $(4,3,3,3)$ with distances
$(j,j',j)$ or $(j',j,j')$, and whether $G$ has $F$ or $\hF$ is distinguished
by $\#B_{j:4,3}$.

There are two ways that $\#H=5$ can arise.  If the branch degrees in order
are $(3,4,3,3)$ with distances $(j,j',j)$, then $\#F'=1$ and $\#\hF=1$; if
the distances are $(j',j,j')$, then $\#F'=1$ and $\#F=1$.  These two
possibilities are distinguished by which of $B_{j:3,3}$ and $B_{j':3,3}$ occurs
as a maximal baton (not using $w_1$).

When $j=j'$, the possibilities with $\#H\in\{3,4,5\}$ each reduce to one case.

It is also possible that $\#H=6$: if the branch degrees in order
are $(4,3,3,3)$ with distances $(j,j'-j,j)$, then $\#F=1$ and $\#\hF=1$,
each contributing three copies of $H$.

These cases tell us the shorter maximal tritons that do not fit in cards
(actually, only those with length $(r-4)/2$ fail to fit in cards).
All shorter tritons whose key degrees are not $(4,3,3)$ fit in cards,
so by an exclusion argument we know them.

\smallskip
{\bf Subcase 1b:} {\it Vertex degrees in $R'$ are $(4,3,3,3,3)$ or $(5,3,3,3)$.}
Define $F,\hF,F'$ as above.  Here we have $n-r-2=\sum_{i=1}^r(d_i-2)= 6$, so
$$
|V(F)|=|V(\hF)|=|V(F')|\le \FR{r-4}2+7=\FR{n+2}2-\FR{n-r-8}2\le\FR{n+2}2.
$$
Hence each of these tritons fits in a card, and when any one of them appears
it either is already a maximal triton (case $(4,3,3,3,3)$) or groups with a
fixed number of other copies into a fixed maximal triton (case $(5,3,3,3)$).
Hence we obtain the shorter maximal tritons whose key vertices are $w_1$ and
two vertices of degree $3$.  All other tritons are smaller and fit in cards,
so by an exclusion argument we know them.

\medskip
{\bf Case 2:} {\it $d_2\geq 4$ and $d_3=3$.}
Since $d_2\le(n-r-1)/2$, for $1\leq j\leq (r-1)/2$ we have
\begin{equation}\label{case2bat}
|V(B_{j:d_2,2})|<|V(B_{j:d_2,3})|
\le j+d_2+3-1\leq \frac{r-1}{2}+\frac{n-r-1}{2}+2=\frac{n+2}{2}.
\end{equation}
Thus these batons fit in cards.  The key vertex of degree $d_2$ in a copy of
$B_{j:d_2,3}$ in $\cD$ can only be $w_1$ or $w_2$.  The vertex $w_2$ lies in at
most two copies of $B_{j:d_2,3}$ whose other key vertex has degree $3$ in $G$.
Since $d_1>d_2\ge4$, the vertex $w_1$ belongs to either $0$ or at least four
such copies of $B_{j:d_2,3}$.  If $w_1$ and $w_2$ are at distance $j$ in $G$,
then $\#B_{j:d_2,3}\geq (d_1-1){d_2-1\choose 2}+{d_1-1\choose 2}$.  Thus from
$\#B_{j:d_2,3}$ we find the number of copies of $B_{j:d_1,d_2}$ and
$B_{j:d_1,3}$ that are maximal batons.  Using exclusion we also find
$\#B_{j:d_1,2}$ by looking at $\#B_{j:d_2+1,2}$, since $B_{j:d_2+1,2}$ has the
same size as $B_{j:d_2,3}$ and fits in a card.

Now that we know all the short maximal batons, we also know the level pairs,
by Lemma~\ref{levelpair}.  We consider two subcases.

\smallskip
{\bf Subcase 2a:} {\it $\SE i1r(d_i-2)\le2d_2-1$.}
Since $d_1>d_2$, the first two terms of the sum already contribute at
least $2d_2-3$, and $d_3\ge d_4\ge 3$ contributes at least $2$, so in this
subcase the degrees of the branch vertices must be $(d_2+1,d_2,3,3)$.

Knowing the level pairs, we know the levels of the four branch vertices.
Using symmetry, we may assume that $w_3=v_b$ and $w_4=v_{b'}$ with
$b\le\min\{b',\bb'\}$ (similar to the use of $b_1$ and $b_2$ in
Section~\ref{2max}).  We know $b$ but don't know whether $v_{b'}$ is also
in the first half of $P$.

If $b'=\bb$, then knowing whether $w_1$ and $w_{2}$ are in the same half of $P$
determines the caterpillar.  Since we know the short maximal batons, we can
test this by the presence or absence of $B_{h:d_1,d_2}$, where $h$ is the
difference between the levels of $w_1$ and $w_2$ (if $h=0$, then we also know
the caterpillar).  Hence we may assume $b'<\bb$.

For $i\in\{1,2\}$, let $j_i$ be a difference between the level of $w_i$ and the
level of $w_3$ or $w_4$ (there are two choices for each $i$).
If $v_b$ and $v_{b'}$ are both in the first half of $P$, then checking for
batons of the form $B_{j_1:d_1,3}$ as short maximal batons tells us whether
$w_1$ is also in that half.  We can then similarly place $w_2$, completing the
reconstruction.

Thus we may assume $\bb>{b'}>(r+1)/2$.  Let $k=(\bb'+b)/2$; note that $k$ is
the level halfway between the levels of $w_3$ and $w_4$, if they have the same
parity.  If $w_1$ or $w_2$ (say $w_i$) is not in $\{v_k,v_\kb,v_{(r+1)/2}\}$,
then again we can use the existence or absence of $B_{j_i:d_i,3}$ as a short
maximal baton to locate it.  We can then locate the remaining branch vertex by
using the same trick with $B_{j_{3-i}:d_{3-i},3}$ or by using the presence or
absence of $B_{j':d_1,d_2}$, where $j'$ is the difference between the levels of
$w_1$ and $w_2$.

The remaining possibility is $w_1,w_2\in\{v_k,v_\kb,v_{(r+1)/2}\}$.
If $b'-k\le(r-1)/2$, then checking the existence of $B_{b'-k:d_1,3}$ and
$B_{b'-k:d_2,3}$ as maximal batons in $G$ tells us whether $w_1$ or $w_2$ is
$v_k$ (or $v_\kb$), and then we can also place the other branch vertex (since
we know the levels).

Hence we may assume $b'-k\ge r/2$.  Since $b\leq \min\{b',\bb'\}$, this yields
$k\le {r}/{4}$.  Now let $B'=B_{k-b,b:d_2+1,3,2}$.  We compute
$$
|V(B')|\le\FR{r}{4}+d_2+3\leq \FR{r}{4}+\FR{n-r-1}{2}+3
=\FR{n+2}{2}-\FR{r-6}{4}.
$$
Thus $B'$ fits in a card when $r\geq 6$, which holds when ${n\ge17}$,
since $r\ge(n-5)/2$.  Since $B'$ has length $k$, it cannot appear with
key vertices $v_k$ and $v_b$.  Hence $w_1=v_\kb$ if $B'$ appears, and
$w_1=v_k$ if it does not.  This completes the reconstruction.

\smallskip
{\bf Subcase 2b:} {\it $\SE i1r(d_i-2)\ge2d_2$.}
In this subcase we find all the shorter maximal tritons and apply
Lemma~\ref{3-11}.  By~\eqref{d'}, our hypothesis yields $d_2\le(n-r-2)/2$.
Also, since $d_1>d_2\ge4$ and $d_3=3$, by~\eqref{last} we have $d_6=2$, meaning
that $G$ has at most three vertices of degree $3$.  Also, if $G$ has three
vertices of degree $3$, then $d_1=d_2+1$.

Consider $j,j'$-tritons, where $j+j'\le(r-4)/2$.
Let $H=B_{j,j':d_2+1,3,3}$ and $H'=B_{j,j':3,d_2+1,3}$.  Each of the shorter
tritons $H$ and $H'$ has at most $N$ vertices, where
\begin{equation}\label{shortermax}
N\le\frac{r-4}{2}+d_2+4=\FR{r-4}2+\FR{n-r-2}2+4=\frac{n+2}{2} .
\end{equation}
Hence $H$ and $H'$ fit in cards.  In any copy of $H$ or $H'$ in $G$, the
vertex of degree $d_2+1$ must be $w_1$.  We use $\#H$ and $\#H'$ to determine
the shorter maximal $j,j'$-tritons that may have more than $(n+2)/2$ vertices;
their key vertices have degrees $\{d_1,3,3\}$ or $\{d_1,d_2,3\}$.
We determine the larger ones first.

Let $h$ be the distance in $G$ between $w_1$ and $w_2$, and let $h'$ be the
difference between their levels.  We have $h=h'$ if and only if $G$ has
$B_{h':d_1,d_2}$ as a maximal baton.  Otherwise, $h=h'+r+1-2k$, where $k$ is
the higher of the levels of $w_1$ and $w_2$.  In particular, we know $h$.
\looseness -1

If $h\notin\{j,j',j+j'\}$, then $G$ contains no $j,j'$-triton having key
vertices with degrees $d_1$ and $d_2$.  If $h=j+j'$ and
$B_{j,j':d_1,3,d_2}\not\esub G$, then since $G$ has at most three vertices of
degree $3$, we have $\#H\leq 2$, while $B_{j,j':d_1,3,d_2}\esub G$ yields
$\#H\geq {d_2-1\choose 2}\geq 3$.  Thus $\#H$ determines whether
$B_{j,j':d_1,3,d_2}$ is present.

Now suppose $h=j$.  If $B_{j,j':d_2,d_1,3}\not\esub G$, then $H'$ can only
occur with key vertices having degrees $3,d_1,3$ in $G$, yielding
$\#H'\leq \CH{d_1-2}{d_2-1}$.  On the other hand, $B_{j,j':d_2,d_1,3}\esub G$
yields $\#H'\geq \CH{d_2-1}2\CH{d_1-2}{d_2-1}$.  Since $d_2\ge4$, we conclude
that $\#H'$ determines whether $B_{j,j':d_2,d_1,3}$ is present.

If $B_{j,j':d_1,d_2,3}\esub G$, then $\#H\ge\CH{d_1-1}{d_2}(d_2-2)$.
To prove that $\#H$ determines whether $B_{j,j':d_1,d_2,3}$ is present, we show
that $\#H$ is smaller when $B_{j,j':d_1,d_2,3}\not\esub G$.  The vertex of
degree $d_2+1$ in a copy of $H$ must be $w_1$.  Since $w_2$ has distance $j$
from $w_1$, using $w_2$ in a copy of $H$ would produce $B_{j,j':d_1,d_2,3}$,
which is forbidden.  Hence we can only produce a copy of $H$ moving in one
direction from $w_1$, distance $j$ and then $j'$ to reach two key vertices
having degree $3$ in $G$.  The result is $\#H\le \CH{d_1-1}{d_2}$, which is
less than the earlier lower bound since $d_2\ge4$.

The cases with $h=j'$ are symmetric to those with $h=j$.
Thus we have determined all shorter maximal tritons
whose key vertices have degrees $\{d_1,d_2,3\}$.

Knowing $h$ and the levels, we actually know the positions of $w_1$ and
$w_2$, where we may assume $w_1=v_b$ with $b\le(r+1)/2$.  Since $d_1\ne d_2$,
there is a $j,j$-triton with initial key vertices of degrees $d_1$ and then
$d_2$ if and only if the distances allow it to fit along the spine.
The third key vertex will have degree $2$ if and only if it does not have
degree $3$.  Since we know the shorter maximal tritons whose key vertices have
degrees $\{d_1,d_2,3\}$, we know this answer.  The same argument applies to
determine all shorter maximal tritons
whose key vertices have degrees $\{d_1,d_2,2\}$.

The remaining tritons that don't fit in cards have key vertices of degrees
$\{d_1,3,3\}$.  Furthermore, by~\eqref{shortermax}, these fail to fit in 
cards only if the degrees of the branch vertices are $\{d_2+2,d_2,3,3\}$.
Since in this case there are only two branch vertices of degree $3$, there
can only be one copy of $B_{j,j':d_1,3,3}$ or $B_{j,j':3,d_1,3}$ as a maximal
triton, and there cannot also be a $j,j'$-triton with key vertices of degrees
$\{d_1,d_2,3\}$.  After checking that we have no $j,j'$-triton with degrees
$\{d_1,d_2,3\}$, we have $B_{j,j':d_1,3,3}$ or $B_{j,j':3,d_1,3}$ in $G$ if and
only if $\#H=\CH{d_1-1}{d_2}=d_2+1$ or $\#H'=\CH{d_1-2}{d_2-1}=d_2$.

All remaining shorter tritons fit in cards.  Hence the exclusion argument
allows us to determine the remaining shorter maximal tritons.

\medskip
{\bf Case 3:} {\it $d_2>d_3\geq 4$.}
By~\eqref{last} and $d_4\ge3$, there are four branch vertices, with degrees
$(d_2+1,d_2,4,3)$, all distinct.  Using~\eqref{d'}, $d_2=(n-r-2)/2$.
By the same computation as~\eqref{case2bat} in Case 2 (improved by $1/2$), the
batons $B_{j:d_2,2}$ and $B_{j:d_2,3}$ fit in cards when $j\leq (r-1)/2$.

Since $d_1=d_2+1$, the number of copies of $B_{j:d_2,3}$ whose key vertices are
branch vertices at distance $j$ is $d_2{d_2-1\choose 2}+{d_2\choose 2}$ if the
branch vertices are $\{w_1,w_2\}$; this count simplifies to $\CH{d_2}2(d_2-1)$,
which is at least $40$ since $d_2\ge5$.  The count is $3d_2$ for $\{w_1,w_3\}$,
is $d_2$ for $\{w_1,w_4\}$, is $3$ for $\{w_2,w_3\}$, and is $1$ for
$\{w_2,w_4\}$.  These five numbers are distinct, and indeed no two subsets
of them have the same sum.  Hence we know which short maximal batons with
key vertices of degree at least $3$ are present.

All other short batons fit in cards (note that $B_{j:d_1,2}=B_{j:d_2+1,2}$,
and $B_{j:d_2+1,2}$ has the same size as $B_{j:d_2,3}$, which fits in a card).
Hence by the exclusion argument we know all short maximal batons.

To complete the reconstruction, since $w_1,\ldots,w_4$ have distinct degrees
and we know their levels, we simply use short maximal batons to test for each
of $w_2,w_3,w_4$ whether it is in the same half of the spine as $w_1$.

\medskip
{\bf Case 4:} {\it $d_2=d_3\geq 4$.}
By~\eqref{last} and $d_4\ge3$, the degrees of the branch vertices are
$(5,4,4,3)$.  Thus $n-r-2=\SE i1r(d_i-2)=8$.
For $j\le(r-1)/2$, we have
$$
|V(B_{j:4,2})|<|V(B_{j:4,3})|=j+6\le \FR{r-1}2+6
=\FR{n+2}2-\FR{n-r-9}2=\FR{n+1}2.
$$
Thus $B_{j:4,2}$ and $B_{j:4,3}$ fit in cards.

To find the short maximal batons that don't fit in cards, consider
$\#B_{j:4,3}$.  It is $4{3\choose 2}+{4\choose 2}$ (that is, $18$) for batons
with key vertices $\{w_1,w_2\}$ or $\{w_1,w_3\}$, is $4$ for $\{w_1,w_4\}$,
is $3+3$ for $\{w_2,w_3\}$, and is $1$ for $\{w_2,w_4\}$ or $\{w_3,w_4\}$.
Distinct subsets of these larger batons give distinct values of $\#B_{j:4,3}$,
except for possibly $4+1+1=6$.  However, the values $4,1,1$ cannot all occur,
because they require $w_4$ to be at distance $j$ from all of $\{w_1,w_2,w_3\}$.
Hence the numbers $\#B_{j:4,3}$ determine which batons occur with both key
vertices of degree at least $3$.

All other short batons fit in cards ($B_{j:5,2}$ has the same size as
$B_{j:4,3}$).  Hence we know the short maximal batons
(using exclusion) and also the level pairs.
We now locate the branch vertices as in Subcase 2a, with the roles of $w_2$ and
$w_4$ exchanged.  That is, we first place $w_3$ and $w_2$, with $w_3=v_b$ and
$w_2=v_{b'}$, where $b\le\min\{b',\bb'\}$.  Using batons and tritons as in
Subcase 2a, we then place $w_1$ and $w_4$ relative to $w_3$ and $w_2$.
\looseness-1
\end{proof}
 
Lemmas~\ref{cond} and~\ref{uncond} complete the proof of the theorem
for all high-diameter cases with a unique vertex of maximum degree
and at least four branch vertices.

\section{At Most Three Branch Vertices}\label{sec:3branch}

Here we reconstruct caterpillars with few branch vertices.  Trees with no
branch vertices are paths, which are reconstructible from the degree list.
The next case is also fairly easy.

\begin{lemma}\label{1branch}
When $n\ge2\ell+1$, caterpillars with one branch vertex are
$\ell$-reconstructible.
\end{lemma}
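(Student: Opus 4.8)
The plan is to recover from the deck everything except the location of the branch vertex, and then to pin down that location by counting small brooms.

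Since $n\ge2\ell+1$, set $m=n-\ell$; then $m\ge\FL{(n+2)/2}>n/2$, so the $m$-deck determines the deck $\DD$ of Section~\ref{lowr} and, as recalled in Section~\ref{highr}, the degree list of $G$. In particular we see that $G$ is a caterpillar with a unique branch vertex $v$; such a caterpillar is a spider, so if $d$ denotes the degree of $v$ then $G$ consists of two arms of lengths $a_1\ge a_2\ge1$ together with $d-2$ arms of length $1$, with $r=a_1+a_2-1$ nonleaf vertices and $n-r=d$ leaves. From the degree list we know $r$ and $d$, hence $a_1+a_2$, so it remains only to determine $a_2$ (the distance from $v$ to the nearer end of the spine); this gives $a_1=r+1-a_2$ and hence $G$. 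If $r\le(n-6)/2$ the low-diameter theorem of Section~\ref{lowr} finishes the job, so from now on assume $r\ge(n-5)/2$.

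The core step is a broom count. For $s\ge2$ let $Q_s$ be the spider with arms of lengths $s,1,1$ (a path on $s+1$ vertices with two extra pendant edges at one end), which has $s+3$ vertices. Because $v$ is the only vertex of $G$ of degree at least $3$, the head of any induced copy of $Q_s$ in $G$ must be $v$; since $s\ge2$, the length-$s$ arm of the copy is an induced path leaving $v$ and so lies inside a single arm of $G$ of length at least $s$, necessarily one of the two long arms, while the two remaining neighbors of the head may be any two of the other $d-1$ neighbors of $v$. Each such choice yields a genuine induced copy, and distinct choices give distinct copies, so $\#Q_s=\CH{d-1}{2}\,t_s$, where $t_s\in\{0,1,2\}$ is the number of arms of $G$ of length at least $s$ (thus $t_s=2$ for $2\le s\le a_2$, then $t_s=1$ for $a_2<s\le a_1$, then $t_s=0$; when $a_2=1$ one simply has $t_s\le1$ for all $s\ge2$). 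Whenever $s+3\le m$ the graph $Q_s$ fits in a card, so $\#Q_s$, and hence $t_s$, is known. Therefore, if $a_2\le m-4$ we observe the first $s$ at which $t_s$ drops below $2$ and read off $a_2$, completing the reconstruction.

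The main obstacle is the remaining case $a_2\ge m-3$, where the visible brooms do not yet determine $a_2$. Here $a_2\le\FL{(r+1)/2}$, and since $r=n-d$ and $m=n-\ell$ a short computation using $n\ge2\ell+1$ and $d\ge3$ gives $\FL{(r+1)/2}\le m-2$; hence the only surviving possibilities are $a_2=m-3$ and, when $\FL{(r+1)/2}=m-2$, also $a_2=m-2$ --- at most two caterpillars, which differ by moving a single edge from the shorter long arm to the longer one. To separate these I would count induced copies of $P_m$ (which have exactly $m$ vertices, hence fit in a card): partitioning $\#P_m$ according to which arms of the spider the path visits, one computes that passing from the candidate with $a_2=m-2$ to the one with $a_2=m-3$ changes $\#P_m$ by exactly $-(d-2)\ne0$, which decides between them. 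That the argument succeeds precisely when $n\ge2\ell+1$ is no accident: the nearly balanced spiders of Theorem~\ref{catsharp} that share a $k$-deck are exactly the pairs excluded here by $a_2\le\FL{(r+1)/2}$ together with $m>n/2$.
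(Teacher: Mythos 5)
Your proof is correct, though it takes a noticeably different route from the paper. The paper locates the branch vertex $v_i$ by counting paths $P_j$ and observing where the number of copies not lying in the spine drops (this happens at $j=i+3$), with the boundary value $i=\CL{r/2}$ handled as a default when no drop is seen. You instead count the spiders $Q_s$ with arms $s,1,1$ (in the paper's notation these are the batons $B_{s-1:3,2}$); since the branch vertex $v$ is the unique vertex of degree at least $3$, each induced copy of $Q_s$ has its center at $v$, and so $\#Q_s=\CH{d-1}{2}t_s$ directly encodes the number $t_s$ of long arms of length at least $s$. This is a cleaner count than the paper's because there is no ``background'' term to subtract, and it pins down $a_2$ as soon as $a_2\le m-4$. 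Your handling of the residual case via the inequality $\FL{(r+1)/2}\le m-2$ (a direct consequence of $n\ge2\ell+1$ and $d\ge3$, and this is the only place the hypothesis enters) correctly reduces to at most two candidate spiders, and your claimed discrepancy $\#P_m(a_1,m-2)-\#P_m(a_1+1,m-3)=d-2$ does check out after expanding $\#P_m$ by arm types, though you state rather than verify it; a referee would want that short computation included. One small inessential detour: the reduction to the low-diameter case at the start is unnecessary, since your subsequent argument never invokes $r\ge(n-5)/2$.
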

\begin{proof}
We know the degree list, so we recognize this case, with a branch vertex
having degree $d_1$ and $r=n-d_1$.  Since $r\ge(n-5)/2$ in the high-diameter
case, we may assume $d_1\le (n+5)/2$.  We seek $i$ such that $d(v_i)=d_1$.
We may assume $i\le(r+1)/2$.  We have $i=1$ if and only if
the deck does not contain the $6$-vertex caterpillar obtained by adding a
pendant edge at the central vertex of the path $P_5$.

Hence we may assume $1<i\le(r+1)/2$.  We count copies in the deck of the path
$P_j$ with $j$ vertices.  There are $r-j+1$ copies of $P_j$ contained in the
spine $\la\VEC v1r\ra$.  Excluding those, there remain $d_1-2$ copies of $P_j$
if $j>i+2$ (through indices greater than $i$), and there are $2(d_1-2)$ copies
of $P_j$ if $j\le i+2$.  Thus, as $j$ increases from $1$, the value of $i$ is
the value of $j-2$ when number of copies of $P_j$ not along the spine decreases.

We must check that $P_j$ fits in a card for the needed values of $j$.
If we can see $P_j$ when $j\le(r+5)/2$,then we can detect the branch vertex
$v_i$ for $i\le(r-1)/2$ in this way.  Indeed we have $(r+5)/2\le (n+2)/2$,
because $r\le n-3$, which holds since $d_1\ge3$.

If we do not find the branch vertex in this way, then $i=\CL{r/2}$.
\end{proof}

\begin{lemma}
When $n\ge2\ell+1$, caterpillars with two branch vertices are
$\ell$-reconstructible.
\end{lemma}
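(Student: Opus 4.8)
The plan is to recognize this case from the degree list and then locate the two branch vertices. Exactly two degrees $d_1\ge d_2$ are at least $3$, all others lie in $\{1,2\}$, and by~\eqref{d'} we have $d_1+d_2=n-r+2$ where $r$ is the number of nonleaf vertices, so $r\le n-4$. If $r\le(n-6)/2$ we finish by Section~\ref{lowr}, so assume $r\ge(n-5)/2$, whence $d_1+d_2\le(n+9)/2$. Writing the spine as $\la\VEC v1r\ra$, let $v_{b_1},v_{b_2}$ be the branch vertices with $b_1<b_2$, reflected so that $b_1\le r+1-b_2$; set $g=b_1-1\le f=r-b_2$ and $h=b_2-b_1\ge1$, so $g+h+f=r-1$. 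Every spine vertex other than $v_{b_1},v_{b_2}$ has degree $2$, so the caterpillar is determined by $g$, $h$, and (when $d_1>d_2$) the degree of $v_{b_1}$, all up to the reflection swapping $(g,d(v_{b_1}))$ with $(f,d(v_{b_2}))$. Since $r$ is known, it suffices to recover $g$, $d(v_{b_1})$, and one of $h$ or $f$.

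The backbone is a count of induced paths. For each $j\le\lfloor(n+2)/2\rfloor$, $\cD$ gives $\#P_j$, and since we know $r$ we know $A_j:=\#P_j-\max\{0,r-j+1\}$, the number of induced $P_j$'s meeting a leaf. As $G$ has only four leaf-bearing vertices $v_1,v_{b_1},v_{b_2},v_r$ (leaf-counts $1,x,y,1$ with $\{x,y\}=\{d_1-2,d_2-2\}$ when both branch vertices are interior to the spine, with an evident variant otherwise), $A_j$ is an explicit ``step plus bump'' function of $j$: it equals $2+2x+2y$ for all small $j$, steps down by $x,\,y,\,y,\,x$ as $j$ passes $g+2,\,f+2,\,g+h+2,\,h+f+2$, steps down by $1$ twice at $j=r+1$, and has isolated bumps of sizes $x,y,xy,y,x,1$ at $j=g+3,\,f+3,\,h+3,\,g+h+3,\,h+f+3,\,r+2$. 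Since $g\le f$, the first $j$ at which $A_j$ decreases is $j=g+4$ (with obvious small adjustments when $g\le1$ or when $h$ is so small that the bump at $h+3$ interferes), and the size of that decrease is $d'(v_{b_1})$; this fixes $g$ and $d(v_{b_1})$ simultaneously. The thresholds used satisfy $g+4\le(r+6)/2\le(n+2)/2$ because $r\le n-4$, so all the relevant path counts are available.

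It remains to obtain $h$ (equivalently $f$). If $h>(r-1)/2$ then $g+f<(r-1)/2$, so $g\le f<(r-1)/2$; all thresholds occurring below $j=g+f+3<(r+5)/2\le(n+2)/2$ are then pairwise separated from the $h$-bump, and reading off the next decrease of $A_j$ after $j=g+4$ gives $f$ and hence $h=r-1-g-f$. If instead $h\le(r-1)/2$, I would find $h$ from the batons of length $h$ having two key vertices of degree $\ge3$: in a two-branch caterpillar the only such maximal baton is $B_{h:d(v_{b_1}),d(v_{b_2})}$, and by Lemma~\ref{counting} applied to the family of $j$-batons with $j\le(r-1)/2$, together with the fact that every short baton of maximum degree at most $\alpha+1$, where $\alpha=\FL{(n-r+1)/4}$, fits in a card, one excludes the many degree-$2$ maximal batons and reads off the length $h$ of the exceptional one (when the branch degrees are large one detects it using batons $B_{j:\alpha+1,\alpha+1}$ and the divisibility bookkeeping of Section~\ref{2maxbaton}, as there). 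With $h$ known, the degree assignment and the remaining offset follow from the path count above, reconstructing $G$ up to reflection.

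The main obstacle is bookkeeping rather than any new idea: one must write $A_j$ precisely, verify that every collision among the step/bump locations still leaves $(g,h,f)$ and the degree assignment determined — the awkward sub-case being a branch vertex of degree $3$, where $x$ or $y$ equals $1$ so that a bump of size $xy$ is indistinguishable in magnitude from one of size $x$, which is exactly why small $h$ is handled via batons rather than via $A_j$ — and keep the sizes of all auxiliary subgraphs at most $\lfloor(n+2)/2\rfloor$, which follows throughout from $r\le n-4$ and $d_1+d_2\le(n+9)/2$.
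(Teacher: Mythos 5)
Your overall strategy matches the paper's: use short baton counts to find the distance $h$ between the two branch vertices when $h$ is small, and path counts to find the outer offsets. The paper phrases the path-count piece via $Q_j=P_{j+3}$ on an augmented spine and compares $\#Q_j$ to a known ceiling $g_j$, which avoids writing out the step/bump formula for your $A_j$ explicitly, but the two bookkeeping schemes encode the same information, and your identification of the first decrease at $j=g+4$ matches the paper's identification of $b$ as the least $j$ with $\#Q_j<g_j$.

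There is a genuine gap, though, in your branch ``$h\le(r-1)/2$: use batons.'' For $B_{h:3,3}$ to fit in a card you need $h+5\le(n+2)/2$, i.e.\ $h\le(n-8)/2$, and since $(r-1)/2>(n-8)/2$ whenever $r\ge n-6$, your regime strictly contains values of $h$ for which $B_{h:3,3}$ (and also $B_{h:d_1,d_2}$) is too large to appear on a card. Your fallback --- batons $B_{j:\alpha+1,\alpha+1}$ with $\alpha=\lfloor(n-r+1)/4\rfloor$ plus the bookkeeping of Section~\ref{2maxbaton} --- is inapplicable here in two ways: that section assumes $d_1=d_2$ and at least four branch vertices, and, worse, the gap regime forces $n-r\le6$, so $\alpha\le1$ and $B_{j:\alpha+1,\alpha+1}$ is just a path, which cannot detect a branch vertex. (This is also exactly where your flagged ``awkward sub-case'' $x=1$ arises, so falling back to the $xy$ bump in $A_j$ is not straightforward either.) Furthermore, the path-count route to a second decrease needs $\min\{f,\,g+h\}+4\le(n+2)/2$, which can fail by one unit when $r\ge n-5$. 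The paper closes both gaps by switching goal: instead of finding $h$ directly, it locates the \emph{level} of $v_{b'}$ from $\#Q_j$ for $j$ up to $(r-1)/2$, and it finishes the residual family $r\ge n-5$ (degrees $(3,3)$ or $(4,3)$) with a short explicit case check. Your proposal would need a comparable residual argument to be complete.
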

\begin{proof}
Let $v_{b}$ and $v_{b'}$ be the two branch vertices along $\VEC v1r$.
By symmetry, we may assume $b\le \min\{b',\bb'\}$.
Let $h=b'-b$.  If $j\le(n-8)/2$, then $\C{V(B_{j:3,3})}\le(n+2)/2$,
and we see whether $G$ contains $B_{j:3,3}$.  If it does, then $h=j$.
Hence we begin by determining $h$ if $h\le(n-8)/2$, and otherwise we know
$h\ge(n-7)/2$.  Note also that $b-1\le(r-1-h)/2$; this means that when we
don't know $h$ we know that it is greater than $b$.

The spine is the path $\la\VEC v1r\ra$ of nonleaf vertices.  Specifying
$v_0$ and $v_{r+1}$ as arbitary leaf neighbors gives us the {\it augmented
spine} $\la\VEC v0{r+1}\ra$.  With this definition, any branch vertex with
degree $d$ has $d-2$ leaf neighbors outside the augmented spine.
Let $Q_j$ be the path with $j+3$ vertices; note that $Q_j=B_{j:2,2}$ when
$j\ge1$.  There are always $r-j$ copies of $Q_j$ contained in the augmented
spine.  Other copies of $Q_j$ start at leaf neighbors of $v_b$ or $v_{b'}$ and
continue in either direction along the spine.  The maximum value of $\#Q_j$ is
achieved when the augmented spine is long enough in both directions from both
branch vertices to fit $Q_j$.  For $1<j\ne h$, that value is $r-j+2d_1+2d_2-8$.
When $j=h$, there is another contribution where both endpoints are leaves
outside the augmented spine, and the value is $r-j+2d_1+2d_2-8+(d_1-2)(d_2-2)$.
In both cases, let $g_j$ denote this greatest value.

We claim that $\#Q_j=g_j$ when $j<b$.  In particular, we know whether $j$
equals $h$ when $j\le b$, and in that case the contribution of
$(d_1-2)(d_2-2)$ appears in both $\#Q_j$ and $g_j$.  However, $\#Q_b<g_b$,
since the distance from $v_b$ to $v_1$ is less than $b$.  Thus we obtain $b$ as
the least value of $j$ such that $\#Q_j<g_j$.  In addition, $g_b-\#Q_b$ tells
us whether the degree of $v_b$ is $d_1$ or $d_2$; if the difference is $d_i-2$,
then the degree is $d_i$.  There is also the possibility that
$g_b-\#Q_b=d_1+d_2-4$; this happens if and only if $\bb=b'$, which determines
$G$.

Having determined $v_b$ and its degree, we again consider $h$.
Recall that we have determined $h$ if it is at most $(n-8)/2$.
In that case, we set $b'=b+h$ and know all of $G$.

In the remaining case, we have $h\ge(n-7)/2$.  If $b+(n-7)/2\ge(r+1)/2$, then
we need only find the level of $b'$ to complete the reconstruction of $G$.
Let $a=g_b-\#Q_b$.  The deficiency of $a$ continues as we increase $j$
beyond $b$.  Let $k$ be the least value of $j$ greater than $b$ such that
$\#Q_j<g_j-a$; note that $k<b+h$.  If $b+(n-7)/2\ge(r+1)/2$, then we set
$b'=\kb$ and have determined $G$.  In order to be guaranteed seeing such $Q_j$,
it suffices for $Q_j$ to fit in a card when $j\le(r-1)/2$, since if we don't
find $k$ by then $k$ is the only remaining level, $r/2$ or $(r+1)/2$.  Thus
it suffices to have $(r+5)/2\le(n+2)/2$, which simplifies to $n-r\ge3$, which
holds when $G$ has two branch vertices.

When $b+(n-7)/2<(r+1)/2$, we still need to find $b'\in\{k,\kb\}$, but very few
possibilities remain.  Since $b\ge1$, the condition requires $r\ge n-5$.
By~\eqref{d'}, $n-r=d_1+d_2-2$.  With two branch vertices, $n-r\ge4$, so
$r\le n-4$.

Let $n-r=4+\eps$, where $\eps\in\{0,1\}$.  We have $(d_1,d_2)=(3+\eps,3)$.
We may assume $(b,k)=(1,(r-1+\eps)/2)$, since $(n-7)/2=(r-3+\eps)/2$ and
$b+h\ge(r-1+\eps)/2$.  We count copies of $Q_j$, where $j={(r-3+\eps)/2}$,
which we have observed fit in a card.  Let $D=\#Q_j-(r-j)$ in order to
ignore the $r-j$ copies contained in the augmented spine in all cases.
When $d_1=3$ we have $D=5$ if $b'=v_k$ and $D=4$ if $b'=v_\kb$.
When $d_1=4$ we have $D\in\{7,8\}$ if $b'=v_k$ and $D\in\{5,6\}$ if $b'=v_\kb$,
depending on whether $v_1$ has degree $4$ or $3$.  Hence we determine $b'$.
\end{proof}


\nobreak
The case when $G$ has exactly three branch vertices takes quite a bit more work.

\begin{lemma}\label{rec3levels}
For $n\ge19$, let $G$ be an $n$-vertex caterpillar having exactly
three branch vertices.  If the levels of the branch vertices are known
and the sum of the degrees of the branch vertices is $10$,
then $G$ is reconstructible from the deck.  The same holds when those degrees
sum to $9$ if $r$ is odd or $\#B_{(r-2)/2:3,3}$ is known.
\end{lemma}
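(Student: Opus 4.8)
The plan is to reduce the problem to locating the three branch vertices along the spine. By~\eqref{d'}, three branch vertices of degree sum $10$ must have degrees $\{4,3,3\}$ with $n-r=4$, and degree sum $9$ forces degrees $\{3,3,3\}$ with $n-r=3$; in either case $G$ is determined by the (known) branch degrees together with the positions $p_1<p_2<p_3$ of the branch vertices, so it suffices to find $p_1,p_2,p_3$. Write $d_{12}=p_2-p_1$, $d_{23}=p_3-p_2$, $d_{13}=d_{12}+d_{23}\le r-1$. Since the levels of the branch vertices are given, I would first record the multiset of their depths; because the depth of $v_k$ is unimodal along the spine (falling to the centre, then rising), the only remaining freedom is how the three depths split between the two halves. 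Up to the global reflection this leaves at most four candidate configurations, and each one is pinned down by the triple $\{d_{12},d_{23},d_{13}\}$ of pairwise distances (for the sum-$10$ case, also by which of $v_{p_1},v_{p_2},v_{p_3}$ carries degree $4$).

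\textbf{Extracting distances from the deck.} To separate the candidate configurations I would read pairwise distances off the deck. If two branch vertices of degrees $a,b$ lie at distance $h$, then $B_{h:a,b}$ is a maximal baton of $G$; more economically, a path through one leaf neighbour at each of them has $h+3$ vertices, and $\#P_{h+3}$ minus the copies of $P_{h+3}$ forced by the augmented spine and by single-leaf detours equals the number of branch-vertex pairs at distance $h$ — and, as one checks exactly as in the two-branch-vertex case, the subtracted quantity depends only on the known depths, not on the unknown placement. When degrees must be distinguished, chiefly to locate the degree-$4$ vertex in the sum-$10$ case, I would instead use the batons $B_{h:a,b}$ together with Lemma~\ref{counting} to recover multiplicities from the counts of small batons. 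Since $n-r\in\{3,4\}$, the path $P_{h+3}$ fits in a card whenever $h\le\FL{(r-1)/2}$ (one larger in the sum-$10$ case), and $d_{12}+d_{23}=d_{13}$ forces $\min\{d_{12},d_{23}\}\le\FL{(r-1)/2}$, so the smaller gap is always visible and usually the larger gap and $d_{13}$ are too.

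\textbf{Assembling the placement, and the main obstacle.} If the four candidate configurations are separated by the known depths together with the distances (and, for sum $10$, the degrees) that the deck makes visible, the reconstruction is complete; this disposes of the case where the branch vertices are bunched, so that all of $d_{12},d_{23},d_{13}$ are small, and the case where $d_{13}$ itself is visible. The hard part will be the opposite regime, where the branch vertices are pushed toward the two ends of the spine: then only $\min\{d_{12},d_{23}\}$ is visible while two configurations, differing solely in distances near $r-1$, agree on everything the deck shows. In the sum-$10$ case the distinguished degree-$4$ vertex breaks the tie, since its depth is known and the small visible baton reveals whether a degree-$4$ or degree-$3$ vertex sits at that distance. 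In the sum-$9$ case this residual ambiguity arises only when $r$ is even, and it is resolved either vacuously (when $r$ is odd) or by the single extra datum $\#B_{(r-2)/2:3,3}$, which supplies the one missing bit of distance information. Carving out this spread-out even-$r$ configuration and checking that the extra input indeed distinguishes the two surviving placements is the crux; the rest is bookkeeping with the known depths and the path/baton counts, and the hypothesis $n\ge19$ is needed only to keep the relevant cards inside the deck and to make the few degenerate small configurations trivial.
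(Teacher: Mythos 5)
Your overall architecture (reduce to placing the three branch vertices, read off pairwise distances from visible batons, handle the easy configurations directly) matches the paper's, but the proposal has a concrete numerical error and, more importantly, it does not actually resolve the hard sub-case the lemma is designed for.

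The numerical error: from~\eqref{d'}, $n-r-2=\sum_{\text{branch}}(d_i-2)$, so degree sum $10$ gives $n-r=6$ and degree sum $9$ gives $n-r=5$, not the $4$ and $3$ you wrote. This happens to be in the ``safe'' direction (you have slightly more room than you claim), so it does not by itself break anything, but it signals a too-quick reading of the budget.

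The real gap is in what you call ``the hard part.'' Your resolution of the spread-out case is: the degree-$4$ vertex breaks the tie in the sum-$10$ case, and in the sum-$9$ case the single datum $\#B_{(r-2)/2:3,3}$ supplies ``the one missing bit.'' Neither claim holds. Following the paper's notation with the three depths $a<a+b<a+b+c$, the truly ambiguous configurations are Type~$1$ versus Type~$3$ when $b=c$: in both, the outermost and innermost branch vertices sit at $v_{s+a}$ and $v_{s-a-b-c}$ respectively, and the only question is whether the middle-depth vertex is at $v_{s+a+b}$ or $v_{s-a-b}$. In both types the single visible $R'$-baton has the same length $b=c$. If the degree-$4$ vertex is the innermost or outermost one, the visible baton has degree pair $(3,3)$ in one type and $(4,3)$ in the other, so you can tell them apart; but if the \emph{middle}-depth vertex has degree $4$, the visible baton is $B_{b:4,3}$ in both types and your tiebreak fails. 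In the sum-$9$ case all three degrees equal $3$, so the visible baton is $B_{b:3,3}$ in both types. Moreover, the extra datum $\#B_{(r-2)/2:3,3}$ certifies only whether there is an $R'$-baton of that one specific length; when the unseen distances $2a+b$ and $2a+3b$ both exceed $(r-2)/2$ (which is the generic situation when the branch vertices are spread toward the ends), this datum is $0$ in both types and distinguishes nothing. This ambiguity also has nothing to do with the parity of $r$: the ``$r$ odd'' alternative in the hypothesis is there only because $(r-2)/2$ is not an integer when $r$ is odd, so that one marginal baton count does not need to be supplied; it does not make the Type~$1$/Type~$3$ confusion vacuous. The paper resolves this case by counting the triton $B_{c,h:3,3,2}$ and tabulating its value in the two types, and the bound $n\ge19$ is used precisely to show that this triton fits in a card (via $h+c\le(r+1)/4$ and $n-r\ge5$). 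Your proposal never introduces any triton, so this case is unhandled, and the one verification where the hypothesis $n\ge19$ actually does work is omitted.
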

\begin{proof}
Let $R'$ the set of branch vertices.
By~\eqref{d'}, $n-r= 2+\SE i13 (d_i-2)\ge5$.  Furthermore, $n-r\ge6$ unless
the degrees in $R'$ all equal $3$; call that the ``$3$-case''.
Since the inequalities $r/2+4\le (n+2)/2$ and $(r-2)/2+5\le (n+2)/2$
reduce to $n-r\ge 6$, except in the $3$-case we have
\begin{equation}\label{j32fit}
\mbox{\em $B_{j:3,2}$ fits in a card when $j\le r/2$,}
\end{equation}
and
\begin{equation}\label{j33fit}
\mbox{\em $B_{j:3,3}$ fits in a card when $j\le(r-2)/2$.}
\end{equation}
In the $3$-case the requirements are $j\le(r-1)/2$ for $B_{j:3,2}$ and
$j\le(r-3)/2$ for $B_{j:3,3}$, but in that case we are given
$\#B_{(r-2)/2:3,3}$, so we know that quantity in all cases.

Let $s=(r+1)/2$.  Let the {\em depth} $f(v)$ of a vertex $v$ in $P$ be the
distance of $v$ from the middle of $P$.  That is, the level $k$ of $v_i$
is $\min\{i,\ib\}$, and $f(v_i)=s-k$; note that $f(v_i)$ is a half-integer when
$r$ is even.  By hypothesis, we are given the depths of all three branch
vertices.
Let an {\it $R'$-baton} be a maximal baton whose key vertices are both branch
vertices.

An easy case is $v_s\in R'$.  Let $b$ be the difference between the levels
of the other two branch vertices.  Since we know all the depths, we know the
$R'$-batons having $v_s$ as a key vertex.  Excluding those, we check whether
$B_{b:3,3}$ occurs to determine whether the other two branch vertices are in
the same half of $P$, completing the reconstruction.  Since the other branch
vertices are not $v_s$, we have $b\le(r-3)/2$, so $B_{b:3,3}$ fits in a
card, by~\eqref{j33fit}.

Another easy case is when two branch vertices have the same depth, with
degrees $d$ and $d'$, and the third branch vertex (with degree $d^*$) has
some positive depth.  When $d=d'$ we already know $G$, but when they differ
we must determine which half of the spine contains the third vertex.
Letting $b$ be the difference between the two depths, we simply check whether
$\#B_{b:3,3}$ is $(d-1)(d^*-1)$ or $(d'-1)(d^*-1)$.  We can do this when
$b\le(r-3)/2$ by the computation above.  It is also possible that
$b=(r-2)/2$ and $\C{V(B_{b:3,3})}=(n+3)/2$, but this occurs only in the
$3$-case, and then $d=d'$.

Hence we may assume that the vertices in $R'$ have distinct depths, all
positive.  Analogously to Definition~\ref{W3def}, rename the vertices of $R'$
as $u_1,u_2,u_3$ in order of depth, with
$$f(u_1)=a<f(u_2)=a+b<f(u_3)=a+b+c$$
(see Figure~\ref{3maxfig}).  Let $h=s-a-b-c$, so $u_3\in \{v_h,v_\hb\}$.  

As in Lemma~\ref{W3}, say that $R'$ is {\it Type $0$} if all of $R'$
is in the same half of $P$ and {\it Type $i$} for $i\in\{1,2,3\}$ if $u_i$ is
in the half opposite $R'-\{u_i\}$.  There must be at least two vertices
in the same half of the spine.  Since $a\ge1/2$, any $R'$-baton with key
vertices in the same half of the spine has length at most $(r-2)/2$.
By~\eqref{j33fit}, we can discover the lengths of such $R'$-batons by
checking for $B_{j:3,3}$, or in the $3$-case we are given whether
$B_{(r-2)/2:3,3}$ occurs.  These lengths lie in $\{b,c,b+c\}$.

We consider subcases by the number of $R'$-batons with length at most $(r-2)/2$.
For $j\le(r-2)/2$, we know $\#B_{j:3,3}$ by~\eqref{j33fit} or the information
given in the $3$-case.  In the $3$-case, all copies of $B_{j:3,3}$ are 
$R'$-batons.  In the case where the branch vertices have degrees $(4,3,3)$,
we see $B_{j:4,3}$ when it occurs if $j\le(r-4)/2$,
and $B_{(r-2)/2:4,3}$ occurs if and only if $\#B_{(r-2)/2:3,3}\ge3$
(and there are two copies if $\#B_{(r-2)/2:3,3}=6$).

\smallskip
{\bf Case 1:} {\it Only one $R'$-baton, say $B$.}   
Since $R'$ has two vertices in the same half of $P$, the length of $B$ is
$b$, $c$, or $b+c$, and $R'$ is respectively Type $3$, $1$, or $2$
if those lengths are distinct.  If $b=c$ and $B$ has length $b$, then we still
must distinguish Type $3$ and Type $1$.  In either case, $u_1$ and $u_3$ are in
different halves of $P$; we may assume $u_1=v_{s+a}$ and $u_3=v_h$.
Now Type $3$ means $u_2=v_{s+a+b}$ and Type $1$ means $u_2=v_{s-a-b}$.

If $b+2a\leq (r-2)/2$, then since we do not also have an $R'$-baton
of length $b+2a$, the arrangement is Type $3$.  (Note: $c=2a$ is possible,
but the two vertices of $R'$ in the same half already give an $R'$-baton
with length $b+c$, and another with key vertices $u_2$ and $u_1$ of length
$b+2a$ would count as a second $R'$-baton.)

Hence we may assume $b+2a\ge (r-1)/2$.  Now we distinguish Type $1$ from Type
$3$ by counting copies of $B_{c,h:3,3,2}$.  When $u_2=v_{s+a+b}$, the triton
can extend from $v_{s+a}$ through increasing indices or from $v_{s+a+b}$
through decreasing indices.  When $u_2=v_{s-a-b}$, the triton occurs only by
extending from $v_h$ through increasing indices, since the path through 
decreasing indices is not long enough.  Thus in the $3$-case there are two
copies when $u_2=v_{s+a+b}$ and only one when $u_2=v_{s-a-b}$.  In the other
case we are given the level of the vertex of degree $4$.  We have the following
counts to distinguish Type $1$ from Type $3$.
\begin{center}
\begin{tabular}{c c c c }
 & $d(u_1)=4$& $d(u_2)=4$& $d(u_3)=4$\\
$\#B_{c,h:3,3,2}$ when $u_2=v_{s+a+b}$& 5&5&2\\
$\#B_{c,h:3,3,2}$ when $u_2=v_{s-a-b}$& 1&2&3
\end{tabular}
\end{center}

Thus it suffices to show that $B_{c,h:3,3,2}$ fits in a card.  From
$b+2a\ge(r-1)/2$ we obtain
$$2(h-1)+2c+b=r-1-b-2a\leq (r-1)/2$$
and so $h+c\leq (r+1)/4$ (since $b\ge1$).  Using $n-r\ge5$, we have
$$
\C{V(B_{c,h:3,3,2})}=c+h+8-3\le\frac{r+1}{4}+5=\frac{r+11}{4}+\FR52\le
\FR{r+11}4+\frac{n-r}{2}=\frac{n+2}{2}+\FR74-\frac{r}{4}.
$$
With $r\ge7$, which happens when $n\ge19$ (since $r\ge(n-5)/2$ in the
high-diameter case), $B_{c,h:3,3,2}$ fits in a card.

\smallskip
{\bf Case 2:} {\it Two $R'$-batons.}
Having only two $R'$-batons means that the two extreme vertices of $R'$ are
too far apart to see their $R'$-baton in a card.  Hence $R'$ is not Type $0$.
If $R'$ is Type $1$, then the lengths of the $R'$-batons we see are
$\{2a+b,c\}$.  If Type $2$, then they are $\{2a+b,b+c\}$.  If Type $3$, then
they are $\{2a+b+c,b\}$.  Since the pairs are distinct, we know $G$.
\looseness-1

\smallskip
{\bf Case 3:} {\it Three $R'$-batons.}  If $R'$ is Type $0$, then the triple of 
lengths of the batons is $\{b,c,b+c\}$.  If Type $1$, then it is
$\{c,2a+b,2a+b+c\}$.  If Type $2$, then it is $\{b+c,2a+b,2a+2b+c\}$.
If Type $3$, then it is $\{b,2a+b+c,2a+2b+c\}$.  Since these triples are
distinct, we know $G$. 
\end{proof}

We now consider cases by the sum of the degrees of the three branch vertices.
When the sum is at most $10$, we apply Lemma~\ref{rec3levels} and hence
use the threshold for $n$ assumed there.

\begin{lemma}\label{3bsmallsum}
For $n\ge19$, if the unknown caterpillar $G$ has exactly three branch vertices
and their degrees sum to $9$, then $G$ is reconstructible from the deck.
\end{lemma}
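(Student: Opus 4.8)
The plan is to reduce everything to Lemma~\ref{rec3levels}. Since the three branch vertices have degrees summing to $9$ and each has degree at least $3$, all three have degree exactly $3$; this is the ``$3$-case'' of Lemma~\ref{rec3levels}, and we recognize it from the known degree list. By~\eqref{d'} of Lemma~\ref{leaves} we have $n-r=5$, hence $r=n-5$, well inside the high-diameter range. Every spine vertex now has degree $2$ or $3$, so for each $j$ the only maximal $j$-batons are $B_{j:2,2}$ (a path on $j+3$ vertices), $B_{j:2,3}$, and $B_{j:3,3}$, and in total there are exactly $r-j$ maximal $j$-batons, one for each pair $(v_i,v_{i+j})$. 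To invoke Lemma~\ref{rec3levels} it will suffice to determine the levels of the three branch vertices and, when $r$ is even, the value $\#B_{(r-2)/2:3,3}$.

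Set $j^*=\FL{(r-1)/2}$. Since $n=r+5$, for both parities of $r$ the largest card has $\FL{(n+2)/2}=j^*+4$ vertices. Consequently $B_{j:3,3}$, which has $j+5$ vertices, fits in a card precisely when $j\le j^*-1$, whereas the shorter batons $B_{j:2,3}$ and $B_{j:2,2}$ fit for all $j\le j^*$. Hence for every $j\le j^*-1$ all relevant $j$-batons are visible in cards, so the exclusion argument (Lemma~\ref{counting}) determines the multiset of maximal $j$-batons, and then Lemma~\ref{jbaton} gives the multiset of unordered degree pairs $\{d(v_i),d(v_{i+j})\}$ at distance $j$. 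The only distance left is $j^*$ itself, where $B_{j^*:3,3}$ has $j^*+5$ vertices --- one too many for a card.

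Recovering the degree pairs at distance $j^*$ is the main obstacle; I would handle it by a small linear system rather than by exclusion. Let $x,y,z$ be the numbers of maximal $j^*$-batons of types $(2,2)$, $(2,3)$, $(3,3)$, so that $x+y+z=r-j^*$ is a known quantity. Both $B_{j^*:2,3}$ (with $j^*+4$ vertices) and $P_{j^*+3}$ (with $j^*+3$ vertices) fit in cards, so $\#B_{j^*:2,3}$ and $\#P_{j^*+3}$ are known. In a caterpillar every $j^*$-baton, and likewise every induced path on $j^*+3$ vertices, lies in a unique maximal $j^*$-baton; counting how often each maximal type contains the smaller subgraph gives $\#B_{j^*:2,3}=y+4z$ and $\#P_{j^*+3}=x+2y+4z$. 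Solving the three equations yields $z=(r-j^*)-\#P_{j^*+3}+\#B_{j^*:2,3}$, which is computable from $\cD$, and then $x$ and $y$ follow; thus the degree-pair multiset at distance $j^*$ is known. Moreover a $B_{j^*:3,3}$-subgraph cannot be enlarged (maximum degree $3$, fixed length), so $z=\#B_{j^*:3,3}$, and when $r$ is even this is exactly the input $\#B_{(r-2)/2:3,3}$ that Lemma~\ref{rec3levels} requires.

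With the multiset of degree pairs known for every distance $j\le j^*=\FL{(r-1)/2}$, Lemma~\ref{levelpair} determines all level pairs. The levels whose pair contains a $3$, together with the central vertex when $r$ is odd and that vertex has degree $3$, are precisely the levels occupied by branch vertices, so the levels are now known. Invoking Lemma~\ref{rec3levels} (using ``$r$ odd'' or the value $\#B_{(r-2)/2:3,3}$ just obtained) then reconstructs $G$. The hypothesis $n\ge19$ is exactly the one Lemma~\ref{rec3levels} needs, and since $r=n-5\ge14$ we have $j^*\ge6$, so no degenerate small cases intervene. The one delicate step, as noted, is the linear-algebra recovery of the degree pairs at the largest distance $j^*$, where the natural separating baton $B_{j^*:3,3}$ is a single vertex too large to appear on a card.
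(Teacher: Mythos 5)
Your proof is correct, and it takes a genuinely different route from the paper's at the one non-routine step. Both arguments reduce to Lemma~\ref{rec3levels}, determine all short maximal batons and hence all level pairs, and face the same obstacle: with $t=j^*=\FL{(r-1)/2}$, the baton $B_{t:3,3}$ has $t+5$ vertices, one too many for a card. The paper determines $\#B_{t:3,3}$ by reading it off from $\#B_{t:3,2}$ via a small lookup table whose one ambiguity (when $\#B_{t:3,2}=5$, corresponding to $\#B_{t:3,3}\in\{0,1\}$) is then broken by a separate count of $\#B_{t:2,2}$ together with a ``Type 0 versus Type 1'' case distinction. You instead observe that the maximal $t$-batons are in bijection with the $r-t$ pairs $(v_i,v_{i+t})$ and set up the exact $3\times 3$ linear system $x+y+z=r-t$, $\#B_{t:2,3}=y+4z$, $\#P_{t+3}=x+2y+4z$, solving directly for $z=\#B_{t:3,3}=(r-t)-\#P_{t+3}+\#B_{t:2,3}$ with no case analysis. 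Since $\#B_{t:3,2}=\#B_{t:2,3}$ and $\#B_{t:2,2}=\#P_{t+3}$, the two arguments consume essentially the same deck data, but your linear-algebraic packaging eliminates the ambiguity and the ad hoc table, which is a clean simplification. All the supporting counts ($B_{t:2,2}$, $B_{t:2,3}$, $B_{t:3,3}$ contain respectively $1,2,4$ copies of $P_{t+3}$ and $0,1,4$ copies of $B_{t:2,3}$; both $B_{t:2,3}$ and $P_{t+3}$ fit in cards since the card limit is $t+4$ when $n=r+5$) check out, and the final appeal to Lemma~\ref{levelpair} and Lemma~\ref{rec3levels} matches the paper.
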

\begin{proof}
Let $R'$ denote the set of branch vertices in $G$; we have
$R'=\{w_1,w_2,w_3\}$, with $d(w_i)=3$.  By~\eqref{d'}, $\SE i13 d_i=9$
implies $n-r=5$, so $r=n-5$.  Hence 

\begin{equation}\label{batfit32}
\mbox{\em $B_{j:3,2}$ fits in a card when $j\le(r-1)/2$,}
\end{equation}
and
\begin{equation}\label{batfit3}
\mbox{\em $B_{j:3,3}$ fits in a card when $j\le(r-3)/2$.}
\end{equation}
Let $s=(r+1)/2$.
Let the {\em depth} $f(v)$ of a vertex $v$ in $P$ be the distance of $v$ from
$v_s$ (depth is $(r+1)/2$ minus level and is a half-integer when $r$ is even).
By Lemma~\ref{rec3levels}, it suffices to determine the levels of the
branch vertices and, if $r$ is even, $\#B_{(r-2)/2:3,3}$.

Let $t=\FL{(r-1)/2}$.  The only short baton not fitting in a card is
$B_{t:3,3}$.  Every such baton contains four copies of $B_{t:3,2}$, which
we see in cards.  We obtain $\#B_{t:3,3}$ from $\#B_{t:3,2}$ as follows:
\begin{center}
\begin{tabular}{c c c c c c c}
$\#B_{t:3,2}$&8&7&6&5&4&3\\
$\#B_{t:3,3}$&2&1&1&1$\vert$0&0&0
\end{tabular}
\end{center}
A branch vertex with depth at most $1/2$ can belong to copies of $B_{t:3,2}$
extending in both directions along $P$; branch vertices in other positions
extend in one direction.  Thus when $B_{t:3,3}$ appears, the third branch
vertex contributes at least one more copy of $B_{t:3,2}$. 
Hence when $\#B_{t:3,3}=1$ we have $\#B_{t:3,2}\ge5$.  We have $\#B_{t:3,3}=2$
if and only if copies of $B_{t:3,3}$ extend in both directions from a branch
vertex with depth at most $1/2$, and then $\#B_{t:3,2}=8$.

The only possible confusion is when we see $\#B_{t:3,2}=5$; we may also have
$\#B_{t:3,2}=0$.  This requires two branch vertices that extend to copies
of $B_{t:3,2}$ in both directions, which occurs only for branch vertices with
depth at most $1/2$.  When $r$ is odd the only such vertex is $v_{(r+1)/2}$.
Hence $\#B_{t:3,3}=5$ with $\#B_{t:3,3}=0$ occurs only when $r$ is even and
$v_{r/2}$ and $v_{r/2+1}$ are both branch vertices; call this Type $0$.
Meanwhile, if $\#B_{t:3,3}=1$ and there is a branch vertex in
$\{v_{r/2},v_{r/2+1}\}$, then $\#B_{t:3,2}\ge6$.  Therefore, $\#B_{t:3,3}=5$
with $\#B_{t:3,3}=1$ when $r$ is even can only occur when
$\{v_{r/2},v_{r/2+1}\}$ contains no branch vertex; call this Type $1$.
We distinguish between Type $0$ and Type $1$ by computing $\#B_{t:2,2}$.
Note $\C{V(B_{t:2,2})}=t+3$.  Designating leaf neighbors $v_0$ and $v_{r+1}$ of
$v_1$ and $v_r$ to augment the spine, both types have $r-t$ copies of
$B_{t:2,2}$ contained in the augmented spine.  A Type $0$ caterpillar has
five additional copies of $B_{t:2,2}$, while a Type $1$ caterpillar has only
four additional copies.

Hence we know all short maximal batons.  By Lemma~\ref{levelpair}, this yields
the level pairs $\{d(v_k),d(v_\kb)\}$ for $1\leq k\leq (r-1)/2$.  The remaining
degrees belong to the remaining level, $\{d(v_s)\}$ or
$\{d(v_{r/2}),d(v_{r/2+1})\}$.  As noted earlier, Lemma~\ref{rec3levels}
applies to complete the reconstruction.
\end{proof}

\begin{lemma}\label{3br10}
For $n\ge19$, if the unknown caterpillar $G$ has exactly three branch vertices
and their degrees sum to $10$, then $G$ is reconstructible from the deck.
\end{lemma}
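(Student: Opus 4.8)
The plan is to determine the levels of the three branch vertices and then invoke Lemma~\ref{rec3levels}. Since the branch degrees sum to $10$ and each is at least $3$, the branch-degree multiset must be $\{4,3,3\}$; write $w_4$ for the unique vertex of degree $4$ and $w,w'$ for the two of degree $3$. By~\eqref{d'}, $\SE i1r(d_i-2)=4$, so $r=n-6$, and with this value of $r$ one checks that for a short length $j\le\FL{(r-1)/2}$ the path $B_{j:2,2}$ and the baton $B_{j:3,2}$ always fit in a card, that $B_{j:4,2}$ and $B_{j:3,3}$ fit when $j\le(r-2)/2$, and that $B_{j:4,3}$ fits when $j\le(r-4)/2$ (no $B_{j:4,4}$ occurs, as $w_4$ is unique). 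The point is that once all short maximal batons are known, Lemma~\ref{jbaton} gives the multiset of degree pairs at each spine-distance $j\le\FL{(r-1)/2}$, Lemma~\ref{levelpair} then gives every level pair $\{d(v_k),d(v_\kb)\}$, and the levels of $w_4,w,w'$ can be read off from which level pairs contain a $4$ or a $3$; then Lemma~\ref{rec3levels}, which applies because $n\ge19$ and the branch degrees sum to $10$, finishes the reconstruction.

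For every length $j\le(r-4)/2$ the baton $B_{j:4,3}$ and all its induced subgraphs fit in cards, so by Lemma~\ref{counting} the deck determines all maximal $j$-batons: one peels off $\hat s(B_{j:4,3},G)$ first (which equals $\#B_{j:4,3}$, since $B_{j:4,3}$ is maximal among the batons occurring in $G$), then the smaller types $B_{j:4,2},B_{j:3,3},B_{j:3,2},B_{j:2,2}$ in decreasing order of size. In particular this determines which of the three pairwise distances among $w_4,w,w'$ are at most $(r-4)/2$, together with their values, and how many spine vertices lie at each such distance from $w_4$. Because $r=n-6$, the only short lengths not handled by this are $j=(r-2)/2$ when $r$ is even and $j\in\{(r-3)/2,(r-1)/2\}$ when $r$ is odd.

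At $j=(r-2)/2$ (for $r$ even) or $j=(r-3)/2$ (for $r$ odd) the only baton failing to fit in a card is $B_{j:4,3}$; here $\#B_{j:3,3}$ is known and equals $\hat s(B_{j:3,3},G)+3\,\hat s(B_{j:4,3},G)$, and since $\hat s(B_{j:3,3},G)\le1$ and $\hat s(B_{j:4,3},G)\le2$ this value lies in one of $\{0,1\},\{3,4\},\{6,7\}$ and hence determines both multiplicities, after which exclusion gives the remaining maximal $j$-batons. The one genuinely delicate case---and the main obstacle---is $j=(r-1)/2$ with $r$ odd, where only $\#B_{j:3,2}$ and $\#B_{j:2,2}$ come from cards. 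The five multiplicities $\hat s(B_{j:a,b},G)$ with $(a,b)\in\{(2,2),(3,2),(4,2),(3,3),(4,3)\}$ satisfy the two linear equations arising from these counts (with computable coefficients $s(B_{j:3,2},B_{j:4,3})=9$, $s(B_{j:3,2},B_{j:3,3})=4$, $s(B_{j:3,2},B_{j:4,2})=3$, and similarly for $B_{j:2,2}$) together with $\sum_{(a,b)}\hat s(B_{j:a,b},G)=(r+1)/2$; it remains to pin down $\hat s(B_{j:3,3},G)$ and $\hat s(B_{j:4,3},G)$, for which I would use the structural data of the previous paragraph. That data determines every pairwise branch distance except possibly those equal to $(r-1)/2$: indeed, since in a path one of the three pairwise distances is the sum of the other two, any such distance exceeding $(r-4)/2$ is forced to equal a computable sum of two known smaller ones, except in the configuration where two of them equal $(r-1)/2$, which forces the branch vertices to be $v_1,v_{(r+1)/2},v_r$ and is detected by whether the deep counts of spine vertices at distance $\le(r-4)/2$ from $w_4$ are $2$ (when $w_4=v_{(r+1)/2}$) or $1$ (otherwise); the data also tells us whether $\hat s(B_{j:4,2},G)+\hat s(B_{j:4,3},G)$ equals $1$ or $2$ (it is $2$ only when $w_4$ is the central vertex). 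With $\hat s(B_{j:3,3},G)$ and $\hat s(B_{j:4,3},G)$ in hand, the remaining $3\times3$ linear system yields $\hat s(B_{j:2,2},G),\hat s(B_{j:3,2},G),\hat s(B_{j:4,2},G)$. All short maximal batons now being known, Lemmas~\ref{jbaton}, \ref{levelpair}, and~\ref{rec3levels} complete the reconstruction.
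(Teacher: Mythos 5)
Your overall plan (determine the levels of the three branch vertices, then invoke Lemma~\ref{rec3levels}) is the same as the paper's, and your handling of lengths $j\le(r-2)/2$ --- using $\#B_{j:3,3}$ to infer $\#B_{j:4,3}$ via the bound $\hat s(B_{j:3,3},G)\le 1$, $\hat s(B_{j:4,3},G)\le 2$, then exclusion --- matches the paper exactly. The divergence is in how the ``middle'' is handled when $r$ is odd: the paper stops at length $(r-2)/2$, deduces level pairs up to there, and then pins down $d(v_s)$ with $s=(r+1)/2$ by a direct case analysis of $\#B_{s-1:3,2}$; you instead try to push through to $j=(r-1)/2$ and determine all short maximal batons.

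That last step has a genuine gap. Your linear system in the five unknowns $\hat s(B_{j:a,b},G)$ is rank $3$, and you acknowledge needing $\hat s(B_{j:3,3},G)$ and $\hat s(B_{j:4,3},G)$ from structural data. But the structural claim fails in two ways. First, ``any such distance exceeding $(r-4)/2$ is forced to equal a computable sum of two known smaller ones'' is only true for the \emph{largest} pairwise branch distance; the \emph{middle} distance $\delta_2$ (with $\delta_1\le\delta_2\le\delta_3=\delta_1+\delta_2$) is never a sum of the other two, so if $\delta_1\le(r-3)/2$ is known but $\delta_2=(r-1)/2$, nothing in your argument determines $\delta_2$, and this is not the $v_1,v_{(r+1)/2},v_r$ configuration you excepted. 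Second, the assertion that ``the data also tells us whether $\hat s(B_{j:4,2},G)+\hat s(B_{j:4,3},G)$ equals $1$ or $2$'' is not justified: that quantity is $2$ iff $w_4=v_{(r+1)/2}$, and the batons of length $\le(r-3)/2$ only reveal whether $w_4$'s level is $\ge(r-1)/2$ --- they cannot distinguish $w_4=v_{(r-1)/2}$ (or $v_{(r+3)/2}$) from $w_4=v_{(r+1)/2}$, since both give two spine vertices at every distance $\le(r-3)/2$ from $w_4$. This is precisely the ambiguity you are trying to resolve, so the argument is circular in the cases $\C{R'\cap I}\in\{1,2,3\}$ (using the paper's notation $I=\{v_{(r-1)/2},v_{(r+1)/2},v_{(r+3)/2}\}$). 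The paper's route avoids reconstructing the full multiset of maximal $(r-1)/2$-batons altogether: knowing level pairs through $(r-2)/2$, it determines $d(v_s)$ from the single count $\#B_{s-1:3,2}$ (which does fit in a card) via the tabulated case analysis, and that value together with the remaining degree multiset fixes the last level pair.
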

\begin{proof}
The set $R'$ of branch vertices is $\{w_1,w_2,w_3\}$, with degrees $4,3,3$
in order.  By~\eqref{d'}, $\SE i13 d_i= 10$
implies $n-r=6$; in particular, $r=n-6$.  Hence 
\begin{equation}\label{4batfit32}
\mbox{\em $B_{j:3,2}$ fits in a card when $j\le r/2$,}
\end{equation}
and
\begin{equation}\label{4batfit3}
\mbox{\em $B_{j:3,3}$ fits in a card when $j\le(r-2)/2$.}
\end{equation}
Since $d_1=4$, there are one (or two) copies of $B_{j:4,3}$ if and only if
$G$ has at least three (or six) copies of $B_{j:3,3}$, respectively, and the
number of copies of $B_{j:3,3}$ that are maximal batons is at most $2$.  Thus
$\#B_{j:3,3}$ determines $\#B_{j:4,3}$.  By Lemma~\ref{rec3levels}, it suffices
to determine the levels of the branch vertices.

Using~\eqref{4batfit3} and exclusion, we obtain all maximal batons with length
at most $(r-2)/2$.  By Lemma~\ref{levelpair}, this yields the level pairs
$\{d(v_k),d(v_\kb)\}$ for $1\leq k\leq (r-2)/2$.

When $r$ is even, the only remaining level is $\{v_{r/2},v_{(r+2)/2}\}$, and
its vertices have the remaining degrees.  When $r$ is odd, the remaining set is
$\{v_{(r-1)/2},v_{(r+1)/2},v_{(r+3)/2}\}$; call this $I$.  We know
$\C{R'\cap I}$ by knowing the level pairs outside $I$.
If $\C{R'\cap I}=3$, then we learn the position of $w_1$ within
$I$ by whether $\#B_{1:4,3}$ is $1$ or $2$, and this determines $G$.
If $\C{R'\cap I}=0$, then all vertices in $I$ have degree $2$.

Let $s=(r+1)/2$.  When $\C{R'\cap I}\in\{1,2\}$, knowing $d(v_s)$ determines 
the remaining level pair $\{d(v_{s-1}),d(v_{s+1})\}$, since we know the level
pairs outside $I$.  By~\eqref{4batfit32}, $B_{s-1:3,2}$ fits in a card; we use
$\#B_{s-1:3,2}$ to determine $d(v_s)$.  The nonzero contributions to
$\#B_{s-1:3,2}$ from key vertices separated by distance $s-1$ are $9$, $4$,
$3$, or $1$ when the degrees of the key vertices are $(4,3)$, $(3,3)$,
$(4,2)$, or $(3,2)$, respectively.  Vertex $v_s$ may generate such batons
in both directions, but any other vertex can be a key vertex in a copy of
$B_{s-1:3,2}$ extending in only one direction along the spine.  

Let $k$ and $k'$ with $k\le k'$ be the two least levels of branch vertices
(if $\C{R'\cap I}=2$, then $k'=s-1$).  The columns below are distinct cases,
mostly since we know $k$ and $k'$.  The second column occurs when the distance
between two branch vertices not including $v_s$ is $s-1$; in the first column
that does not occur.  Those two cases are distinguished by $\#B_{s-1:3,2}$.
\begin{center}
\begin{tabular}{c c | c c | c c | c c}
$k>1$&usual& $k>1$&extra&$k'>k=1$&& $k'=k=1$\\
$\#B_{s-1:3,2}$&$d(v_s)$& $\#B_{s-1:3,2}$&$d(v_s)$& $\#B_{s-1:3,2}$&$d(v_s)$& $\#B_{s-1:3,2}$&$d(v_s)$\\
8&4& 10&4&     13&4& 18&4\\
6&3& 11&3&11 or 8&3& 13&3\\
5&2&7 or 10&2&10 or 7 or 5&2& 5&2\\
\end{tabular}
\end{center}
The ``or'' choices depend on which branch vertex other than $v_s$ has degree
$4$.  The only ambiguity is in column $2$ with $k>1$ and $\#B_{s-1:3,2}=10$.
If $d(v_s)=4$, then the branch vertices at levels $k$ and $k'$ both have
degree $3$; in the other case their degrees are $\{4,3\}$.  Since we know the
level pairs except in $I$, ambiguity thus requires $k=2$ and $k'=s-1$, with
$d(v_2)=3$ and $v_{s+1}\in R'$ (by symmetry).  Since $\C{R'\cap I}$ is known,
the third branch vertex is $v_s$ or $v_{s-1}$.  Now we distinguish the two
cases by which of $B_{1:4,3}$ and $B_{2:4,3}$ occurs.
\end{proof}

We are now left with the case where the unknown caterpillar has exactly
three branch vertices and their degrees sum to at least $11$.
By~\eqref{d'}, this yields $n-r=2+\SE i13 (d_i-2)\ge 7$, so $r\leq n-7$.
Thus $(r-1)/2\le(n-8)/2$.  Again let $R'$ denote the set of branch vertices.  

\begin{definition}\label{distdef}
Let $u_1,u_2,u_3$ be the indices of the branch vertices (not the vertices
themselves) along the spine, where $1\leq u_1<u_2<u_3\leq r$ and the ordering
$\VEC v1r$ of the spine is chosen so that $u_2-u_1\leq u_3-u_2$.  For
$i,j\in\{1,2,3\}$, let $\delta_{i,j}=|u_i-u_j|$.

Being able to see $R'$-batons in the deck depends to a large extent on their
lengths, which are the distances between the branch vertices.  We distinguish
three ``distance cases'':\\
{\bf Case 1.} $\delta_{1,2},\delta_{2,3},\delta_{1,3}\leq (r-1)/2$.\\
{\bf Case 2.} $\delta_{1,2},\delta_{2,3}\le(r-1)/2$ and $\delta_{1,3}>(r-1)/2$.\\
{\bf Case 3.} $\delta_{1,2}\le(r-1)/2$ and $\delta_{2,3},\delta_{1,3}>(r-1)/2$.
\end{definition}

We first prove that the distance case of the unknown caterpillar is 
recognizable from the deck.  We then use the properties of each case to
determine the exact indices and complete the reconstruction.

\begin{lemma}\label{distcase}
In the situation where the unknown caterpillar $G$ has exactly three branch
vertices and their degree sum to at least $11$, the deck determines the
distance case of $G$, as defined in Definition~\ref{distdef}.
\end{lemma}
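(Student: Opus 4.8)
The plan is to reduce the statement to computing a single integer and then to extract that integer from the deck by counting small batons and tritons, which fit in cards because $n-r\ge7$ in this regime.

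First, an elementary observation identifies what must be computed. Taking the indices $u_1<u_2<u_3$ of the branch vertices as in Definition~\ref{distdef}, so that $\delta_{1,2}\le\delta_{2,3}$, we have $\delta_{1,3}=\delta_{1,2}+\delta_{2,3}\le r-1$, hence $2\delta_{1,2}\le r-1$ and $\delta_{1,2}\le(r-1)/2$ automatically. Thus the distance case is determined by the number $N$ of unordered pairs of branch vertices at spine-distance at most $(r-1)/2$: Cases~1, 2, 3 correspond to $N=3,2,1$. Writing $m_j$ for the number of branch pairs at spine-distance exactly $j$, we have $N=\sum_{j\le(r-1)/2}m_j$; moreover $m_j\le2$, and $m_j=2$, which forces $u_1,u_2,u_3$ to be equally spaced, can occur for at most one $j$. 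So it suffices to determine each $m_j$ for $1\le j\le\lfloor(r-1)/2\rfloor$.

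For the probes: as noted just before Definition~\ref{distdef}, the hypothesis gives $n-r\ge7$, so $(r-1)/2\le(n-8)/2$, and every baton $B_{j:a,b}$ with $j\le(r-1)/2$ and $a+b\le6$ and every triton $B_{j,j:2,3,2}$ with $2j\le(n-6)/2$ has at most $(n+2)/2$ vertices and is counted by the deck. The key feature to exploit is that a copy of $B_{j:3,3}$ forces both key vertices to have at least two leaf neighbours, and in a caterpillar the only such spine vertices are branch vertices (interior ones of degree $\ge4$, or the endpoints $v_1,v_r$ when their degree is $\ge3$); hence $\#B_{j:3,3}$ receives contributions only from pairs of branch vertices, uncontaminated by degree-$2$ endpoints.

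The substantive step is to read off $m_j$. We know $r$, the three branch degrees $d_1\ge d_2\ge d_3\ge3$, and that the other $r-3$ spine vertices have degree $2$ (so at most two branch vertices have degree $3$, since $d_1+d_2+d_3\ge11$). If every branch vertex has at least two leaf neighbours, then $\#B_{j:3,3}$, aided by $\#B_{j:3,2}$ when the relevant branch degrees differ, gives $m_j$. If some branch vertex is an interior vertex of degree $3$ (one leaf neighbour, invisible to $B_{j:3,3}$), we instead combine $\#B_{j:3,2}$ and $\#B_{j:2,2}$: the only leafy spine pairs at distance $j$ that are not branch pairs are $(v_1,v_{1+j})$ and $(v_r,v_{r-j})$, whose contributions are governed by whether $v_1$ or $v_r$ is itself a branch vertex and with which small degree -- a bounded list of possibilities fixed by the degree list -- so we subtract them and recover $m_j$. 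This is the same baton/triton bookkeeping used in Sections~\ref{2maxbaton}--\ref{1max} and in Lemmas~\ref{3bsmallsum}--\ref{3br10}. I expect the main obstacle to be the residual cases where the baton counts cannot separate $m_j=1$ from $m_j=2$ (equivalently, where $G$ may or may not have its branch vertices equally spaced at a short distance $j$): there one compares with the number of $j,j$-tritons whose three key vertices are all branch vertices, and -- when even $B_{j,j:2,3,2}$ is too large to fit in a card -- one checks that the constraints on the positions of $u_1,u_2,u_3$ then already force a unique case. Summing the $m_j$ gives $N$, and hence the distance case.
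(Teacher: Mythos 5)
Your reduction to counting $m_j$ (the number of branch pairs at spine distance $j$) and summing is a sound reframing, and you correctly identify $\#B_{j:3,3}$ as the main probe, but the proposal has a genuine gap precisely at the obstacle you flag. The ambiguity between Case~2 with $\delta_{1,2}=\delta_{2,3}=j_1$ (so $m_{j_1}=2$) and Case~3 with $m_{j_1}=1$ can occur for $j_1$ as large as $(r-1)/2$, which is close to $(n-8)/2$. Your fallback triton $B_{j_1,j_1:2,3,2}$ has $2j_1+4$ vertices and fits in a card only when $j_1\le(n-6)/4$; in most of the high-diameter range (e.g.\ when $r$ is near $n-7$) this is far smaller than $(r-1)/2$, so the triton is unavailable for most of the ambiguous regime. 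Moreover, $B_{j_1,j_1:2,3,2}$ would not cleanly detect equally spaced branch vertices anyway: its end key vertices have degree $2$ in the triton and can be realized by any spine vertex (leaving only the central key vertex pinned to a branch vertex), so the count mixes in contributions unrelated to the branch triple. Your closing assertion that ``the constraints on the positions of $u_1,u_2,u_3$ then already force a unique case'' is the crux, and it is not true without further work: the paper's proof exhibits the precise obstruction, namely that a Case~3 caterpillar with unordered branch-degree set $\{a{+}1,b{+}1,c{+}1\}$ and a Case~2 caterpillar with the same set can produce the same $\#B_{j_1:3,3}$ whenever $\binom{a}{2}\binom{c}{2}=\binom{b}{2}\bigl[\binom{a}{2}+\binom{c}{2}\bigr]$, and disambiguating then requires a second probe ($\#B_{j_1:4,3}$ when $j_1\le(r-3)/2$, else $\#B_{j_1-1:3,2}$) and an argument that two such binomial identities cannot hold simultaneously. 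This algebraic step is the actual content of the lemma and is missing from your proposal.

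A minor additional point: your justification that key vertices of $B_{j:3,3}$ are branch vertices is misstated. An interior spine vertex of degree $3$ has only one leaf neighbour, yet it can still serve as a key vertex of $B_{j:3,3}$ by using its other spine neighbour as the second pendant ``leaf'' of the induced baton (this is exactly why the paper counts with $\binom{d-1}{2}$ rather than $\binom{d-2}{2}$). Your conclusion that degree-$2$ spine vertices are excluded is correct, but the reasoning via ``two leaf neighbours'' is not, and the proposed detour through $\#B_{j:3,2}$ and $\#B_{j:2,2}$ for interior degree-$3$ branch vertices is unnecessary.
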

\begin{proof}
We will use the counts $\#B_{j:3,3}$ for $1\le j\le(r-1)/2$.  Since
$\C{V(B_{j,3,3})}=j+5\le(r-1)/2+5\le(n+2)/2$, all such batons fit in cards,
and we can count them.  Note that the key vertices in a copy of $B_{j:3,3}$
must be branch vertices.

For simplicity of notation in the computations, let $y_s=d(u_s)-1$ and
$x_s={y_s\choose 2}$ for $s\in\{1,2,3\}$.  For various lengths of the batons,
the exact counts are as follows:
    \begin{align*}
        \#B_{j:3,3}=\begin{cases}
            0 & j\not\in \{\delta_{1,2}, \delta_{2,3}, \delta_{1,3}\}\\
            x_1x_2 & j=\delta_{1,2}\neq \delta_{2,3}\\
            x_2x_3 & j=\delta_{2,3}\neq\delta_{1,2}\\
            x_1x_2+x_2x_3 & j=\delta_{1,2}=\delta_{2,3}\\
            x_1x_3 & j=\delta_{1,3}
        \end{cases}
    \end{align*}

Consider the number of distinct values of $j$ such that $j\le(r-1)/2$ and
$\#B_{j:3,3}\neq 0$.  There is at least one, since there must be two branch
vertices in one half of the spine.  If there are three such values, say
$j_1,j_2,j_3$ with $j_1<j_2<j_3$, then $j_1=\delta_{1,2}$, $j_2=\delta_{2,3}$,
and $j_3=j_1+j_2=\delta_{1,3}$.  Thus Case $1$ holds.

Next, suppose that there are two such values, $j_1$ and $j_2$ with $j_1<j_2$.
If $j_2\neq 2j_1$, then we must have $j_1=\delta_{1,2}$, $j_2=\delta_{2,3}$,
and $\delta_{1,3}>(r-1)/2$.  Thus Case $2$ holds.

If $j_2=2j_1$, then we must look closer.  Two types of caterpillars produce
such counts: we may have $\delta_{1,2}=\delta_{2,3}=j_1$ and $\delta_{1,3}=j_2$
(Case 1) or $\delta_{1,2}=j_1$, $\delta_{2,3}=2j_1$ and $\delta_{1,3}>(r-1)/2$
(Case 2).  We distinguish these possibilities by comparing the counts.
In the Case 1 instance, $\#B_{j_1:3,3}=x_1x_2+x_2x_3$ and
$\#B_{j_2:3,3}=x_1x_3$, while the Case 2 instance yields $\#B_{j_1:3,3}=x_1x_2$
and $\#B_{j_2:3,3}=x_2x_3$.  Although we do not know the order of the degrees
of branch vertices along the spine, we know the set of their degrees, so we
know the value $x_1x_2+x_1x_3+x_2x_3$.  If Case 1 holds here, then this value
equals $\#B_{j_1:3,3}+\#B_{j_2:3,3}$; otherwise it is larger than
$\#B_{j_1:3,3}+\#B_{j_2:3,3}$.  This distinguishes Case 1 and Case 2 when
$j_2=2j_1$.

Finally, suppose that $j_1$ is the only value at most $(r-1)/2$ such that
$\#B_{j:3,3}>0$.  By our choice of indexing, $j_1=\delta_{1,2}$.  It may happen
that $\delta_{2,3}>(r-1)/2$ (Case 3)
or that also $\delta_{2,3}=j_1$ and $\delta_{1,3}>(r-1)/2$ (Case 2).
These two possibilities appear as $T_3$ and $T_2$, respectively, in
Figure~\ref{distfig}.

    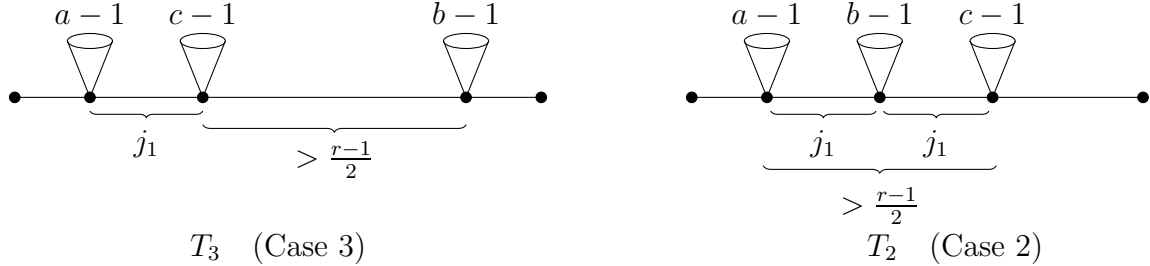
\begin{figure}[h!]
        \begin{center}
    \begin{tikzpicture}
        \draw[fill=black] (0,0) circle (2pt);
        \draw[fill=black] (1,0) circle (2pt);
        \draw[fill=black] (2.5,0) circle (2pt);
        \draw[fill=black] (6,0) circle (2pt);
        \draw[fill=black] (7,0) circle (2pt);
        \draw[thin] (0, 0)--(7,0);
        \draw[thin] (0.7,0.75)--(1,0)--(1.3,0.75);
        \draw[thin] (2.2,0.75)--(2.5,0)--(2.8,0.75);
        \draw[thin] (5.7,0.75)--(6,0)--(6.3,0.75);
        \draw (6,0.75) ellipse (0.3cm and 0.1cm);
        \draw (1,0.75) ellipse (0.3cm and 0.1cm);
        \draw (2.5,0.75) ellipse (0.3cm and 0.1cm);
        \node at (1,1.1) {$a-1$};
        \node at (2.5,1.1) {$c-1$};
        \node at (6,1.1) {$b-1$};
        \draw [decorate,decoration={brace,mirror,raise=1ex}]
          (1,0) -- (2.5,0) node[midway,yshift=-1.5em]{$j_1$};
        \draw [decorate,decoration={brace,amplitude=0.1cm,mirror,raise=2ex}]
          (2.5,0) -- (6,0) node[midway,yshift=-2em]{$>\frac{r-1}{2}$};
        \draw[fill=black] (9,0) circle (2pt);
        \draw[fill=black] (10,0) circle (2pt);
        \draw[fill=black] (11.5,0) circle (2pt);
        \draw[fill=black] (13,0) circle (2pt);
        \draw[fill=black] (15,0) circle (2pt);
        \draw[thin] (9, 0)--(15,0);
        \draw[thin] (9.7,0.75)--(10,0)--(10.3,0.75);
        \draw[thin] (11.2,0.75)--(11.5,0)--(11.8,0.75);
        \draw[thin] (12.7,0.75)--(13,0)--(13.3,0.75);
        \draw (10,0.75) ellipse (0.3cm and 0.1cm);
        \draw (11.5,0.75) ellipse (0.3cm and 0.1cm);
        \draw (13,0.75) ellipse (0.3cm and 0.1cm);
        \node at (10,1.1) {$a-1$};
        \node at (11.5,1.1) {$b-1$};
        \node at (13,1.1) {$c-1$};
        \draw [decorate,decoration={brace,mirror,raise=1ex}]
          (10.05,0) -- (11.45,0) node[midway,yshift=-1.5em]{$j_1$};
          \draw [decorate,decoration={brace,mirror,raise=1ex}]
          (11.55,0) -- (12.95,0) node[midway,yshift=-1.5em]{$j_1$};
        \draw [decorate,decoration={brace,amplitude=0.1cm,mirror,raise=5ex}]
          (9.95,0) -- (13.05,0) node[midway,yshift=-3.5em]{$>\frac{r-1}{2}$};
        \node at (3.5, -2) {$T_3$\quad(Case 3)};
        \node at (12.5, -2) {$T_2$\quad(Case 2)};
        \end{tikzpicture}
   \caption{Only one visible length of $R'$-baton\label{distfig}}
        \end{center}
    \end{figure}

In Case 3 we have $\#B_{j_1:3,3}=x_1x_2$, in Case 2
$\#B_{j_1:3,3}=x_1x_2+x_2x_3$.  However, we do not know the indexing of the
values in $\{x_1,x_2,x_3\}$.  If the product of two of those values never
equals the sum of the other two products, then $\#B_{j_1:3,3}$ distinguishes
Case 3 and Case 2.

Hence we may assume otherwise.  In particular, the product of the two largest
values in $\{x_1,x_2,x_3\}$ equals the sum of the two products involving the
smallest of the three numbers.  To compare the values, let
$b=\min\{y_1,y_2,y_3\}$, and let $a$ and $c$ be the other two values.
In Figure~\ref{distfig}, the position of the branch vertex with degree $b+1$
is fixed in the two possibilities, while the positions of those with degrees
$a+1$ and $c+1$ may be interchanged.  If Case 3 and Case 2 are
indistinguishable, then
\begin{equation}\label{32fromj33}
\CH a2\CH c2=\CH b2\left[\CH a2+\CH c2\right].
\end{equation}

If $j_1\leq (r-3)/2$,
then $B_{j_1:4,3}$ fits in a card.
Now $\#B_{j_1:4,3}=\CH a3\CH c2+\CH a2\CH c3$ in Case 3, while $\#B_{j_1:4,3}
=\CH b2\left[\CH a3+\CH c3\right]+\CH b3\left[\CH a2+\CH c2\right]$ in Case 2.
If Case 3 and Case 2 are indistinguishable, then
\begin{equation}\label{32fromj43}
\CH a3\CH c2+\CH a2\CH c3
=\CH b2\left[\CH a3+\CH c3\right]+\CH b3\left[\CH a2+\CH c2\right].
\end{equation}
Since $a,b,c\ge2$, we can divide~\eqref{32fromj43} by~\eqref{32fromj33},
which yields
$$
\FR{a-2}3+\FR{c-2}3=\FR{\CH a3+\CH c3}{\CH a2+\CH c2}+\FR{b-2}3.
$$
Multiplying both sides by ${\CH a2+\CH c2}$ and canceling like terms yields
$$
\FR{a-2}3\CH c2+\FR{c-2}3\CH a2=\FR{b-2}3\left[\CH a2+\CH c2\right].
$$
This is a contradiction, since $b<\min\{a,c\}$.  Hence~\eqref{32fromj33}
and~\eqref{32fromj43} cannot both hold, and $\#B_{j_1:3,3}$ or $\#B_{j_1:4,3}$
distinguishes Case 3 from Case 2.

We conclude that ambiguity between Case 3 and Case 2 requires $j_1\ge(r-2)/2$.
From Case 3, this requires $\delta_{1,3}=r-1$, so $j_1=(r-2)/2$.  Case 2 then
requires $\delta_{1,3}=r-2$.  The baton $B_{j_1-1:3,2}$ is small enough to fit
in a card, so we can obtain $\#B_{j_1-1:3,2}$ from the deck.  This baton is too
short to have two branch vertices as key vertices and too long to extend in
both directions from the outermost branch vertices.  Since the vertices of
degrees $a+1$ and $c+1$ can be switched, in Case 3 we have
$B_{j_1-1:3,2}\in\{\CH a2+2\CH c2+\CH b2, \CH c2+2\CH a2+\CH b2\}$.
In Case 2, $\#B_{j_1-1:3,2}=\CH a2+2\CH b2+\CH c2$.  Since $b<\min\{a,c\}$,
there are more copies in Case 3 than in Case 2, so $\#B_{j_1-1:3,2}$
distinguishes the two cases.
\end{proof}

In addition to knowing the distance case from Lemma~\ref{distcase},
from the batons in the deck we also know the actual distances, except
that in Case 3 we do not know the differences between $u_3$ and the other
indices.  Our next tast is to determine the order of the degrees of the
branch vertices along the spine.  After that we will have the smallest
subtree containing the branch vertices, and reconstruction will be completed
by determining how close that configuration is to one of the ends.

\begin{lemma}\label{3brdeg}
For an unknown caterpillar having exactly three branch vertices, whose degrees
sum to at least $11$, the deck determines the degrees of the branch vertices
in order, except that in Case $3$ it determines $d(u_3)$ and leaves the
remaining two degrees unordered.
\end{lemma}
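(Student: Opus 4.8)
The plan is to read off the ordered triple of branch-vertex degrees from counts of the small batons $B_{j:3,3}$, whose key vertices are forced to lie among the three branch vertices. We already know the degree list of $G$, hence the multiset $\{d(u_1),d(u_2),d(u_3)\}$; by~\eqref{d'} the hypothesis gives $n-r-2=\sum_i(d(u_i)-2)\ge5$, so $n-r\ge7$, whence $\C{V(B_{j:3,3})}=j+5\le(r+9)/2\le(n+2)/2$ for every $j\le(r-1)/2$. Thus all such batons fit in cards, and by Lemma~\ref{distcase} (and the discussion following it) we know the distance case together with the numerical values $\delta_{1,2}$ and $\delta_{2,3}$ (in Case~3 only $\delta_{1,2}$, since there $\delta_{2,3}>(r-1)/2$). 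If the three branch degrees coincide there is nothing to prove, so I would assume they do not. Two facts will be used repeatedly: in $G$, a copy of $B_{j:3,3}$ has both key vertices at branch vertices, and a branch vertex $v$ used as a key vertex admits exactly $\binom{d(v)-1}{2}$ choices of its two pendants in the induced baton — an interior spine neighbour may serve as a pendant of the induced subgraph, which is why this count does not depend on whether $v$ is an end of the spine. Writing $B_i=\binom{d(u_i)-1}{2}$, each $B_i$ is a positive integer (as $d(u_i)\ge3$), the multiset $\{B_1,B_2,B_3\}$ and the sum $\Sigma=B_1+B_2+B_3$ are known, and $x\mapsto\binom{x-1}{2}$ is strictly increasing on integers $x\ge3$.

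First I would handle the near side. In every distance case, if $\delta_{1,2}\ne\delta_{2,3}$ then the only pair of branch vertices at spine-distance $\delta_{1,2}$ is $\{u_1,u_2\}$, and this holds automatically in Case~3 since there $\delta_{1,2}\le(r-1)/2<\delta_{2,3}$. Hence $\#B_{\delta_{1,2}:3,3}=B_1B_2$. The at most three $2$-element sub-multisets of the known degree multiset yield pairwise distinct products of the corresponding $\binom{\cdot-1}{2}$ values (equality of two such products would force two of the $d(u_i)$ equal while the remaining one has positive $\binom{\cdot-1}{2}$), so the observed count identifies the unordered pair $\{d(u_1),d(u_2)\}$. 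In Case~3 this already finishes the proof: $d(u_3)$ is the leftover entry of the degree multiset, and $\{d(u_1),d(u_2)\}$ is left unordered, exactly as asserted.

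For Cases~1 and~2 two sub-cases remain. If $\delta_{1,2}\ne\delta_{2,3}$, running the same argument at distance $\delta_{2,3}$ (which is at most $(r-1)/2$ in both cases, so $B_{\delta_{2,3}:3,3}$ fits in a card) produces $\{d(u_2),d(u_3)\}$; the entry lying in both pairs $\{d(u_1),d(u_2)\}$ and $\{d(u_2),d(u_3)\}$ is $d(u_2)$, which together with the known multiset determines the ordered triple, the sole degenerate situation being that the two pairs coincide, which forces $d(u_1)=d(u_3)$, so the degree sequence is already palindromic and $d(u_2)$ is its entry of odd multiplicity. If instead $\delta_{1,2}=\delta_{2,3}=:\delta$, the subtree spanned by the branch vertices is a palindrome, so ``in order'' means only ``up to reversal'', and it is enough to determine $d(u_2)$ and the unordered pair $\{d(u_1),d(u_3)\}$. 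Here batons of length $\delta$ have key-vertex pairs $\{u_1,u_2\}$ and $\{u_2,u_3\}$ only (as $\delta_{1,3}=2\delta\ne\delta$), so $\#B_{\delta:3,3}=B_m(\Sigma-B_m)$, where $B_m=\binom{d(u_2)-1}{2}$; as $d(u_2)$ ranges over the three values of the known multiset, $B_m(\Sigma-B_m)$ takes three pairwise distinct values (equality would again force two of the $B_i$ equal, using positivity of the third), so the observed count pins down $d(u_2)$ and hence $\{d(u_1),d(u_3)\}$.

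The main obstacle — and the reason Case~3 admits only the weaker conclusion — is the card-size constraint: once $\delta_{1,3}>(r-1)/2$, no single baton or triton sees all three branch vertices at once, so one is forced to work with the two ``local'' batons of lengths $\delta_{1,2}$ and $\delta_{2,3}$ that overlap only at $u_2$; the crucial saving observation is that whenever $\delta_{1,2}=\delta_{2,3}$ the branch subtree is a palindrome, so the positions of $u_1$ and $u_3$ never have to be distinguished. Because every baton invoked above has the form $B_{j:3,3}$ with $j\le(r-1)/2$, there is no remaining difficulty fitting subgraphs into cards; the only care needed is the bookkeeping around the degenerate coincidence of the two near/far pairs, which, as noted, is exactly the palindromic degree sequence.
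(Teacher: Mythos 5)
Your proof is correct and follows essentially the same approach as the paper's: both recover the ordered triple from the counts $\#B_{\delta_{1,2}:3,3}$ and (in Cases 1--2) $\#B_{\delta_{2,3}:3,3}$ together with the known multiset of branch degrees. The paper phrases the recovery as explicit division by $\beta=x_1x_2x_3$ (and, when $\delta_{1,2}=\delta_{2,3}$, subtraction from $\alpha$), while you instead appeal to the pairwise distinctness of products $\binom{d-1}{2}\binom{d'-1}{2}$ over the $2$-element sub-multisets and the injectivity of $B\mapsto B(\Sigma-B)$, which amounts to the same computations repackaged.
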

\begin{proof}
We maintain the same notation as in the proof of Lemma~\ref{distcase}.
Since we know the degree list, we know the unordered set $\{x_1,x_2,x_3\}$.
Therefore, we know $\alpha$ and $\beta$, defined by
$\alpha=x_1x_2+x_2x_3+x_1x_3$ and $\beta=x_2x_2x_3$.  
Let $j_1=\delta_{1,2}$, $j_2=\delta_{2,3}$, and $j_3=\delta_{1,3}$; we know the
values in $(j_1,j_2,j_3)$ that are at most $(r-1)/2$.  Let $b_i=\#B_{j_i:3,3}$
for $i\in \{1,2,3\}$; we know $b_i$ when $j_i\le(r-1)/2$.

In Case $1$ or Case $2$ of the distance cases, we know $j_1,j_2,b_1,b_2$
If $j_1<j_2$, then $b_1=x_1x_2$ and $b_2=x_2x_3$.  Hence we obtain
$x_3=\beta/b_1$, $x_1=\beta/b_2$, and then $x_2=\beta/(x_1x_3)$.

If $\delta_{1,2}=\delta_{2,3}$, then $b_1=b_2=x_1x_2+x_2x_3$ and $b_3=x_1x_3$.
We can obtain $b_3$ as $\alpha-b_1$, even if $j_3>(r-1)/2$ (that is, the
computation is valid in Case $1$ or Case $2$).  We then have $x_2=\beta/b_2$.
The ordering of $x_1$ and $x_3$ can be chosen arbitrarily (see
Figure~\ref{distfig}); to make the choice definite, we make it so that
$x_1\le x_3$.

In Case $3$, we have learned only $b_1=x_1x_2$ and obtain $x_3=\beta/b_1$.  The
set $\{x_1, x_2\}$ remains unordered at this point (see Figure~\ref{distfig}).

We retrieve the degrees of the branch vertices using $x_i=\CH{d(v_{u_i})-1}2$.
\end{proof}

In Case $1$ or Case $2$ of the distance cases, we now know the subgraph
induced by the portion of the spine from $v_{u_1}$ to $v_{u_3}$ and the
neighboring leaves.  To complete the reconstruction, it suffices to determine
the index of one end of this portion, $u_1$ or $u_3$.  Case $3$ will be
somewhat harder, since there we do not yet completely know the ordering
of the degrees of branch vertices.

\begin{lemma}\label{3brrec}
If $G$ is a caterpillar having exactly three branch vertices, whose degrees
sum to at least $11$, then $G$ is determined by the deck.
\end{lemma}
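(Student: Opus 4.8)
The plan is to finish the three-branch-vertex case by combining what the previous lemmas already give us: by Lemma~\ref{distcase} we know the distance case of $G$, by Lemma~\ref{3brdeg} we know the degrees of the branch vertices in order (with the partial ambiguity in Case~$3$), and in Cases~$1$ and~$2$ we know the distances among all three branch vertices, hence the whole subtree spanned by them together with their pendant leaves. The only remaining unknowns are (i) the distance from the outermost branch vertex to the near end of the spine (in all cases), and (ii) in Case~$3$, the ordering of the two degrees assigned to $u_1$ and $u_2$ (and, since $\delta_{2,3}$ and $\delta_{1,3}$ are not seen in short batons, the distance of $u_3$ from $u_2$). So I would first dispose of Cases~$1$ and~$2$, then treat Case~$3$ separately.

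For Cases~$1$ and~$2$: let $p=u_1-1$ be the number of spine vertices strictly before the first branch vertex (so the near end of the spine is at distance $u_1$ from $v_{u_1}$, counting the augmented-spine leaf). Knowing the spanned subtree and the level pairs (which we know, since $d_1=d_3$ fails here but the short maximal batons are still available via the general machinery — actually we just need $\#B_{j:3,2}$-type counts, which fit in cards since $n-r\ge7$ gives plenty of room), I would count copies of a short path or a short baton $B_{j:3,2}$ anchored at a branch vertex and extending toward the near end: the count drops exactly when the extension runs off the end, which pins down $p$. Concretely, for each $j$ up to roughly $(r-1)/2$ the quantity $\#B_{j:3,2}$ minus its "generic" value (the value it would take if both outermost branch vertices had enough room on both sides) reveals, as in the proof of the two-branch-vertex lemma, the first $j$ at which $v_{u_1}$ cannot extend a baton of length $j$ toward $v_1$; that threshold is $u_1$. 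A symmetric statement with $v_{u_3}$ would over-determine things, which is fine — we only need one end. One must check these batons fit in cards: since the branch vertices have bounded degree (the degree sum is exactly the value forced by $n-r$, and each is at least $3$), $B_{j:3,2}$ has $j+5$ vertices, which is at most $(n+2)/2$ when $j\le(r-1)/2$ because $n-r\ge7$. If we never see the threshold, then $u_1$ is as central as possible, i.e. the configuration sits symmetrically in the middle, and again $G$ is determined.

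For Case~$3$: here $\delta_{1,2}=j_1\le(r-1)/2$ is known and $\delta_{2,3},\delta_{1,3}>(r-1)/2$, and by Lemma~\ref{3brdeg} we know $d(u_3)$ but not the order of $\{d(u_1),d(u_2)\}$. The key point is that $u_3$ is far from both $u_1$ and $u_2$, which forces $u_2-u_1$ small relative to $r$ and hence $u_3$ fairly central; symmetrically $u_1$ is fairly close to the $v_1$-end. I would determine the position of $u_1$ (distance to the near end) exactly as in Cases~$1,2$, using $\#B_{j:3,2}$ anchored at $v_{u_1}$ extending toward $v_1$ — since $u_1\le(r-\delta_{2,3})/2$ or so is small, these batons are short and fit in cards. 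Knowing $u_1$ and $j_1$ gives $u_2$, and then the level pairs (together with $d(u_3)$ known) locate $u_3$: the level of $u_3$ is determined because we know the multiset of all three levels from the level pairs and we now know the levels of $u_1$ and $u_2$. The remaining binary ambiguity — which of the two non-$u_3$ degrees sits at $u_1$ versus $u_2$ — is resolved by a triton count: pick a triton of the form $B_{j_1,g:d(u_1)+1,\,d(u_2)+1,\,2}$ (or with the two key degrees swapped) whose third key vertex is a known-degree vertex at distance $g$ from $u_2$ toward $u_3$; the count is divisible by the binomial factor coming from the high-degree branch vertices in a way that distinguishes the two orderings, since the two candidate degrees differ. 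I expect the main obstacle to be the bookkeeping in Case~$3$: verifying that the triton used to break the $\{d(u_1),d(u_2)\}$ ambiguity is small enough to fit in a card (this needs the bound $d_i\le(n-r+something)/\text{const}$ extracted from~\eqref{d'} together with $u_1$ and $g$ both being small because $u_3$ is far from the other two), and checking that its count genuinely differs between the two orderings rather than accidentally coinciding. If a single triton does not separate the two orderings, I would fall back on a pair of tritons $B_{j_1,g:a,b,2}$ and $B_{j_1,g:a,b,3}$ and use the now-familiar "$\binom{x}{2}+\binom{y}{2}<\binom{x+y}{2}$" trick, exactly as in the proofs of Lemmas~\ref{W3} and~\ref{Case1}. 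Once the order of all three branch-vertex degrees and the distance of one end are known, $G$ is fully determined, completing the proof.
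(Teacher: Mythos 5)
Your high-level approach — track the counts $\#B_{j:3,2}$ (corrected for copies whose degree-$2$ key vertex is itself a branch vertex), and detect where those counts drop to locate the levels of the branch vertices — is the same as the paper's. But there is a genuine gap you have glossed over, and it is exactly where the paper spends most of its effort.

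When you say the drop in $\#B_{j:3,2}$ ``pins down $p=u_1-1$,'' you are assuming the first drop is caused by $v_{u_1}$ falling off the $v_1$ end. In fact the drop at level $h$ could equally well be caused by $v_{u_3}$ falling off the $v_r$ end, i.e.\ by $\ub_3=h$ rather than $u_1=h$. Since you already know the whole spanned subtree and it is in general asymmetric (e.g.\ $d(u_1)\ne d(u_3)$), placing $v_{u_1}$ at level $h$ vs.\ placing $v_{u_3}$ at level $h$ gives two non-isomorphic caterpillars. Saying ``we only need one end'' does not resolve this: you still need to know \emph{which} branch vertex sits at the level you found. The paper's proof resolves this by examining the \emph{magnitude} of the drop ($x_1$, $x_3$, or $x_1+x_3$), and then needs a separate argument for the case $x_1=x_3$ with $u_1\ne\ub_3$ (using the fact that the second drop occurs at $h+\delta_{1,2}$ only if $u_1=h$). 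Your sketch omits this entirely. The analogous gap is even larger in Case~$3$: there all three levels $u_1,u_2,\ub_3$ are at most $r/2$, so the three drops could be matched to $(u_1,u_2,u_3)$ in several ways, and the paper must compare $\delta_{1,2}$ against the gaps between drops and then use a triton count to break the final tie when all three branch degrees coincide. Your proposal's Case~$3$ simply assumes the first drop is $u_1$ and proceeds, which is not justified.

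Your triton idea for breaking the $\{d(u_1),d(u_2)\}$ ambiguity in Case~$3$ is in the right spirit (the paper uses $B_{\delta_{1,2},h:3,3,2}$ for a similar purpose), but as written it comes last, after you have already used an unjustified identification of $u_1$; the order of the argument needs to be rearranged so that the triton (or the drop magnitudes) resolve the level-to-vertex matching before you place the configuration on the spine.
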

\begin{proof}
We know $\#B_{j:3,3}$ when $j\le (r-1)/2$ and $\#B_{j:3,2}$ when $j\le(r+1)/2$.

Let $\#B'_{j:3,2}$ denote the number of copies of $B_{j:3,2}$ in which the
leaf neighbor of the key vertex of degree $2$ lies on the augmented spine.
We don't see $\#B'_{j:3,2}$ directly in the deck, but we will be able to
compute these values when needed using the expressions below in~\eqref{B'}.
Recall the notation $y_i=d(v_{u_i})-1$ and $x_i=\CH{y_i}2$.  The computation
uses that the key vertices in copies of $B_{j:3,2}$ not counted by
$\#B'_{j:3,2}$ must both be branch vertices.  By convention, let
$\#B'_{0:3,2}=2x_1+2x_2+2x_3$.
\begin{align}\label{B'}
  \#B'_{j:3,2}=\begin{cases}
    \#B_{j:3,2} & j\not\in \{\delta_{1,2},\delta_{2,3},\delta_{1,3}\}\\
    \#B_{j:3,2}-x_1(y_2\!-\!1)-x_2(y_1\!-\!1) & j=\delta_{1,2}\ne\delta_{2,3}\\
    \#B_{j:3,2}-x_2(y_3\!-\!1)-x_3(y_2\!-\!1) & j=\delta_{2,3}\ne\delta_{1,2}\\
    \#B_{j:3,2}-x_1(y_3\!-\!1)-x_3(y_1\!-\!1) & j=\delta_{1,3}\\
    \#B_{j:3,2}-x_1(y_2\!-\!1)-x_2(y_1\!-\!1)-x_2(y_3\!-\!1)-x_3(y_2\!-\!1) & j=\delta_{1,2}=\delta_{2,3}\\
        \end{cases}
    \end{align}
We cannot compute $\#B'_{j:3,2}$ when $j>(r+1)/2$, because then
$B_{j:3,2}$ does not fit in a card (such as when $j=\delta_{1,3}>(r+1)/2$.
It does fit when $j\le(r+1)/2$, since $\SE i13 d_i\ge11$ implies $n-r\ge7$ and
$(r+1)/2+4\le(n+2)/2$.

When $j<u_1$ and $j<\ub_3$, we have $\#B'_{j:3,2}=2x_1+2x_2+2x_3$, because
there is room for $B_{j:3,2}$ to extend in either direction from any branch
vertex.  When the length $j$ exceeds a value equal to the distance between
a branch vertex $u_1$ and $v_1$ or $v_r$, we lose the $x_i$ batons that had
been extending toward one end of the caterpillar.  That is, the values $j$ such
$\#B'_{j:3,2}<\#B'_{j-1:3,2}$ are the levels $j$ such that there is a branch
vertex in $\{v_j,v_\jb\}$.  Since there are three branch vertices, there are
two or three values of $j$ where such a difference occurs.  (Our convention
for $\#B'_{0:3,2}$ makes the computation valid also when $j=1$.)
We consider cases as in Definition~\ref{distdef}.  

\medskip
{\bf Cases 1\&2:} {\it $\delta_{1,2}\le\delta_{2,3}\le(r-1)/2$.}
By Lemma~\ref{3brdeg} we know the values $y_i$ and $x_i$ for $i\in\{1,2,3\}$,
so we obtain $\#B_{j:3,2}$ from $\#B_{j:3,2}$ as in~\eqref{B'} for
$j\le(r+1)/2$.

Let $h$ be the least $j$ such that $\#B'_{h:3,2}<2x_1+2x_2+2x_3$.  Since
$u_2>\min\{u_1,\ub_3\}$, we have $h=\min\{u_1,\ub_3\}$.  To determine whether
$h$ is $u_1$ or $\ub_3$ (or both) we compute the difference.  If $x_1\ne x_3$,
then
    \begin{align}\label{B'diff}
        \#B'_{h-1:3,2}-\#B'_{h:3,2}=\begin{cases}
            x_1 & u_1=h\\
            x_1+x_3 & u_1=\ub_3=h\\
            x_3 & \ub_3=h
        \end{cases} ~.
    \end{align}
The outcome specifies the position along the spine for at least one of
$\{v_{u_1},v_{u_3}\}$, completing the reconstruction.

When $x_1=x_3$, the difference $D$ in~\eqref{B'diff} does not distinguish
$u_1=h$ and $\ub_3=h$.  If $u_1=\ub_3=h$, which we recognize by $D=x_1+x_3$,
then the caterpillar is determined up to symmetry.  Hence we may assume
$u_1\ne \ub_3$.

Since we know $x_1,x_2,x_3$, we know the three levels of the branch vertices,
two of which may be equal.  If $\delta_{1,2}=\delta_{2,3}$, then setting
$u_1=h$ or $u_3=\hb$ yields the same caterpillar, since $x_1=x_3$.  Hence
we may assume $\delta_{1,2}<\delta_{2,3}$.  Now setting $u_1=h$ or $u_3=\hb$
interchanges the levels of $v_{u_1}$ and $v_{u_3}$, yielding the same 
contributions to the differences $\#B'_{j-1:3,2}-\#B'_{j:3,2}$ from these
two vertices.  However, when $u_1=h$ the vertex $v_{u_2}$ has level
$h+\delta_{1,2}$, and when $\ub_3=h$ the level of $v_{u_2}$ is higher than
$h+\delta_{1,2}$.  We detect this difference, since we know the levels $j$
such that $\#B'_{j:3,2}<\#B'_{j-1:3,2}$.  This distinguishes which branch
vertex has level $h$, completing the reconstruction.

\medskip
{\bf Case 3:} {\it $\delta_{1,2}\le(r-1)/2<\delta_{2,3}$.}
By Lemma~\ref{3brdeg}, we know $x_3$ and hence also the set $\{x_1, x_2\}$.
Since we do not know $\delta_{2,3}$, fixing $u_1$ does not complete the
reconstruction.  However, since $\delta_{2,3}>(r-1)/2$, we know that $v_{u_3}$
is in the second half of the spine and that $v_{u_1}$ and $v_{u_2}$ are in the
first half ($\delta_{2,3}>(r-1)/2$ yields $u_2\le r/2$).  Hence it suffices to
determine the levels of the three branch vertices and assign the values in
$\{x_1,x_2\}$ to the correct indices.

To find the levels of branch vertices, we seek $j$ such that
$\#B'_{j:3,2}<\#B'_{j-1:3,3}$ and $j\le r/2$.  Because $\delta_{2,3}>(r-1)/2$,
the only lines in~\eqref{B'} that are relevant for this computation are the
first two.  When $j=\delta_{1,2}$, we know the value of $\#B'_{j:3,2}$ because
the formula is unchanged when $(x_1,y_1)$ and $(x_2,y_2)$ are interchanged.
For all other relevant values, $\#B'_{j:3,2}=\#B_{j:3,2}$. 

Let $h$ be the least value of $j$ such that $\#B'_{j:3,2}<\#B'_{j-1:3,2}$,
and let $h'$ be the next.  These are the only two such values if and only if
two branch vertices are on the same level.  The possibilities are then
$u_1=\ub_3=h$ or $u_2=\ub_3=h'$.  We obtain
    \begin{align}\label{B'h}
        \#B'_{h-1:3,2}-\#B'_{h:3,2}=\begin{cases}
            x_1+x_3 & u_1=h=\ub_3\\
            x_1 & u_1=h\ne \ub_3
        \end{cases}
    \end{align}
We also have
    \begin{align}\label{B'h'}
        \#B'_{h'-1:3,2}-\#B'_{h':3,2}=\begin{cases}
            x_2 & u_2=h'\ne\ub_3\\
            x_2+x_3 & u_2=h'=\ub_3
        \end{cases}
    \end{align}
Thus the ordered pair of values in~\eqref{B'h} and~\eqref{B'h'}
is $(x_1+x_3,x_2)$ (two branch vertices at level $h$) or 
$(x_1,x_2+x_3)$ (two branch vertices at level $h'$).
If $x_1=x_2$, then the ordered pairs are distinguished by which
coordinate is larger, completing the reconstruction.  If $x_1\ne x_2$, then
we count appearances of $B_{\delta_{1,2},h:3,3,2}$, which fits in a card since
its length is at most $(r-2)/2$.  We have
$\#B_{\delta_{1,2},h:3,3,2}=x_1(y_2-1)$, and that value
differs from $x_2(y_1-1)$ when $x_1\ne x_2$.  Thus this count tells us whether
the degrees of $v_{u_1}$ and $v_{u_2}$ respect the order
$(x_1,x_2)$ or the order $(x_2,x_1)$.  Having determined which of $x_1$ and
$x_2$ is smaller, we can determine which of them we see in the ordered pair
from~\eqref{B'h} and~\eqref{B'h'}, completing the reconstruction.

\medskip
It remains only to consider the subcase where the three branch vertices have
distinct levels.  That is, there are distinct values $h_1,h_2,h_3$ with
$1\le h_1<h_2<h_3\le r/2$ such that $\#B'_{h_i:3,2}<\#B'_{h_i-1:3,2}$.
The caterpillar is now one of three possibilities, given by $(u_1,u_2,u_3)$
being $(h_1,h_2,\hb_3)$, $(h_1,h_3,\hb_2)$, or $(h_2,h_3,\hb_1)$.

Let $\beta_i= \#B'_{h_i-1:3,2}-\#B'_{h_i:3,2}$.
In the three options for $G$, the triple $(\beta_1,\beta_2,\beta_3)$ is
$(x_1,x_2,x_3)$, $(x_1,x_3,x_2)$, or $(x_3,x_1,x_2)$.

We first distinguish the second possibility from the others using
$\delta_{1,2}$, a value we know.  In the three options, $\delta_{1,2}$ is
$h_2-h_1$, $h_3-h_1$, or $h_3-h_2$, respectively.  Since $h_3-h_1$ is strictly
largest, that option cannot be confused with either of the others.  It also
distinguishes $x_1$ as the difference associated with $h_1$.

Ambiguity among the other two options requires $(x_1,x_2,x_3)=(x_3,x_1,x_2)$.
Here $x_3$ must equal both $x_1$ and $x_2$, so all three branch vertices have
the same degree, $d$.  Now we consider a triton.  Let $x=\CH{d-1}2$.  We have
$\#B_{\delta_{1,2},h:3,3,2}=x(d-2)$ if $u_1=h_1$, and
$\#B_{\delta_{1,2},h:3,3,2}=2x(d-2)$ if $u_1=h_2$.  Hence we can distinguish
the cases.  The value $\beta_2$ then tells us which of $\{x_1,x_2\}$ is
which, completing the reconstruction.
\end{proof}

We have completed the consideration of all cases and proved
Theorem~\ref{main}.

\paragraph{Acknowledgment.} We thank Carla Groenland and Zachary Hunter for
helpful comments.

\end{document}